\numberwithin{equation}{section}
\numberwithin{figure}{section}
\newtheorem{lemma}{Lemma}[section]
\newtheorem{theorem}{Theorem}[section]
\newtheorem{remark}{Remark}[section]
\newtheorem{definition}{Definition}[section]
\newtheorem{exam}{Example}[section]
\begin{document}

\title{\Large
Robust globally divergence-free weak Galerkin methods for   unsteady incompressible convective Brinkman-Forchheimer equations
\thanks
{
	This work was supported in part by Natural Science Foundation of Sichuan Province, China (2023NSFSC0075) and National Natural Science Foundation of China  (12171340).
}
}
\author[a]{Xiaojuan Wang \thanks{ Email: 1271582983@qq.com}}
\author[b]{Jihong Xiao \thanks{ Email: xiaojh2752@163.com }}
\author[a]{Xiaoping Xie \thanks{  Email: xpxie@scu.edu.cn}}
\author[a]{Shiquan Zhang \thanks{Corresponding author. Email: shiquanzhang@scu.edu.cn }}

\affil[a]{School of Mathematics, Sichuan University, Chengdu 610064, China}
\affil[b]{Division of Mathematics, Sichuan University Jinjiang College, Pengshan 620860, China}

\renewcommand*{\Affilfont}{\small\it}
\renewcommand\Authands{, }
\maketitle

\begin{abstract}
This paper  develops and analyzes a class of  semi-discrete and fully discrete weak
Galerkin finite element methods for   unsteady incompressible convective Brinkman-Forchheimer equations.  For the spatial discretization, the methods adopt the piecewise polynomials of degrees  $m\ (m\geq1)$ and $m-1$  respectively  to approximate the velocity  and pressure inside the elements, and piecewise polynomials of degree  $m$ to approximate  their numerical  traces on the interfaces of elements.  In the fully discrete method,  the backward Euler difference scheme is used to approximate the time derivative. The methods are shown to yield globally divergence-free velocity  approximation.  Optimal a priori error estimates in  the energy norm and $L^2$ norm are established.
A convergent linearized iterative algorithm is designed for solving the fully discrete system.
Numerical experiments are provided to verify the  theoretical results.
\end{abstract}

%\begin{keyword}
% \color{black}	
{\bf Keywords:}		
unsteady  Brinkman-Forchheimer equations; weak Galerkin method; divergence-free; error estimates. \color{black}
	
%words here, in the form: keyword \sep keyword
\text{\bf AMS 2010} \; 65M60, 65N30
%% PACS codes here, in the form: \PACS code \sep code

%% MSC codes here, in the form: \MSC code \sep code
%% or \MSC[2008] code \sep code (2000 is the default)
%\text{AMS 2010} \; 65M60, 65N30

%\end{keyword}

%\end{frontmatter}

%% \linenumbers

%% main text
\color{black}
\section{Introduction}
Let $\Omega\subset \mathbb{R}^n$ $(n=2,3)$ \color{black} be a   polygonal/polyhedral domain.  We consider the following unsteady incompressible convective Brinkman-Forchheimer  model:  find \color{black} \color{black} the velocity $\bm{u}(\bm{x},t):\Omega \times [0,T]\rightarrow \mathbb{R}^{n}$ and  the pressure $p(\bm{x},t):\Omega \times [0,T]\rightarrow \mathbb{R}$ such that
\begin{eqnarray}\label{BF0}
\left\{
\begin{aligned}
\bm{u} _{t}-\nu \Delta \bm{u}+\nabla\cdot(\bm{u}\otimes\bm{u})+\alpha |\bm{u}|^{r-2}\bm{u}+\nabla p&=\bm{f},&\text{in} \ \Omega\times [0,T],\label{BF0}\\
\nabla\cdot\bm{u}&=0,&\text{in} \ \Omega\times [0,T],\\
\bm{u}(\bm{x},0)&=\bm{u}_{0},&\text{in} \  \Omega,\\
\bm{u}&=\bm{0},&\text{on} \ \partial \Omega. \\
\end{aligned}
\right.
\end{eqnarray}
Here,
 $\bm{f}(\bm{x},t) $ denotes a given forcing term, $\nu$  the Brinkman coefficient, $\alpha > 0 $  the Forchheimer coefficient, $3\leq r <\infty$ for $n=2$ and $3\leq r\leq 6$ for $n=3$. $\bm{u}_{0}(\bm{x})$ is the initial data satisfying $\bm{u}_{0}|_{\partial\Omega}=0$.
The operator $\otimes$ is defined by $\bm{u}\otimes\bm{v}=(u_iv_j)_{n\times n}$   for $\bm{v}=(v_1,\cdots,v_n)^T$.

The Brinkman-Forchheimer model, which  can be viewed as the Navier-Stokes equations with a nonlinear damping term, is  used to modelling fast flows in highly porous media \cite{D1982Nonlinear,K1981Boundary}. 
In recent years there have developed many numerical algorithms   for  stationary Brinkman-Forchheimer equations,  such as  conforming mixed finite element  methods \cite{2019Mixed,YangHuaijun2023Saot}, stabilized mixed methods \cite{LiZhenzhen2019Smfe}, multi-level mixed methods \cite{LiMinghao2019Tmfe,QiuHailong2019Msaf,ZhengBo2021Tdsa,ZhengBo2023Tsaf}, parallel finite element algorithms \cite{WassimEid2022Lapf,WassimEid2023Aptm} and weak Galerkin methods (WG) \cite{TaiYue2024WGmf,WangX.J2024Rgdw}.
  For unsteady  Brinkman-Forchheimer equations, there are limited numerical studies in the literature.  In \cite{QianLiu2021Saoa}  a superconvergence analysis was presented for a  fully discrete nonconforming mixed finite element   method with  a backward Euler temporal scheme. In  \cite{2022Unconditional}    error estimates were derived for  semi-discrete  and    linearized fully discrete conforming mixed finite element schemes. We mention that there are several mixed finite element formulations for the system \eqref{BF0} with the term  $\nabla\cdot(\bm{u}\otimes\bm{u})$ being replaced by  a Darcy  term, e.g. the pseudostress-velocity mixed formulation \cite{Caucao;2021}, the velocity-velocity gradient-pseudostress mixed formulation  \cite{CaucaoSergio2022AtBs} and the pressure stabilized mixed formulation  \cite{Louaked;2017}.

It is well-known that  the  divergence constraint  $\nabla\cdot\bm{u}=0$ corresponds to the conservation of   mass for  incompressible fluid flows, and   that   numerical methods   with poor conservation usually  suffer from instabilities   \cite{bookin1, JLMNR2017, add4,bookin3,  add2}.
Meanwhile, the  numerical schemes  with  the exactly divergence-free velocity  approximation may automatically lead to pressure-robustness in the sense that the velocity approximation  error   is independent of    the pressure approximation \cite{JLMNR2017,MR3564690,MuLin2023ApwG}.
We refer to \cite{CFX2016,Chen-Xie2023,CKS2007,HLX2019,HX2019,MR3511719,MuLin2018Addf,XZ2010,ZhangLi2019Agdw,ZCX2017} for some divergence-free finite element methods for  the incompressible fluid flows.
	
In \cite{WangX.J2024Rgdw}, robust globally divergence-free WG methods were developed and analyzed for   stationary incompressible convective Brinkman-Forchheimer equations.
	In this paper, we shall consider the weak Galerkin   discretization of the unsteady Brinkman-Forchheimer model \eqref{BF0}. The WG framework was first proposed  in \cite{WangJunping2013AwGf,WangJunping2013AWGM}  for  second-order elliptic problems.  It  allows the use of totally discontinuous functions on meshes with arbitrary shape of polygons/polyhedra due to the introduction of weakly defined gradient/divergence operators over functions with discontinuity, and has  the local elimination property, i.e. the unknowns defined in the interior of elements can be locally eliminated by using the numerical traces  defined on the interfaces of  elements. We refer to \cite{CFX2016, Chen-Xie2023, HLX2019,HX2019,HuXiaozhe2019AwGf,MuLin2023ApwG,MuLin2018Addf,PengHui2022WGmf,WangJunping2016AwGf,WangRuishu2016AwGf,WangX.J2024Rgdw, XZ2010,ZZX2023,ZCX2017} some developments and applications of  the WG methods for fluid flow  problems.  Particularly,  a class of robust globally divergence-free weak Galerkin methods were developed  in \cite{CFX2016} for Stokes equations, and later were extended  to solve incompressible quasi-Newtonian Stokes equations \cite{ZCX2017}, natural convection equations \cite{HLX2019,HX2019}, incompressible Magnetohydrodynamics flow equations \cite{ZZX2023} and stationary incompressible convective Brinkman-Forchheimer equations \cite{WangX.J2024Rgdw}.

 The goal of this  contribution is to extend the WG methods of  \cite{HLX2019} to the semi-discretization and full discretization of  the unsteady Brinkman-Forchheimer model  \eqref{BF0}.
The main features of our  WG discretizations  for the   model \eqref{BF0} are as follows:
\begin{itemize}

\item  The proposed WG methods are arbitrary order ones.  In the  spatial discretization, we adopt   piecewise polynomials of degrees $m$ $ (m\geq1)$ and $m-1$  to approach the velocity and pressure in the interiors of elements, respectively, and use polynomials of degree  $m$ to approximate the numerical traces of velocity and pressure on the interfaces of elements.
In the full discretization, the backward Euler difference scheme is used for the temporal discretization.
\item The WG methods yield     globally divergence-free velocity approximation, which  automatically leads to   pressure-robustness.	

\item The schemes  are  ``parameter-friendly", i.e.,   the stabilization parameters in the schemes
are not required  to be  ``sufficiently large".

\item The well-posedness of the discrete schemes and optimal a priori error estimates in the energy norm and $L^2$ norm   are established.

\end{itemize}

The rest of this paper is organized as follows.
Section 2 gives  weak formulations, the semi-discrete WG scheme and some preliminary results.
Sections  3 and 4 derive the well-posedness  and optimal a priori error estimates of the semi-discrete   scheme, respectively.
Section 5 gives the backward Euler fully discrete WG scheme  and shows its  existence and uniqueness  results.
Section 6 presents optimal error estimations for the fully discrete scheme.
Section 7 proposes a linearized  iteration algorithm for solving the nonlinear fully discrete  system.
Section 8 provides numerical tests to verify the theoretical results.
Finally, some concluding remarks are made in Section 9.
\color{black}

\section{Semi-discrete WG formulations}
\subsection{Notation and weak problem}
For any bounded domain $\Lambda \subset \mathbb{R}^k$ $( k=n,n-1 )$, nonnegative  integer $s$ and real number $1\leq q<\infty$, let $W^{s,q}(\Lambda)$ and $W_0^{s,q}(\Lambda)$ be the standard Sobolev spaces defined on $\Lambda$ with norm $||\cdot||_{s,q, \Lambda}$ and semi-norm $|\cdot|_{s,q,\Lambda}$. In particular,   $H^s(\Lambda):=W^{s,2}(\Lambda)$, $L^2(\Lambda)=H^{0}(\Lambda)$ and $H^s_0(\Lambda):=W^{s,2}_0(\Lambda)$, with $||\cdot||_{s,\Lambda}:=||\cdot||_{s,2, \Lambda}$ and $|\cdot|_{s,\Lambda}:=|\cdot|_{s,2, \Lambda}$.   We use $(\cdot,\cdot)_{s,\Lambda}$ to denote the inner product of $H^s(\Lambda)$, with $(\cdot,\cdot)_\Lambda :=(\cdot,\cdot)_{0,\Lambda}$.
	When $\Lambda = \Omega$, we set $||\cdot||_s := ||\cdot||_{s,\Omega},|\cdot|_s := |\cdot|_{s,\Omega}$, and $(\cdot,\cdot):= (\cdot,\cdot)_{\Omega}$.
	Especially, when $\Lambda \subset \mathbb{R}^{n-1}$ we use $\langle\cdot,\cdot\rangle_\Lambda$ to replace $(\cdot,\cdot)_{\Lambda}$.
For a nonnegative integer $m$, let $P_m(\Lambda)$  be the set of all polynomials defined  on $\Lambda$ with degree   no more than $m$.

We introduce the following spaces:
\begin{align*}
&\bm{V}:=[H^1_0(\Omega)]^n, \quad \bm{V}_{0}:=\{\bm{v}\in\bm{V}:\nabla\cdot \bm{v}=0\},\\
&Q:=L^2_0(\Omega):=\{q\in L^2(\Omega)\big | \ (q,1)=0\},\\
&\bm{H}({\rm div};\Lambda):=\left\{\bm v\in [L^2(\Lambda)]^n\big | \  \color{black}\nabla\cdot\color{black}\bm{v}\in L^2(\Lambda)\right\},\\
&L^{p}(0,T; H^{s}(\Lambda) ):=\{ v:[0,T]\rightarrow H^{s}(\Lambda) \big |\ 
 \int_{0}^{T}\|v\|_{s,\Lambda}^{p}dx <\infty\},  1\leq p<\infty,\\
&L^{\infty}(0,T; H^{s}(\Lambda) ):=\{ v:[0,T]\rightarrow H^{s}(\Lambda)\big | \  %\text{strongly measurable:}
\sup_{t\in [0,T] }\|v\|_{s,\Lambda}<\infty\}.
\end{align*}
For simplicity, $ L^{p}(0,T; H^{s}(\Lambda) )$ and $L^{\infty}(0,T; H^{s}(\Lambda) )$ are denoted by $L^{p}(H^{s})$ and $L^{\infty}(H^{s})$, respectively.  %

Let $\mathcal{T}_h$ be a shape regular  partition of $\Omega$ into closed simplexes,
and let $\mathcal{E}_h$ be the set of all edges (faces) of all the elements in $\Omega$.
	For any $K\in \mathcal{T}_h$, $e\in \mathcal{E}_h$, we denote by $h_K$ the diameter of $K$ and by $h_e$ the diameter of $e$.
Let $h=\max_{K\in\mathcal{T}_h}h_K$.
 For all $K\in\mathcal{T}_h$ and $e\in\mathcal{E}_h$, $\color{black}\bm{n}_K\color{black}$ and $\bm{n}_e$ stand for the outward unit normal vectors along with the boundary $\partial K$ and $e$, respectively.
 We may abbreviate $\bm{n}_K$ as $\bm{n}$ when  there is no ambiguity.
	We use  $\nabla_h$ and $\nabla_h\cdot$  to denote  respectively  the operators of piecewise-defined gradient and divergence with respect to the partition $\mathcal{T}_h$.
In addition,  introduce the following  inner product and mesh-dependent norm:
\begin{align*}
\langle \cdot,\cdot \rangle_{\partial\mathcal{T}_h}:=&\sum_{K\in\mathcal{T}_h}\langle \cdot,\cdot \rangle_{\partial K},\ \ \ \ \
\|\cdot\|_{0,\partial \mathcal{T}_h}:=(\sum_{K\in\mathcal{T}_h}\|\cdot\|^2_{0,\partial K})^{1/2}.
\end{align*}

Let $N > 0$ be an integer and $0=t_0<t_1<...<t_N =T$ be a uniform division of time domain $[0,T]$ with the time step $\Delta t =\frac{T}{N}$ and  $t_k=k\Delta t$.

For convenience, throughout the paper we use $x\lesssim y$ ($x\gtrsim y$)
to denote $x\le Cy$ ($x\ge Cy$), where   $C$  is a positive constant independent of the mesh size $h$ and time step $\Delta t$.

We define the following bilinear and trilinear forms: for $\bm{\kappa}$, $\bm{u}$, $\bm{v}\in \bm{V}$ and $q\in Q,$
%\begin{subequations}
\begin{align*}
&a(\bm{u},\bm{v}):=\nu ( \nabla \bm{u},\nabla \bm{v}),\quad
b(\bm{v},q):=-( q, \nabla\cdot\bm{v}),\\\quad%\nonumber\\
&c(\bm{\kappa};\bm{u},\bm{v}):=\alpha(|\bm{\kappa}|^{r-2}\bm{u},\bm{v}),\quad %\nonumber\\
d(\bm{\kappa};\bm{u},\bm{v}):=\frac{1}{2}(\nabla\cdot(\bm{u}\otimes\bm{\kappa}),\bm{v})
-\frac{1}{2}(\nabla\cdot(\bm{v}\otimes\bm{\kappa}),\bm{u}).
\end{align*}
%\end{subequations}
Then the weak form of \eqref{BF0} is given as follows: for all $t\in (0,T]$, seek $(\bm{u},p )\in \bm{V}\times Q$ such that
\begin{subequations}\label{weak}
\begin{align}
(\bm{u}_{t},\bm{v})+a(\bm{u},\bm{v})+b(\bm{v},p )+c(\bm{u};\bm{u},\bm{v} )+d(\bm{u};\bm{u},\bm{v} )&=(\bm{f},\bm{v}),&\forall \bm{v}\in\bm{V},\label{weak1}\\
b(\bm{u},q )&=0,&\forall q\in Q. \label{weak1b}
\end{align}
\end{subequations}

\subsection{Semi-discrete WG scheme}

In order to give the WG scheme to the system \eqref{BF0}, for integer $\gamma\geq 0$,  we introduce the
discrete gradient operator $\nabla_{w,\gamma}$ and  the discrete weak divergence operator $\nabla_{w,\gamma}\cdot$ as follows.

\begin{definition}
For all $K\in \mathcal{T}_{h}$ and $v\in \mathcal{V}(K):=\{v=\{v_i,v_b\}:v_i\in L^2(K),v_b\in H^{1/2}(\partial K)\}$, the discrete weak gradient   $\nabla_{w,\gamma,K}v\in [P_{\gamma}(K)]^{n}$ of v on $K$ is defined by
\begin{align}\label{weak gradient}
(\nabla_{w,\gamma,K}v, \bm{\varsigma})_K=-(v_i,\nabla \cdot \bm{\varsigma})_K+\langle v_b,\bm{\varsigma}\cdot \bm{n}_{K} \rangle_{\partial K}, \quad\forall  \bm{\varsigma}\in [P_{\gamma}(K)]^{n}.
\end{align}
Then  the global discrete weak gradient operator $\nabla_{w,\gamma}$ is defined as
$$\nabla_{w,\gamma}|_{K}:=\nabla_{w,\gamma,K}, \quad\forall K\in \mathcal{T}_h. $$
Moreover, given a vector  $\bm{v}=(v_1,v_2,...,v_n)^{T}$ with $v_j|_K\in \mathcal{V}(K)$ for $j=1,...,n$,
the discrete weak gradient $\nabla_{w,\gamma}\bm{v}$ is defined as
$$ \nabla_{w,\gamma}\bm{v}:= (\nabla_{w,\gamma}v_1,\nabla_{w,\gamma}v_2,..., \nabla_{w,\gamma}v_n )^{T}.$$
\end{definition}

\begin{definition}
For all  $K\in \mathcal{T}_{h}$ and $\bm{w}\in \bm{\mathcal{W}}(K):=\{\bm{w}=\{\bm{w}_i,\bm{w}_b\}:\bm{w}_i\in [L^2(K)]^{n},\bm{w}_b\cdot\bm{n}_{K}\in H^{-1/2}(\partial K)\} $, the discrete weak divergence   $\nabla_{w,\gamma,K}\cdot\bm{w}\in P_{\gamma}(K)$ of $\bm{w}$ on $K$ is defined by
\begin{align}
(\nabla_{w,\gamma,K}\cdot\bm{w} , \varsigma)_K=-(\bm{w}_i,\nabla  \varsigma)_K+\langle \bm{w}_b\cdot \bm{n},\varsigma \rangle_{\partial K}, \quad\forall \varsigma\in P_{\gamma}(K).
\end{align}
Then the global discrete weak divergence operator $\nabla_{w,\gamma}\cdot $ is defined as $$\nabla_{w,\gamma}\cdot|_{K}:=\nabla_{w,\gamma,K}\cdot,\quad \forall K\in \mathcal{T}_h. $$
Moreover, given a tensor  $\widetilde{\bm{w}}=(\bm{w}_1,...,\bm{w}_n)^{T}$ with $\bm{w}_j|_K\in  \bm{\mathcal{W}}(K)$ for $j=1,...,n$, the discrete weak divergence $\nabla_{w,\gamma}\cdot \widetilde{\bm{w}}$ is defined as $$\nabla_{w,\gamma}\cdot \widetilde{\bm{w}}:=(\nabla_{w,\gamma}\cdot \bm{w}_{1},...,\nabla_{w,\gamma}\cdot \bm{w}_n)^{T}. $$
\end{definition}

For any integer $m\ge 1$,
we introduce the following finite dimensional spaces:
\begin{align*}
\bm{V}_h:=&\{\bm{v}_h=\{\bm{v}_{hi},\bm{v}_{hb}\}: \bm{v}_{hi}|_K\in [P_m(K)]^n,\bm{v}_{hb}|_e\in [P_{m}(e)]^n ,
\forall K\in\mathcal{T}_h,\forall e\in\mathcal{E}_h\},\\
\bm{V}_h^{0}:=&\{\bm{v}_h=\{\bm{v}_{hi},\bm{v}_{hb}\}\in\bm{V}_h: \bm{v}_{hb}|_{\partial\Omega}=\bm{0} \},\\
Q_h:=&\{q_h=\{q_{hi},q_{hb}\}: q_{hi}|_K\in P_{m-1}(K),q_{hb}|_e\in P_{m}(e),
\forall K\in\mathcal{T}_h,\forall e\in\mathcal{E}_h\},\\
Q_h^{0}:=&\{q_h=\{q_{hi},q_{hb}\}\in Q_h, q_{hi}\in L_0^2(\Omega)\}.
\end{align*}

For any $\bm{u}_{h}=\{\bm{u}_{hi},\bm{u}_{hb}\},\bm{v}_{h}=\{\bm{v}_{hi},\bm{v}_{hb}\},
\bm{\kappa}_{h}=\{\bm{\kappa}_{hi},\bm{\kappa}_{hb}\}\in \bm{V}_{h}^{0}$, and $p_h=\{p_{hi}, p_{hb}\}\in Q_h^{0},$
we shall define bilinear and trilinear terms as follows:
\begin{align*}
a_h(\bm{u}_h,\bm{v}_h)&:=\nu(\nabla _{w,l}\bm{u}_h,\nabla _{w,l}\bm{v}_h)
+s_h(\bm{u}_h,\bm{v}_h), \\
s_h(\bm{u}_h,\bm{v}_h)&:=\nu \langle  \eta( \bm{u}_{hi}-\bm{u}_{hb}), \bm{v}_{hi}-\bm{v}_{hb} \rangle_{\partial \mathcal{T}_h},\\
b_h(\bm{v}_h,q_h)&:=(\nabla _{w,m}q_h,\bm{v}_{hi}), \\
c_h(\bm{\kappa}_h ;\bm{u}_h,\bm{v}_h )&:=\alpha( |\bm{\kappa}_{hi}|^{r-2}\bm{u}_{hi}, \bm{v}_{hi} ), \\
d_h(\bm{\kappa}_h ;\bm{u}_h,\bm{v}_h )
&:=\frac{1}{2}( \nabla _{w,m}\cdot\{\bm{u}_{hi}\otimes\bm{\kappa}_{hi}, \bm{u}_{hb}\otimes\bm{\kappa}_{hb}\},\bm{v}_{hi}  )%\\&\qquad
    -\frac{1}{2}( \nabla _{w,m}\cdot\{\bm{v}_{hi}\otimes\bm{\kappa}_{hi}, \bm{v}_{hb}\otimes\bm{\kappa}_{hb}\},\bm{u}_{hi}  ),
\end{align*}
where the stabilization parameter $\eta|_{\partial K}=h_K^{-1} ,$ $\forall K\in\mathcal{T}_h$,
and  integer $l= m-1,m.$

Based on the above definitions, the semi-discrete WG scheme for \eqref{BF0} reads: for any $t\in [0,T]$, seek $\bm{u}_h=\{\bm{u}_{hi},\bm{u}_{hb} \}\in \bm{V}_h^{0}$, $p_h=\{p_{hi}, p_{hb}\}\in Q_h^{0} $ such that
\begin{subequations}\label{WG}
\begin{align}
(\frac{\partial\bm{u}_{hi}}{\partial t},\bm{v}_{hi} )+a_h(\bm{u}_h,\bm{v}_h)+b_h(\bm{v}_{h},p_h)%
+c_h(\bm{u}_h ;\bm{u}_h,\bm{v}_h )%&\nonumber\\%&&
+d_h(\bm{u}_h ;\bm{u}_h,\bm{v}_h )
=&(\bm{f},\bm{v}_{hi}),  \forall \bm{v}_h \in \bm{V}_h^{0},\label{WG1}\\
b_h(\bm{u}_{h},q_h)=&0, \forall q_h\in  Q_h^{0}, \label{WG2}
\end{align}
\end{subequations}
	%%where the initial data $\bm{u}_{h}(\bm{x},0 )=\bm{u}_{h}^{0}=\bm{\Pi}_m^\ast \bm{u}_0$.

By following a similar process as in the proofs of \cite[Theorem 2.1]{WangX.J2024Rgdw}, we can obtain that the scheme \eqref{WG} yields   globally divergence-free velocity approximation.
\begin{theorem}\label{TH2.2}
Let $\bm{u}_h=\{\bm{u}_{hi},\bm{u}_{hb} \}\in \bm{V}_h^{0}$ be the velocity solution of the WG scheme \eqref{WG}. Then there hold
 \begin{align}\label{divergence-free}
	&\bm{u}_{hi}\in \bm{H}({\rm div};\Omega),\quad  \nabla\cdot\bm{u}_{hi}=0.
\end{align}
\end{theorem}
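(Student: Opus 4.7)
The plan is to exploit the discrete incompressibility condition \eqref{WG2} by making judicious choices of the test pressure $q_h=\{q_{hi},q_{hb}\}\in Q_h^0$. First I would unfold $b_h(\bm{u}_h,q_h)$ using the definition of $\nabla_{w,m}q_h$: since $\bm{u}_{hi}|_K\in [P_m(K)]^n$ is an admissible test tensor in \eqref{weak gradient}, applying the defining identity on each $K$ and summing gives
\begin{align*}
b_h(\bm{u}_h,q_h)=\sum_{K\in\mathcal T_h}\Bigl[-(q_{hi},\nabla\cdot\bm{u}_{hi})_K+\langle q_{hb},\bm{u}_{hi}\cdot\bm{n}_K\rangle_{\partial K}\Bigr]=0.
\end{align*}
This is the key identity driving everything that follows.

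Next I would specialize to test functions with $q_{hb}=0$. The resulting identity reads $(q_{hi},\nabla\cdot\bm{u}_{hi})=0$ for every piecewise $P_{m-1}$ function $q_{hi}$ of zero global mean. Because $\nabla\cdot\bm{u}_{hi}$ is itself piecewise in $P_{m-1}(K)$, it must be orthogonal in $L^2(\Omega)$ to the codimension-one subspace $\{q:q|_K\in P_{m-1}(K),\ \int_\Omega q=0\}$. Hence $\nabla\cdot\bm{u}_{hi}$ is a global constant $c$ on $\Omega$.

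Then I would test with functions having $q_{hi}=0$ and $q_{hb}$ supported on a single edge. For an interior edge $e=\partial K^+\cap\partial K^-$ this yields $\langle q_{hb},[\bm{u}_{hi}\cdot\bm{n}_e]\rangle_e=0$ for all $q_{hb}|_e\in P_m(e)$; since the jump $[\bm{u}_{hi}\cdot\bm{n}_e]$ also lies in $P_m(e)$, choosing $q_{hb}$ equal to the jump itself forces it to vanish. For a boundary edge $e\subset\partial\Omega$ the analogous calculation with $q_{hb}=\bm{u}_{hi}\cdot\bm{n}|_e$ gives $\bm{u}_{hi}\cdot\bm{n}=0$ on $\partial\Omega$. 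Normal continuity across all interior faces together with square integrability of the elementwise divergence shows $\bm{u}_{hi}\in\bm{H}(\mathrm{div};\Omega)$, and the distributional divergence then coincides with the piecewise one, namely $c$. Finally, applying the divergence theorem and using $\bm{u}_{hi}\cdot\bm{n}=0$ on $\partial\Omega$ gives $c\,|\Omega|=\int_\Omega\nabla\cdot\bm{u}_{hi}=\int_{\partial\Omega}\bm{u}_{hi}\cdot\bm{n}=0$, so $c=0$ as claimed.

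No step here should present a real obstacle; the mild subtlety is making sure that splitting the $\partial\mathcal{T}_h$ sum into edge contributions is legitimate, i.e.\ that $q_{hb}$ is truly single-valued on every edge (which holds by definition of $Q_h$) and that the space $Q_h^0$ is rich enough to allow edge-supported boundary components (which it is, since the only restriction in $Q_h^0$ is on $q_{hi}$). The constant-mean bookkeeping in the first step is the one place where one must be careful, but the orthogonal-complement argument above handles it cleanly.
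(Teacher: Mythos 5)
Your argument is correct and is essentially the standard proof that the paper delegates to \cite[Theorem 2.1]{WangX.J2024Rgdw} (following \cite{CFX2016}): test $b_h(\bm{u}_h,q_h)=0$ with $q_{hb}=0$ to get that $\nabla_h\cdot\bm{u}_{hi}$ is a global constant, then with $q_{hi}=0$ and edge-supported $q_{hb}$ to get normal continuity across interior faces and vanishing normal trace on $\partial\Omega$, whence $\bm{u}_{hi}\in\bm{H}(\mathrm{div};\Omega)$ and the constant is zero by the divergence theorem. The points you flag (single-valuedness of $q_{hb}$, the zero-mean constraint affecting only $q_{hi}$) are handled correctly.
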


\subsection{Preliminary results}

We first introduce   two semi-norms, $|||\cdot|||_V$ and $|||\cdot|||_Q$,  on the spaces $ \bm{V}_h$ and $ Q_h$, respectively, as follows:
\begin{align*}
|||\bm{v}_h|||_V^2&:= \|\nabla _{w,l}\bm{v}_h\|^2_0
+\|\eta^{\frac 1 2}(\bm{v}_{hi}-\bm{v}_{hb})\|^2_{0,\partial \mathcal{T}_h}, \quad \forall \bm{v}_h\in \bm{V}_h,\\
|||q_h|||_Q^2&:=\|{q}_{hi}\|^2_0+\sum_{K\in \mathcal{T}_h}h_{K}^{2}\|\nabla _{w,m} q_h\|_{0,K}^{2}, \quad \forall q_h \in Q_h,
\end{align*}
where $\|\cdot\|_{0,\partial \mathcal{T}_h}:=(\sum_{K\in\mathcal{T}_h}\|\cdot\|^2_{0,\partial K})^{1/2}$,
 $\eta|_{\partial K}=h_{K}^{-1}$.
It is easy to see that $|||\cdot|||_V$ and $|||\cdot|||_Q$ are norms on $\bm{V}_h^{0}$ and $Q_h^{0}$, respectively (cf. \cite[Lemma 3.3]{  CFX2016}).

In view of the definitions of the discrete weak gradient operator,   Green's formula,
  the projection operator,  the Cauchy-Schwarz inequality, the inverse inequality and the trace inequality,   the following lemma holds (cf. \cite[{\color{black}Lemma 3.2}]{CFX2016}).
\begin{lemma}\label{Lemma 2.3}
For any $K\in \mathcal{T}_h$ and $\bm{\omega}_{h}=\{\bm{\omega}_{hi},\bm{\omega}_{hb} \}\in [P_{m}(K)]^{n}\times[P_{k}(\partial K) ]^{n}$ with $0\leq m-1\leq l\leq m$, there hold
\begin{subequations}
\begin{align}
\|\nabla \bm{\omega}_{hi}\|_{0,K}\lesssim \|\nabla_{w,l}\bm{\omega}_{h}\|_{0,K}+h_K^{-\frac{1}{2}}\| \bm{\omega}_{hi}-\bm{\omega}_{hb}\|_{0,\partial K},\label{2.10a}\\
\|\nabla_{w,l} \bm{\omega}_{h}\|_{0,K}\lesssim \|\nabla \bm{\omega}_{hi}\|_{0,K}+h_K^{-\frac{1}{2}}\| \bm{\omega}_{hi}-\bm{\omega}_{hb}\|_{0,\partial K}.\label{2.10b}
\end{align}
\end{subequations}
\end{lemma}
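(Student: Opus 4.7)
The plan is to derive a single fundamental identity linking $\nabla \bm{\omega}_{hi}$, $\nabla_{w,l}\bm{\omega}_h$, and the interior/boundary jump $\bm{\omega}_{hi} - \bm{\omega}_{hb}$, and then obtain both bounds in \eqref{2.10a}--\eqref{2.10b} by choosing appropriate test tensors. Starting from Definition \ref{weak gradient} applied componentwise to the vector $\bm{\omega}_h$ and integrating by parts in the volume term on $K$, I would show that for any tensor test function $\bm{\varsigma} \in [P_l(K)]^{n\times n}$ the identity
\begin{equation*}
(\nabla_{w,l,K}\bm{\omega}_h,\bm{\varsigma})_K = (\nabla \bm{\omega}_{hi},\bm{\varsigma})_K - \langle \bm{\omega}_{hi}-\bm{\omega}_{hb},\bm{\varsigma}\bm{n}_K\rangle_{\partial K}
\end{equation*}
holds. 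This identity is the only structural input needed; from it both estimates will drop out by self-testing.

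For \eqref{2.10b}, I would substitute $\bm{\varsigma} = \nabla_{w,l}\bm{\omega}_h$, which is admissible by definition. The left-hand side becomes $\|\nabla_{w,l}\bm{\omega}_h\|_{0,K}^2$, and the two terms on the right are controlled by Cauchy--Schwarz, with the boundary term handled by the standard discrete trace inequality $\|\bm{\varsigma}\|_{0,\partial K} \lesssim h_K^{-1/2}\|\bm{\varsigma}\|_{0,K}$ for polynomials applied to $\nabla_{w,l}\bm{\omega}_h$. Dividing through by $\|\nabla_{w,l}\bm{\omega}_h\|_{0,K}$ yields \eqref{2.10b}. For \eqref{2.10a}, I would instead substitute $\bm{\varsigma} = \nabla \bm{\omega}_{hi}$; this is admissible precisely because $\bm{\omega}_{hi} \in [P_m(K)]^n$ makes $\nabla \bm{\omega}_{hi}$ a tensor polynomial of degree $m-1$, and the hypothesis $m-1 \leq l$ ensures $\nabla \bm{\omega}_{hi} \in [P_l(K)]^{n\times n}$. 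The left-hand side then becomes $\|\nabla \bm{\omega}_{hi}\|_{0,K}^2$, and the right-hand side is bounded by the same Cauchy--Schwarz plus discrete trace argument, now applied to the polynomial $\nabla \bm{\omega}_{hi}$; dividing yields \eqref{2.10a}.

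There is no genuine obstacle here; the content is really just the integration-by-parts identity together with one inverse trace inequality. The only step that deserves attention is the degree bookkeeping: the lower bound $l \geq m-1$ is what permits $\nabla \bm{\omega}_{hi}$ itself to serve as the test tensor in the weak gradient identity, and is therefore essential for \eqref{2.10a}; the upper bound $l \leq m$ and the face degree $k$ of $\bm{\omega}_{hb}$ do not enter the argument, since $\bm{\omega}_{hb}$ appears only inside a boundary norm rather than as a factor whose polynomial nature needs to be exploited via an inverse estimate.
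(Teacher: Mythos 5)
Your proof is correct and is essentially the argument behind the cited reference (\cite[Lemma 3.2]{CFX2016}, which the paper invokes rather than reproving): the integration-by-parts identity $(\nabla_{w,l}\bm{\omega}_h,\bm{\varsigma})_K=(\nabla\bm{\omega}_{hi},\bm{\varsigma})_K-\langle\bm{\omega}_{hi}-\bm{\omega}_{hb},\bm{\varsigma}\bm{n}_K\rangle_{\partial K}$, tested with $\bm{\varsigma}=\nabla_{w,l}\bm{\omega}_h$ and $\bm{\varsigma}=\nabla\bm{\omega}_{hi}$ respectively, combined with the discrete trace inequality. Your remark that $l\ge m-1$ is exactly what licenses the second choice of test tensor is the right piece of bookkeeping.
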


By the definition of the norm  $|||\cdot|||_V$, we
further have the following conclusion (cf. \cite[(3.3)]{HX2019}, \cite[Lemma 2.4]{WangX.J2024Rgdw}, \cite[Lemma 3.3]{ZZX2023}):
\begin{lemma} \label{Lemma 2.4}
For any $ \bm{v}_{h}\in \bm{V}_{h}^{0}$, there hold
\begin{align}
 \|\nabla_{h} \bm{v}_{hi}\|_{0}\lesssim ||| \bm{v}_{h}|||_V
\end{align}
and
\begin{eqnarray}\label{Cr-est}
		\|\bm{v}_{hi}\|_{0,\widetilde{r}}\leq C_{\widetilde{r}} |||\bm{v}_{h}||| _V
	\end{eqnarray}
	for   $\widetilde{r}$ satisfying
	\begin{eqnarray*}
		\left\{
		\begin{aligned}
			&2\le \widetilde{r}< \infty,&\text{ if } \ n=2,\\
			&2\le \widetilde{r}\le 6 ,&\text{ if } \ n=3,
		\end{aligned}
		\right.
	\end{eqnarray*}
	where $C_{\widetilde{r}}>0$ is a constant only  depending on $\widetilde{r}$.
\end{lemma}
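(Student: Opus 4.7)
The plan is to handle the two estimates in turn, with the first being essentially immediate from Lemma~\ref{Lemma 2.3} and the second requiring a discrete Sobolev embedding for broken polynomial spaces.

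For the first inequality, I would apply Lemma~\ref{Lemma 2.3}\,(2.10a) locally on each $K\in\mathcal{T}_h$, square both sides, and sum over $K$. Since $\eta|_{\partial K}=h_K^{-1}$, the right-hand side sums exactly to the squared norm $|||\bm{v}_h|||_V^2$, yielding $\|\nabla_h\bm{v}_{hi}\|_0^2\lesssim |||\bm{v}_h|||_V^2$.

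The second inequality is a discrete Sobolev embedding. The idea is to regard $\bm{v}_{hi}$ as a (generally discontinuous) piecewise polynomial and show that its DG-type norm
\[
\|\bm{v}_{hi}\|_{DG}^2:=\|\nabla_h\bm{v}_{hi}\|_0^2+\sum_{e\in\mathcal{E}_h}h_e^{-1}\|[\bm{v}_{hi}]\|_{0,e}^2
\]
is controlled by $|||\bm{v}_h|||_V$. On an interior edge $e$ shared by $K_1,K_2$, the single-valued trace $\bm{v}_{hb}|_e$ lets us insert and subtract to obtain
\[
\|[\bm{v}_{hi}]\|_{0,e}\le \|\bm{v}_{hi}|_{K_1}-\bm{v}_{hb}\|_{0,e}+\|\bm{v}_{hb}-\bm{v}_{hi}|_{K_2}\|_{0,e},
\]
while on a boundary edge $e\subset\partial\Omega$ the condition $\bm{v}_{hb}|_{\partial\Omega}=\bm{0}$ gives $\|\bm{v}_{hi}\|_{0,e}=\|\bm{v}_{hi}-\bm{v}_{hb}\|_{0,e}$, so that (using $h_e\sim h_K$ for shape-regular meshes and the first inequality) $\|\bm{v}_{hi}\|_{DG}\lesssim |||\bm{v}_h|||_V$. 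I would then invoke the discrete Sobolev embedding for broken $H^1$ spaces with homogeneous Dirichlet-type boundary behavior (see e.g.\ the broken Sobolev inequalities of Brenner or Di Pietro--Ern), which gives $\|\bm{v}_{hi}\|_{0,\widetilde{r}}\lesssim \|\bm{v}_{hi}\|_{DG}$ precisely in the stated range of $\widetilde{r}$ (namely $2\le\widetilde{r}<\infty$ for $n=2$ and $2\le\widetilde{r}\le 6$ for $n=3$), which matches the classical Sobolev embedding $H_0^1(\Omega)\hookrightarrow L^{\widetilde{r}}(\Omega)$.

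The main obstacle is the second estimate: one needs a discrete Sobolev inequality for broken piecewise polynomial spaces in which the jumps (not just the gradient) are penalized, and one must verify that the penalty present in $|||\cdot|||_V$ (which controls $\bm{v}_{hi}-\bm{v}_{hb}$ rather than the jumps of $\bm{v}_{hi}$ directly) is strong enough. The triangle-inequality argument above bridges this gap; otherwise the result is a direct consequence of the cited discrete embedding, as already used in \cite{HX2019,WangX.J2024Rgdw,ZZX2023}.
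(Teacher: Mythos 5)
Your proof is correct and follows essentially the same route the paper relies on: the first bound is exactly the summed form of Lemma~\ref{Lemma 2.3}\,(2.10a), and the second is the standard discrete Sobolev embedding for broken polynomial spaces, with the triangle inequality through the single-valued trace $\bm{v}_{hb}$ (and $\bm{v}_{hb}|_{\partial\Omega}=\bm{0}$ on boundary faces) converting the stabilization term into control of the jumps — which is precisely the argument in the references the paper cites for this lemma. The only minor caveat is that $C_{\widetilde{r}}$ also depends on $\Omega$ and the shape-regularity of $\mathcal{T}_h$, not on $\widetilde{r}$ alone, but this is a conventional abuse shared by the paper's statement.
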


For any $K\in\mathcal{T}_h$, $ e\in\mathcal{E}_h$ and nonnegative integer $j$, %here $\zeta$ is a integer,
let $\Pi_j^\ast: L^2(K)\rightarrow P_{j}(K)$ and $\Pi_j^{B}: L^2(e)\rightarrow P_{j}(e)$ be the usua $L^2$-projection operators. We shall adopt $\bm{\Pi}_j^\ast$ to denote $\Pi_j^\ast$ for the vector form.

For  any  integer $ j\geq 0$, we introduce the local Raviart-Thomas (RT) element space
\begin{eqnarray}
	\bm{RT}_j(K)=[P_j(K)]^n+\bm{x}P_j(K), \ \forall K\in \mathcal{T}_{h}\nonumber
\end{eqnarray}
and the RT  projection operator $\bm{P}^{RT}_j: [H^1(K)]^n\rightarrow  \bm{RT}_j(K)$ (cf. \cite{RT-1}) defined by
	\begin{align}
			\langle\bm{P}^{RT}_j\bm{\omega}\cdot \bm{n}_e,\sigma \rangle_e&=\langle \bm{\omega}\cdot \bm{n}_e,\sigma\rangle_e, \quad\forall \sigma\in P_j(e),  e\in\mathcal{E}_{h} \text{ and } e\subset \partial K, \text{ for } j\geq 0,\label{RT1}\\
			(\bm{P}^{RT}_j\bm{\omega},\bm{\sigma})_K&=(\bm{\omega},\bm{\sigma})_K,  \quad\forall \bm{\sigma} \in [P_{j-1}(K)]^n, \text{ for } j\geq 1.\label{RT2}
		\end{align}
Then we have the following commutativity properties for the   RT projection, the $L^2$ projections and the discrete weak operators.
\begin{lemma}(cf. \cite[Lemma 3.7]{CFX2016})\label{Lemma 2.8}
 For $m\geq 1$, there hold
% \begin{subequations}
\begin{align*}
&\nabla _{w,l}\{ \bm{P}_m^{RT}\bm{\omega}, \bm{\Pi}_{m}^{B}\bm{\omega} \}=\bm{\Pi}_{l}^{\ast}(\nabla\bm{\omega} ),\quad \forall \bm{\omega}\in [H^{1}(\Omega)]^{n}, l=m, m-1,\\
&\nabla _{w,m}\{\Pi_{m-1}^{\ast}q,\Pi _m ^{B}q\}=\bm{\Pi}_{m}^{\ast}(\nabla q ),\quad\forall q\in H^{1}(\Omega).
\end{align*}
%\end{subequations}
 \end{lemma}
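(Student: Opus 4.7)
The plan is to verify each identity elementwise on an arbitrary $K\in\mathcal{T}_h$ by testing both sides against a polynomial of the appropriate degree and reducing to the defining orthogonalities of the projections $\bm{\Pi}_m^B$, $\Pi_{m-1}^\ast$ and $\bm{P}_m^{RT}$. The common template is: expand the weak operator on the left via its definition \eqref{weak gradient} (an integration-by-parts-style identity with a volume piece and a boundary piece), rewrite the right-hand side using the defining property of the $L^2$-projection followed by classical integration by parts, and then match the volume and boundary contributions separately.

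For the first identity I would argue componentwise: fix an index $1\le i\le n$ and $\bm{\varsigma}\in[P_l(K)]^n$. Applying \eqref{weak gradient} to the scalar pair $\{(\bm{P}_m^{RT}\bm{\omega})_i,(\bm{\Pi}_m^B\bm{\omega})_i\}$ shows the left-hand side equals $-((\bm{P}_m^{RT}\bm{\omega})_i,\nabla\cdot\bm{\varsigma})_K+\langle(\bm{\Pi}_m^B\bm{\omega})_i,\bm{\varsigma}\cdot\bm{n}_K\rangle_{\partial K}$, while the right-hand side, after using the $L^2$-projection property and integration by parts on each $K$, equals $-(\omega_i,\nabla\cdot\bm{\varsigma})_K+\langle\omega_i,\bm{\varsigma}\cdot\bm{n}_K\rangle_{\partial K}$. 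The boundary pieces agree because $\bm{\varsigma}\cdot\bm{n}_K|_e\in P_l(e)\subseteq P_m(e)$, which is in the exactness range of $\bm{\Pi}_m^B$. The volume pieces agree by invoking property \eqref{RT2} of the RT projection with the test vector $(\nabla\cdot\bm{\varsigma})\bm{e}_i$, since $\nabla\cdot\bm{\varsigma}\in P_{l-1}(K)\subseteq P_{m-1}(K)$; the sub-case $l=m-1=0$ is trivial because then $\nabla\cdot\bm{\varsigma}\equiv 0$ and \eqref{RT2} is vacuous.

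For the second identity the plan follows the same template, but with the scalar weak gradient applied to $\{\Pi_{m-1}^\ast q,\Pi_m^B q\}$ and tested against $\bm{\zeta}\in[P_m(K)]^n$. The boundary contributions match because $\bm{\zeta}\cdot\bm{n}_K|_e\in P_m(e)$ is exactly the target space of $\Pi_m^B$, and the volume contributions match because $\nabla\cdot\bm{\zeta}\in P_{m-1}(K)$ is exactly the target space of $\Pi_{m-1}^\ast$; no RT projection is needed here, which is why the pressure interior degree is set at $m-1$.

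I do not anticipate any substantive obstacle; the proof is a direct verification from definitions. The only care required is the degree bookkeeping: one must ensure that $\nabla\cdot\bm{\varsigma}$ has degree at most $m-1$ so that \eqref{RT2} applies, and that the normal trace $\bm{\varsigma}\cdot\bm{n}_K$ on each face lies in the exactness degree of the corresponding boundary $L^2$-projection. Both conditions follow from $l\le m$, and they are precisely what motivates the choices of polynomial degrees $m$, $m-1$, $m$ for the velocity interior, pressure interior, and trace spaces in the WG formulation.
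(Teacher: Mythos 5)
Your verification is correct and is exactly the standard argument: the paper itself offers no proof of this lemma, deferring to \cite[Lemma 3.7]{CFX2016}, and the cited proof proceeds by the same elementwise test-function computation you describe, matching the boundary terms via the exactness range of $\Pi_m^B$ and the volume terms via \eqref{RT2} (resp.\ the defining property of $\Pi_{m-1}^\ast$). Your degree bookkeeping ($\nabla\cdot\bm{\varsigma}\in P_{l-1}(K)\subseteq P_{m-1}(K)$ and $\bm{\varsigma}\cdot\bm{n}_K|_e\in P_l(e)\subseteq P_m(e)$) is the only delicate point and you have it right.
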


 Finally, we give several inequalities which will be used later (cf. \cite[Theorem 5.3.3]{CiarletP.G1978TFEM}, \cite[Lemma 1]{LiMinghao2019Tmfe}).
\begin{lemma} \label{Lemma 2.9}
For any $  \lambda,\mu\in \mathbb{R}^{n}$ and $r\geq2$, there hold
\begin{align*}
&\left| | \lambda|^{r-2}-|\mu |^{r-2}\right|\leq (r-2) (|\lambda|^{r-3}+|\mu|^{r-3})|\lambda-\mu|,\\
&\left|| \lambda|^{r-2}\lambda-|\mu |^{r-2}\mu\right|\leq C_{r} (|\lambda|+|\mu|)^{r-2}|\lambda-\mu|,\\
&\left(| \lambda|^{r-2}\lambda-|\mu |^{r-2}\mu\right)\cdot( \lambda-\mu)\gtrsim |\lambda-\mu|^{r},
\end{align*}
where  $|\cdot|$ denotes the Euclid norm and $C_{r}$ is a positive constant,  depending only on $r$, to be used in the latter analysis.
\end{lemma}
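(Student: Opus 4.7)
The plan is to prove all three pointwise inequalities by reducing to standard bounds on the vector field $F(\lambda):=|\lambda|^{r-2}\lambda$ on $\mathbb{R}^n$, treating the case $r\ge 3$ relevant to the model (the $r=2$ endpoint is trivial since the factor $r-2$ vanishes and $F$ is the identity). Each inequality follows from one of two standard devices: a scalar mean value argument for (i), and the integral representation
\[
F(\lambda)-F(\mu) = \int_0^1 F'\bigl(\mu+t(\lambda-\mu)\bigr)(\lambda-\mu)\,dt
\]
for (ii) and (iii).

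For (i), I would apply the mean value theorem to the scalar function $g(s):=s^{r-2}$ on the interval between $|\lambda|$ and $|\mu|$: for some $\xi$ lying between those values,
\[
\bigl||\lambda|^{r-2}-|\mu|^{r-2}\bigr|=(r-2)\xi^{r-3}\bigl||\lambda|-|\mu|\bigr|.
\]
Since $r-3\ge 0$, one has $\xi^{r-3}\le \max(|\lambda|,|\mu|)^{r-3}\le |\lambda|^{r-3}+|\mu|^{r-3}$, and the reverse triangle inequality $\bigl||\lambda|-|\mu|\bigr|\le |\lambda-\mu|$ closes the estimate, with the degenerate case $\lambda=\mu=0$ handled separately.

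For (ii) and (iii), I would compute the Jacobian $F'(\xi)=|\xi|^{r-2}I+(r-2)|\xi|^{r-4}\xi\xi^T$; its eigenvalues are $|\xi|^{r-2}$ with multiplicity $n-1$ and $(r-1)|\xi|^{r-2}$, yielding the two-sided quadratic-form bounds $|\xi|^{r-2}|\zeta|^2 \le \zeta^T F'(\xi)\zeta \le (r-1)|\xi|^{r-2}|\zeta|^2$ for all $\zeta\in\mathbb{R}^n$. Substituting into the integral representation and using $|\mu+t(\lambda-\mu)|\le |\lambda|+|\mu|$ yields (ii) with $C_r=r-1$. Contracting the representation with $\lambda-\mu$ and invoking the lower bound gives
\[
\bigl(F(\lambda)-F(\mu)\bigr)\cdot(\lambda-\mu) \ge |\lambda-\mu|^2\int_0^1 |\mu+t(\lambda-\mu)|^{r-2}\,dt.
\]

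I expect the only nontrivial step to be the coercivity estimate $\int_0^1 |\mu+t(\lambda-\mu)|^{r-2}\,dt\gtrsim (|\lambda|+|\mu|)^{r-2}$ needed to finish (iii). I would handle it by cases on the ratio of $|\lambda-\mu|$ to $|\lambda|+|\mu|$: if $|\lambda-\mu|\le \tfrac12(|\lambda|+|\mu|)$, the segment from $\mu$ to $\lambda$ stays at distance at least $\tfrac14(|\lambda|+|\mu|)$ from the origin and the integrand is uniformly comparable to $(|\lambda|+|\mu|)^{r-2}$; otherwise, I would restrict attention to a sub-interval of $[0,1]$ on which $|\mu+t(\lambda-\mu)|$ is a fixed fraction of $\max(|\lambda|,|\mu|)$. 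Finally, $(|\lambda|+|\mu|)^{r-2}\ge |\lambda-\mu|^{r-2}$ (from the triangle inequality together with $r-2\ge 0$) upgrades the bound to the claimed $|\lambda-\mu|^r$, completing (iii).
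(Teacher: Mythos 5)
Your proof is correct, and it follows the standard route. Note that the paper does not actually prove this lemma; it simply cites \cite[Theorem 5.3.3]{CiarletP.G1978TFEM} and \cite[Lemma 1]{LiMinghao2019Tmfe}, and the argument underlying those references is exactly the one you give: the mean value theorem for the scalar bound, and the integral representation of $F(\lambda)-F(\mu)$ with the two-sided eigenvalue bounds $|\xi|^{r-2}|\zeta|^2\le \zeta^T F'(\xi)\zeta\le (r-1)|\xi|^{r-2}|\zeta|^2$ for the vector bounds. Your key coercivity step for (iii) is genuinely needed and is true; in fact it admits a one-line proof that is simpler than your case split: assuming without loss of generality $|\lambda|\ge|\mu|$, for $t\in[3/4,1]$ one has $|(1-t)\mu+t\lambda|\ge \tfrac34|\lambda|-\tfrac14|\mu|\ge\tfrac12|\lambda|\ge\tfrac14(|\lambda|+|\mu|)$, so the integral is at least $4^{-(r-1)}(|\lambda|+|\mu|)^{r-2}$. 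The only caveat is the one you flag yourself: your argument covers $r=2$ and $r\ge3$ but not $2<r<3$, where the first inequality involves the negative exponent $r-3$ and your bound $\xi^{r-3}\le\max(|\lambda|,|\mu|)^{r-3}$ reverses; since the paper assumes $3\le r$ throughout and only ever invokes the $r^{-3}$-power inequality in that regime, this restriction is harmless.
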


\section{Well-posedness of the semi-discrete WG scheme}

\subsection{Stability results }
Denote
\begin{eqnarray}\label{V0h}
\bm{V}_{0h}:=\{\bm{\kappa}_h\in\bm{V}_h^{0}:b_h(\bm{\kappa}_{h},q_h)=0,\forall q_h\in Q_h^{0}\}.
\end{eqnarray}
%\begin{remark}
From the proof of Theorem \ref{TH2.2}, it is trival to see that
\begin{eqnarray*}
\bm{V}_{0h}=\{\bm{\kappa}_h\in\bm{V}_h^{0}:\bm{\kappa}_{hi}\in H(div, \Omega), \nabla\cdot \bm{\kappa}_{hi}=0\}.
\end{eqnarray*}
%\end{remark}

Lemmas \ref{Lemma 3.1} and \ref{theoremLBB} give some stability  conditions for the Semi-discrete scheme \eqref{WG}.
\begin{lemma}\label{Lemma 3.1}
For any $ \bm{\kappa}_h=\{\bm{\kappa}_{hi},\bm{\kappa}_{hb}\},
\bm{u}_h=\{\bm{u}_{hi},\bm{u}_{hb}\},
\bm{v}_h=\{\bm{v}_{hi},\bm{v}_{hb}\}
\in\bm{V}_h^0$,  $\bm{f}\in L^{2}( [ L^{2}(\Omega) ]^{n})$, there hold
\begin{align}
a_h(\bm{u}_h,\bm{v}_h)&\lesssim \nu|||\bm{u}_{h}|||_{V}\cdot|||\bm{v}_{h}|||_{V},\label{a1}\\
a_h(\bm{v}_h,\bm{v}_h)&= \nu|||\bm{v}_{h}|||^{2}_{V},\label{a2}\\
c_{h}(\bm{v}_h;\bm{v}_h,\bm{v}_h)&= \alpha\|\bm{v}_{hi}\|_{0,r}^{r},\label{c2}\\
c_{h}(\bm{\kappa}_h;\bm{u}_h,\bm{v}_h)&\leq \alpha C_{\widetilde{r}}^{r} |||\bm{\kappa}_{h}|||^{r-2}_{V}|||\bm{u}_{h}|||_{V}\cdot|||\bm{v}_{h}|||_{V},\label{c1}\\
d_h(\bm{\kappa}_h;\bm{v}_h,\bm{v}_h)&=0 , \label{d0}\\
d_h(\bm{\kappa}_h;\bm{u}_h,\bm{v}_h)&\leq \mathcal{N}_h|||\bm{\kappa}_h|||_{V} \cdot |||\bm{u}_h|||_{V}\cdot |||\bm{v}_h|||_{V}%\nonumber \\&
\lesssim |||\bm{\kappa}_h|||_{V} \cdot |||\bm{u}_h|||_{V}\cdot |||\bm{v}_h|||_{V},\label{dh1}\\
(\bm f,\bm v_{hi})&\leq \|\bm{f}\|_{*,h} |||\bm{v}_h|||_{V}\lesssim  \|\bm{f}\|_0|||\bm{v}_h|||_{V},\label{fh}
\end{align}
where $C_{\widetilde{r}}$ is the same as  in \eqref{Cr-est} and
$$\mathcal{N}_h:=\sup_{\bm 0\neq\bm{\kappa}_h,\bm{u}_h,\bm{v}_h\in\bm{V}_{0 h}}
\frac{d_h(\bm{\kappa}_h;\bm{u}_h,\bm{v}_h)}{|||\bm{\kappa}_h|||_{V}\cdot |||\bm{u}_h|||_{V}\cdot|||\bm{v}_h|||_{V} },\quad \|\bm{f}\|_{*,h}:=  \sup\limits_{\bm 0\neq \bm{v}_h\in\bm{V}_{0h}}\frac{(\bm f,\bm v_{hi})}{|||\bm{v}_h|||_{V}} .$$
\end{lemma}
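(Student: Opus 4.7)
My plan is to handle the six estimates separately, grouping the routine identities first and reserving the substantive effort for the trilinear convection bound \eqref{dh1}.

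For \eqref{a1}--\eqref{a2} I expand $a_h$ into its $\nu(\nabla_{w,l}\cdot,\nabla_{w,l}\cdot)$ and stabilization pieces, apply the Cauchy--Schwarz inequality on $\Omega$ and on $\partial\mathcal{T}_h$ respectively, and recognize the resulting factors as the two summands defining $|||\cdot|||_V^2$; putting $\bm{u}_h=\bm{v}_h$ collapses these inequalities into the equality \eqref{a2}. Estimate \eqref{c2} reads off from $c_h(\bm{v}_h;\bm{v}_h,\bm{v}_h)=\alpha\int_\Omega|\bm{v}_{hi}|^{r-2}|\bm{v}_{hi}|^{2}\,d\bm{x}$, and \eqref{d0} follows from the antisymmetry of $d_h$ in its last two arguments built into the definition.

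For \eqref{c1} I apply generalized H\"older with exponents $(r/(r-2),r,r)$ to $(|\bm{\kappa}_{hi}|^{r-2}\bm{u}_{hi},\bm{v}_{hi})$, obtaining three $L^r$ factors, each of which is then bounded by $C_{\widetilde{r}}|||\cdot|||_V$ via Lemma \ref{Lemma 2.4} at $\widetilde{r}=r$; the admissibility windows $3\le r<\infty$ ($n=2$) and $3\le r\le 6$ ($n=3$) are precisely what make this step valid. For \eqref{fh}, the first inequality is the definition of $\|\bm{f}\|_{*,h}$ as a supremum over $\bm{V}_{0h}$, and the second combines Cauchy--Schwarz on $(\bm{f},\bm{v}_{hi})$ with the embedding $\|\bm{v}_{hi}\|_0\lesssim|||\bm{v}_h|||_V$, which is Lemma \ref{Lemma 2.4} at $\widetilde{r}=2$.

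The main work is \eqref{dh1}. Applying the definition of $\nabla_{w,m}\cdot$ componentwise and using that $\bm{u}_{hb},\bm{v}_{hb}$ are single-valued on faces and vanish on $\partial\Omega$, I first rewrite
\begin{align*}
d_h(\bm{\kappa}_h;\bm{u}_h,\bm{v}_h) = {}& \tfrac{1}{2}(\bm{v}_{hi}\otimes\bm{\kappa}_{hi},\nabla_h\bm{u}_{hi}) - \tfrac{1}{2}(\bm{u}_{hi}\otimes\bm{\kappa}_{hi},\nabla_h\bm{v}_{hi}) \\
&+ \tfrac{1}{2}\langle(\bm{u}_{hb}\otimes\bm{\kappa}_{hb})\bm{n},\bm{v}_{hi}-\bm{v}_{hb}\rangle_{\partial\mathcal{T}_h} - \tfrac{1}{2}\langle(\bm{v}_{hb}\otimes\bm{\kappa}_{hb})\bm{n},\bm{u}_{hi}-\bm{u}_{hb}\rangle_{\partial\mathcal{T}_h}.
\end{align*}
The two volume integrals are bounded by H\"older with exponents $(4,4,2)$, after which Lemma \ref{Lemma 2.4} at $\widetilde{r}=4$ (admissible in both $n=2$ and $n=3$) controls the $L^4$ factors and Lemma \ref{Lemma 2.3} controls $\|\nabla_h\cdot\|_0$, yielding a bound by the product $|||\bm{\kappa}_h|||_V|||\bm{u}_h|||_V|||\bm{v}_h|||_V$. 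For the two face integrals, I factor out the stabilization weight $\eta^{1/2}=h_K^{-1/2}$ to pair against $\eta^{1/2}(\bm{v}_{hi}-\bm{v}_{hb})$ (resp.\ $\eta^{1/2}(\bm{u}_{hi}-\bm{u}_{hb})$), and use discrete trace and inverse inequalities on the polynomial traces $\bm{u}_{hb},\bm{v}_{hb},\bm{\kappa}_{hb}$ to reduce each remaining factor back to an $L^4$ norm of the corresponding interior component, again controlled by $|||\cdot|||_V$ via Lemma \ref{Lemma 2.4}. The first inequality in \eqref{dh1} then follows by the definition of $\mathcal{N}_h$ on $\bm{V}_{0h}$. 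The principal obstacle is this boundary bookkeeping for the tensor traces: one must align the H\"older exponents, the stabilization weight $h_K^{-1/2}$, and the trace/inverse inequalities so that every appearance of $\bm{u}_{hb}\otimes\bm{\kappa}_{hb}$ or $\bm{v}_{hb}\otimes\bm{\kappa}_{hb}$ is absorbed into an interior $L^4$ norm without leaving residual negative powers of $h$; once the scales balance, the rest is mechanical.
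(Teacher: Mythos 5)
Your proposal is correct, but it takes a genuinely different route from the paper: for \eqref{a1}--\eqref{dh1} the paper gives no argument at all and simply cites \cite[Lemma 3.1]{WangX.J2024Rgdw}, reserving a one-line remark (H\"older plus Lemma \ref{Lemma 2.4}) for \eqref{fh}, whereas you reconstruct the estimates from first principles. Your treatment of the routine items matches what the cited reference does: Cauchy--Schwarz termwise for \eqref{a1}--\eqref{a2}, the pointwise identity for \eqref{c2}, antisymmetry of $d_h$ in its last two slots for \eqref{d0}, and generalized H\"older with exponents $\bigl(\tfrac{r}{r-2},r,r\bigr)$ combined with \eqref{Cr-est} at $\widetilde r=r$ for \eqref{c1}, which reproduces the constant $\alpha C_{\widetilde r}^{\,r}$ exactly. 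Your handling of \eqref{dh1} is the substantive part and is sound: unwinding $\nabla_{w,m}\cdot$ and using single-valuedness of the boundary unknowns to replace $\bm v_{hi}$ by $\bm v_{hi}-\bm v_{hb}$ in the face terms is the standard manipulation, the $(4,4,2)$ H\"older split is admissible in both $n=2,3$ by Lemma \ref{Lemma 2.4}, and the powers of $h_K$ from the trace/inverse inequalities do balance against the weight $\eta^{1/2}=h_K^{-1/2}$ (the cross terms even pick up positive powers of $h_K$). The one step you should make explicit is that $\bm u_{hb}$ is not the trace of $\bm u_{hi}$, so on each $\partial K$ you must write $\bm u_{hb}=\bm u_{hi}-(\bm u_{hi}-\bm u_{hb})$ and absorb the difference into the stabilization seminorm; you allude to this but do not spell it out. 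Also note that, as stated in the paper, the first inequalities of \eqref{dh1} and \eqref{fh} only make sense for arguments in $\bm V_{0h}$ (where $\mathcal N_h$ and $\|\bm f\|_{*,h}$ are defined as suprema), a wrinkle in the statement itself rather than in your argument. What your route buys is a self-contained verification, including the explicit constant in \eqref{c1} and the proof that $\mathcal N_h\lesssim 1$; what the paper's route buys is brevity.
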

\begin{proof}
The results \eqref{a1} - \eqref{dh1} follow from \cite[Lemma 3.1]{WangX.J2024Rgdw}.
 From the definition of   $|||\cdot|||_V$,  H\"{o}lder's inequality and Lemma \ref{Lemma 2.4}, we obtain \eqref{fh}.
\end{proof}

We also have the following discrete inf-sup inequality and stability results.
 \begin{lemma}(\cite[Theorem 3.1]{CFX2016})\label{theoremLBB}
 There holds
\begin{eqnarray*}
\sup_{ \bm{v}_{h}\in \bm{V}_h^{0}}\frac{b_h(\bm{v}_{h},p_h)}{|||\bm{v}_{h}|||_{V} }\gtrsim |||p_h|||_{Q}, \quad \forall p_h\in Q_h^{0}.
\end{eqnarray*}
\end{lemma}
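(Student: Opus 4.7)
The plan is to construct, for each $p_h=\{p_{hi},p_{hb}\}\in Q_h^{0}$, a single test function $\bm{v}_h\in\bm{V}_h^{0}$ realizing $b_h(\bm{v}_h,p_h)\gtrsim |||p_h|||_Q^{2}$ together with $|||\bm{v}_h|||_V\lesssim |||p_h|||_Q$. Since
$$|||p_h|||_Q^{2}=\|p_{hi}\|_0^{2}+\sum_{K\in\mathcal{T}_h}h_K^{2}\|\nabla_{w,m}p_h\|_{0,K}^{2}$$
splits into an $L^{2}$-piece and a mesh-weighted weak-gradient piece, I would handle each ingredient with its own test function and then combine them by a Boland--Nicolaides argument.

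For the weak-gradient piece I would take $\bm{v}_h^{(2)}=\{\bm{v}_{hi}^{(2)},\bm{0}\}$ with $\bm{v}_{hi}^{(2)}|_K:=h_K^{2}\,\nabla_{w,m,K}p_h$. Since $\nabla_{w,m,K}p_h\in [P_m(K)]^{n}$ by definition and $\bm{v}_{hb}^{(2)}=\bm{0}$ vanishes on $\partial\Omega$, we have $\bm{v}_h^{(2)}\in\bm{V}_h^{0}$, and a direct computation from the definition of $b_h$ gives $b_h(\bm{v}_h^{(2)},p_h)=\sum_{K}h_K^{2}\|\nabla_{w,m}p_h\|_{0,K}^{2}$. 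The energy bound $|||\bm{v}_h^{(2)}|||_V^{2}\lesssim \sum_{K}h_K^{2}\|\nabla_{w,m}p_h\|_{0,K}^{2}$ then follows from an inverse estimate on $\bm{v}_{hi}^{(2)}$, the trace inequality for $\bm{v}_{hi}^{(2)}-\bm{v}_{hb}^{(2)}=\bm{v}_{hi}^{(2)}$ on $\partial K$, and Lemma \ref{Lemma 2.3}.

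For the $L^{2}$-piece, since $p_{hi}\in L_0^{2}(\Omega)$, the continuous Stokes inf-sup supplies $\bm{w}\in [H_0^{1}(\Omega)]^{n}$ with $\nabla\cdot\bm{w}=p_{hi}$ and $\|\bm{w}\|_1\lesssim\|p_{hi}\|_0$. I would then define $\bm{v}_h^{(1)}=F_h\bm{w}\in\bm{V}_h^{0}$ through a Fortin-type interpolant whose boundary trace $\bm{v}_{hb}^{(1)}|_e$ prescribes the normal-flux moments of $\bm{w}\cdot\bm{n}_e$ against $P_m(e)$ (in the spirit of the Raviart--Thomas projection, so that interelement pairings with $p_{hb}$ close up) and whose interior value $\bm{v}_{hi}^{(1)}|_K\in [P_m(K)]^{n}$ is obtained by an $L^{2}$ projection matching the relevant moments of $\bm{w}$. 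Expanding $b_h(\bm{v}_h^{(1)},p_h)$ through the defining identity of $\nabla_{w,m,K}p_h$ and using elementwise integration by parts, the boundary terms telescope (thanks to the RT-type trace matching and $\bm{v}_{hb}^{(1)}=\bm 0$ on $\partial\Omega$) and one is left with $b_h(\bm{v}_h^{(1)},p_h)=-(p_{hi},\nabla\cdot\bm{w})=-\|p_{hi}\|_0^{2}$, together with the stability $|||\bm{v}_h^{(1)}|||_V\lesssim\|\bm{w}\|_1\lesssim\|p_{hi}\|_0$ coming from standard interpolation estimates and Lemma \ref{Lemma 2.3}.

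I would then set $\bm{v}_h:=-\bm{v}_h^{(1)}+\delta\,\bm{v}_h^{(2)}$ with a sufficiently large constant $\delta>0$, apply Young's inequality to absorb the cross-contribution $|b_h(\bm{v}_h^{(2)},p_h)|\lesssim \|p_{hi}\|_0\bigl(\sum_{K}h_K^{2}\|\nabla_{w,m}p_h\|_{0,K}^{2}\bigr)^{1/2}$ that may appear when the $L^{2}$-part mixes with the gradient part, and conclude $b_h(\bm{v}_h,p_h)\gtrsim |||p_h|||_Q^{2}$ with $|||\bm{v}_h|||_V\lesssim |||p_h|||_Q$. The main obstacle is the Fortin construction itself: one must land in the restricted interior space $[P_m(K)]^{n}$ (which is a strict subspace of $\bm{RT}_m(K)$), preserve just enough moments of $\bm{w}$ so that the weak-gradient pairing recovers $\|p_{hi}\|_0^{2}$ exactly, and retain $H^{1}$-stability in the WG energy norm $|||\cdot|||_V$ simultaneously. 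This is precisely what is established in \cite[Theorem~3.1]{CFX2016}, whose result we invoke directly.
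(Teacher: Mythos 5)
The paper offers no proof of this lemma at all—it simply cites \cite[Theorem 3.1]{CFX2016}—and your argument, after sketching the standard two-part construction (a local test function $\{h_K^2\nabla_{w,m}p_h,\bm 0\}$ for the weighted weak-gradient term and a Fortin/RT-type interpolant of a divergence lift of $p_{hi}$ for the $L^2$ term, combined \`a la Boland--Nicolaides), ultimately defers to the same reference for the key Fortin operator. Your sketch is a correct reconstruction of the mechanism behind the cited result and is consistent with how it is proved there, so this is essentially the same approach as the paper's (one cosmetic remark: since $b_h(\bm{v}_h^{(1)},p_h)$ and $b_h(\bm{v}_h^{(2)},p_h)$ each evaluate exactly to the corresponding piece of $|||p_h|||_Q^2$, the cross term you propose to absorb by Young's inequality does not actually arise).
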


\begin{theorem} \label{Theorem 32}
Assume that $\bm{f}\in L^{2}( [ L^{2}(\Omega) ]^{n})$ and $\bm{u}_{hi}^{0}\in [ L^{2}(\Omega) ]^{n}.$
Then, for the semi-discrete scheme \eqref{WG},   the following  stability result holds
%\begin{subequations}
\begin{align}
\| \bm{u}_{hi}(t)\|_{0}^{2}+\nu\int_{0}^{t} |||\bm{u}_{h}(\tau)|||^{2}_{V}d\tau
+2\alpha\int_{0}^{t}\|\bm{u}_{hi}(\tau) \|_{0,r}^{r}d\tau%\nonumber\\&
&\lesssim \| \bm{u}_{hi}^{0}\|_{0}^{2}+\frac{1}{\nu}\int_{0}^{t}\|\bm{f}(\tau) \|_{*,h}^{2}d\tau.\label{3.25}
\end{align}
\end{theorem}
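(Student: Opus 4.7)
The plan is a standard energy estimate: test the momentum equation \eqref{WG1} with $\bm{v}_h = \bm{u}_h$, exploit the identities already collected in Lemma \ref{Lemma 3.1}, and integrate in time. Concretely, substituting $\bm{v}_h=\bm{u}_h$ into \eqref{WG1} yields
\begin{equation*}
\bigl(\tfrac{\partial\bm{u}_{hi}}{\partial t},\bm{u}_{hi}\bigr) + a_h(\bm{u}_h,\bm{u}_h) + b_h(\bm{u}_h,p_h) + c_h(\bm{u}_h;\bm{u}_h,\bm{u}_h) + d_h(\bm{u}_h;\bm{u}_h,\bm{u}_h) = (\bm{f},\bm{u}_{hi}).
\end{equation*}
The first term becomes $\tfrac{1}{2}\tfrac{d}{dt}\|\bm{u}_{hi}\|_0^2$. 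By \eqref{a2} we have $a_h(\bm{u}_h,\bm{u}_h)=\nu |||\bm{u}_h|||_V^2$; by \eqref{c2}, $c_h(\bm{u}_h;\bm{u}_h,\bm{u}_h)=\alpha\|\bm{u}_{hi}\|_{0,r}^r$; and by \eqref{d0}, $d_h(\bm{u}_h;\bm{u}_h,\bm{u}_h)=0$.

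The only nonroutine point is the pressure term. Taking $q_h=p_h\in Q_h^0$ in \eqref{WG2} gives $b_h(\bm{u}_h,p_h)=0$, so it drops out. As a consequence $\bm{u}_h(t)\in\bm{V}_{0h}$, which is precisely what is needed to apply \eqref{fh} for the right-hand side:
\begin{equation*}
(\bm{f},\bm{u}_{hi}) \;\le\; \|\bm{f}\|_{*,h}\,|||\bm{u}_h|||_V \;\le\; \tfrac{1}{2\nu}\|\bm{f}\|_{*,h}^2 + \tfrac{\nu}{2}|||\bm{u}_h|||_V^2
\end{equation*}
by Young's inequality.

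Combining these observations, we arrive at
\begin{equation*}
\tfrac{1}{2}\tfrac{d}{dt}\|\bm{u}_{hi}\|_0^2 + \tfrac{\nu}{2}|||\bm{u}_h|||_V^2 + \alpha\|\bm{u}_{hi}\|_{0,r}^r \;\le\; \tfrac{1}{2\nu}\|\bm{f}\|_{*,h}^2.
\end{equation*}
Integrating from $0$ to $t$ and multiplying by $2$ delivers the claimed bound \eqref{3.25}, with the initial value $\bm{u}_{hi}(0)=\bm{u}_{hi}^0$ producing the first term on the right. There is no real obstacle; the only delicate bookkeeping is recognizing that one must invoke \eqref{WG2} (rather than the divergence-free property at the continuous level) to kill $b_h(\bm{u}_h,p_h)$ and to justify using the discrete dual norm $\|\bm{f}\|_{*,h}$ from Lemma \ref{Lemma 3.1}. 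If one instead bounded $(\bm{f},\bm{u}_{hi})$ by $\|\bm{f}\|_0\|\bm{u}_{hi}\|_0$, one would get a weaker estimate with $\|\bm{f}\|_0^2$ in place of $\|\bm{f}\|_{*,h}^2$, so using the sharper bound is essential to obtain \eqref{3.25} as stated.
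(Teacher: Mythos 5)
Your proof is correct and follows essentially the same route as the paper: test \eqref{WG} with $(\bm{v}_h,q_h)=(\bm{u}_h,p_h)$, use \eqref{a2}, \eqref{c2}, \eqref{d0} and \eqref{fh} together with Young's inequality, and integrate in time. The extra remarks about why $b_h(\bm{u}_h,p_h)$ vanishes and why $\bm{u}_h\in\bm{V}_{0h}$ justifies the dual norm $\|\bm{f}\|_{*,h}$ are implicit in the paper's one-line argument, so there is no substantive difference.
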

\begin{proof}
Taking $(\bm{v}_{h},q_{h})=(\bm{u}_{h},p_{h})$ in \eqref{WG} and utilizing  H\"{o}lder's inequality, we gain
\begin{align}
&\frac{1}{2}\frac{d}{dt}\| \bm{u}_{hi}(t)\|_{0}^{2}+\nu |||\bm{u}_{h}(t)|||^{2}_{V}+\alpha\|\bm{u}_{hi}(t) \|_{0,r}^{r}%\nonumber\\&
=(\bm{f},\bm{u}_{hi}(t))
\leq \frac{1}{2\nu}\|\bm{f} \|_{*,h}^{2}+\frac{\nu}{2}|||\bm{u}_{h}(t)|||^{2}_{V}.\label{3.27}
\end{align}
Integrating the inequality \eqref{3.27} with respect to $t$, we obtain \eqref{3.25}.
\end{proof}
%\section{Existence and uniqueness of the semi-discrete solution}
\subsection{Existence and uniqueness }

To obtain the existence and uniqueness of the semi-discrete solution for the scheme \eqref{WG}, we first introduce the following lemma.
\begin{lemma}(cf.\cite[Theorem 3.2]{TeschlGerald2012Odea}) \label{Lemma 41}
For the initial value problem
\begin{align}
\bm{y}^{'}(t)=f(t,\bm{y}(t)), \quad \bm{y}(t_{0})=\bm{y}_{0},
\end{align}
assume that $f:\Omega\times[0,T]\rightarrow \mathbb{R}^{n}$ is continuous and that $f (t, \bm{y})$ is  locally Lipschitz continuous in $\bm{y}$. Then there exists  $\epsilon>0$ such that for every $(t_{0}, \bm{y}_{0})$ the initial problem has a unique local solution defined on the interval $|t-t_{0}|<\epsilon$.
\end{lemma}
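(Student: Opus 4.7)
The plan is to recast the IVP as an integral equation and apply the Banach fixed-point theorem (the standard Picard--Lindel\"of argument). Observe that $\bm{y}\in C^1(I)$ solves $\bm{y}'=f(t,\bm{y})$ with $\bm{y}(t_0)=\bm{y}_0$ on an interval $I\ni t_0$ if and only if $\bm{y}\in C(I)$ satisfies the Volterra-type fixed-point equation
\begin{align*}
\bm{y}(t) = \bm{y}_0 + \int_{t_0}^t f(s,\bm{y}(s))\,ds,\quad t\in I.
\end{align*}
Local existence and uniqueness thus reduce to producing a unique fixed point of the induced Picard operator on a suitable complete metric space.

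Next, I would exploit the local hypotheses: since $f$ is continuous and locally Lipschitz in $\bm{y}$, one can choose $a,b>0$ such that on the closed cylinder $R:=\{(t,\bm{y}):|t-t_0|\le a,\ \|\bm{y}-\bm{y}_0\|\le b\}$ one has $\|f\|\le M$ for some $M<\infty$ (by continuity on a compact set) and the Lipschitz bound $\|f(t,\bm{y}_1)-f(t,\bm{y}_2)\|\le L\|\bm{y}_1-\bm{y}_2\|$ for $(t,\bm{y}_j)\in R$. Then I would fix
\begin{align*}
\epsilon \in \bigl(0,\ \min\{a,\ b/M,\ 1/(2L)\}\bigr)
\end{align*}
and work on $I_\epsilon:=[t_0-\epsilon,t_0+\epsilon]$, considering the closed ball
\begin{align*}
\mathcal{B}:=\{\bm{\phi}\in C(I_\epsilon;\mathbb{R}^n):\|\bm{\phi}-\bm{y}_0\|_{L^\infty(I_\epsilon)}\le b\}
\end{align*}
inside the Banach space $(C(I_\epsilon;\mathbb{R}^n),\|\cdot\|_{L^\infty})$, together with the Picard map $(T\bm{\phi})(t):=\bm{y}_0+\int_{t_0}^t f(s,\bm{\phi}(s))\,ds$.

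The remaining two verifications are routine with this choice of $\epsilon$: (i) \emph{self-mapping}, $\|T\bm{\phi}-\bm{y}_0\|_{L^\infty}\le M\epsilon\le b$, so $T(\mathcal{B})\subset\mathcal{B}$; and (ii) \emph{contractivity}, $\|T\bm{\phi}-T\bm{\psi}\|_{L^\infty}\le L\epsilon\|\bm{\phi}-\bm{\psi}\|_{L^\infty}\le \tfrac{1}{2}\|\bm{\phi}-\bm{\psi}\|_{L^\infty}$ for all $\bm{\phi},\bm{\psi}\in\mathcal{B}$. Banach's fixed-point theorem then furnishes a unique $\bm{y}\in\mathcal{B}$ with $T\bm{y}=\bm{y}$; by the fundamental theorem of calculus this $\bm{y}$ is $C^1$ and is the unique local solution on $I_\epsilon$.

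The main (quite mild) obstacle is just the balancing act in the choice of $\epsilon$: one needs it simultaneously small enough to keep the Picard iterates inside the cylinder $R$ (so that the local bound $M$ and Lipschitz constant $L$ actually apply), and to guarantee strict contractivity. Uniqueness \emph{within} $\mathcal{B}$ is automatic from Banach's theorem; to upgrade to uniqueness among \emph{all} $C^1$ solutions on $I_\epsilon$ (not just those in $\mathcal{B}$), one invokes a short Gr\"onwall argument on the difference of two solutions, or equivalently shrinks $\epsilon$ further so that any $C^1$ solution necessarily stays in $\mathcal{B}$ by continuity.
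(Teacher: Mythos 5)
Your proposal is the standard Picard--Lindel\"of contraction-mapping argument, which is correct and is essentially the proof given in the cited reference (Teschl); the paper itself states this lemma as a classical result and does not reprove it. Your handling of the choice of $\epsilon$ and the upgrade from uniqueness in $\mathcal{B}$ to uniqueness among all $C^1$ solutions is sound, so there is nothing to add.
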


%\subsection{Existence result}
To prove the existence and uniqueness to the scheme \eqref{WG}, we introduce the following auxiliary system: find $\bm{u}_h\in \bm{V}_{0h}$ such that
\begin{align}\label{wg1}
		(\frac{\partial\bm{u}_{hi}}{\partial t},\bm{v}_{hi})+a_h(\bm{u}_h,\bm{v}_h)+c_h(\bm{u}_h ;\bm{u}_h,\bm{v}_h )
		+d_h(\bm{u}_h ;\bm{u}_h,\bm{v}_h )=&(\bm{f},\bm{v}_{hi}), \quad\forall \bm{v}_h\in \bm{V}_{0h},
\end{align}

then we have the following equivalent result.
\begin{lemma} \label{equivalent2}
The semi-discrete problems \eqref{WG} and \eqref{wg1} are equivalent in the sense that both $(i)$  and $(ii)$  hold:\\
$(i)$ If $(\bm{u}_h,p_h)\in \bm{V}_h^{0}\times Q_h^{0}$ solves \eqref{WG}, then $\bm{u}_h\in \bm{V}_{0h}$ solves  \eqref{wg1};\\
$(ii)$ If $\bm{u}_h\in \bm{V}_{0h}$ solves  \eqref{wg1}, then  $(\bm{u}_h,p_h)$ solves \eqref{WG}, where $p_h\in Q_h^{0} $ is determined by
\begin{align*}
		b_h(\bm{v}_{h},p_h)=&(\bm{f},\bm{v}_{hi})-(\frac{\partial\bm{u}_{hi}}{\partial t},\bm{v}_{hi})-a_{h}(\bm{u}_h, \bm{v}_h )%\nonumber\\&
		-c_h(\bm{u}_h ;\bm{u}_h ,\bm{v}_h)-d_h(\bm{u}_h ;\bm{u}_h ,\bm{v}_h), \quad\forall \bm{v}_{h}\in \bm{V}_h^{0}.
	\end{align*}
\end{lemma}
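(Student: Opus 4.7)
The plan is to treat the two directions (i) and (ii) separately. Direction (i) follows immediately from the defining condition of $\bm{V}_{0h}$ in \eqref{V0h}; direction (ii) is the substantive part and relies on the discrete inf-sup inequality of Lemma \ref{theoremLBB} via a standard closed-range argument for saddle-point systems.

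For (i), I would take any solution $(\bm{u}_h, p_h) \in \bm{V}_h^0 \times Q_h^0$ of \eqref{WG}. Equation \eqref{WG2} is precisely the membership condition for $\bm{V}_{0h}$ in \eqref{V0h}, so $\bm{u}_h \in \bm{V}_{0h}$. Restricting \eqref{WG1} to test functions $\bm{v}_h \in \bm{V}_{0h} \subset \bm{V}_h^0$ and invoking $b_h(\bm{v}_h, p_h) = 0$ for such $\bm{v}_h$ by the very definition of $\bm{V}_{0h}$, what remains is exactly \eqref{wg1}.

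For (ii), assume $\bm{u}_h \in \bm{V}_{0h}$ solves \eqref{wg1}. Then \eqref{WG2} holds automatically. To produce the pressure $p_h$, I would consider the linear functional
$$F(\bm{v}_h) := (\bm{f}, \bm{v}_{hi}) - (\tfrac{\partial \bm{u}_{hi}}{\partial t}, \bm{v}_{hi}) - a_h(\bm{u}_h, \bm{v}_h) - c_h(\bm{u}_h; \bm{u}_h, \bm{v}_h) - d_h(\bm{u}_h; \bm{u}_h, \bm{v}_h)$$
on $\bm{V}_h^0$. By \eqref{wg1}, $F$ vanishes on $\bm{V}_{0h}$, i.e., it lies in the annihilator of $\bm{V}_{0h}$ inside the dual of $\bm{V}_h^0$. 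The discrete inf-sup inequality of Lemma \ref{theoremLBB} implies that the finite-dimensional operator $B : Q_h^0 \to (\bm{V}_h^0)^*$ defined by $(B q_h)(\bm{v}_h) := b_h(\bm{v}_h, q_h)$ is injective, and by a standard finite-dimensional duality argument its range equals this annihilator. Consequently a unique $p_h \in Q_h^0$ exists with $b_h(\bm{v}_h, p_h) = F(\bm{v}_h)$ for all $\bm{v}_h \in \bm{V}_h^0$; this is exactly \eqref{WG1} with the $p_h$ prescribed in the statement.

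No serious obstacle is expected: the nonlinear terms $c_h$ and $d_h$ play no essential role here, since they are simply absorbed into the right-hand side $F$, and no time-stepping or regularity issues enter. The only mildly delicate point is the identification of the range of $B$ with the annihilator of $\bm{V}_{0h}$, but this is a standard finite-dimensional consequence of the inf-sup condition and can be invoked without further work.
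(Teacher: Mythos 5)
Your proof is correct and follows exactly the route the paper intends: the paper states this lemma without an explicit proof, but in the analogous fully discrete setting (Lemma \ref{EQR} and the end of the proof of Theorem \ref{TH5.2}) the authors invoke precisely the same mechanism, namely that \eqref{WG2} is the membership condition for $\bm{V}_{0h}$ and that the discrete inf-sup condition of Lemma \ref{theoremLBB} yields a unique $p_h\in Q_h^{0}$ representing the residual functional, which vanishes on $\bm{V}_{0h}$. No gaps; the finite-dimensional identification of the range of the pressure operator with the annihilator of $\bm{V}_{0h}$ is standard and correctly invoked.
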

According to  Lemma \ref{equivalent2}, we only need to take into consideration the system \eqref{wg1} for the existence
and uniqueness of the semi-discrete WG solution to the system \eqref{WG}.

Taking $\bm{v}_{h}=\{\bm{v}_{hi},\bm{0}\}\in \bm{V}_{0h}$ in \eqref{wg1}, we derive
\begin{align}\label{4.3}
&(\frac{\partial\bm{u}_{hi}}{\partial t},\bm{v}_{hi})-\nu ( \nabla\cdot\nabla _{w,l}\bm{u}_{h},\bm{v}_{hi} )
+\nu \langle \eta ( \bm{u}_{hi}-\bm{u}_{hb}), \bm{v}_{hi}\rangle_{\partial\mathcal{T}_{h}}
+\alpha (|\bm{u}_{hi}|^{r-2}\bm{u}_{hi},\bm{v}_{hi} )\nonumber\\
&-\frac{1}{2}(\bm{u}_{hi}\otimes \bm{u}_{hi},\nabla_{h} \bm{v}_{hi})
+\frac{1}{2}\langle(\bm{u}_{hb}\otimes \bm{u}_{hi})\bm{n}_{K},\bm{v}_{hi}\rangle_{\partial\mathcal{T}_{h}}
+\frac{1}{2}(\bm{v}_{hi}\otimes \bm{u}_{hi},\nabla_{h} \bm{u}_{hi})
=( \bm{f},\bm{v}_{hi}).
\end{align}
And then, taking $\bm{v}_{h}=\{\bm{0},\bm{v}_{hb}\}\in \bm{V}_{0h}$ in \eqref{wg1}, we gain
\begin{align}\label{4.4}
\nu \langle \nabla _{w,l}\bm{u}_{h}\bm{n}_{K},\bm{v}_{hb} \rangle_{\partial\mathcal{T}_{h}}
-\nu \langle \eta ( \bm{u}_{hi}-\bm{u}_{hb}), \bm{v}_{hb}\rangle_{\partial\mathcal{T}_{h}}
-\frac{1}{2}\langle(\bm{v}_{hb}\otimes \bm{u}_{hi})\bm{n}_{K},\bm{u}_{hi}\rangle_{\partial\mathcal{T}_{h}}=0.
\end{align}

\begin{theorem} \label{TH4.1}
Assume that $\bm{f}(t,\cdot)$ is continuous with respect to $t$, then there exists  a $\hat{t}\in (0,T]$
such that the  system \eqref{wg1} admits a unique solution $\bm{u}_h\in \bm{V}_{0h}$ for all $t\in (0,\hat{t}]$.
\end{theorem}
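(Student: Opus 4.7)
The plan is to recast the finite-dimensional problem \eqref{wg1} as a standard first-order ODE in the coefficient vector of $\bm u_{hi}$ and then to invoke the Picard-Lindel\"of type result in Lemma \ref{Lemma 41}. Since the time derivative in \eqref{wg1} acts only on the interior component $\bm u_{hi}$, whereas the unknown $\bm u_h = \{\bm u_{hi}, \bm u_{hb}\}$ ranges over $\bm V_{0h}$, the raw system is of differential-algebraic type, and its reduction to a genuine ODE will proceed in three steps.

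First, I would exploit equation \eqref{4.4}, obtained by testing \eqref{wg1} with $\bm v_h = \{\bm 0, \bm v_{hb}\}$ (which always lies in $\bm V_{0h}$, since $b_h(\{\bm 0, \bm v_{hb}\}, q_h) = (\nabla_{w,m} q_h, \bm 0) = 0$). Integration by parts against the definition of $\nabla_{w,l}$ shows that the portion of \eqref{4.4} linear in $\bm u_{hb}$ assembles into the symmetric bilinear form $a_h(\{\bm 0, \bm u_{hb}\}, \{\bm 0, \bm v_{hb}\})$, which on the diagonal equals $\nu |||\{\bm 0, \bm u_{hb}\}|||_V^2$. Because $|||\cdot|||_V$ is a norm on $\bm V_h^0$, this quantity is strictly positive whenever $\bm u_{hb} \neq \bm 0$, so the underlying finite-dimensional linear operator is invertible. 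Hence \eqref{4.4} can be uniquely solved for $\bm u_{hb} = F(\bm u_{hi})$, where $F$ is an explicit map that is affine plus quadratic in $\bm u_{hi}$ (the quadratic piece originating from the $\bm u_{hi}\otimes\bm u_{hi}$ contribution), and in particular smooth and locally Lipschitz on bounded sets.

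Next, substituting $\bm u_{hb} = F(\bm u_{hi})$ into equation \eqref{4.3} (obtained by testing \eqref{wg1} with $\{\bm v_{hi}, \bm 0\} \in \bm V_{0h}$) yields a closed ODE $M_i \bm y'(t) = g(t, \bm y(t))$ for the coefficient vector $\bm y$ of $\bm u_{hi}$ in a basis of the constrained interior subspace $\{\bm v_{hi}:\{\bm v_{hi}, \bm 0\} \in \bm V_{0h}\}$, where $M_i$ is the associated positive-definite $L^2$ mass matrix. Inverting $M_i$ gives the canonical form $\bm y'(t) = M_i^{-1} g(t, \bm y(t))$. To apply Lemma \ref{Lemma 41} I would then verify its hypotheses: continuity in $t$ follows from the assumed continuity of $\bm f(t, \cdot)$, while local Lipschitz continuity in $\bm y$ follows from the linearity of $a_h$, the polynomial (trilinear) nature of $d_h$ and of $F$, and the local Lipschitz estimate $\bigl||\lambda|^{r-2}\lambda - |\mu|^{r-2}\mu\bigr| \le C_r(|\lambda|+|\mu|)^{r-2}|\lambda-\mu|$ from Lemma \ref{Lemma 2.9} that controls the Forchheimer nonlinearity $c_h$. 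Lemma \ref{Lemma 41} then furnishes $\hat t > 0$ and a unique $\bm y \in C^1([0,\hat t])$, from which $\bm u_h(t) = \{\bm u_{hi}(t), F(\bm u_{hi}(t))\}$ recovers the unique solution of \eqref{wg1} on $(0,\hat t]$.

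The principal obstacle is the DAE-to-ODE reduction in the first step: one must recognize that the $\bm u_{hb}$-linear portion of the algebraic constraint \eqref{4.4} coincides with the coercive form $a_h(\{\bm 0,\cdot\}, \{\bm 0, \cdot\})$, so that the trace unknown can be eliminated in favor of the interior unknown before Lemma \ref{Lemma 41} is invoked. Once this identification is made, the remaining verifications are standard polynomial and Nemytskii-type Lipschitz estimates in finite dimensions.
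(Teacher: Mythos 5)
Your proposal is correct and follows essentially the same route as the paper: reduce the differential--algebraic system to a genuine ODE for the interior unknown by eliminating the trace unknown through the coercive (hence invertible) algebraic constraint coming from test functions $\{\bm 0,\bm v_{hb}\}$, and then apply the Picard--Lindel\"of result of Lemma \ref{Lemma 41} using local Lipschitz continuity of the polynomial and Forchheimer nonlinearities. The paper carries out the same elimination in explicit matrix form (also treating the weak-gradient coefficients as an auxiliary unknown eliminated via the SPD matrices $\mathcal{R}_0$ and $\mathcal{R}_8$), so the two arguments coincide in substance.
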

\begin{proof}
Let $\bm{\Phi}:=\{\bm{\Phi}_{i},\bm{\Phi}_{b}\}$ and $\bm{\Phi}_{w}$ be  the bases of $\bm{V}_{h}|_{K}$ and $[P_{l}(K)]^{n}$, respectively, with
\begin{align}
\bm{\Phi}_{i}=\{\bm{\Phi}_{i1},...,\bm{\Phi}_{il_{1}}\},\quad
\bm{\Phi}_{b}=\{\bm{\Phi}_{b1},...,\bm{\Phi}_{bl_{2}}\},\quad
\bm{\Phi}_{w}=\{\bm{\Phi}_{w1},...,\bm{\Phi}_{wl_{3}}\}.\
\end{align}
Let $\bm{u}_{h}(t)|_{K}:=\{\bm{u}_{hi}(t)|_{K} ,\bm{u}_{hb}(t)|_{\partial K}\}=\{\bm{\Phi}_{i}\bm{\beta}_{i}(t),\bm{\Phi}_{b}\bm{\beta}_{b}(t) \}$ and $\big( \nabla_{w,l}u_{h}(t)\big)|_{K}:=\bm{\Phi}_{w}\bm{\beta}_{w}(t)$ with
 \begin{align}
 \bm{u}_{hi}(t)=&(u_{hi,1}(t),...,u_{hi,n}(t))^{T},\quad
  \bm{u}_{hb}(t)=(u_{hb,1}(t),...,u_{hb,n}(t))^{T},\nonumber\\
\bm{\beta}_{i}(t)=&(\beta_{i1}(t),...,\beta_{il_{1}}(t) )^{T},\quad\quad
 \bm{\beta}_{b}(t)=(\beta_{b1}(t),...,\beta_{bl_{2}}(t) )^{T},\nonumber\\
 \bm{\beta}_{w}(t)=&(\beta_{w1}(t),...,\beta_{wl_{3}}(t) )^{T},\quad
 |\bm{u}_{hi}(t)|=(u_{hi,1}^{2}(t)+...+u_{hi,n}^{2}(t) )^{1/2}.
\end{align}

Denote
\begin{align*}
\mathcal{R}_{0}&=\sum_{K\in\mathcal{T}_{h}}\int_{K}\bm{\Phi}_{w}^{T}\bm{\Phi}_{w} dx,\quad\quad\quad\quad
\mathcal{R}_{1}=\sum_{K\in\mathcal{T}_{h}}\int_{K}(\nabla\cdot\bm{\Phi}_{w})^{T}\bm{\Phi}_{i} dx,\\
\mathcal{R}_{2}&=\sum_{K\in\mathcal{T}_{h}}\int_{\partial K}(\bm{\Phi}_{w}\bm{n}_{e})^{T}\bm{\Phi}_{b}ds,\quad\quad
\mathcal{R}_{3}=\sum_{K\in\mathcal{T}_{h}}\int_{K}\bm{\Phi}_{i}^{T}\bm{\Phi}_{i} dx,\\
\mathcal{R}_{4}&=\sum_{K\in\mathcal{T}_{h}}\int_{\partial K}\eta\bm{\Phi}_{i}^{T}\bm{\Phi}_{i}ds,\quad\quad\quad\quad
\mathcal{R}_{5}=\sum_{K\in\mathcal{T}_{h}}\int_{\partial K}\eta\bm{\Phi}_{i}^{T}\bm{\Phi}_{b}ds,\\
\mathcal{R}_{6}&=\sum_{K\in\mathcal{T}_{h}}\int_{K}(\nabla\bm{\Phi}_{i})^{T}A_{u,\Phi} dx,\quad\quad
\mathcal{R}_{7}=\sum_{K\in\mathcal{T}_{h}}\int_{\partial K}\bm{\Phi}_{i}^{T}\widehat{A}_{u,\Phi} ds,\\
\mathcal{R}_{8}&=\sum_{K\in\mathcal{T}_{h}}\int_{\partial K}\eta\bm{\Phi}_{b}^{T}\bm{\Phi}_{b}ds,\quad\quad\quad\quad
\mathcal{R}_{9}=\sum_{K\in\mathcal{T}_{h}}\int_{K}|\bm{u}_{hi}(t)|^{r-2} \bm{\Phi}_{i}^{T}\bm{\Phi}_{i} dx,\\%\widetilde{ A}_{u,\Phi} dx,\\
F(t)&=\sum_{K\in\mathcal{T}_{h}}\int_{ K}\bm{\Phi}_{i}^{T} \bm{f}(t)dx,
\end{align*}
where, $\bm{n}_{e}=(\widehat{n}_1,\widehat{n}_2,...,\widehat{n}_n)^{T}$,
\begin{align*}
A_{u,\Phi}=&
\begin{pmatrix}%{ccc}
 \bm u_{hi,1}\bm{\Phi}_{i1}& \cdots&\bm u_{hi,1}\bm{\Phi}_{i l_{1}}\\
 \vdots                & \ddots& \vdots\\
 \bm u_{hi,n}\bm{\Phi}_{i1}& \cdots&\bm u_{hi,n}\bm{\Phi}_{i l_{1}}\\
 \end{pmatrix}
 ,\quad
 % \quad\text{in}\quad \mathcal{R}_{6},\\
 \widehat{A}_{u,\Phi}=
\begin{pmatrix}%{ccc}
 \bm u_{hi,1}\bm{\Phi}_{b1}\widehat{n}_1& \cdots&\bm u_{hi,1}\bm{\Phi}_{i l_{2}}\widehat{n}_1\\
 \vdots                & \ddots& \vdots\\
 \bm u_{hi,n}\bm{\Phi}_{b1}\widehat{n}_n& \cdots&\bm u_{hi,n}\bm{\Phi}_{i l_{2}}\widehat{n}_n\\
 \end{pmatrix}
 ,
 % \quad\text{in}\quad \mathcal{R}_{7},
\end{align*}
and we recall that $u_{hi}=(u_{hi,1},...,u_{hi,n})^{T}=\bm{\Phi}_{i}\bm{\beta}_{i}(t).$
Then, \eqref{weak gradient}, \eqref{4.3} and \eqref{4.4} can be written as
\begin{subequations}\label{R}
\begin{align}
\mathcal{R}_{0}\widehat{\bm{\beta}}_{w}(t)+\mathcal{R}_{1}\widehat{\bm{\beta}}_{i}(t)-\mathcal{R}_{2}\widehat{\bm{\beta}}_{b}(t)=&0,\\
\mathcal{R}_{3}\frac{d \widehat{\bm{\beta}}_{i}(t)}{dt}-\nu\mathcal{R}_{1}^{T}\widehat{\bm{\beta}}_{w}(t)
+\big(\nu\mathcal{R}_{4}+\alpha\mathcal{R}_{9}-\frac{1}{2}(\mathcal{R}_{6}-\mathcal{R}_{6}^{T} )\big)\widehat{\bm{\beta}}_{i}(t)%&\nonumber\\
+(-\nu\mathcal{R}_{5}+\frac{1}{2}\mathcal{R}_{7})\widehat{\bm{\beta}}_{b}(t)=&F(t),\\
\nu \mathcal{R}_{2}^{T}\widehat{\bm{\beta}}_{w}(t)-(\nu\mathcal{R}_{5}^{T}+\frac{1}{2}\mathcal{R}_{7}^{T})\widehat{\bm{\beta}}_{i}(t)
+\nu\mathcal{R}_{8}\widehat{\bm{\beta}}_{b}(t)=&0,
\end{align}
\end{subequations}
where, $\widehat{\bm{\beta}}_{w}(t)|_{K}=\bm{\beta}_{w}(t)$, $\widehat{\bm{\beta}}_{i}(t)|_{K}=\bm{\beta}_{i}(t)$,  $\widehat{\bm{\beta}}_{b}(t)|_{K}=\bm{\beta}_{b}(t)$ and $\widehat{\bm{\beta}}(t)|_{K}=\bm{\beta}(t)$.
Since $\mathcal{R}_{0},\mathcal{R}_{3},\mathcal{R}_{4}$ and $\mathcal{R}_{8}$ are symmetric positive defined, we can eliminate $\widehat{\bm{\beta}}_{i}(t)$ and $\widehat{\bm{\beta}}_{b}(t)$ from \eqref{R} to derive
\begin{align}\label{4.8}
\mathcal{R}_{3}\frac{d \widehat{\bm{\beta}}_{i}(t)}{dt}+\widehat{\mathcal{R}}\big(\widehat{\bm{\beta}}_{i}(t)\big)\widehat{\bm{\beta}}_{i}(t)=F(t).
\end{align}
Here,
\begin{align*}
\widehat{\mathcal{R}}(\widehat{\bm{\beta}}_{i}(t))=&-\big(\nu \mathcal{R}_{1}^{T}+(-\nu\mathcal{R}_{5}+\frac{1}{2}\mathcal{R}_{7})\mathcal{R}_{8}^{-1}\mathcal{R}_{2}^{T}\big)
(\mathcal{R}_{0}+\mathcal{R}_{2}\mathcal{R}_{8}^{-1}\mathcal{R}_{2}^{T} )^{-1}%\nonumber\\&
\big(-\mathcal{R}_{1}+\mathcal{R}_{2}\mathcal{R}_{8}^{-1}(\mathcal{R}_{5}^{T}+\frac{\mathcal{R}_{7}^{T}}{2\nu} )\big)\nonumber\\&
+\big(\nu\mathcal{R}_{4}+\alpha\mathcal{R}_{9}-\frac{1}{2}(\mathcal{R}_{6}-\mathcal{R}_{6}^{T} )\big)%\nonumber\\&
+(-\nu\mathcal{R}_{5}+\frac{1}{2}\mathcal{R}_{7})\mathcal{R}_{8}^{-1}(\mathcal{R}_{5}^{T}+\frac{\mathcal{R}_{7}^{T}}{2\nu} ).
\end{align*}
Notice that $\mathcal{R}_{0}$ and $\mathcal{R}_{8}$ are symmetric positive defined. Thus, $\mathcal{R}_{8}$ is invertible, as well as, $\mathcal{R}_{0}+\mathcal{R}_{2}\mathcal{R}_{8}^{-1}\mathcal{R}_{2}^{T}$ are symmetric defined, which imply invertible.
In addition, $\mathcal{R}_{6}$, $\mathcal{R}_{7}$ and $\mathcal{R}_{9}$ rely on $\widehat{\bm{\beta}}_{i}$.
According to  the stability result in Theorem \ref{Theorem 32}, we can gain that
 $\mathcal{R}_{6}=\mathcal{R}_{6}(\widehat{\bm{\beta}}_{i}),%\quad
 \mathcal{R}_{7}=\mathcal{R}_{7}(\widehat{\bm{\beta}}_{i}),
 \mathcal{R}_{9}=\mathcal{R}_{9}(\widehat{\bm{\beta}}_{i})$
are bounded in $[0,T]$, which imply that $ \widehat{\mathcal{R}}\big(\widehat{\bm{\beta}}_{i}(t)\big)\widehat{\bm{\beta}}_{i}(t)$ is globally Lipschitz continuous with respect to $\widehat{\bm{\beta}}_{i}$.
 In addition, $\mathcal{R}_{3}$ is symmetric positive defined. From  Lemma \ref{Lemma 41}, it follows that there exists $\hat{t}\in (0,T]$ such that the system \eqref{4.8} exists a unique solution $\widehat{\bm{\beta}}_{i}$ on $(0,\hat{t}].$ And the system \eqref{R}  has one unique solution $(\widehat{\bm{\beta}}_{i}(t),\widehat{\bm{\beta}}_{b}(t),\widehat{\bm{\beta}}_{w}(t))$,
which implies that the existence and uniqueness of $\bm{u}_{h}$ in \eqref{wg1} on $(0,\hat{t}].$
we complete this proof.
\end{proof}

Combining the stability result \eqref{3.25}, the ordinary differential equations theory, Lemmas \ref{theoremLBB} and \ref{equivalent2}, and Theorem \ref{TH4.1},  we obtain the following existence and uniqueness results.
\begin{theorem} \label{TH4.2}
Under the same conditions as in Theorem \ref{TH4.1}, for any $t\in(0,T],$ the semi-discrete WG scheme \eqref{WG}  has a unique solution $(\bm{u}_h,p_h)\in \bm{V}_h^{0}\times Q_h^{0}$.
\end{theorem}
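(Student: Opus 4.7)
The plan is to bootstrap from the local-in-time existence result of Theorem \ref{TH4.1} to global existence on $(0,T]$ by exploiting the a priori bound \eqref{3.25}, and then to reconstruct the pressure via the discrete inf-sup condition. By Lemma \ref{equivalent2}, it suffices to show that the reduced system \eqref{wg1} on the divergence-free subspace $\bm{V}_{0h}$ admits a unique solution $\bm{u}_h$ on the whole interval $(0,T]$; the pressure $p_h\in Q_h^0$ is then recovered from the right-hand side formula in part $(ii)$ of that lemma.

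For the global existence of $\bm{u}_h$, I would invoke the standard ODE continuation principle. Let $t^{*}\in(0,T]$ be the maximal time on which the unique local solution produced by Theorem \ref{TH4.1} can be extended. If $t^{*}<T$, then by the continuation form of the ODE theory underlying Lemma \ref{Lemma 41}, the coefficient vector $\widehat{\bm{\beta}}_{i}(t)$ of $\bm{u}_{hi}(t)$ in the system \eqref{4.8} must blow up as $t\to t^{*-}$. However, Theorem \ref{Theorem 32} provides
\[
\|\bm{u}_{hi}(t)\|_0^2+\nu\int_0^t|||\bm{u}_h(\tau)|||_V^2\,d\tau \lesssim \|\bm{u}_{hi}^{0}\|_0^2+\frac{1}{\nu}\int_0^T\|\bm{f}(\tau)\|_{*,h}^2\,d\tau,
\]
uniformly in $t\in[0,t^{*})$. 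Since $\bm{V}_{0h}$ is finite-dimensional for fixed $h$, the $L^2$-norm $\|\cdot\|_0$ is equivalent to the Euclidean norm on the coefficient vector, so $\widehat{\bm{\beta}}_{i}$ remains bounded on $[0,t^{*})$. This contradicts blow-up, forcing $t^{*}=T$. Uniqueness on $(0,T]$ then follows from the local uniqueness asserted by Theorem \ref{TH4.1} together with the usual connectedness argument (the set on which two purported solutions coincide is both open and closed in $(0,T]$).

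Finally, to produce $p_h$, I would define the linear functional $\ell:\bm{V}_h^{0}\to\mathbb{R}$ by
\[
\ell(\bm{v}_h):=(\bm{f},\bm{v}_{hi})-\Bigl(\tfrac{\partial\bm{u}_{hi}}{\partial t},\bm{v}_{hi}\Bigr)-a_h(\bm{u}_h,\bm{v}_h)-c_h(\bm{u}_h;\bm{u}_h,\bm{v}_h)-d_h(\bm{u}_h;\bm{u}_h,\bm{v}_h).
\]
Equation \eqref{wg1} shows that $\ell$ vanishes on $\bm{V}_{0h}$, i.e.\ on the kernel of $b_h(\cdot,\cdot)$. The discrete inf-sup inequality of Lemma \ref{theoremLBB} then yields a unique $p_h\in Q_h^{0}$ with $b_h(\bm{v}_h,p_h)=\ell(\bm{v}_h)$ for all $\bm{v}_h\in\bm{V}_h^{0}$, and part $(ii)$ of Lemma \ref{equivalent2} certifies that the pair $(\bm{u}_h,p_h)$ solves \eqref{WG}.

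The main obstacle is the second step: turning local existence into global existence requires verifying that the a priori estimate \eqref{3.25} transfers to a uniform bound on the ODE coefficient vector $\widehat{\bm{\beta}}_i$, thereby ruling out finite-time blow-up of \eqref{4.8}. This is essentially automatic in finite dimensions, but it is the load-bearing step: without the nonlinear dissipation captured by Theorem \ref{Theorem 32} (which uses $d_h(\bm{v}_h;\bm{v}_h,\bm{v}_h)=0$ and $c_h(\bm{v}_h;\bm{v}_h,\bm{v}_h)\ge 0$), one could not exclude the possibility that $\widehat{\bm{\beta}}_i$ escapes every compact set on $[0,t^{*})$.
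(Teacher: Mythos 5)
Your proposal is correct and follows essentially the same route the paper sketches: local existence from Theorem \ref{TH4.1} extended globally via the a priori bound \eqref{3.25} and finite-dimensionality (the ODE continuation principle), with the pressure recovered through the inf-sup condition of Lemma \ref{theoremLBB} and the equivalence in Lemma \ref{equivalent2}. The paper states this only as a one-line combination of those ingredients, so your write-up simply supplies the details it leaves implicit.
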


\section{A priori error estimates for the semi-discrete WG scheme}
\label{section4}
In this section, we provide the error analysis for the semi-discrete WG scheme \eqref{WG}. To this end, we first assume that the solution $(\bm{u},p)$ to \eqref{weak} satisfies the following regularity conditions for $m\geq 1:$
\begin{align}\label{regularity}
\bm{u}\in& L^{\infty}([ H^{2}(\Omega)]^{n} )\cap L^{2}(\bm{V}\cap [H^{m+1}(\Omega)]^{n} ),\quad%\\
p\in L^{2}( L^{2}_{0} \cap H^{m+1}(\Omega)),\nonumber\\
 \bm{u}_{t}\in& L^{\infty} ([H^{2}(\Omega)]^{n})\cap L^{2} ([H^{m+1}(\Omega)]^{n}),\quad
  \bm{u}_{tt}\in L^{2} [ L^{2}(\Omega)]^{n}.
\end{align}

Define
\begin{align}
\bm{\mathcal{I}}_{h}\bm{u}|_K:=\{\bm{P}_{m}^{RT}(\bm{u}|_K), \bm{\Pi} _{m}^{B}(\bm{u}|_K)\},\quad
\mathcal{P}_{h}p|_K:=\{ \Pi_{m-1}^{\ast}(p|_K),\Pi_{m}^{B}(p|_K) \},\ m\geq 1.
\end{align}
%Here we recall that $m\geq 1$.
The  initial data is given as
\begin{align}
\bm{u}_{h}^{0}:=\bm{\mathcal{I}}_{h}\bm{u}_{0}=\{\bm{P}_{m}^{RT}\bm{u}_{0}, \bm{\Pi} _{m}^{B}\bm{u}_{0}\}.
\end{align}

\subsection{Primary results}

Firstly, we give several primary results to prepare for error estimates in the next subsection.
\begin{lemma}\label{Lemma 5.3}
Assume that $(\bm{u},p)$ is the solution to \eqref{weak}, for any $(\bm{v}_h,q_h)\in \bm{V}_h^{0}\times Q_h^{0}$, there hold
	\begin{eqnarray}
	\bm{P}^{RT}_m\bm{u}|_K\in [P_{m}(K)]^n, \quad \forall K\in\mathcal{T}_h, \label{4.5}
	\end{eqnarray}
% the   equations
\begin{subequations}\label{ee}
\begin{align}
a_{0}(\bm{\mathcal{I}}_{h}\bm{u}_{t},\bm{v}_{h})
+a_h(\bm{\mathcal{I}}_{h}\bm{u},\bm{v}_h)+b_h(\bm{v}_h,\mathcal{P}_{h}p)
+c_h(\bm{\mathcal{I}}_{h}\bm{u};\bm{\mathcal{I}}_{h}\bm{u},\bm{v}_h)
+ d_h(\bm{\mathcal{I}}_{h}\bm{u};\bm{\mathcal{I}}_{h}\bm{u},\bm{v}_h)&\nonumber\\
=(\bm{f},\bm{v}_{hi}) +\xi_{I}(\bm{u};\bm{u},\bm{v}_h)+\xi_{II}(\bm{u},\bm{v}_h)%\nonumber\\
+\xi_{III}(\bm{u};\bm{u},\bm{v}_h)
+ (\bm{P}_m^{RT}\bm{u}_{t}-\bm{u}_{t},\bm{v}_{hi} ),&\label{e2}\\
	b_h(\bm{\mathcal{I}}_{h}\bm{u},q_h)=0,&\label{e3}
\end{align}
\end{subequations}
	where, $a_{0}(\bm{\mathcal{I}}_{h}\bm{u}_{t},\bm{v}_{h})=(\bm{P}_m^{RT}\bm{u}_{t},\bm{v}_{hi})$,
	\begin{align*}
\xi_{I}(\bm{u};\bm{u},\bm{v}_{h})=& -\frac{1}{2}(\bm{P}_m^{RT}\bm{u}\otimes \bm{P}_m^{RT}\bm{u}-\bm{u}\otimes\bm{u},\nabla_h\bm{v}_{hi})%\nonumber\\&
+\frac{1}{2}\langle(\bm{\Pi}_{m}^{B}\bm{u}\otimes \bm{\Pi}_{m}^{B}\bm{u}
-\bm{u}\otimes\bm{u})\bm{n},\bm{v}_{hi}\rangle_{\partial\mathcal{T}_h}\nonumber\\
&-\frac{1}{2}((\bm{u}\cdot\nabla)\bm{u}-(\bm{P}_m^{RT}\bm{u}\cdot\nabla_h)\bm{P}_m^{RT}\bm{u},\bm{v}_{hi})%\nonumber\\&
-\frac{1}{2}\langle(\bm{v}_{hb}\otimes \bm{\Pi}_{m}^{B}\bm{u})\bm{n},\bm{P}_m^{RT}\bm{u}\rangle_{\partial\mathcal{T}_h},\nonumber\\
	\xi_{II}(\bm{u},\bm{v}_h)=& \nu\langle (\nabla\bm u-\bm{\Pi}_{l}^\ast\nabla\bm u)\bm{n},\bm{v}_{hi}-\bm{v}_{hb} \rangle_{\partial\mathcal{T}_h} +\nu\langle \eta(\bm{P}^{RT}_m\bm{u}-{\bm{u}}),\bm{v}_{hi}-\bm{v}_{hb} \rangle_{\partial\mathcal{T}_h},\nonumber\\
\xi_{III}(\bm{u};\bm{u},\bm{v}_h)=&
\alpha (|\bm{P}^{RT}_m\bm{u}|^{r-2}\bm{P}^{RT}_m\bm{u}-|\bm{u}|^{r-2}\bm{u},\bm{v}_{hi}).
	\end{align*}
	
\end{lemma}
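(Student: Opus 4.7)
The plan is to prove the three claims sequentially. Claim \eqref{4.5}, that $\bm{P}_m^{RT}\bm{u}|_K$ actually lies in $[P_m(K)]^n$ despite $\bm{RT}_m(K)$ being strictly larger, follows from the commuting property $\nabla\cdot\bm{P}_m^{RT}\bm{u} = \Pi_m(\nabla\cdot\bm{u}) = 0$ of the RT projection combined with divergence-freeness from \eqref{weak1b}. Writing any element of $\bm{RT}_m(K) = [P_m(K)]^n \oplus \bm{x}\widetilde P_m(K)$ as $\bm{q} + \bm{x}p$ with $p$ homogeneous of degree $m$, Euler's identity yields $\nabla\cdot(\bm{q}+\bm{x}p) = \nabla\cdot\bm{q} + (n+m)p$; vanishing of this sum forces the homogeneous degree-$m$ piece $(n+m)p$ to equal $-\nabla\cdot\bm{q}\in P_{m-1}(K)$, hence $p=0$ and the claim follows.

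For \eqref{e3}, I would use the definition $b_h(\bm{\mathcal{I}}_h\bm{u},q_h) = \sum_K(\nabla_{w,m,K}q_h, \bm{P}_m^{RT}\bm{u})_K$, which by \eqref{4.5} is a valid pairing since $\bm{P}_m^{RT}\bm{u}|_K\in[P_m(K)]^n$. Substituting $\bm{\varsigma}=\bm{P}_m^{RT}\bm{u}|_K$ into the defining identity of the weak divergence and summing over elements produces $-(q_{hi},\nabla\cdot\bm{P}_m^{RT}\bm{u})_K$, which vanishes by the first step, plus face contributions $\langle q_{hb}, \bm{P}_m^{RT}\bm{u}\cdot\bm{n}_K\rangle_{\partial K}$. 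The latter telescope across interior faces by the normal-trace identity $\bm{P}_m^{RT}\bm{u}\cdot\bm{n}_e = \Pi_m^B(\bm{u}\cdot\bm{n}_e)$ (independent of which element we view $e$ from) together with single-valued $q_{hb}$, while the boundary contribution vanishes since $\bm{u}|_{\partial\Omega}=\bm{0}$ forces $\Pi_m^B(\bm{u}\cdot\bm{n})=0$ on $\partial\Omega$.

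For \eqref{e2} I would start from the PDE \eqref{BF0} tested against $\bm{v}_{hi}$ and match each term to the corresponding piece of the left-hand side of \eqref{e2} up to a consistency error. The pressure term is cleanest: Lemma \ref{Lemma 2.8} gives $\nabla_{w,m}\mathcal{P}_h p = \bm{\Pi}_m^\ast\nabla p$, so $b_h(\bm{v}_h,\mathcal{P}_h p) = (\bm{\Pi}_m^\ast\nabla p,\bm{v}_{hi}) = (\nabla p,\bm{v}_{hi})$ with no correction. The Forchheimer term is likewise direct: $c_h(\bm{\mathcal{I}}_h\bm{u};\bm{\mathcal{I}}_h\bm{u},\bm{v}_h) = \alpha(|\bm{P}_m^{RT}\bm{u}|^{r-2}\bm{P}_m^{RT}\bm{u},\bm{v}_{hi})$, and subtracting $\alpha(|\bm{u}|^{r-2}\bm{u},\bm{v}_{hi})$ yields exactly $\xi_{III}$. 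For the viscous term, I would invoke the commutativity $\nabla_{w,l}\bm{\mathcal{I}}_h\bm{u} = \bm{\Pi}_l^\ast\nabla\bm{u}$ from Lemma \ref{Lemma 2.8}, then elementwise integrate by parts $-\nu(\Delta\bm{u},\bm{v}_{hi}) = \nu\sum_K[(\nabla\bm{u},\nabla\bm{v}_{hi})_K - \langle\nabla\bm{u}\cdot\bm{n},\bm{v}_{hi}\rangle_{\partial K}]$, insert $\bm{\Pi}_l^\ast\nabla\bm{u}$ and apply the defining identity of $\nabla_{w,l}\bm{v}_h$. The mismatch between $\nabla\bm{u}$ and $\bm{\Pi}_l^\ast\nabla\bm{u}$ on faces together with the stabilization $s_h(\bm{\mathcal{I}}_h\bm{u},\bm{v}_h)$ carrying $\bm{P}_m^{RT}\bm{u}-\bm{\Pi}_m^B\bm{u}$ reorganizes into precisely $\xi_{II}$.

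The main obstacle will be reconciling the convective piece $(\nabla\cdot(\bm{u}\otimes\bm{u}),\bm{v}_{hi})$ with $d_h(\bm{\mathcal{I}}_h\bm{u};\bm{\mathcal{I}}_h\bm{u},\bm{v}_h)$ so as to isolate $\xi_I$. Here I would expand each copy of $\nabla_{w,m}\cdot\{\bm{u}_{hi}\otimes\bm{\kappa}_{hi},\bm{u}_{hb}\otimes\bm{\kappa}_{hb}\}$ via the defining identity of the weak divergence, transporting derivatives onto $\bm{v}_{hi}$ or $\bm{u}_{hi}$ by integration by parts to obtain bulk terms of the form $(\bm{P}_m^{RT}\bm{u}\otimes\bm{P}_m^{RT}\bm{u},\nabla_h\bm{v}_{hi})$ and $(\bm{v}_{hi}\otimes\bm{P}_m^{RT}\bm{u},\nabla_h\bm{P}_m^{RT}\bm{u})$ together with face terms carrying $\bm{\Pi}_m^B\bm{u}$. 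On the continuous side I would use $\nabla\cdot\bm{u}=0$ to write $(\nabla\cdot(\bm{u}\otimes\bm{u}),\bm{v}_{hi}) = \tfrac12(\nabla\cdot(\bm{u}\otimes\bm{u}),\bm{v}_{hi}) + \tfrac12((\bm{u}\cdot\nabla)\bm{u},\bm{v}_{hi})$ in antisymmetric form, and convert the first half via elementwise integration by parts into bulk and face pieces. Adding and subtracting $\bm{u}\otimes\bm{u}$ where $\bm{P}_m^{RT}\bm{u}\otimes\bm{P}_m^{RT}\bm{u}$ or $\bm{\Pi}_m^B\bm{u}\otimes\bm{\Pi}_m^B\bm{u}$ appear, and using that interior face contributions of the continuous $\bm{u}\otimes\bm{u}$ cancel while boundary faces vanish thanks to $\bm{u}|_{\partial\Omega}=\bm{v}_{hb}|_{\partial\Omega}=0$, will assemble precisely the four summands of $\xi_I$. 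Finally, rewriting $(\bm{u}_t,\bm{v}_{hi})$ as $(\bm{P}_m^{RT}\bm{u}_t,\bm{v}_{hi})$ minus $(\bm{P}_m^{RT}\bm{u}_t-\bm{u}_t,\bm{v}_{hi})$ and identifying the first piece with $a_{0}(\bm{\mathcal{I}}_h\bm{u}_t,\bm{v}_h)$ supplies the remaining error on the right-hand side of \eqref{e2}.
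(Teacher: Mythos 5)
Your proposal is correct and follows essentially the same route as the paper: first show $\nabla\cdot\bm{P}_m^{RT}\bm{u}=0$ via the RT commuting property (hence \eqref{4.5} and, with \eqref{RT1} and the single-valuedness of $q_{hb}$, also \eqref{e3}), then obtain \eqref{e2} by the commutativity relations of Lemma \ref{Lemma 2.8}, elementwise Green's formulas, and term-by-term comparison with the tested PDE, with the residual face and projection mismatches assembling into $\xi_I$, $\xi_{II}$, $\xi_{III}$ and the $\bm{P}_m^{RT}\bm{u}_t-\bm{u}_t$ term. The only (cosmetic) differences are that you spell out the Euler-identity argument for why a divergence-free $\bm{RT}_m$ function lies in $[P_m(K)]^n$ and that in the \eqref{e3} step the identity you substitute into is the weak \emph{gradient} identity \eqref{weak gradient} (for the scalar $q_h$), not the weak divergence.
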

\begin{proof}
For all $ K\in\mathcal{T}_h,\varrho_m\in P_{m}(K)$, utilizing the  property of  RT projection (cf. \cite[Lemma 3.1]{RT-1}), we obtain
	\begin{eqnarray*}
		(\nabla\cdot\bm{P}^{RT}_{m}\bm{u},\varrho_m)_K=(\nabla\cdot\bm{u},\varrho_m)_K=0,
	\end{eqnarray*}
	which means that $\nabla\cdot\bm{P}^{RT}_m\bm{u}=0$,  i.e. \eqref{4.5} holds.
	
	 From the definition  of discrete weak divergence,  Green's formula and the definition of   the trilinear form $d_h(\cdot;\cdot,\cdot)$, it is trival to have
	 \begin{eqnarray*}
	d_h(\bm{\mathcal{I}}_{h}\bm{u} ;\bm{\mathcal{I}}_{h}\bm{u},\bm{v}_{h})
	=(\nabla\cdot(\bm{u}\otimes\bm{u}),\bm{v}_{hi})+\xi_{I}(\bm{u};\bm{u},\bm{v}_{h}). %\label{EN1}
	\end{eqnarray*}

  Therefore, by Lemma    \ref{Lemma 2.8}, the   projection properties and the definition of discrete weak gradient, we can obtain
\begin{align*}
&a_{0}(\bm{\mathcal{I}}_{h}\bm{u}_{t},\bm{v}_{h})+
a_h(\bm{\mathcal{I}}_{h}\bm{u},\bm{v}_h)+b_h(\bm{v}_h,\mathcal{P}_{h}p)
+c_h(\bm{\mathcal{I}}_{h}\bm{u};\bm{\mathcal{I}}_{h}\bm{u},\bm{v}_h)
+ d_h(\bm{\mathcal{I}}_{h}\bm{u};\bm{\mathcal{I}}_{h}\bm{u},\bm{v}_h)\nonumber\\
=&(\bm{P}_m^{RT}\bm{u}_{t},\bm{v}_{hi})+\nu (\nabla_{w,l}\{ \bm {P}_{m}^{RT}\bm{u},\bm {\Pi}_{k}^{B}\bm{u} \},\nabla_{w,l}\bm{v}_{h}  )
+\nu\langle \eta (\bm {P}_{m}^{RT}\bm{u}-\bm{u}), \bm{v}_{hi}-\bm{v}_{hb}\rangle_{\partial\mathcal{T}_h}\nonumber\\
&+(\bm{v}_{hi}, \nabla_{w,m}\{  \Pi_{m-1}^{\ast}p, \Pi_{m}^{B}p \} )
+\alpha(| \bm {P}_{m}^{RT}\bm{u}|^{r-2}\bm {P}_{m}^{RT}\bm{u},\bm{v}_{hi} )%\nonumber\\&
+(\nabla\cdot(\bm{u}\otimes\bm{u}),\bm{v}_{hi})+\xi_{I}(\bm{u};\bm{u},\bm{v}_{h})\nonumber\\
=&(\bm{u}_{t},\bm{v}_{hi})+\nu ( \bm {\Pi}_{l}^{\ast}(\nabla\bm{u}), \nabla_{w,l}\bm{v}_{h} )
+\nu\langle \eta (\bm {P}_{m}^{RT}\bm{u}- \bm{u}), \bm{v}_{hi}-\bm{v}_{hb}\rangle_{\partial\mathcal{T}_h}
+(\bm{v}_{hi}, \bm {\Pi}_{m}^{\ast}(\nabla p))\nonumber\\
&+\alpha(| \bm{u}|^{r-2}\bm{u},\bm{v}_{hi} )+\xi_{III}(\bm{u};\bm{u},\bm{v}_{h})%\nonumber\\
+(\nabla\cdot(\bm{u}\otimes\bm{u}),\bm{v}_{hi})+\xi_{I}(\bm{u};\bm{u},\bm{v}_{h})
+(\bm{P}_m^{RT}\bm{u}_{t}-\bm{u}_{t},\bm{v}_{hi})\nonumber\\
=&(\bm{u}_{t},\bm{v}_{hi})-\nu (\nabla_{h}\cdot \bm {\Pi}_{l}^{\ast}(\nabla\bm{u}), \bm{v}_{hi} )
+ \nu\langle\bm {\Pi}_{l}^{\ast}(\nabla\bm{u})\bm{n}, \bm{v}_{hb} \rangle_{\partial\mathcal{T}_h}%\nonumber\\&
+\nu\langle \eta (\bm {P}_{m}^{RT}\bm{u}- \bm{u}), \bm{v}_{hi}-\bm{v}_{hb}\rangle_{\partial\mathcal{T}_h}\nonumber\\
&+(\nabla\cdot(\bm{u}\otimes\bm{u}),\bm{v}_{hi})
+(\bm{v}_{hi}, \nabla p)
+\alpha(| \bm{u}|^{r-2}\bm{u},\bm{v}_{hi} )
+\xi_{I}(\bm{u};\bm{u},\bm{v}_{h})
+\xi_{III}(\bm{u};\bm{u},\bm{v}_{h})
+(\bm{P}_m^{RT}\bm{u}_{t}-\bm{u}_{t},\bm{v}_{hi}),
\end{align*}
which, together with  Green's formula, the   projection properties, the fact that $\langle (\nabla
\bm{u})\bm{n}, \bm{v}_{hb}\rangle_{\partial\mathcal{T}_h}=0$, and  the first equation of \eqref{BF0}, meams
\begin{align*}
&a_{0}(\bm{\mathcal{I}}_{h}\bm{u}_{t},\bm{v}_{h})+
a_h(\bm{\mathcal{I}}_{h}\bm{u},\bm{v}_h)+b_h(\bm{v}_h,\mathcal{P}_{h}p)
+c_h(\bm{\mathcal{I}}_{h}\bm{u};\bm{\mathcal{I}}_{h}\bm{u},\bm{v}_h)
+ d_h(\bm{\mathcal{I}}_{h}\bm{u};\bm{\mathcal{I}}_{h}\bm{u},\bm{v}_h)\nonumber\\
=&(\bm{u}_{t},\bm{v}_{hi})+\nu (\bm {\Pi}_{l}^{\ast}(\nabla\bm{u}), \nabla_{h} \bm{v}_{hi} )
+ \nu\langle\bm {\Pi}_{l}^{\ast}(\nabla\bm{u})\bm{n}, \bm{v}_{hb}-\bm{v}_{hi} \rangle_{\partial\mathcal{T}_h}%\nonumber\\&
+\nu\langle \eta (\bm {P}_{m}^{RT}\bm{u}- \bm{u}), \bm{v}_{hi}-\bm{v}_{hb}\rangle_{\partial\mathcal{T}_h}\nonumber\\
&+(\nabla\cdot(\bm{u}\otimes\bm{u}),\bm{v}_{hi})
+(\bm{v}_{hi}, \nabla p)+\alpha(| \bm{u}|^{r-2}\bm{u},\bm{v}_{hi} )
+\xi_{I}(\bm{u};\bm{u},\bm{v}_{h})+\xi_{III}(\bm{u};\bm{u},\bm{v}_{h})
+(\bm{P}_m^{RT}\bm{u}_{t}-\bm{u}_{t},\bm{v}_{hi})\nonumber\\
=&(\bm{u}_{t},\bm{v}_{hi})-\nu( \triangle\bm{u}, \bm{v}_{hi} )
+\nu\langle(\nabla \bm{u}-\bm {\Pi}_{l}^{\ast}\nabla\bm{u})\bm{n}, \bm{v}_{hi} -\bm{v}_{hb} \rangle_{\partial\mathcal{T}_h}%\nonumber\\&
+\nu\langle \eta (\bm {P}_{m}^{RT}\bm{u}- \bm{u}), \bm{v}_{hi}-\bm{v}_{hb}\rangle_{\partial\mathcal{T}_h}\nonumber\\
&+(\nabla\cdot(\bm{u}\otimes\bm{u}),\bm{v}_{hi})
+(\bm{v}_{hi}, \nabla p)+\alpha(| \bm{u}|^{r-2}\bm{u},\bm{v}_{hi} ) %\nonumber\\
+\xi_{I}(\bm{u};\bm{u},\bm{v}_{h})+\xi_{III}(\bm{u};\bm{u},\bm{v}_{h})
+(\bm{P}_m^{RT}\bm{u}_{t}-\bm{u}_{t},\bm{v}_{hi})\nonumber\\
=&(\bm{f}, \bm{v}_{hi})+\xi_{I}(\bm{u};\bm{u},\bm{v}_{h})
+\xi_{II}(\bm{u},\bm{v}_{h})+\xi_{III}(\bm{u};\bm{u},\bm{v}_{h})
+(\bm{P}_m^{RT}\bm{u}_{t}-\bm{u}_{t},\bm{v}_{hi}).
\end{align*}
This proves \eqref{e2}.

Finally, the relation \eqref{e3} follows from the definition of $\nabla_{w,m}$, the fact $\nabla\cdot\bm{P}^{RT}_{m}\bm{u}=0$ and  \eqref{RT1}.
This finishes the proof.
\end{proof}

%\color{black}
 By following a similar line as in the proofs of \cite[Lemma 4.3]{HX2019}, \cite[Lemma 5.2]{ZZX2023} and \cite[Lemma 4.2]{WangX.J2024Rgdw}, we can obtain the  estimates of  $\xi_{I}$, $\xi_{II}$, and $\xi_{III}$.
\begin{lemma} \label{Lemma 5.5}
For any $\bm{v}_h\in\bm{V}_h^{0}$, there hold
\begin{subequations}
\begin{align}
|\xi_{I}(\bm{u},\bm{u};\bm{v}_h)|&\lesssim  h^{m}
\|\bm{u}\|_{2}\|\bm{u}\|_{1+m}|||\bm{v}_h|||_{V},\label{X1}\\
|\xi_{II}(\bm{u};\bm{v}_h)|&\lesssim  h^{m}\|\bm{u}\|_{1+m}|||\bm{v}_h|||_{V},\label{X2}\\
|\xi_{III}(\bm{u},\bm{u}; \bm{v}_h)|&\lesssim  h^{m}\|\bm{u}\|_{2}^{r-2}\|\bm{u}\|_{1+m}|||\bm{v}_h|||_{V}.\label{X3}
\end{align}
\end{subequations}
\end{lemma}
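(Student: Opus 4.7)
The plan is to estimate the three quantities separately, using a common toolkit consisting of (i) approximation properties of the Raviart--Thomas projection $\bm{P}_m^{RT}$ and the $L^2$ projections $\bm{\Pi}_l^{\ast}$, $\bm{\Pi}_m^{B}$ on elements and faces, (ii) trace and inverse inequalities, (iii) Sobolev embeddings (in particular $H^2\hookrightarrow L^\infty$ for $n\le 3$), (iv) the algebraic inequalities of Lemma~\ref{Lemma 2.9}, and (v) Lemma~\ref{Lemma 2.4} to absorb $\|\nabla_h\bm{v}_{hi}\|_0$, $\|\bm{v}_{hi}\|_0$, and $\|\eta^{1/2}(\bm{v}_{hi}-\bm{v}_{hb})\|_{0,\partial\mathcal{T}_h}$ into $|||\bm{v}_h|||_V$.

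For $\xi_{II}$, I would Cauchy--Schwarz each of the two boundary sums. The $\bm{v}_h$-dependent factor becomes $\|\bm{v}_{hi}-\bm{v}_{hb}\|_{0,\partial\mathcal{T}_h}\lesssim h^{1/2}|||\bm{v}_h|||_V$ (recall $\eta=h_K^{-1}$), while the factors $\|(\nabla\bm{u}-\bm{\Pi}_l^{\ast}\nabla\bm{u})\bm{n}\|_{0,\partial\mathcal{T}_h}$ and $\|\eta(\bm{P}_m^{RT}\bm{u}-\bm{u})\|_{0,\partial\mathcal{T}_h}$ are both bounded by $h^{m-1/2}\|\bm{u}\|_{m+1}$ via the scaled trace inequality and optimal approximation. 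Multiplying yields \eqref{X2}. For $\xi_{III}$, the second inequality of Lemma~\ref{Lemma 2.9} reduces the integrand pointwise to $C_r(|\bm{P}_m^{RT}\bm{u}|+|\bm{u}|)^{r-2}|\bm{P}_m^{RT}\bm{u}-\bm{u}|$; H\"{o}lder with exponents $(\infty,2,2)$, together with $L^\infty$-stability of $\bm{P}_m^{RT}$ and $H^2\hookrightarrow L^\infty$ to dominate the first factor by $\|\bm{u}\|_2^{r-2}$, the optimal $L^2$ estimate $\|\bm{P}_m^{RT}\bm{u}-\bm{u}\|_0\lesssim h^{m+1}\|\bm{u}\|_{m+1}$, and $\|\bm{v}_{hi}\|_0\lesssim |||\bm{v}_h|||_V$ then close \eqref{X3}.

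The most delicate piece is $\xi_{I}$, composed of four terms. For each quadratic difference I would use the telescoping identity
\begin{align*}
\bm{P}_m^{RT}\bm{u}\otimes\bm{P}_m^{RT}\bm{u}-\bm{u}\otimes\bm{u}=(\bm{P}_m^{RT}\bm{u}-\bm{u})\otimes\bm{P}_m^{RT}\bm{u}+\bm{u}\otimes(\bm{P}_m^{RT}\bm{u}-\bm{u}),
\end{align*}
and the analogous identity on boundaries for $\bm{\Pi}_m^{B}\bm{u}$. In the second boundary term of $\xi_I$ the single-valuedness of $\bm{u}\otimes\bm{u}$ and of $\bm{\Pi}_m^{B}\bm{u}\otimes\bm{\Pi}_m^{B}\bm{u}$ across interior edges, combined with $\bm{v}_{hb}|_{\partial\Omega}=\bm{0}$, lets me freely substitute $\bm{v}_{hi}$ by $\bm{v}_{hi}-\bm{v}_{hb}$. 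The fourth term, which involves $\bm{v}_{hb}$ directly, is handled by subtracting the vanishing integral $\langle(\bm{v}_{hb}\otimes\bm{u})\bm{n},\bm{u}\rangle_{\partial\mathcal{T}_h}=0$ (zero by the same reasons) and then splitting each resulting factor. For the convective volume piece I would use
\begin{align*}
(\bm{u}\cdot\nabla)\bm{u}-(\bm{P}_m^{RT}\bm{u}\cdot\nabla_h)\bm{P}_m^{RT}\bm{u}=((\bm{u}-\bm{P}_m^{RT}\bm{u})\cdot\nabla)\bm{u}+(\bm{P}_m^{RT}\bm{u}\cdot\nabla_h)(\bm{u}-\bm{P}_m^{RT}\bm{u}),
\end{align*}
and arrange H\"{o}lder so that the $L^\infty$ factor always lands on $\bm{u}$ or $\bm{P}_m^{RT}\bm{u}$ (bounded by $\|\bm{u}\|_2$), while the $L^2$ difference retains the full order $h^{m+1}$. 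Pairing each resulting piece with Lemma~\ref{Lemma 2.4} for the $\bm{v}_h$-dependent factor produces \eqref{X1}.

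The main obstacle is the bookkeeping in $\xi_{I}$: the splittings must be chosen so that each $L^\infty$-type factor lands on a quantity whose $L^\infty$-norm is controlled by $\|\bm{u}\|_2$, since $W^{1,\infty}$ regularity of $\bm{u}$ is \emph{not} available when $n=3$ under the stated assumptions; simultaneously one must preserve the optimal $h^{m+1}$ order of the $L^2$ differences. Managing the boundary pieces further requires combining the edge-continuity of $\bm{\Pi}_m^{B}\bm{u}$ with the homogeneous boundary condition on $\bm{v}_{hb}$ in order to borrow the extra $h^{1/2}$ hidden in $\|\eta^{1/2}(\bm{v}_{hi}-\bm{v}_{hb})\|_{0,\partial\mathcal{T}_h}$. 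Once these choices are in place, summing the local bounds over $\mathcal{T}_h$ yields the three stated estimates.
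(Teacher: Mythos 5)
Your outline is correct and is essentially the argument the paper relies on: the paper does not reproduce a proof of Lemma \ref{Lemma 5.5} but defers to \cite[Lemma 4.3]{HX2019}, \cite[Lemma 5.2]{ZZX2023} and \cite[Lemma 4.2]{WangX.J2024Rgdw}, and those proofs proceed exactly as you describe — telescoping the quadratic and power-nonlinearity differences, using RT/$L^2$-projection approximation with scaled trace and inverse inequalities, exploiting single-valuedness of $\bm u$, $\bm{\Pi}_m^{B}\bm u$ and $\bm v_{hb}$ across interior faces (in particular subtracting the vanishing term $\langle(\bm v_{hb}\otimes\bm u)\bm n,\bm u\rangle_{\partial\mathcal T_h}=0$ to make the fourth piece of $\xi_I$ small), and absorbing the test-function factors via Lemma \ref{Lemma 2.4}. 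Your attention to keeping the $L^\infty$ factors on $\bm u$ or $\bm P_m^{RT}\bm u$ (controlled by $\|\bm u\|_2$ via $H^2\hookrightarrow L^\infty$) rather than on $\nabla\bm u$ is precisely the point that makes the estimate work for $n=3$, so no gap remains.
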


\begin{lemma}\cite[Lemma 4.6]{HLX2019} \label{Lemma 5.6}
Assume $\bm{u}_{t}\in [H^{m+1}(\Omega)]^{n}$, then for all $\bm{v} \in \bm{V}_{h}^{0}$, it holds
\begin{align}
|(\bm{P}_m^{RT}\bm{u}_{t}-\bm{u}_{t},\bm{v}_{hi} )|\lesssim h^{m+1}|\bm{u}_{t}|_{m+1}\|\bm{v}_{hi}\|_{0}.
\end{align}
\end{lemma}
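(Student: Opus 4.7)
The plan is to prove this estimate by combining the standard approximation property of the Raviart--Thomas projection with the Cauchy--Schwarz inequality. Since $\bm{v}_{hi}|_K \in [P_m(K)]^n$ has polynomial degree $m$ while the defining orthogonality of $\bm{P}_m^{RT}$ only annihilates $[P_{m-1}(K)]^n$ test functions, I cannot exploit orthogonality here; I must invoke the full $L^2$ approximation estimate for $\bm{P}_m^{RT}$.

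First, I would split the inner product over the mesh:
\begin{equation*}
(\bm{P}_m^{RT}\bm{u}_{t}-\bm{u}_{t},\bm{v}_{hi} ) = \sum_{K\in\mathcal{T}_h}(\bm{P}_m^{RT}\bm{u}_{t}-\bm{u}_{t},\bm{v}_{hi})_K .
\end{equation*}
On each $K$, I apply the Cauchy--Schwarz inequality in $L^2(K)$ to obtain
$|(\bm{P}_m^{RT}\bm{u}_{t}-\bm{u}_{t},\bm{v}_{hi})_K| \leq \|\bm{P}_m^{RT}\bm{u}_{t}-\bm{u}_{t}\|_{0,K}\|\bm{v}_{hi}\|_{0,K}$.

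Next I would invoke the standard local approximation estimate for the RT projection, namely $\|\bm{P}_m^{RT}\bm{u}_{t}-\bm{u}_{t}\|_{0,K}\lesssim h_K^{m+1}|\bm{u}_{t}|_{m+1,K}$, which follows from the Bramble--Hilbert lemma together with the property \eqref{RT2} (and is a textbook fact for RT elements; this is where the $h^{m+1}$ rate originates). Substituting, summing over $K$, bounding $h_K$ by $h$, and applying the discrete Cauchy--Schwarz inequality to the resulting sum yields
\begin{equation*}
|(\bm{P}_m^{RT}\bm{u}_{t}-\bm{u}_{t},\bm{v}_{hi} )| \lesssim h^{m+1}\Big(\sum_K |\bm{u}_{t}|_{m+1,K}^2\Big)^{1/2}\Big(\sum_K\|\bm{v}_{hi}\|_{0,K}^2\Big)^{1/2}=h^{m+1}|\bm{u}_{t}|_{m+1}\|\bm{v}_{hi}\|_{0},
\end{equation*}
which is the desired bound.

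There is no real obstacle here: the argument is routine once the RT approximation property is cited, and no inverse inequality or trace inequality is needed because the estimate is purely in volumetric $L^2$-norms. The only subtlety worth noting is that the naive approach of using orthogonality $(\bm{P}_m^{RT}\bm{u}_t-\bm{u}_t,\bm{\sigma})_K=0$ to gain an extra factor of $h$ fails, since $\bm{v}_{hi}$ has one degree too many; thus the proof rests on the full $O(h^{m+1})$ $L^2$-approximation rate rather than on a duality/orthogonality trick.
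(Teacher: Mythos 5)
Your proposal is correct, and it is essentially the standard argument: the paper itself gives no proof of this lemma but simply cites \cite[Lemma 4.6]{HLX2019}, and the expected derivation there is exactly the elementwise Cauchy--Schwarz step combined with the $L^2$ approximation property $\|\bm{P}_m^{RT}\bm{u}_t-\bm{u}_t\|_{0,K}\lesssim h_K^{m+1}|\bm{u}_t|_{m+1,K}$ of the Raviart--Thomas projection (which rests on polynomial preservation, $[P_m(K)]^n\subset \bm{RT}_m(K)$, stability, and Bramble--Hilbert, rather than on \eqref{RT2} alone). Your observation that the orthogonality in \eqref{RT2} cannot be exploited against $\bm{v}_{hi}\in[P_m(K)]^n$ is accurate and does not affect the validity of the proof.
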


\subsection{Error estimate}

To obtain the a  priori error estimates, we introduce the following auxiliary problem:
find $(\widetilde{\bm{u}},\phi)$ such that
\begin{eqnarray}\label{auxiliary}
\left\{
\begin{aligned}
\widetilde{\bm{u}}_{t}-\nu \Delta \widetilde{\bm{u}}+\alpha |\widetilde{\bm{u}}|^{r-2}\widetilde{\bm{u}}+\nabla \phi&=\bm{f}-(\bm{u}\cdot\nabla)\bm{u},
&\text{in} \ \Omega\times[0,T],\\
\nabla\cdot\widetilde{\bm{u}}&=0,&\text{in} \ \Omega\times[0,T],\\
\widetilde{\bm{u}}(\bm{x},0 )&=\bm{u}_{0}(\bm{x}),& \text{in}\ \Omega,\\
\widetilde{\bm{u}}&=\bm{0},&\text{on } \partial \Omega. \\
\end{aligned}
\right.
\end{eqnarray}
And we consider the corresponding WG scheme: seek $\widetilde{\bm{u}}_{h}\in \bm{V}_{0h}$ such that
\begin{align}\label{AWG}
(\frac{\partial\widetilde{\bm{u}}_{hi}}{\partial t},\bm{v}_{h})+a_{h}(\widetilde{\bm{u}}_{h}, \bm{v}_{h})+c_{h}(\widetilde{\bm{u}}_{h};\widetilde{\bm{u}}_{h},\bm{v}_{h} )=
( \bm{f}, \bm{v}_{hi})%&\nonumber\\
-d_{h}(\bm{\mathcal{I}}_{h}\bm{u};\bm{\mathcal{I}}_{h}\bm{u},\bm{v}_{h} ),\quad\forall \bm{v}_{h}\in \bm{V}_{0h},&
\end{align}
with the initial data $\widetilde{\bm{u}}_{h}(\bm{x},0)=\widetilde{\bm{u}}_{h}^{0}=\bm{u}_{h}^{0}$. %Here, $\bm{V}_{0h}$ is defined in \eqref{V0h}.

Let
$$\bm{e}_{u}=\bm{u}-\bm{u}_{h}=(\bm{u}- \bm{\mathcal{I}}_{h}\bm{u})+( \bm{\mathcal{I}}_{h}\bm{u}-\widetilde{\bm{u}}_{h})+( \widetilde{\bm{u}}_{h}-\bm{u}_{h} )
:=\delta_{u}+\varphi_{u}+\chi_{u},$$
where
$$\delta_{u}=\bm{u}- \bm{\mathcal{I}}_{h}\bm{u}, \ \varphi_{u}= \bm{\mathcal{I}}_{h}\bm{u}-\widetilde{\bm{u}}_{h}, \ \chi_{u}= \widetilde{\bm{u}}_{h}-\bm{u}_{h}
,$$
and $\varphi_{u}^{0}=\chi_{u}^{0}=0.$

For the above error decomposition, we have the following lemmas.

\begin{lemma} \label{Lemma 5.7}
Assume that $\bm{u}\in L^{\infty} ([H^{2}]^{n})\cap L^{2} ([H^{m+1}]^{n})$ and $\bm{u}_{t}\in L^{2} ([H^{m+1}]^{n})$. Let  $\widetilde{\bm{u}}_{h}\in \bm{V}_{0h}$ be the solution to \eqref{AWG}, then there holds
\begin{align}\label{5.22}
\nu \int_{0}^{t} |||\varphi_{u}(\tau) |||^{2}_{V} d\tau   \lesssim \mathcal{C}(\bm{u})h^{2m}.
\end{align}
In addition, if $\bm{u}_{t}\in L^{\infty} ([H^{2}]^{n})\cap L^{2} ([H^{m+1}]^{n})$, then
\begin{align}\label{5.23}
\int_{0}^{t}\|\frac{\partial\varphi_{ui}}{\partial t}(\tau)\|_{0}^{2}d\tau+ \nu |||\varphi_{u}(t) |||^{2}_{V}
\lesssim \mathcal{C}(\bm{u})h^{2m}.
\end{align}
Here, $\mathcal{C}(\bm{u})>0$ is a constant depending only on $ \nu, \alpha, r$ and the regularity of $\bm{u}.$
\end{lemma}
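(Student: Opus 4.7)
The plan is to derive a uniform control on $\varphi_u = \bm{\mathcal{I}}_h\bm{u} - \widetilde{\bm{u}}_h$ by subtracting the auxiliary scheme \eqref{AWG} from the projected identity \eqref{e2}. Since $\mathcal{P}_h p \in Q_h^0$ and $\bm{v}_h \in \bm{V}_{0h}$, the mixed term $b_h(\bm{v}_h, \mathcal{P}_h p)$ vanishes and the error equation becomes
\begin{align*}
(\partial_\tau \varphi_{ui}, \bm{v}_{hi}) + a_h(\varphi_u, \bm{v}_h) + \bigl[c_h(\bm{\mathcal I}_h\bm u;\bm{\mathcal I}_h\bm u, \bm v_h) - c_h(\widetilde{\bm u}_h;\widetilde{\bm u}_h,\bm v_h)\bigr] = \xi_I + \xi_{II} + \xi_{III} + (\bm P_m^{RT}\bm u_\tau - \bm u_\tau, \bm v_{hi})
\end{align*}
for all $\bm v_h \in \bm V_{0h}$; note that $\varphi_u(0)=0$ by the choice $\widetilde{\bm u}_h^0 = \bm u_h^0 = \bm{\mathcal I}_h \bm u_0$.

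For \eqref{5.22} I would test with $\bm v_h = \varphi_u$. The key observation is that the $c_h$-discrepancy is nonnegative: by the third inequality of Lemma \ref{Lemma 2.9} it dominates $\alpha\|\varphi_{ui}\|_{0,r}^r$, so it simply drops to the left-hand side. Combining with $a_h(\varphi_u,\varphi_u)=\nu|||\varphi_u|||_V^2$ (see \eqref{a2}) and bounding the four source contributions via Lemmas \ref{Lemma 5.5}--\ref{Lemma 5.6} together with Young's inequality yields
\begin{align*}
\tfrac12\tfrac{d}{dt}\|\varphi_{ui}\|_0^2 + \tfrac{\nu}{2}|||\varphi_u|||_V^2 \lesssim \mathcal C(\bm u)\,h^{2m} + \|\varphi_{ui}\|_0^2 .
\end{align*}
Integrating in time and invoking Gronwall's lemma with $\varphi_u(0)=0$ delivers \eqref{5.22}.

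For \eqref{5.23} the natural choice is $\bm v_h = \partial_t \varphi_u \in \bm V_{0h}$, which generates $\|\partial_t\varphi_{ui}\|_0^2$ and $\tfrac{\nu}{2}\tfrac{d}{dt}|||\varphi_u|||_V^2$ on the left. The obstruction is that Lemma \ref{Lemma 5.5} estimates $\xi_I, \xi_{II}, \xi_{III}$ in terms of $|||\partial_t \varphi_u|||_V$, which is not present on the left-hand side. The remedy is to integrate the equation over $(0,t)$ and then perform integration by parts in time on each linear-in-test source: this produces (i) a boundary term $\xi_\bullet(\bm u(t);\bm u(t),\varphi_u(t))$, (ii) an initial term that vanishes since $\varphi_u(0)=0$, and (iii) an interior integral in which the time derivative has been redistributed onto $\bm u$, bounded using $\bm u_t \in L^\infty(H^2)\cap L^2(H^{m+1})$. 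The term $(\bm P_m^{RT}\bm u_\tau-\bm u_\tau,\partial_\tau\varphi_{ui})$ is likewise integrated by parts, producing one evaluation at $t$ handled by Lemma \ref{Lemma 5.6} with Young's inequality, and an interior contribution controlled by $\bm u_{tt}\in L^2(L^2)$. After Young's inequality, $\|\partial_\tau\varphi_{ui}\|_0^2$ and $|||\varphi_u(t)|||_V^2$ are absorbed into the left, while the remaining $\int_0^t |||\varphi_u|||_V^2 d\tau$ is bounded by the already-proven estimate \eqref{5.22}.

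The principal difficulty is the nonlinear Forchheimer discrepancy $\alpha(|\bm P_m^{RT}\bm u|^{r-2}\bm P_m^{RT}\bm u - |\widetilde{\bm u}_{hi}|^{r-2}\widetilde{\bm u}_{hi},\partial_\tau\varphi_{ui})$ in the derivative estimate: naive integration by parts in time produces $\partial_\tau \widetilde{\bm u}_{hi}$, for which no a priori bound is immediately available. My resolution is to estimate the discrepancy pointwise in time via the second inequality of Lemma \ref{Lemma 2.9}, i.e.\ $\lesssim (|\bm P_m^{RT}\bm u| + |\widetilde{\bm u}_{hi}|)^{r-2}|\varphi_{ui}|$, then apply a Hölder split with exponents $(\infty,2,2)$ using the embedding $\bm u\in L^\infty([L^\infty]^n)$ coming from $\bm u\in L^\infty(H^2)$ (valid for $n\le 3$), together with the $L^r$--$L^\infty$ stability bound on $\widetilde{\bm u}_h$ inherited from Theorem \ref{Theorem 32}. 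Young's inequality then absorbs $\|\partial_\tau\varphi_{ui}\|_0$ into the left-hand side, and Gronwall's lemma combined with the already-established \eqref{5.22} closes the estimate.
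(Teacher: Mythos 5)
Your treatment of \eqref{5.22} coincides with the paper's: test with $\varphi_u$, drop the nonnegative Forchheimer discrepancy via the monotonicity inequality of Lemma \ref{Lemma 2.9}, bound the consistency terms by Lemmas \ref{Lemma 5.5}--\ref{Lemma 5.6} with Young's inequality, integrate and apply Gronwall. For \eqref{5.23} your overall strategy (test with $\partial_t\varphi_u$, integrate in time, move the time derivative off the test function in the $\xi$-terms by integration by parts, absorb and invoke \eqref{5.22} plus Gronwall) is also the paper's.

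However, there is a genuine gap in your handling of the nonlinear Forchheimer discrepancy in the derivative estimate. After reducing it to $\bigl((|\bm P_m^{RT}\bm u|+|\widetilde{\bm u}_{hi}|)^{r-2}|\varphi_{ui}|,|\partial_t\varphi_{ui}|\bigr)$ you propose a H\"older split with exponents $(\infty,2,2)$, placing the weight $(|\bm P_m^{RT}\bm u|+|\widetilde{\bm u}_{hi}|)^{r-2}$ in $L^\infty(\Omega)$. The $\bm P_m^{RT}\bm u$ part is fine since $\bm u\in L^\infty(H^2)\hookrightarrow L^\infty(L^\infty)$ for $n\le 3$, but the $|\widetilde{\bm u}_{hi}|^{r-2}$ part requires a spatial $L^\infty$ bound on the discrete solution, and the stability result you cite (Theorem \ref{Theorem 32} and its analogue for \eqref{AWG}) only yields $L^\infty(L^2)$, $L^2(|||\cdot|||_V)$ and $L^r(0,t;L^r)$ control --- none of which gives $\|\widetilde{\bm u}_{hi}\|_{0,\infty}$. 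As written this step fails. The paper circumvents it by splitting instead with exponents $(3,6,2)$, writing the weight as $\|\,\cdot\,\|_{0,3(r-2)}^{r-2}$, using the triangle inequality $\widetilde{\bm u}_{hi}=\bm P_m^{RT}\bm u-\varphi_{ui}$, and controlling $\|\varphi_{ui}\|_{0,3(r-2)}$ and $\|\varphi_{ui}\|_{0,6}$ by $|||\varphi_u|||_V$ through the discrete embedding of Lemma \ref{Lemma 2.4}; you would need to adopt that (or a comparable) split to close the argument. A second, minor point: you integrate $(\bm P_m^{RT}\bm u_\tau-\bm u_\tau,\partial_\tau\varphi_{ui})$ by parts in time and invoke $\bm u_{tt}\in L^2(L^2)$, which is not among the hypotheses of this lemma; the direct route --- Lemma \ref{Lemma 5.6} followed by Young's inequality, absorbing $\|\partial_\tau\varphi_{ui}\|_0^2$ into the left --- avoids that extra regularity and is what the paper does.
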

\begin{proof}
According to  Lemma \ref{Lemma 5.3} and \eqref{AWG}, we can get
\begin{align}\label{5.24}
(\frac{\partial\varphi_{ui}}{\partial t},\bm{v}_{h} )+ a_{h}(\varphi_{u},\bm{v}_{h}  )+c_h(\bm{\mathcal{I}}_{h}\bm{u};\bm{\mathcal{I}}_{h}\bm{u},\bm{v}_h)
-c_{h}(\widetilde{\bm{u}}_{h};\widetilde{\bm{u}}_{h},\bm{v}_{h} )&\nonumber\\
=\xi_{I}(\bm{u};\bm{u},\bm{v}_h)+\xi_{II}(\bm{u},\bm{v}_h)
+\xi_{III}(\bm{u};\bm{u},\bm{v}_h)
+ (\bm{P}_m^{RT}\bm{u}_{t}-\bm{u}_{t},\bm{v}_{hi} )
\end{align}
Taking $\bm{v}_{h}=\varphi_{u}$ in the above equation and using  Lemmas \ref{Lemma 2.9}, \ref{Lemma 3.1}, \ref{Lemma 5.5} - \ref{Lemma 5.6} and  H\"{o}lder's inequality, we obtain
\begin{align}\label{5.25}
&\frac{1}{2} \frac{d}{dt} \|\varphi_{ui}\|_{0}^{2}+ \nu |||\varphi_{u}|||^{2}_{V}+ \alpha\|\varphi_{ui}\|_{0,r}^{r}\nonumber\\
\lesssim& h^{m}(\|\bm{u}\|_{2}\|\bm{u}\|_{1+m}+ \|\bm{u}\|_{1+m}
+ \|\bm{u}\|_{2}^{r-2}\|\bm{u}\|_{1+m})|||\varphi_{u}|||_{V}
+h^{m+1}|\bm{u}_{t}|_{m+1}\|\varphi_{ui}\|_{0}\nonumber\\
\lesssim & \frac{h^{2m}}{2\nu}(\|\bm{u}\|_{2}\|\bm{u}\|_{1+m}+ \|\bm{u}\|_{1+m}
+ \|\bm{u}\|_{2}^{r-2}\|\bm{u}\|_{1+m})^{2}
+\frac{\nu}{2}|||\varphi_{u}|||^{2}_{V}+h^{m+1}|\bm{u}_{t}|_{m+1}\|\varphi_{ui}\|_{0}.
\end{align}
Integrating both sides of \eqref{5.25} with respect to $t$, we derive
\begin{align*}\label{5.26}
& \|\varphi_{ui}(t)\|_{0}^{2}+ \nu\int_{0}^{t} |||\varphi_{u}(\tau)|||^{2}_{V} d\tau
+2\alpha\int_{0}^{t}\|\varphi_{ui}(\tau)\|_{0,r}^{r}d\tau \nonumber\\
\lesssim & \frac{h^{2m}}{\nu}\int_{0}^{t}(\|\bm{u}\|_{2}\|\bm{u}\|_{1+m}+ \|\bm{u}\|_{1+m}
+ \|\bm{u}\|_{2}^{r-2}\|\bm{u}\|_{1+m})^{2}d\tau%\nonumber\\&
+2\int_{0}^{t}h^{m+1}|\bm{u}_{t}|_{m+1}\|\varphi_{ui}\|_{0}d\tau\nonumber\\
\lesssim & \frac{h^{2m}}{\nu}\int_{0}^{t}(\|\bm{u}\|_{2}\|\bm{u}\|_{1+m}+ \|\bm{u}\|_{1+m}
+ \|\bm{u}\|_{2}^{r-2}\|\bm{u}\|_{1+m})^{2}d\tau%\nonumber\\&
+ h^{2m+2}\int_{0}^{t}|\bm{u}_{t}|_{m+1}^{2}d\tau
+\int_{0}^{t}\|\varphi_{ui}\|_{0}^{2}d\tau,
\end{align*}
which, together with the continuous Gronwall's inequality (cf. \cite[Lemma 3.3]{TeschlGerald2012Odea}), implies \eqref{5.22}.

Taking $\bm{v}_{h}=\frac{\partial\varphi_{u}}{\partial t}$ in \eqref{5.24} and utilizing Lemmas \ref{Lemma 2.9}, \ref{Lemma 5.5} and \ref{Lemma 5.6}, we gain
\begin{align}
& \|\frac{\partial\varphi_{ui}}{\partial t}\|_{0}^{2}+\frac{\nu}{2} \frac{d}{dt} |||\varphi_{u}|||^{2}_{V}
\nonumber\\
=&\xi_{I}(\bm{u};\bm{u},\frac{\partial\varphi_{u}}{\partial t})+\xi_{II}(\bm{u},\frac{\partial\varphi_{u}}{\partial t})
+\xi_{III}(\bm{u};\bm{u},\frac{\partial\varphi_{u}}{\partial t})
+ (\bm{P}_m^{RT}\bm{u}_{t}-\bm{u}_{t},\frac{\partial\varphi_{ui}}{\partial t} )
-c_h(\bm{\mathcal{I}}_{h}\bm{u};\bm{\mathcal{I}}_{h}\bm{u},\frac{\partial\varphi_{u}}{\partial t})
+c_{h}(\widetilde{\bm{u}}_{h};\widetilde{\bm{u}}_{h},\frac{\partial\varphi_{u}}{\partial t} )\nonumber\\
\lesssim&
(|\bm{P}_{m}^{RT}\bm{u}|^{r-2}\bm{P}_{m}^{RT}\bm{u}-|\widetilde{\bm{u}}_{hi}|^{r-2}\widetilde{\bm{u}}_{hi},\frac{\partial\varphi_{ui}}{\partial t})
+ h^{m}(\|\bm{u}\|_{2}\|\bm{u}\|_{1+m}+ \|\bm{u}\|_{1+m}
+ \|\bm{u}\|_{2}^{r-2}\|\bm{u}\|_{1+m})|||\frac{\partial\varphi_{u}}{\partial t}|||_{V}\nonumber\\
&+h^{m+1}|\bm{u}_{t}|_{m+1}\|\frac{\partial\varphi_{ui}}{\partial t}\|_{0}\nonumber\\
\lesssim& \big((|\bm{P}_{m}^{RT}\bm{u}| +|\widetilde{\bm{u}}_{hi}|)^{r-2}\varphi_{ui},\frac{\partial\varphi_{ui}}{\partial t}\big)
+ h^{m}(\|\bm{u}\|_{2}\|\bm{u}\|_{1+m}+ \|\bm{u}\|_{1+m}
+ \|\bm{u}\|_{2}^{r-2}\|\bm{u}\|_{1+m})|||\frac{\partial\varphi_{u}}{\partial t}|||_{V}\nonumber\\
&+h^{m+1}|\bm{u}_{t}|_{m+1}\|\frac{\partial\varphi_{ui}}{\partial t}\|_{0}
\end{align}
which plus  H\"{o}lder's inequality, the triangle inequality and Lemma \ref{Lemma 2.4} further imples
\begin{align}
& \|\frac{\partial\varphi_{ui}}{\partial t}\|_{0}^{2}+\frac{\nu}{2} \frac{d}{dt} |||\varphi_{u}|||^{2}_{V}
\nonumber\\
\lesssim& (\|\bm{P}_{m}^{RT}\bm{u}\|_{0,3(r-2)}+\|\widetilde{\bm{u}}_{hi}\|_{0,3(r-2)} )^{r-2}\|\varphi_{ui}\|_{0,6}\|\frac{\partial\varphi_{ui}}{\partial t} \|_0
+ h^{m}(\|\bm{u}\|_{2}\|\bm{u}\|_{1+m}+ \|\bm{u}\|_{1+m}
+ \|\bm{u}\|_{2}^{r-2}\|\bm{u}\|_{1+m})|||\frac{\partial\varphi_{u}}{\partial t}|||_{V}\nonumber\\
&+h^{m+1}|\bm{u}_{t}|_{m+1}\|\frac{\partial\varphi_{ui}}{\partial t}\|_{0}\nonumber\\
\lesssim&(\|\bm{u}\|_{0,3(r-2)}+\|\varphi_{ui}\|_{0,3(r-2)} )^{r-2}|||\varphi_{u}|||_{V}\|\frac{\partial\varphi_{ui}}{\partial t} \|_0
+ h^{m}(\|\bm{u}\|_{2}\|\bm{u}\|_{1+m}+ \|\bm{u}\|_{1+m}
+ \|\bm{u}\|_{2}^{r-2}\|\bm{u}\|_{1+m})|||\frac{\partial\varphi_{u}}{\partial t}|||_{V}\nonumber\\
&+h^{m+1}|\bm{u}_{t}|_{m+1}\|\frac{\partial\varphi_{ui}}{\partial t}\|_{0}.
\end{align}
Integrating both sides of the above inequality with respect to $t$ and using the Cauchy-Schwarz inequality and integration by parts, we have
\begin{align*}
& 2 \int_{0}^{t} \|\frac{\partial\varphi_{ui}}{\partial t}(\tau)\|_{0}^{2} d\tau+ \nu|||\varphi_{u}(t)|||^{2}_{V}
\nonumber\\
\lesssim &\int_{0}^{t}(\|\bm{u}\|_{0,3(r-2)}+\|\varphi_{ui}\|_{0,3(r-2)} )^{r-2}|||\varphi_{u}|||_{V}\|\frac{\partial\varphi_{ui}}{\partial t} \|_0d\tau\nonumber\\
&+\int_{0}^{t}h^{m}(\|\bm{u}\|_{2}\|\bm{u}\|_{1+m}+ \|\bm{u}\|_{1+m}
+ \|\bm{u}\|_{2}^{r-2}\|\bm{u}\|_{1+m})|||\frac{\partial\varphi_{u}}{\partial t}|||_{V}d\tau%\nonumber\\&
+ \int_{0}^{t}h^{m+1}|\bm{u}_{t}|_{m+1}\|\frac{\partial\varphi_{ui}}{\partial t}\|_{0}d\tau.
\nonumber\\
\lesssim &\int_{0}^{t}(\|\bm{u}\|_{2}+\|\varphi_{ui}\|_{0,3(r-2)} )^{2(r-2)}|||\varphi_{u}|||_{V}^{2}d\tau+\int_{0}^{t}\|\frac{\partial\varphi_{ui}}{\partial t} \|_0^{2}d\tau\nonumber\\
&+ h^{m}(\|\bm{u}\|_{2}\|\bm{u}\|_{1+m}+ \|\bm{u}\|_{1+m}
+ \|\bm{u}\|_{2}^{r-2}\|\bm{u}\|_{1+m})|||\varphi_{u}|||_{V}\nonumber\\
&+h^{m}|\int_{0}^{t}(\|\bm{u}\|_{2}\|\bm{u}\|_{1+m}+ \|\bm{u}\|_{1+m}
+ \|\bm{u}\|_{2}^{r-2}\|\bm{u}\|_{1+m})_{t}|||\varphi_{u}|||_{V}d\tau|%\nonumber\\&
+ \int_{0}^{t}h^{m+1}|\bm{u}_{t}|_{m+1}\|\frac{\partial\varphi_{ui}}{\partial t}\|_{0}d\tau\nonumber\\
\lesssim &\int_{0}^{t}(\|\bm{u}\|_{2}+|||\varphi_{u}|||_{V} )^{2(r-2)}|||\varphi_{u}|||_{V}^{2}d\tau+\int_{0}^{t}\|\frac{\partial\varphi_{ui}}{\partial t} \|_0^{2}d\tau
+\frac{\nu}{2} |||\varphi_{u}|||^{2}_{V}\nonumber\\
&+\frac{2h^{2m}}{\nu}(\|\bm{u}\|_{2}\|\bm{u}\|_{1+m}+ \|\bm{u}\|_{1+m}
+ \|\bm{u}\|_{2}^{r-2}\|\bm{u}\|_{1+m})^{2}
+\nu\int_{0}^{t}|||\varphi_{u}|||^{2}_{V}d\tau
\nonumber\\
&+ \frac{h^{2m}}{\nu}\int_{0}^{t}(\|\bm{u}\|_{2}\|\bm{u}\|_{1+m}+ \|\bm{u}\|_{1+m}
+ \|\bm{u}\|_{2}^{r-2}\|\bm{u}\|_{1+m})_{t}^{2}d\tau
%\nonumber\\&
+h^{2m+2}\int_{0}^{t}|\bm{u}_{t}|_{m+1}^{2}d\tau.
\end{align*}
Combining the continuous Gronwall's inequality (cf. \cite[Lemma 3.3]{TeschlGerald2012Odea}) and \eqref{5.22}, we get \eqref{5.23}.
This completes the proof.
\end{proof}

%Next, we give the following lemma to be used later.
 \begin{lemma}\label{Lemma 4.6}
 Assume that $\bm{u}\in [H^{2}(\Omega)]^{n}$, then  $\forall \varphi_{u}, \chi_{u}\in \bm{V}_{h}^{0}$ there hold
 \begin{subequations}
 \begin{align}
 d_{h}(\bm{\mathcal{I}}_{h}\bm{u};\varphi_{u},\chi_{u} )\lesssim&  (\| \varphi_{ui}\|_{0}+h|||\varphi_{u}|||_{V} )\|\bm{u}\|_{2}|||\chi_{u} |||_{V},\label{d-1}\\
d_{h}(\varphi_{u}+\chi_{u};\varphi_{u},\chi_{u})
 \lesssim&C(\bm{u})\|\varphi_{ui}+\chi_{ui}\|_{0,2}|||\chi_{u}|||_{V} ,\label{d-2}\\
 d_{h}(\varphi_{u}+\chi_{u};\bm{\mathcal{I}}_{h}\bm{u},\chi_{u})
 \lesssim&C(\bm{u})\|\varphi_{ui}+\chi_{ui}\|_{0,2}|||\chi_{u}|||_{V} ,\label{d-3}
 \end{align}
 \end{subequations}
 where $C(\bm{u})>0$ is a constant depending only on the regularity of $\bm{u}$.
 \end{lemma}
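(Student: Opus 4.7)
The plan is to reduce all three inequalities to the same integration-by-parts identity, then bound each piece by H\"older's inequality combined with Sobolev embedding via Lemma \ref{Lemma 2.4}. Specifically, since on each $K$ the interior parts $\bm{u}_{hi},\bm{v}_{hi}\in[P_m(K)]^n$ are polynomials, the definition of $\nabla_{w,m}\cdot$ in \eqref{weak gradient} yields
\begin{align*}
d_h(\bm{\kappa}_h;\bm{u}_h,\bm{v}_h)
=&-\tfrac{1}{2}(\bm{u}_{hi}\otimes\bm{\kappa}_{hi},\nabla_h\bm{v}_{hi})
+\tfrac{1}{2}\langle(\bm{u}_{hb}\otimes\bm{\kappa}_{hb})\bm{n},\bm{v}_{hi}\rangle_{\partial\mathcal{T}_h}\\
&+\tfrac{1}{2}(\bm{v}_{hi}\otimes\bm{\kappa}_{hi},\nabla_h\bm{u}_{hi})
-\tfrac{1}{2}\langle(\bm{v}_{hb}\otimes\bm{\kappa}_{hb})\bm{n},\bm{u}_{hi}\rangle_{\partial\mathcal{T}_h}.
\end{align*}
All three estimates will follow by substituting the appropriate arguments and applying H\"older, combined with the bounds $\|\nabla_h\bm{w}_{hi}\|_0\lesssim|||\bm{w}_h|||_V$ and $\|\bm{w}_{hi}\|_{0,\widetilde r}\lesssim|||\bm{w}_h|||_V$ from Lemma \ref{Lemma 2.4}.

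For \eqref{d-1}, the first argument $\bm{\mathcal{I}}_h\bm{u}=\{\bm{P}_m^{RT}\bm{u},\bm{\Pi}_m^B\bm{u}\}$ inherits the $L^\infty$ bound $\|\bm{\mathcal{I}}_h\bm{u}\|_{0,\infty}\lesssim\|\bm{u}\|_2$ via the Sobolev embedding $H^2\hookrightarrow L^\infty$ (valid for $n\le 3$) together with the stability of $\bm{P}_m^{RT}$ and $\bm{\Pi}_m^B$. I would therefore apply H\"older with exponents $(\infty,2,2)$ to the two volume terms to produce $\|\bm{u}\|_2\|\varphi_{ui}\|_0|||\chi_u|||_V$. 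For each boundary term I would split $\varphi_{ub}=\varphi_{ui}+(\varphi_{ub}-\varphi_{ui})$; the jump piece is controlled by the stabilization using $\eta=h_K^{-1}$ and a trace inequality, contributing the $h|||\varphi_u|||_V$ factor, while the piece with $\varphi_{ui}$ is moved back to the interior using Green's formula on each $K$, where the extra factor of $h$ required to match the target bound comes from the approximation estimate $\|\bm{u}-\bm{\Pi}_m^B\bm{u}\|_{0,\partial K}\lesssim h_K^{1/2}\|\bm{u}\|_{1,K}$.

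For \eqref{d-2} and \eqref{d-3}, the same expansion is used with $\bm{\kappa}_h=\varphi_u+\chi_u$. In each volume term I would arrange the H\"older split so that the factor carrying $\varphi_{ui}+\chi_{ui}$ sits in $L^2$; the remaining two factors are then either $\|\bm{u}\|_\infty$ or $\|\nabla\bm{u}\|_{0,\widetilde r}$ (from $\bm{\mathcal{I}}_h\bm{u}$ in \eqref{d-3}), and/or $\|\varphi_{ui}\|_{0,\widetilde r},\|\chi_{ui}\|_{0,\widetilde r}$ controlled by Lemma \ref{Lemma 2.4}, together with $\|\nabla_h\chi_{ui}\|_0\lesssim|||\chi_u|||_V$. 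Any resulting factor of $|||\varphi_u|||_V$ is bounded by $\mathcal{C}(\bm{u})$ via Lemma \ref{Lemma 5.7}, and all non-$L^2$, non-$|||\chi_u|||_V$ ingredients are absorbed into the constant $C(\bm{u})$. The boundary terms are handled by the same jump-plus-interior decomposition as in \eqref{d-1}, the interior parts being reabsorbed via Green's formula.

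The main obstacle will be the boundary analysis in \eqref{d-1}: one must simultaneously extract a clean $\|\varphi_{ui}\|_0$ and a clean $h|||\varphi_u|||_V$ without picking up negative powers of $h$. The key is the precise matching between $\eta=h_K^{-1}$, the $h$-scaled trace inequality, and the $h^{1/2}$-boundary approximation error of $\bm{\Pi}_m^B\bm{u}$; for \eqref{d-2} one must also be careful not to invoke an inverse inequality converting $L^2$ into $L^{\widetilde r}$, which would degrade the $h$-scaling---this is avoided by choosing H\"older exponents so that the $L^2$ norm lands exactly on $\varphi_{ui}+\chi_{ui}$.
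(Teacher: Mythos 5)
Your treatment of \eqref{d-2} and \eqref{d-3} essentially matches the paper's: expand $d_h$ through the weak-divergence definition (the paper rewrites it in convective form using $\nabla_h\cdot(\varphi_{ui}+\chi_{ui})=0$), apply H\"older and Lemma \ref{Lemma 2.4}, and absorb the stray factor of $|||\varphi_{u}|||_{V}$ into $C(\bm{u})$ via Lemma \ref{Lemma 5.7}. One remark: an inverse inequality, hence a negative power of $h$, is in fact unavoidable here (the paper's own display carries an $h^{-n/6}$), contrary to your closing claim; it is harmless only because $|||\varphi_{u}|||_{V}\lesssim \mathcal{C}(\bm{u})h^{m}$ with $m\geq 1$. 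This also means \eqref{d-2}--\eqref{d-3} are really being used for the specific $\varphi_u,\chi_u$ of the error decomposition, not for arbitrary elements of $\bm{V}_h^0$ as the statement literally reads.

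The genuine gap is in \eqref{d-1}, which the paper does not prove (it cites \cite[Lemma 4.8]{HLX2019}) but which you attempt directly. Your claim that H\"older with exponents $(\infty,2,2)$ applied to ``the two volume terms'' produces $\|\bm{u}\|_{2}\|\varphi_{ui}\|_{0}|||\chi_{u}|||_{V}$ fails for the second volume term $\tfrac12(\chi_{ui}\otimes\bm{P}_m^{RT}\bm{u},\nabla_h\varphi_{ui})$: there the gradient sits on $\varphi_{ui}$, so the $(\infty,2,2)$ split yields $\|\bm{u}\|_{2}\|\chi_{ui}\|_{0}\|\nabla_h\varphi_{ui}\|_{0}\lesssim\|\bm{u}\|_{2}\,|||\chi_{u}|||_{V}\cdot|||\varphi_{u}|||_{V}$, which is \emph{not} dominated by $(\|\varphi_{ui}\|_{0}+h|||\varphi_{u}|||_{V})\|\bm{u}\|_{2}|||\chi_{u}|||_{V}$ --- the crucial factor of $h$ in front of $|||\varphi_{u}|||_{V}$ is missing. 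To reach the stated bound you must integrate by parts once more on each element, using $\nabla\cdot\bm{P}_m^{RT}\bm{u}=0$ (i.e.\ \eqref{4.5}), to move the derivative off $\varphi_{ui}$:
\begin{equation*}
(\chi_{ui}\otimes\bm{P}_m^{RT}\bm{u},\nabla\varphi_{ui})_K
=-\bigl((\bm{P}_m^{RT}\bm{u}\cdot\nabla)\chi_{ui},\varphi_{ui}\bigr)_K
+\bigl\langle(\bm{P}_m^{RT}\bm{u}\cdot\bm{n})\chi_{ui},\varphi_{ui}\bigr\rangle_{\partial K};
\end{equation*}
the volume part then gives $\|\bm{u}\|_{2}\|\varphi_{ui}\|_{0}|||\chi_{u}|||_{V}$, and the new boundary contributions combine with the original ones into jump terms that produce the $h|||\varphi_{u}|||_{V}$ piece through exactly the trace and boundary-approximation estimates you describe. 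Your proposal invokes Green's formula only for the boundary terms, so as written the volume-term analysis of \eqref{d-1} does not close.
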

 \begin{proof}
 The first inequality has been proved in (cf.\cite[Lemma 4.8]{HLX2019}).

 %Next, we shall demonstrate the last two inequalities.
 From the definition of $d_h (\cdot;\cdot,\cdot)$,  H\"{o}lder's inequality, the projection properties and Lemma  \ref{Lemma 5.7}, it follows
 \begin{subequations}
\begin{align}
&d_{h}(\varphi_{u}+\chi_{u};\varphi_{u},\chi_{u})\nonumber\\
=&\frac{1}{2}((\varphi_{ui}+\chi_{ui})\cdot\nabla_{h}\varphi_{ui},\chi_{ui})
-\frac{1}{2}\langle((\varphi_{ui}+\chi_{ui})\cdot\bm{n})(\varphi_{ui}-\varphi_{ub}),\chi_{ui}\rangle_{\partial\mathcal{T}_{h}}\nonumber\\
&-\frac{1}{2}((\varphi_{ui}+\chi_{ui})\cdot\nabla_{h}\chi_{ui},\varphi_{ui})
+\frac{1}{2}\langle((\varphi_{ui}+\chi_{ui})\cdot\bm{n})(\chi_{ui}-\chi_{ub}),\varphi_{ui}\rangle_{\partial\mathcal{T}_{h}}\nonumber\\
\lesssim & \|\varphi_{ui}+\chi_{ui}\|_{0,2}h^{-\frac{n}{6}} |||\varphi_{u}|||_{V} \cdot|||\chi_{u}|||_{V}\nonumber\\
\lesssim & C(\bm{u})\|\varphi_{ui}+\chi_{ui}\|_{0,2}|||\chi_{u}|||_{V},\\
&d_{h}(\varphi_{u}+\chi_{u};\bm{\mathcal{I}}_{h}\bm{u},\chi_{u})\nonumber\\
=&\frac{1}{2}((\varphi_{ui}+\chi_{ui})\cdot\nabla_{h}\bm{P}^{RT}_{m}\bm{u},\chi_{ui})
-\frac{1}{2}\langle((\varphi_{ui}+\chi_{ui})\cdot\bm{n})(\bm{P}^{RT}_{m}\bm{u}-\bm{\Pi}^{B}_{m}\bm{u}),\chi_{ui}\rangle_{\partial\mathcal{T}_{h}}\nonumber\\
&-\frac{1}{2}((\varphi_{ui}+\chi_{ui})\cdot\nabla_{h}\chi_{ui},\bm{P}^{RT}_{m}\bm{u})
+\frac{1}{2}\langle((\varphi_{ui}+\chi_{ui})\cdot\bm{n})(\chi_{ui}-\chi_{ub}),\bm{P}^{RT}_{m}\bm{u}\rangle_{\partial\mathcal{T}_{h}}\nonumber\\
\lesssim & \|\varphi_{ui}+\chi_{ui}\|_{0,2} \|\bm{u}\|_{2} |||\chi_{u}|||_{V}\nonumber\\
\lesssim & C(\bm{u})\|\varphi_{ui}+\chi_{ui}\|_{0,2}|||\chi_{u}|||_{V},\label{5.32}
\end{align}
 \end{subequations}
 which imply \eqref{d-2} and \eqref{d-3}.
 \end{proof}

\begin{lemma}\label{Lemma 5.9}
Let $\bm{u}_{h}$ and $\widetilde{\bm{u}}_{h}$ be the solution to \eqref{WG} and \eqref{AWG}, respectively.
Under the same condition as in Lemma \ref{Lemma 4.6}, there holds
\begin{align}\label{5.28}
\nu \int_{0}^{t}||| \chi_{u}(\tau)|||^{2}_{V}d\tau\lesssim \mathcal{C}(\bm{u} )h^{2m}.
\end{align}
In addition, if $\bm{u}_{t}\in L^{\infty} ([H^{2}]^{n})\cap L^{2} ([H^{m+1}]^{n})$, then
\begin{align}\label{421}
\int_{0}^{t}\|\frac{\partial\chi_{ui}}{\partial t}(\tau)\|_{0}^{2}d\tau+ \nu |||\chi_{u}(t) |||^{2}_{V}
\lesssim \mathcal{C}(\bm{u})h^{2m}.
\end{align}
Here, $\mathcal{C}(\bm{u})>0$ is a  constant depending only on $ \nu, \alpha, r$ and the regularity of $\bm{u}.$
\end{lemma}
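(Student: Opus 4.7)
The strategy mirrors that of Lemma \ref{Lemma 5.7}. First I would derive an error equation on $\bm V_{0h}$: since $\widetilde{\bm u}_h$ and $\bm u_h$ both lie in $\bm V_{0h}$ (the latter by Theorem \ref{TH2.2}), so does $\chi_u=\widetilde{\bm u}_h-\bm u_h$, and subtracting \eqref{WG1} from \eqref{AWG} restricted to $\bm v_h\in\bm V_{0h}$ annihilates the pressure term, leaving
\begin{align*}
(\partial_t\chi_{ui},\bm v_{hi})+a_h(\chi_u,\bm v_h)
&=c_h(\bm u_h;\bm u_h,\bm v_h)-c_h(\widetilde{\bm u}_h;\widetilde{\bm u}_h,\bm v_h)\\
&\quad+d_h(\bm u_h;\bm u_h,\bm v_h)-d_h(\bm{\mathcal{I}}_h\bm u;\bm{\mathcal{I}}_h\bm u,\bm v_h).
\end{align*}
Writing $\bm u_h=\bm{\mathcal{I}}_h\bm u-(\varphi_u+\chi_u)$ and expanding the $d_h$ difference by trilinearity produces exactly the three combinations $d_h(\bm{\mathcal{I}}_h\bm u;\varphi_u+\chi_u,\cdot)$, $d_h(\varphi_u+\chi_u;\bm{\mathcal{I}}_h\bm u,\cdot)$, and $d_h(\varphi_u+\chi_u;\varphi_u+\chi_u,\cdot)$ targeted by Lemma \ref{Lemma 4.6}.

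To obtain \eqref{5.28}, I would test with $\bm v_h=\chi_u$. Identity \eqref{a2} yields $\nu|||\chi_u|||_V^2$ on the left, the skew property \eqref{d0} eliminates every $d_h(\cdot;\chi_u,\chi_u)$ contribution, and the monotonicity estimate of Lemma \ref{Lemma 2.9} applied to the $c_h$ difference produces a nonnegative multiple of $\|\chi_{ui}\|_{0,r}^r$. The surviving $d_h$ pieces are controlled by \eqref{d-1}--\eqref{d-3}; after the triangle inequality and $\|\cdot\|_{0,2}=\|\cdot\|_0$, each is dominated by a multiple of $(\|\varphi_{ui}\|_0+\|\chi_{ui}\|_0+h|||\varphi_u|||_V)|||\chi_u|||_V$. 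Young absorbs the trailing $|||\chi_u|||_V$ factor, integration in $t$ with $\chi_u(0)=0$ together with \eqref{5.22} handles the $\varphi_u$-driven data, and the continuous Gronwall inequality then delivers \eqref{5.28}.

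For \eqref{421}, I would test instead with $\bm v_h=\partial_t\chi_u$, which is admissible since $\bm V_{0h}$ is time-independent (differentiating $b_h(\chi_u,q_h)=0$ in $t$). The left side becomes $\|\partial_t\chi_{ui}\|_0^2+\tfrac{\nu}{2}\tfrac{d}{dt}|||\chi_u|||_V^2$. The $c_h$ difference is treated exactly as in Lemma \ref{Lemma 5.7} via the second inequality of Lemma \ref{Lemma 2.9}, H\"older's inequality, and the embedding \eqref{Cr-est}. After time-integration, the troublesome $d_h$ contributions involving $|||\partial_t\chi_u|||_V$ are rewritten by integration by parts in $t$, trading that uncontrolled factor for boundary values at $0$ and $t$ plus interior integrals of $|||\chi_u|||_V$ against time derivatives of $\bm u$ and of $\varphi_u$. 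The boundary term at $t$ is absorbed into $\tfrac\nu 2|||\chi_u(t)|||_V^2$ by Young, while the interior integrals are driven by \eqref{5.22}, \eqref{5.23}, and the just-proved \eqref{5.28}. A final Gronwall step yields \eqref{421}.

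The main obstacle is the $d_h$ contribution in the $\partial_t\chi_u$ test: unlike the case $\bm v_h=\chi_u$, the skew property \eqref{d0} no longer annihilates any piece, and a naive bound via Lemma \ref{Lemma 4.6} produces factors $|||\partial_t\chi_u|||_V$ that are unavailable on the left. The integration-by-parts-in-time device employed near the end of the proof of Lemma \ref{Lemma 5.7} is what converts these terms into admissible quantities, and carefully tracking the resulting $(\varphi_u)_t$ and $(\chi_u)_t$ factors so that they fall within the control afforded by \eqref{5.23} and the preliminary bound \eqref{5.28} is the delicate bookkeeping step.
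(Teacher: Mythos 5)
Your proposal is correct and follows essentially the same route as the paper: subtract the reduced scheme \eqref{wg1} from \eqref{AWG}, test with $\chi_u$ (using \eqref{a2}, \eqref{d0}, Lemma \ref{Lemma 2.9} and the three $d_h$ bounds of Lemma \ref{Lemma 4.6}, then Young, integration in $t$ and Gronwall with Lemma \ref{Lemma 5.7}) for \eqref{5.28}, and test with $\partial_t\chi_u$ for \eqref{421}, handling the $c_h$ difference via Lemma \ref{Lemma 2.9} with H\"older and \eqref{Cr-est} and removing the uncontrolled $|||\partial_t\chi_u|||_V$ factors from the $d_h$ terms by integration by parts in time before the final Gronwall step. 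This matches the paper's argument, including the identification of the integration-by-parts-in-time device as the key point for the second estimate.
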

\begin{proof}
Based on \eqref{wg1} and \eqref{AWG},  $\forall \bm{v}_{h}\in \bm{V}_{0h}$, we can get
\begin{align*}
&(\frac{\partial\chi_{ui}}{\partial t},\bm{v}_{hi})+a_{h}(\chi_{u},\bm{v}_{h} )+c_{h}(\widetilde{\bm{u}}_{h};\widetilde{\bm{u}}_{h},\bm{v}_{h} )
-c_{h}(\bm{u}_{h};\bm{u}_{h},\bm{v}_{h} )%\nonumber\\&
=d_{h}(\bm{u}_{h};\bm{u}_{h},\bm{v}_{h} )-d_{h}(\bm{\mathcal{I}}_{h}\bm{u};\bm{\mathcal{I}}_{h}\bm{u},\bm{v}_{h} ).
\end{align*}
Taking $\bm{v}_{h}=\chi_{u}$ in the above equation and utilizing Lemmas  \ref{Lemma 2.9}, \ref{Lemma 3.1} and \ref{Lemma 4.6}, we obtain
\begin{align*}
&\frac{1}{2} \frac{d}{dt} \|\chi_{ui}\|_{0}^{2}+ \nu |||\chi_{u}|||^{2}_{V}+\| \chi_{ui} \|_{0,r}^{r}\nonumber\\
\lesssim& d_{h}(\bm{u}_{h};\bm{u}_{h},\chi_{u} )-d_{h}(\bm{\mathcal{I}}_{h}\bm{u};\bm{\mathcal{I}}_{h}\bm{u},\chi_{u})\nonumber\\
=&d_{h}(\varphi_{u}+\chi_{u};\varphi_{u},\chi_{u})
-d_{h}(\varphi_{u}+\chi_{u};\bm{\mathcal{I}}_{h}\bm{u},\chi_{u})
-d_{h}(\bm{\mathcal{I}}_{h}\bm{u}; \varphi_{u},\chi_{u})\nonumber\\
\lesssim& C(\bm{u})(\|\varphi_{ui}+\chi_{ui}\|_{0,2}
+\| \varphi_{ui}\|_{0}+h|||\varphi_{u}|||_{V})|||\chi_{u} |||_{V}.
\end{align*}
Integrating both sides of the  above inequality with respect to $t$ and  using  H\"{o}lder's inequality and the triangle inequality, we gain
\begin{align*}
&\|\chi_{ui}(t)\|_{0}^{2}+2\nu \int_{0}^{t}||| \chi_{u}(\tau)|||^{2}_{V}d\tau
+2  \alpha\int_{0}^{t} \| \chi_{ui}(\tau) \|_{0,r}^{r}d\tau\nonumber\\
\lesssim &C(\bm{u} )  \int_{0}^{t}(\|\varphi_{ui}+\chi_{ui}\|_{0,2}
+\| \varphi_{ui}\|_{0}+h|||\varphi_{u}|||_{V})|||\chi_{u} |||_{V}d\tau \nonumber\\
\lesssim &\frac{C(\bm{u} )^{2} }{\nu} \int_{0}^{t}(\|\varphi_{ui}(\tau)\|_{0}^{2}+\|\chi_{ui}(\tau)\|_{0}^{2}
+\| \varphi_{ui}(\tau)\|_{0}^{2}+h^{2}|||\varphi_{u}(\tau)|||^{2}_{V})d\tau
+ \nu \int_{0}^{t}|||\chi_{u}(\tau) |||_{V}^{2}d\tau,
\end{align*}
which, together with the continuous Gronwall's inequality (cf. \cite[Lemma 3.3]{TeschlGerald2012Odea}) and Lemma \ref{Lemma 5.7},  indicates \eqref{5.28}.

Taking $\bm{v}_{h}=\frac{\partial\chi_{u}}{\partial t}$ in \eqref{5.24} and utilizing \eqref{a2}, \eqref{c2} and Lemmas \ref{Lemma 2.4}, \ref{Lemma 2.9}, \ref{Lemma 4.6}, we gain
\begin{align}
& \|\frac{\partial\chi_{ui}}{\partial t}\|_{0}^{2}+\frac{\nu}{2} \frac{d}{dt} |||\chi_{u}|||^{2}_{V}
\nonumber\\
=& c_{h}(\bm{u}_{h};\bm{u}_{h},\frac{\partial\chi_{u}}{\partial t})
-c_{h}(\widetilde{\bm{u}}_{h};\widetilde{\bm{u}}_{h},\frac{\partial\chi_{u}}{\partial t} )
+d_{h}(\bm{u}_{h};\bm{u}_{h},\frac{\partial\chi_{u}}{\partial t} )-d_{h}(\bm{\mathcal{I}}_{h}\bm{u};\bm{\mathcal{I}}_{h}\bm{u},\frac{\partial\chi_{u}}{\partial t})\nonumber\\
=&\alpha (|\bm{u}_{hi}|^{r-2}\bm{u}_{hi}-|\widetilde{\bm{u}}_{hi}|^{r-2}\widetilde{\bm{u}}_{hi}, \frac{\partial\chi_{ui}}{\partial t}  )
+d_{h}(\varphi_{u}+\chi_{u};\varphi_{u},\frac{\partial\chi_{u}}{\partial t})
-d_{h}(\varphi_{u}+\chi_{u};\bm{\mathcal{I}}_{h}\bm{u},\frac{\partial\chi_{u}}{\partial t})
-d_{h}(\bm{\mathcal{I}}_{h}\bm{u}; \varphi_{u},\frac{\partial\chi_{u}}{\partial t})\nonumber\\
\lesssim& (\|\bm{u}_{hi}\|_{0,3(r-2)}+ \|\widetilde{\bm{u}}_{hi}\|_{0,3(r-2)})^{r-2}\|\chi_{ui}\|_{0,6} \|\frac{\partial\chi_{ui}}{\partial t} \|_{0}
+C(\bm{u})(\|\varphi_{ui}+\chi_{ui}\|_{0}
+\| \varphi_{ui}\|_{0}+h|||\varphi_{u}|||_{V})|||\frac{\partial\chi_{u}}{\partial t}|||_{V}\nonumber\\
\lesssim& (|||\bm{u}_{h}|||_{V}+ |||\chi_{u}|||_{V})^{r-2}|||\chi_{u}|||_{V} \|\frac{\partial\chi_{ui}}{\partial t} \|_{0}
+C(\bm{u})(\|\varphi_{ui}+\chi_{ui}\|_{0}
+\| \varphi_{ui}\|_{0}+h|||\varphi_{u}|||_{V})|||\frac{\partial\chi_{u}}{\partial t}|||_{V}.
\end{align}
Integrating both sides of the above inequality with respect to $t$ and using the Cauchy-Schwarz inequality and integration by parts, we have
\begin{align*}
&  2\int_{0}^{t} \|\frac{\partial\chi_{ui}}{\partial t}(\tau)\|_{0}^{2} d\tau+ \nu|||\chi_{u}(t)|||^{2}_{V}
\nonumber\\
\lesssim &\int_{0}^{t}(|||\bm{u}_{h}|||_{V}+ |||\chi_{u}|||_{V})^{2(r-2)}|||\chi_{u}|||_{V}^{2}d\tau
+\int_{0}^{t} \|\frac{\partial\chi_{ui}}{\partial t} \|_{0}^{2}d\tau
+C(\bm{u})\int_{0}^{t}(\|\varphi_{ui}+\chi_{ui}\|_{0}
+\| \varphi_{ui}\|_{0}+h|||\varphi_{u}|||_{V})|||\frac{\partial\chi_{u}}{\partial t}|||_{V}d\tau\nonumber\\
\lesssim&\int_{0}^{t}(|||\bm{u}_{h}|||_{V}+ |||\chi_{u}|||_{V})^{2(r-2)}|||\chi_{u}|||_{V}^{2}d\tau
+\int_{0}^{t} \|\frac{\partial\chi_{ui}}{\partial t} \|_{0}^{2}d\tau
+C(\bm{u})(\|\varphi_{ui}+\chi_{ui}\|_{0}
+\| \varphi_{ui}\|_{0}+h|||\varphi_{u}|||_{V})|||\chi_{u}|||_{V}\nonumber\\
&+C(\bm{u})|\int_{0}^{t} (\|\varphi_{ui}+\chi_{ui}\|_{0}
+\| \varphi_{ui}\|_{0}+h|||\varphi_{u}|||_{V})_{t}|||\chi_{u}|||_{V}d\tau|\nonumber\\
\lesssim&\int_{0}^{t}(|||\bm{u}_{h}|||_{V}+ |||\chi_{u}|||_{V})^{2(r-2)}|||\chi_{u}|||_{V}^{2}d\tau
+\int_{0}^{t} \|\frac{\partial\chi_{ui}}{\partial t} \|_{0}^{2}d\tau
+\frac{2C(\bm{u})^{2}}{\nu}(\|\varphi_{ui}\|_{0}^{2}+\|\chi_{ui}\|_{0}^{2}+ h^{2}|||\varphi_{u}|||_{V}^{2})\nonumber\\
&+ \frac{\nu}{2}|||\chi_{u}|||_{V}^{2}
+\frac{C(\bm{u})^{2}}{\nu}\int_{0}^{t} (\|\varphi_{ui}+\chi_{ui}\|_{0}
+\| \varphi_{ui}\|_{0}+h|||\varphi_{u}|||_{V})_{t}^{2}d\tau
+ \nu\int_{0}^{t}|||\chi_{u}|||_{V}^{2}  d\tau,
\end{align*}
which, togerher with the continuous Gronwall's inequality (cf. \cite[Lemma 3.3]{TeschlGerald2012Odea}), Lemma \ref{Lemma 5.7} and \eqref{5.28}, yields \eqref{421}.
\end{proof}

Based on  Lemmas \ref{Lemma 5.7} and \ref{Lemma 5.9}, we can derive the following conclusion.
\begin{lemma}\label{Lemma 5.10}
Let $\bm{u}_{h}$ and $\bm{\mathcal{I}}_{h}\bm{u}$ be  the solution to the systems \eqref{WG} and \eqref{ee}, respectively.
Under the regularity assumption \eqref{regularity}, there holds
\begin{align*}
\nu \int_{0}^{t} |||\bm{\mathcal{I}}_{h}\bm{u}-\bm{u}_{h} |||^{2}_{V}d\tau \lesssim \mathcal{C}( \bm{u})h^{2m}.
\end{align*}
Here, $\mathcal{C}(\bm{u})>0$ is a  constant depending only on $ \nu, \alpha, r$ and the regularity of $\bm{u}.$
\end{lemma}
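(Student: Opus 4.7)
The plan is to exploit the decomposition
$$\bm{\mathcal{I}}_{h}\bm{u}-\bm{u}_{h}=(\bm{\mathcal{I}}_{h}\bm{u}-\widetilde{\bm{u}}_{h})+(\widetilde{\bm{u}}_{h}-\bm{u}_{h})=\varphi_{u}+\chi_{u}$$
already introduced before Lemma \ref{Lemma 5.7}, and then reduce the claim to the two pieces whose bounds have been established in Lemmas \ref{Lemma 5.7} and \ref{Lemma 5.9}.

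First, I would apply the triangle inequality for the mesh-dependent norm $|||\cdot|||_{V}$ (which is in fact a norm on $\bm{V}_h^0$ by \cite[Lemma 3.3]{CFX2016}) to obtain
$$|||\bm{\mathcal{I}}_{h}\bm{u}-\bm{u}_{h}|||_{V}\le|||\varphi_{u}|||_{V}+|||\chi_{u}|||_{V},$$
so that $|||\bm{\mathcal{I}}_{h}\bm{u}-\bm{u}_{h}|||_{V}^{2}\le 2\,|||\varphi_{u}|||_{V}^{2}+2\,|||\chi_{u}|||_{V}^{2}$. Next, I would integrate this inequality in time over $[0,t]$ and multiply by $\nu$, which yields
$$\nu\int_{0}^{t}|||\bm{\mathcal{I}}_{h}\bm{u}-\bm{u}_{h}|||_{V}^{2}\,d\tau\le 2\nu\int_{0}^{t}|||\varphi_{u}(\tau)|||_{V}^{2}\,d\tau+2\nu\int_{0}^{t}|||\chi_{u}(\tau)|||_{V}^{2}\,d\tau.$$

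Finally, I would invoke \eqref{5.22} from Lemma \ref{Lemma 5.7} to bound the $\varphi_{u}$ integral by $\mathcal{C}(\bm{u})h^{2m}$, and \eqref{5.28} from Lemma \ref{Lemma 5.9} to bound the $\chi_{u}$ integral by $\mathcal{C}(\bm{u})h^{2m}$; both invocations are legitimate because the regularity assumption \eqref{regularity} supplies exactly the hypotheses $\bm{u}\in L^{\infty}([H^{2}]^{n})\cap L^{2}([H^{m+1}]^{n})$ and $\bm{u}_{t}\in L^{2}([H^{m+1}]^{n})$ required by Lemmas \ref{Lemma 5.7} and \ref{Lemma 4.6}. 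Summing the two contributions (and, if desired, absorbing the factor $2$ into the constant $\mathcal{C}(\bm{u})$) gives the asserted bound. There is no real obstacle here: Lemma \ref{Lemma 5.10} is essentially a packaging of the two auxiliary estimates via the triangle inequality, and all of the analytic work (Galerkin orthogonality for \eqref{AWG}, the consistency identity \eqref{e2}, the trilinear-form bounds of Lemma \ref{Lemma 4.6}, and the Gronwall argument) has already been carried out in the preceding two lemmas.
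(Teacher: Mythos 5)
Your proposal is correct and is exactly the argument the paper intends: the text introduces Lemma \ref{Lemma 5.10} with the phrase ``Based on Lemmas \ref{Lemma 5.7} and \ref{Lemma 5.9}'' and supplies no further proof, since the result follows from the triangle inequality applied to the decomposition $\bm{\mathcal{I}}_{h}\bm{u}-\bm{u}_{h}=\varphi_{u}+\chi_{u}$ together with the bounds \eqref{5.22} and \eqref{5.28}. Your write-up simply makes those omitted details explicit, and your verification that the regularity assumption \eqref{regularity} covers the hypotheses of the two auxiliary lemmas is accurate.
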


Next, we derive the error estimate to the pressure.
\begin{lemma}\label{Lemma 5.11}
Let $(\bm{u}_{h},p_{h})$ and $(\bm{\mathcal{I}}_{h}\bm{u},\mathcal{P}_{h}p)$ be  the solution to \eqref{WG} and \eqref{ee}, respectively. Under the regularity assumption \eqref{regularity}, there holds
\begin{align}\label{EP-0}
  \int_{0}^{t}|||\mathcal{P}_{h}p-p_{h}|||_{Q}d\tau \lesssim \mathcal{C}( \bm{u},p)h^{m}.
\end{align}
Here, $\mathcal{C}( \bm{u},p)>0$ is a  constant depending only on $\mathcal{C}( \bm{u})$ and  $\|p\|_{m+1}.$
\end{lemma}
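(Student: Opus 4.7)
The plan is to estimate $|||\mathcal{P}_h p - p_h|||_Q$ pointwise in $t$ via the discrete inf-sup condition of Lemma \ref{theoremLBB}, and then integrate in time. First, I would subtract the momentum equation \eqref{WG1} of the semi-discrete scheme from the consistency identity \eqref{e2} of Lemma \ref{Lemma 5.3}; using the decomposition $\bm{\mathcal{I}}_h\bm{u}-\bm{u}_h = \varphi_u+\chi_u$, this yields, for every $\bm{v}_h\in\bm{V}_h^0$,
\begin{align*}
b_h(\bm{v}_h,\mathcal{P}_h p-p_h)
&= -\Big(\tfrac{\partial(\varphi_{ui}+\chi_{ui})}{\partial t},\bm{v}_{hi}\Big)
 - a_h(\varphi_u+\chi_u,\bm{v}_h) \\
&\quad -\bigl[c_h(\bm{\mathcal{I}}_h\bm{u};\bm{\mathcal{I}}_h\bm{u},\bm{v}_h)-c_h(\bm{u}_h;\bm{u}_h,\bm{v}_h)\bigr]
 -\bigl[d_h(\bm{\mathcal{I}}_h\bm{u};\bm{\mathcal{I}}_h\bm{u},\bm{v}_h)-d_h(\bm{u}_h;\bm{u}_h,\bm{v}_h)\bigr]\\
&\quad +\xi_I(\bm{u};\bm{u},\bm{v}_h)+\xi_{II}(\bm{u},\bm{v}_h)+\xi_{III}(\bm{u};\bm{u},\bm{v}_h)+(\bm{P}_m^{RT}\bm{u}_t-\bm{u}_t,\bm{v}_{hi}).
\end{align*}

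Next I would bound each right-hand term by something times $|||\bm{v}_h|||_V$. The time derivative term is handled by $\|\bm{v}_{hi}\|_0\lesssim|||\bm{v}_h|||_V$ from Lemma \ref{Lemma 2.4}; the $a_h$ term uses \eqref{a1}; the three consistency terms are treated by Lemma \ref{Lemma 5.5}; the projection error in time is treated by Lemma \ref{Lemma 5.6}. For the nonlinear Forchheimer term I would invoke Lemma \ref{Lemma 2.9} together with a generalised H\"older's inequality to reduce it to $\|\varphi_{ui}+\chi_{ui}\|_0$ multiplied by powers of $\|\bm{u}\|_2$ and $\|\bm{u}_{hi}\|_{0,3(r-2)}$, the latter being controlled by the stability result of Theorem \ref{Theorem 32} and the embedding \eqref{Cr-est} under the assumption $3\le r<\infty$ ($n=2$) and $3\le r\le 6$ ($n=3$). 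For the convective term I would use the split $d_h(\bm{\mathcal{I}}_h\bm{u};\bm{\mathcal{I}}_h\bm{u},\bm{v}_h)-d_h(\bm{u}_h;\bm{u}_h,\bm{v}_h) = d_h(\bm{\mathcal{I}}_h\bm{u};\varphi_u+\chi_u,\bm{v}_h)+d_h(\varphi_u+\chi_u;\bm{u}_h,\bm{v}_h)$ and apply Lemma \ref{Lemma 4.6} (the second piece after rewriting $\bm{u}_h=\bm{\mathcal{I}}_h\bm{u}-(\varphi_u+\chi_u)$).

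Dividing by $|||\bm{v}_h|||_V$, taking supremum over $\bm{v}_h\in\bm{V}_h^0$, and invoking Lemma \ref{theoremLBB} yields a pointwise-in-time bound on $|||\mathcal{P}_h p-p_h|||_Q$ in terms of the quantities
\[
\bigl\|\tfrac{\partial\varphi_{ui}}{\partial t}\bigr\|_0,\ \bigl\|\tfrac{\partial\chi_{ui}}{\partial t}\bigr\|_0,\ |||\varphi_u|||_V,\ |||\chi_u|||_V,\ h^m\|\bm{u}\|_{m+1}\bigl(1+\|\bm{u}\|_2^{r-2}\bigr),\ h^{m+1}|\bm{u}_t|_{m+1}.
\]
Each of these is already controlled in $L^2(0,t)$ by $\mathcal{C}(\bm{u})h^m$ via Lemmas \ref{Lemma 5.7} and \ref{Lemma 5.9} (for the $\varphi_u,\chi_u$ contributions and their time derivatives) and Lemmas \ref{Lemma 5.5}--\ref{Lemma 5.6} and the regularity assumption \eqref{regularity} (for the remaining ones). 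Integrating over $(0,t)$ and applying the Cauchy--Schwarz inequality $\int_0^t f\,d\tau\le\sqrt{t}\,\|f\|_{L^2(0,t)}$ then converts these $O(h^{2m})$ squared-integral bounds into the desired $O(h^m)$ estimate on $\int_0^t|||\mathcal{P}_h p-p_h|||_Q\,d\tau$.

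The main obstacle will be the nonlinear Forchheimer difference $c_h(\bm{\mathcal{I}}_h\bm{u};\bm{\mathcal{I}}_h\bm{u},\bm{v}_h)-c_h(\bm{u}_h;\bm{u}_h,\bm{v}_h)$: to keep the bound integrable in time one must carefully choose H\"older exponents so that the factors $(\|\bm{P}_m^{RT}\bm{u}\|+|\bm{u}_{hi}|)^{r-2}$ and $\|\bm{v}_{hi}\|$ fall in $L^p$-norms controlled either by the smoothness of $\bm{u}$ or by $|||\bm{v}_h|||_V$ via the embedding \eqref{Cr-est}, while the remaining factor is $\|\varphi_{ui}+\chi_{ui}\|_0$ (or a comparable norm) whose $L^\infty$-in-time $O(h^m)$ bound comes from the second halves of Lemmas \ref{Lemma 5.7} and \ref{Lemma 5.9}. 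Verifying that the admissible range of $r$ stated in \eqref{BF0} is compatible with these exponents is the only nontrivial bookkeeping.
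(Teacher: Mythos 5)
Your proposal follows essentially the same route as the paper: the discrete inf-sup condition of Lemma \ref{theoremLBB} applied to the difference of the consistency identity \eqref{e2} and the scheme \eqref{WG1}, term-by-term bounds via Lemmas \ref{Lemma 2.4}, \ref{Lemma 2.9}, \ref{Lemma 3.1}, \ref{Lemma 5.5}, \ref{Lemma 5.6}, and time integration using the $O(h^{2m})$ squared bounds from Lemmas \ref{Lemma 5.7}--\ref{Lemma 5.10}. The only cosmetic difference is that you split the convective difference into two trilinear pieces handled by Lemma \ref{Lemma 4.6}, while the paper uses the three-term split $\mathcal{J}_1,\mathcal{J}_2,\mathcal{J}_3$ and estimates $\mathcal{J}_1$ directly by H\"older, trace and inverse inequalities; the exponent bookkeeping for the Forchheimer term that you flag is exactly the point the paper also passes over lightly.
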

\begin{proof}
In light of  Lemmas \ref{theoremLBB}, \ref{Lemma 3.1} and \ref{Lemma 5.5}, \eqref{WG1}, \eqref{e2}, we have
\begin{align}\label{E23}
&|||\mathcal{P}_{h}p-p_{h} |||_{Q}\nonumber\\
\lesssim &\sup_{( \bm{v}_{h},q_{h})\in \bm{V}_{h}^{0}\times Q_{h}^{0}}\frac{b_{h}(\bm{v}_{h},\mathcal{P}_{h}p-p_{h} )}{|||\bm{v}_{h} |||_{V}}\nonumber\\
\lesssim & \frac{1}{|||\bm{v}_{h} |||_{V}}\big( \xi_{I}(\bm{u};\bm{u},\bm{v}_h)+\xi_{II}(\bm{u},\bm{v}_h)
+\xi_{III}(\bm{u};\bm{u},\bm{v}_h)-(\frac{\partial (\varphi_{ui}+\chi_{ui})}{\partial t},\bm{v}_{hi})- a_h(\bm{\mathcal{I}}_{h}\bm{u}-\bm{u}_h,\bm{v}_h)\nonumber\\
&-c_h(\bm{\mathcal{I}}_{h}\bm{u};\bm{\mathcal{I}}_{h}\bm{u},\bm{v}_h)
+c_h(\bm{u}_{h};\bm{u}_{h},\bm{v}_h)
-d_h(\bm{\mathcal{I}}_{h}\bm{u};\bm{\mathcal{I}}_{h}\bm{u},\bm{v}_h)
+d_h(\bm{u}_{h};\bm{u}_{h},\bm{v}_h)\big)\nonumber\\
\lesssim & h^{m}(\|\bm{u}\|_{2}\|\bm{u}\|_{m+1}+\|\bm{u}\|_{m+1}+\|\bm{u}\|_{2}^{r-2}\|\bm{u}\|_{m+1} )
+\|\frac{\partial(\varphi_{ui}+\chi_{ui})}{\partial t}\|_{0}+\nu ||| \bm{\mathcal{I}}_{h}\bm{u}-\bm{u}_h |||_{V}\nonumber\\
&+\alpha C_{\widetilde{r}}^{r}C_{r}(||| \varphi_{u}|||_{V}
+|||\chi_{u} |||_{V} )^{r-2}||| \bm{\mathcal{I}}_{h}\bm{u}-\bm{u}_h |||_{V}
+\frac{1}{|||\bm{v}_{h} |||_{V} }(d_h(\bm{\mathcal{I}}_{h}\bm{u}-\bm{u}_{h};\bm{\mathcal{I}}_{h}\bm{u}-\bm{u}_{h},\bm{v}_h)\nonumber\\
&+d_h(\bm{\mathcal{I}}_{h}\bm{u}-\bm{u}_{h};\bm{\mathcal{I}}_{h}\bm{u},\bm{v}_h)
+d_h(\bm{\mathcal{I}}_{h}\bm{u};\bm{\mathcal{I}}_{h}\bm{u}-\bm{u}_{h},\bm{v}_h)).
\end{align}
Denote
\begin{align}
&\mathcal{J}_{1}=d_h(\bm{\mathcal{I}}_{h}\bm{u}-\bm{u}_{h};\bm{\mathcal{I}}_{h}\bm{u}-\bm{u}_{h},\bm{v}_h),\quad
\mathcal{J}_{2}=d_h(\bm{\mathcal{I}}_{h}\bm{u}-\bm{u}_{h};\bm{\mathcal{I}}_{h}\bm{u},\bm{v}_h),\quad
\mathcal{J}_{3}=d_h(\bm{\mathcal{I}}_{h}\bm{u};\bm{\mathcal{I}}_{h}\bm{u}-\bm{u}_{h},\bm{v}_h).
\end{align}
Next, using the definitions of $d_{h}$ and weak operators, integration by parts,  H\"{o}lder's inequality, the trace inequality, the inverse inequality and the  projection properties, we estimate $\mathcal{J}_{j}(j=1,2,3)$ one by one.
%\begin{subequations}
\begin{align}
\mathcal{J}_{1}=&\frac{1}{2}(\nabla_{w, m}\cdot((\bm{\mathcal{I}}_{h}\bm{u}-\bm{u}_{h})
\otimes (\bm{P}_{m} ^{RT}\bm{u}-\bm{u}_{hi})),\bm{v}_{hi} ) %\nonumber\\&
-\frac{1}{2}(\nabla_{w, m}\cdot(\bm{v}_{h}
\otimes (\bm{P}_{m} ^{RT}\bm{u}-\bm{u}_{hi})),\bm{P}_{m} ^{RT}\bm{u}-\bm{u}_{hi} )\nonumber\\
=&\frac{1}{2}((\bm{P}_{m} ^{RT}\bm{u}-\bm{u}_{hi})\cdot(\nabla_{h}(\bm{P}_{m} ^{RT}\bm{u}-\bm{u}_{hi})),\bm{v}_{hi} )\nonumber\\&
-\frac{1}{2}\langle((\bm{P}_{m} ^{RT}\bm{u}-\bm{u}_{hi})\cdot\bm{n})((\bm{P}_{m} ^{RT}\bm{u}-\bm{u}_{hi})-(\bm{\Pi}_{m}^{B}\bm{u} -\bm{u}_{hb})),\bm{v}_{hi}\rangle_{\partial\mathcal{T}_{h}}\nonumber\\
&-\frac{1}{2}((\bm{P}_{m} ^{RT}\bm{u}-\bm{u}_{hi})\cdot(\nabla_{h}\bm{v}_{hi}),\bm{P}_{m} ^{RT}\bm{u}-\bm{u}_{hi} )%\nonumber\\&
+\frac{1}{2}\langle((\bm{P}_{m} ^{RT}\bm{u}-\bm{u}_{hi})\cdot\bm{n})(\bm{v}_{hi}-\bm{v}_{hb}),\bm{P}_{m} ^{RT}\bm{u}-\bm{u}_{hi}\rangle_{\partial\mathcal{T}_{h}}\nonumber\\
\lesssim& \sum _{K\in \mathcal{T}_{h}}\big(
  \|\bm{P}_{m} ^{RT}\bm{u}-\bm{u}_{hi} \|_{0,3} \|\nabla_{h}(\bm{P}_{m} ^{RT}\bm{u}-\bm{u}_{hi})\|_{0,2} \|\bm{v}_{hi} \|_{0,6}    %\nonumber\\&
+h^{-1}_{K}\|\bm{P}_{m} ^{RT}\bm{u}-\bm{u}_{hi} \|_{0,2}|||\bm{\mathcal{I}}_{h}\bm{u}-\bm{u}_{h}|||_{V} \cdot\|\bm{v}_{hi} \|_{0,2} \nonumber\\
  &
  + \|\bm{P}_{m} ^{RT}\bm{u}-\bm{u}_{hi} \|_{0,3} \|\bm{P}_{m} ^{RT}\bm{u}-\bm{u}_{hi}\|_{0,6} \| \nabla_{h} \bm{v}_{hi} \|_{0,2}  +h^{-1}_{K}\|\bm{P}_{m} ^{RT}\bm{u}-\bm{u}_{hi} \|_{0,2}^{2}||| \bm{v}_{h}|||_{V}                     \big)\nonumber\\
  \lesssim& C( \bm{u}) (h ^{m-\frac{n}{6}}|||\bm{\mathcal{I}}_{h}\bm{u}-\bm{u}_{h}|||_{V}+h ^{2m-\frac{1}{2}} )||| \bm{v}_{h}|||_{V}.
\end{align}
%\end{subequations}
Similarly, we have
\begin{align}\label{E26}
\mathcal{J}_{2}+\mathcal{J}_{3}\lesssim C( \bm{u})|||\bm{\mathcal{I}}_{h}\bm{u}-\bm{u}_{h}|||_{V}\cdot||| \bm{v}_{h}|||_{V}.
\end{align}
As a consequence, we obtain
\begin{align}\label{Php}
|||\mathcal{P}_{h}p-p_{h}|||_{Q}%\nonumber\\
\lesssim & h^{m}(\|\bm{u}\|_{2}\|\bm{u}\|_{m+1}+\|\bm{u}\|_{m+1}+\|\bm{u}\|_{2}^{r-2}\|\bm{u}\|_{m+1} )
+\|\frac{\partial (\varphi_{ui}+\chi_{ui})}{\partial t}\|_{0}\nonumber\\
&+\big(\nu+ \alpha C_{\widetilde{r}}^{r}C_{r}(||| \varphi_{u}|||_{V}
+|||\chi_{u} |||_{V} )+ C( \bm{u}) \big)||| \bm{\mathcal{I}}_{h}\bm{u}-\bm{u}_h |||_{V}.
\end{align}
Integrating both sides of inequality \eqref{Php} with respect to $t$ and combining \eqref{E23} - \eqref{E26} and Lemma \ref{Lemma 5.10}, we get \eqref{EP-0}, where the estimate of $\int _{0}^{t}\|\frac{\partial (\varphi_{ui}+\chi_{ui})}{\partial t}\|_{0} d \tau $ follows from  the triangle inequality, Lemmas \ref{Lemma 5.6}, \ref{Lemma 5.7}, \ref{Lemma 5.9}, \eqref{5.23} and \eqref{421}.
\end{proof}

Now we are in the position to get the a priori error estimates for the semi-discrete WG scheme \eqref{WG}.
\begin{theorem} \label{Theoremul2}
Let  $(\bm{u},p)$ and $(\bm{u}_{h},p_{h})$ be  the solution to \eqref{weak} and \eqref{WG},  respectively.
 Under the regularity assumption \eqref{regularity}, there holds
\begin{align}
\|\bm{u}-\bm{u}_{hi} \|_{0}&\lesssim \mathcal{C}( \bm{u})h^{m+1},\label{ul2}
\end{align}
where $\mathcal{C}(\bm{u})>0$ is a  constant depending only on $ \nu, \alpha, r$, $\|\bm{u}\|_2,$  $\|\bm{u}\|_{m+1},$ $\|\bm{u}_t\|_{m+1}.$
 \end{theorem}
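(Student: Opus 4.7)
The plan is to exploit the three--way splitting $\bm u-\bm u_{hi}=\delta_{ui}+\varphi_{ui}+\chi_{ui}$ already introduced in Section~\ref{section4} and estimate each piece by $h^{m+1}$ separately. The contribution of $\delta_{ui}=\bm u-\bm P_m^{RT}\bm u$ is immediate: standard approximation properties of the Raviart--Thomas projection give $\|\delta_{ui}\|_0\lesssim h^{m+1}\|\bm u\|_{m+1}$, so this piece is already of the desired order.

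The essential work is to improve, from the energy--norm estimates of Lemmas~\ref{Lemma 5.7} and~\ref{Lemma 5.9}, the $L^2$--bounds for $\varphi_{ui}$ and $\chi_{ui}$ by one power of $h$, since passing through Lemma~\ref{Lemma 2.4} would only yield $O(h^{m})$. To gain this power I would run an Aubin--Nitsche duality argument. At a fixed time $t$, set $\bm g=\varphi_{ui}(t)+\chi_{ui}(t)$ and introduce the linearized steady Brinkman--type dual problem
\begin{equation*}
-\nu\Delta\bm\psi+\alpha|\widetilde{\bm u}(t)|^{r-2}\bm\psi+\nabla\phi=\bm g,\qquad \nabla\cdot\bm\psi=0 \text{ in }\Omega,\qquad \bm\psi=\bm 0 \text{ on }\partial\Omega,
\end{equation*}
which, under \eqref{regularity}, admits $H^2\times H^1$ regularity $\|\bm\psi\|_2+\|\phi\|_1\lesssim\|\bm g\|_0$. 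Testing the error equation satisfied by $\varphi_u+\chi_u$ (obtained by subtracting \eqref{WG} from \eqref{ee}) against $\bm v_h=\bm{\mathcal I}_h\bm\psi\in\bm V_{0h}$, and exploiting \emph{pressure--robustness} via the identity $\nabla\cdot\bm P_m^{RT}\bm\psi=0$ (so the contribution involving $\mathcal P_h\phi$ drops out completely), I expect an estimate of the shape
\begin{equation*}
\|\bm g\|_0^2\lesssim h\bigl(|||\varphi_u+\chi_u|||_V+\|\partial_t(\varphi_{ui}+\chi_{ui})\|_0+\text{nonlinear residuals}\bigr)\|\bm g\|_0,
\end{equation*}
which, combined with the $O(h^m)$ bounds from Lemmas~\ref{Lemma 5.7}, \ref{Lemma 5.9}, \ref{Lemma 5.10}, delivers $\|\bm g\|_0\lesssim h^{m+1}\mathcal C(\bm u)$.

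The main obstacle will be handling the nonlinear contributions to the duality identity. The Forchheimer term contributes $\alpha(|\widetilde{\bm u}_{hi}|^{r-2}\widetilde{\bm u}_{hi}-|\bm u_{hi}|^{r-2}\bm u_{hi},\bm{\mathcal I}_h\bm\psi)$, which I would control using Lemma~\ref{Lemma 2.9} together with the $L^\infty(H^2)$ regularity of $\bm u$ to bound the factor $(|\bm u|+|\bm u_h|)^{r-2}$. The convective term $d_h$ is the most delicate: after splitting as $d_h(\bm{\mathcal I}_h\bm u-\bm u_h;\bm{\mathcal I}_h\bm u-\bm u_h,\bm{\mathcal I}_h\bm\psi)+d_h(\bm{\mathcal I}_h\bm u-\bm u_h;\bm{\mathcal I}_h\bm u,\bm{\mathcal I}_h\bm\psi)+d_h(\bm{\mathcal I}_h\bm u;\bm{\mathcal I}_h\bm u-\bm u_h,\bm{\mathcal I}_h\bm\psi)$ as in Lemma~\ref{Lemma 5.11}, each summand should yield a factor of $h$ after integrating by parts on the dual side and using the boundedness of $\|\bm\psi\|_2$. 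The time--derivative term $(\partial_t(\varphi_{ui}+\chi_{ui}),\bm{\mathcal I}_h\bm\psi)$ is handled by the $L^2(L^2)$ bounds on $\partial_t\varphi_u$ and $\partial_t\chi_u$ from \eqref{5.23} and \eqref{421} together with an extra factor $h$ extracted from $\bm{\mathcal I}_h\bm\psi-\bm\psi$. A final triangle inequality and Gronwall in time combine the three pieces into \eqref{ul2}.
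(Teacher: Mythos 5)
Your overall strategy --- split $\bm{u}-\bm{u}_{hi}$ into a projection error plus discrete errors and upgrade the discrete part from $O(h^{m})$ in the energy norm to $O(h^{m+1})$ in $L^2$ by duality --- is not what the paper does, and as formulated it has a genuine gap. The paper's proof is a direct supercloseness/energy argument: it sets $\varepsilon_{u}=\bm{u}_{h}-\bm{\mathcal{I}}_{h}\bm{u}$, subtracts the two formulations, tests with $(\varepsilon_{u},\varepsilon_{p})$ so that $b_h(\varepsilon_{u},\varepsilon_{p})=0$ (pressure-robustness), and observes that every surviving right-hand-side term carries a factor $\|\bm{P}_{m}^{RT}\bm{u}-\bm{u}\|_{0}$ or $\|\bm{u}_t-\bm{P}_{m}^{RT}\bm{u}_t\|_{0}$, both already $O(h^{m+1})$ by the \emph{local} approximation properties of the RT projection; the terms multiplying $|||\varepsilon_{u}|||_{V}$ are absorbed by Young's inequality, Gronwall gives $\|\varepsilon_{ui}\|_{0}\lesssim h^{m+1}$, and the triangle inequality with $\|\bm{u}-\bm{P}_m^{RT}\bm{u}\|_0\lesssim h^{m+1}\|\bm{u}\|_{m+1}$ finishes. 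No duality, and hence no elliptic-regularity hypothesis, is needed.

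The concrete gap in your route is the time-derivative term. You pose a \emph{steady} dual problem at each fixed $t$ with datum $\bm{g}=\varphi_{ui}(t)+\chi_{ui}(t)$, so the duality identity contains $(\partial_t(\varphi_{ui}+\chi_{ui}),\bm{\mathcal{I}}_h\bm{\psi})$. Writing $\bm{\mathcal{I}}_h\bm{\psi}=(\bm{\mathcal{I}}_h\bm{\psi}-\bm{\psi})+\bm{\psi}$ gains a factor $h$ only on the first piece; the remaining piece $(\partial_t\bm{g},\bm{\psi})$ is bounded only by $\|\partial_t\bm{g}\|_{0}\|\bm{\psi}\|_{0}\lesssim\|\partial_t\bm{g}\|_{0}\|\bm{g}\|_{0}$, and the available bounds \eqref{5.23} and \eqref{421} give $\int_0^t\|\partial_t\varphi_{ui}\|_0^2+\|\partial_t\chi_{ui}\|_0^2\lesssim h^{2m}$, i.e.\ only $O(h^{m})$, so the argument closes at $O(h^{m})$ rather than $O(h^{m+1})$. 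Rescuing it would require either a backward-in-time parabolic dual problem or an $O(h^{m+1})$ bound on $\|\partial_t\bm{g}\|_0$, neither of which is supplied by the cited lemmas. A secondary issue is that the $H^2\times H^1$ regularity you invoke for the dual problem requires convexity of the polygonal/polyhedral domain, which the paper never assumes; the paper's argument sidesteps this because the $O(h^{m+1})$ smallness comes from local RT approximation rather than from a global dual estimate.
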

\begin{proof}
Let $\varepsilon_{u}=\bm{u}_{h}-\bm{\mathcal{I}}_{h}\bm{u}$ and $\varepsilon_{p}=p_{h}-\mathcal{P}_{h}p .$\\
Subtracting \eqref{WG} from \eqref{weak}, we gain
\begin{align}
(\frac{\partial\varepsilon_{ui}}{\partial t},\bm{v}_{hi}  )+a_h(\varepsilon_{u},\bm{v}_{h} )+b_h(\bm{v}_{h},\varepsilon_{p} )
+\alpha(|\bm{u}_{hi}|^{r-2}\bm{u}_{hi}-|\bm{P}_{m}^{RT}\bm{u}|^{r-2}\bm{P}_{m}^{RT}\bm{u},\bm{v}_{hi}  )
&\nonumber\\
=(\bm{u}_t-\bm{P}_{m}^{RT}\bm{u}_t,\bm{v}_{hi}  )-\alpha(|\bm{P}_{m}^{RT}\bm{u}|^{r-2}\bm{P}_{m}^{RT}\bm{u}-|\bm{u}|^{r-2}\bm{u},\bm{v}_{hi}  )
-d_h(\bm{\mathcal{I}}_{h}\bm{u}-\bm{u}; \bm{u},\bm{v}_{h} )\nonumber\\
-d_h(\bm{\mathcal{I}}_{h}\bm{u};\bm{\mathcal{I}}_{h}\bm{u}- \bm{u},\bm{v}_{h} )
-d_h(\varepsilon_{u}; \bm{\mathcal{I}}_{h}\bm{u},\bm{v}_{h}  )
-d_h(\bm{u}_{h} ; \varepsilon_{u} ,\bm{v}_{h}  )&
,\\
b_h(\varepsilon_{u},q_h  )=0.&
\end{align}
Taking $\bm{v}_{h}=\varepsilon_{u}$ and $q_h= \varepsilon_{p}$ in the above equations, we obtain
\begin{align}
&\frac{1}{2}\frac{d}{d t}\|\varepsilon_{ui} \|_0^{2}+\nu|||\varepsilon_{u}|||_V^{2}+\alpha\|\varepsilon_{ui}\|_{0,r}^{2}\nonumber\\
\lesssim&\|\bm{u}_t-\bm{P}_{m}^{RT}\bm{u}_t\|_0\|\varepsilon_{ui}\|_0
+\alpha((|\bm{P}_{m}^{RT}\bm{u}|+|\bm{u}|)^{r-2}|\bm{P}_{m}^{RT}\bm{u}- \bm{u}|,\varepsilon_{ui} )\nonumber\\
&+|d_h(\bm{\mathcal{I}}_{h}\bm{u}-\bm{u}; \bm{u},\varepsilon_{u} )|
+|d_h(\bm{\mathcal{I}}_{h}\bm{u};\bm{\mathcal{I}}_{h}\bm{u}- \bm{u},\varepsilon_{u} )|
+|d_h(\varepsilon_{u}; \bm{\mathcal{I}}_{h}\bm{u},\varepsilon_{u}  )|\nonumber\\
\lesssim& h^{m+1}\|\bm{u}_t \|_{m+1}\|\varepsilon_{ui}\|_0
+\|\bm{u}\|_{2}^{r-2}\|\bm{P}_{m}^{RT}\bm{u}- \bm{u}\|_{0}\|\varepsilon_{ui}\|_{0}
+\|\bm{P}_{m}^{RT}\bm{u}-\bm{u}\|_0 \|\bm{u}\|_2|||\varepsilon_{u}|||_V
+\|\bm{u}\|_2\|\varepsilon_{ui}\|_0|||\varepsilon_{u}|||_V\nonumber\\
\lesssim& h^{2m+2}(\|\bm{u}_t \|_{m+1}^{2}+\|\bm{u} \|_{2}^{2}\|\bm{u} \|_{m+1}^{2} +\|\bm{u} \|_{2}^{2r-2}\|\bm{u} \|_{m+1}^{2} ) +\frac{1}{2}\|\varepsilon_{ui}\|_0^{2}+\frac{\nu}{2}|||\varepsilon_{u}|||_V^{2},
\end{align}
which implies
\begin{align}
&\|\varepsilon_{ui} \|_0^{2}+\nu\int_0^t|||\varepsilon_{u}|||_V^{2}d \tau+2\alpha\int_0^t \|\varepsilon_{ui}\|_{0,r}^{2} d \tau\nonumber\\
\lesssim& h^{2m+2} \int_0^t(\|\bm{u}_t \|_{m+1}^{2}+\|\bm{u} \|_{2}^{2}\|\bm{u} \|_{m+1}^{2} +\|\bm{u} \|_{2}^{2r-2}\|\bm{u} \|_{m+1}^{2} )d \tau
+ \int_0^t\|\varepsilon_{ui}\|_0^{2}d \tau.
\end{align}
According to the above inequality, the continuous Gronwall's inequality (cf. \cite[Lemma 3.3]{TeschlGerald2012Odea}), the triangle inequality and the projection properties, we gain \eqref{ul2}.
\end{proof}

Finally, based upon the projection properties, Lemmas \ref{Lemma 5.10} and \ref{Lemma 5.11}, Theorem \ref{Theoremul2},  we can derive the following main conclusion with respect to  the semi-discrete WG scheme \eqref{WG}.
 \begin{theorem} \label{Theorem 5.1} %(\color{black}A \color{black}error estimate)
Under the same conditions as in Theorem \ref{Theoremul2},  there hold
\begin{subequations}
\begin{align}
\|\bm{u}-\bm{u}_{hi} \|_{0}&\lesssim \mathcal{C}( \bm{u})h^{m+1},\\
\nu \int_{0}^{t} |||\bm{u}-\bm{u}_{h} |||^{2}_{V}d\tau &\lesssim \mathcal{C}( \bm{u})h^{2m},\label{EU}\\
  \int_{0}^{t}\|p-p_{h}\|_{0}d\tau &\lesssim \mathcal{C}( \bm{u},p)h^{m}.\label{EP}
\end{align}
\end{subequations}
Here, $\mathcal{C}(\bm{u})>0$ is a  constant depending only on $ \nu, \alpha, r$, $\|\bm{u}\|_2,$  $\|\bm{u}\|_{m+1},$ $\|\bm{u}_t\|_2,$ $\|\bm{u}_t\|_{m+1},$ as well as, $\mathcal{C}( \bm{u},p)$
 has been defined in Lemma \ref{Lemma 5.11}.
 \end{theorem}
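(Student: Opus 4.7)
The plan is to assemble the three claimed bounds by combining the intermediate results already in hand with triangle-inequality arguments against the projections $\bm{\mathcal{I}}_h\bm{u}$ and $\mathcal{P}_hp$. The first estimate $\|\bm{u}-\bm{u}_{hi}\|_0\lesssim \mathcal{C}(\bm{u})h^{m+1}$ is literally the content of Theorem \ref{Theoremul2}, so nothing further is required for that line; it only needs to be restated as part of the collected conclusion.

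For the energy-norm estimate \eqref{EU}, the route is the splitting
\begin{equation*}
|||\bm{u}-\bm{u}_h|||_V \;\le\; |||\bm{u}-\bm{\mathcal{I}}_h\bm{u}|||_V \;+\; |||\bm{\mathcal{I}}_h\bm{u}-\bm{u}_h|||_V.
\end{equation*}
The second term, integrated in time, is exactly the conclusion of Lemma \ref{Lemma 5.10}. For the first term, I would invoke Lemma \ref{Lemma 2.8} to identify $\nabla_{w,l}\bm{\mathcal{I}}_h\bm{u}=\bm{\Pi}_l^\ast(\nabla\bm{u})$, so that by the definition of $|||\cdot|||_V$ one only needs the standard $L^2$- and boundary-projection error bounds for $\bm{P}_m^{RT}$ and $\bm{\Pi}_m^B$, which yield $O(h^m)\|\bm{u}\|_{m+1}$. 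Squaring and integrating in time gives the desired $h^{2m}$ rate for the projection error, and combining with Lemma \ref{Lemma 5.10} finishes \eqref{EU}.

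For the pressure estimate \eqref{EP}, I would again triangulate:
\begin{equation*}
\|p-p_{hi}\|_0 \;\le\; \|p-\Pi_{m-1}^\ast p\|_0 \;+\; \|\Pi_{m-1}^\ast p-p_{hi}\|_0.
\end{equation*}
The first piece is bounded by standard $L^2$-projection approximation as $O(h^m)\|p\|_{m+1}$. The second piece is dominated by $|||\mathcal{P}_hp-p_h|||_Q$ directly from the definition of $|||\cdot|||_Q$, so integration in time and Lemma \ref{Lemma 5.11} produce the claimed $\mathcal{C}(\bm{u},p)h^m$ bound.

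The only part requiring care—and the one I would expect to be the main technical obstacle if it had not already been handled earlier—is the control of the nonlinear $c_h$ and $d_h$ contributions inside Lemmas \ref{Lemma 5.10} and \ref{Lemma 5.11}; these depend on bounds for the projection of $\bm{u}$ in $L^{3(r-2)}$ and on the sharp splitting of $d_h(\bm{\mathcal{I}}_h\bm{u}-\bm{u}_h;\bm{\mathcal{I}}_h\bm{u}-\bm{u}_h,\cdot)$ via the Raviart--Thomas projection. Since those steps are already established, the proof of Theorem \ref{Theorem 5.1} itself reduces to the triangle-inequality assembly above together with routine projection approximation, and no further genuinely new argument is needed.
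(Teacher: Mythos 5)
Your proposal is correct and follows essentially the same route as the paper, which likewise derives the theorem by triangle-inequality assembly from Theorem \ref{Theoremul2}, Lemma \ref{Lemma 5.10}, Lemma \ref{Lemma 5.11}, and the standard approximation properties of $\bm{P}_m^{RT}$, $\bm{\Pi}_m^B$ and $\Pi_{m-1}^{\ast}$ (including the identification $\nabla_{w,l}\bm{\mathcal{I}}_h\bm{u}=\bm{\Pi}_l^{\ast}(\nabla\bm{u})$ from Lemma \ref{Lemma 2.8}). Your closing observation is also accurate: all genuinely technical work lives in the cited lemmas, so no new argument is needed here.
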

\begin{remark}
According to  Lemma \ref{Lemma 2.3}, \eqref{EU} and \eqref{EP} in Theorem \ref{Theorem 5.1}, we also have
\begin{subequations}
\begin{align}
	\nu \int_{0}^{t} \|\nabla\bm{u}-\nabla_{h}\bm{u}_{hi} \|_{0}^{2}d\tau&\lesssim \mathcal{C}( \bm{u})h^{2m} ,\label{EUU}\\
	 \int_{0}^{t}\|p-p_{hi}\|_{0}d\tau &\lesssim \mathcal{C}( \bm{u},p)h^{m},\label{EPP}
	\end{align}
\end{subequations}
where $\mathcal{C}( \bm{u})$ and $\mathcal{C}( \bm{u},p)$ are defined in Theorem \ref{Theorem 5.1}.
\end{remark}
\begin{remark}
The result \eqref{EUU} indicates  that the velocity error estimate is independent of the pressure approximation, which means that the proposed semi-discrete WG scheme \eqref{WG} is pressure-robust.
\end{remark}

\section{ Fully discrete WG method}

\subsection{ Fully discrete scheme}
In this section, the first-order backward Euler is adopted for temporal discretization to establish the fully discrete scheme.

Recall that $N > 0$ is an integer and $0=t_0<t_1<...<t_N =T$ is a uniform division of time domain $[0,T]$ with the time step $\Delta t =\frac{T}{N}$.
We denote by $ ( \bm{u}_{h}^{k}, p_{h}^{k})$ the approximation of $ ( \bm{u}_{h}(t), p_{h}(t))$ at the discrete time $t_{k}=k \triangle t$ for $k=1,...,N.$
 Replacing the time derivative $\frac{\partial \bm{u}_{hi}}{\partial t}$ at time $t_{k}$ in \eqref{WG} by the backward difference quotient
\begin{align}
	D_{t}\bm{u}_{hi}^{k}=\frac{\bm{u}_{hi}^{k}-\bm{u}_{hi}^{k-1}}{\triangle t},
\end{align}
the fully discrete scheme for the problem \eqref{BF0} is given as follows.
For each $1\leq k\leq N$, find $(\bm{u}_{h}^{k},p_{h}^{k})=(\{\bm{u}_{hi}^{k},\bm{u}_{hb}^{k}\},\{p_{hi}^{k},p_{hb}^{k}\})\in \bm{V}_{h}^{0}\times Q_{h}^{0}$ such that
\begin{subequations}\label{fullwg}
\begin{align}
( D_{t}\bm{u}_{hi}^{k},\bm{v}_{hi} )
+a_h(\bm{u}_h^{k},\bm{v}_h)+b_h(\bm{v}_{h},p_h^{k})%&\nonumber\\
+c_h(\bm{u}_h^{k} ;\bm{u}_h^{k},\bm{v}_h )
+d_h(\bm{u}_h^{k} ;\bm{u}_h^{k},\bm{v}_h )&=(\bm{f}^{k},\bm{v}_{hi}),\label{fullw1}\\
b_h(\bm{u}_{h}^{k},q_h)&=0,
\end{align}
\end{subequations}
with the initial value $\bm{u}_h^{0}=\bm{\mathcal{I}}_{h}\bm{u}_{0}.$
\begin{remark}
Similar to Theorem \ref{TH2.2}, we can demonstrate that the fully discrete scheme \eqref{fullwg} also yields globally divergence-free approximation for velocity $\bm{u}_{hi}^{k}$ in a pointwise sense.
\end{remark}
\subsection{  Stability results }

\begin{theorem} \label{Theorem 6.1}
Assume that $\bm{f}\in L^{2}( [L^{2}]^{n})$ and $\bm{u}_{hi}^{0}\in [L^{2}(\Omega)]^{n}$.
Then, for the fully discrete WG method  \eqref{fullwg}, the following stability results hold:
for all $1 \leq \widetilde{l}\leq N,$
\begin{subequations}
\begin{align}
\|\bm{u}_{hi}^{\widetilde{l}} \|_{0}^{2}+\sum_{k=1}^{\widetilde{l}}\|\bm{u}_{hi}^{k}-\bm{u}_{hi}^{k-1} \|_{0}^{2}
+\nu\triangle t \sum_{k=1}^{\widetilde{l}} |||\bm{u}_{h}^{k} |||^{2}_{V}
 +2\alpha\triangle t \sum_{k=1}^{\widetilde{l}}\|\bm{u}_{hi}^{k}\|_{0,r}^{r}%&\nonumber\\
\lesssim\|\bm{u}_{hi}^{0} \|_{0}^{2}+\frac{1}{\nu}\triangle t\sum_{k=1}^{\widetilde{l}}\|\bm{f}^{k} \|_{\ast,h}^{2},&\label{EEU}\\
\triangle t \sum_{k=1}^{\widetilde{l}}\|p_{hi}^{k}\|_{0}\lesssim C(\bm{f}, \nu,\alpha,r,\bm{u}_{hi}^{0} )&.\label{EE-P}
\end{align}
\end{subequations}
Here,
$C(\bm{f}, \nu,\alpha,r,\bm{u}_{hi}^{0} )>0$ is  a constant depending only on $\bm{f}, \nu,\alpha,r,\bm{u}_{hi}^{0}$.
\end{theorem}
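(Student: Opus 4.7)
My plan is to attack the two bounds in \eqref{EEU} and \eqref{EE-P} separately, establishing the energy stability first and then leveraging it to control the pressure.

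For the energy estimate \eqref{EEU}, I would test the momentum equation \eqref{fullw1} with $\bm{v}_h = \bm{u}_h^k \in \bm{V}_h^0$ and the divergence constraint with $q_h = p_h^k$. The pressure coupling terms $b_h(\bm{u}_h^k,p_h^k)$ cancel between the two equations; the trilinear convective term $d_h(\bm{u}_h^k;\bm{u}_h^k,\bm{u}_h^k)=0$ by the antisymmetry property \eqref{d0}; and the coercive identities \eqref{a2}, \eqref{c2} of Lemma \ref{Lemma 3.1} produce the contributions $\nu|||\bm{u}_h^k|||_V^2$ and $\alpha\|\bm{u}_{hi}^k\|_{0,r}^r$. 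For the discrete time derivative I apply the standard backward Euler identity
\begin{equation*}
(D_t\bm{u}_{hi}^k,\bm{u}_{hi}^k)=\frac{1}{2\Delta t}\bigl(\|\bm{u}_{hi}^k\|_0^2-\|\bm{u}_{hi}^{k-1}\|_0^2+\|\bm{u}_{hi}^k-\bm{u}_{hi}^{k-1}\|_0^2\bigr),
\end{equation*}
which produces the three left-hand-side contributions of \eqref{EEU}, and the forcing is controlled via \eqref{fh} together with Young's inequality $(\bm{f}^k,\bm{u}_{hi}^k)\le\tfrac{1}{2\nu}\|\bm{f}^k\|_{*,h}^2+\tfrac{\nu}{2}|||\bm{u}_h^k|||_V^2$, the second half of which is absorbed by the viscous term. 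Multiplying by $2\Delta t$ and summing $k=1,\dots,\widetilde l$ produces a telescoping sum in $\|\bm{u}_{hi}^k\|_0^2$ and gives \eqref{EEU}; this is the fully discrete analogue of Theorem \ref{Theorem 32}, with the continuous Gronwall step replaced by telescoping.

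For the pressure estimate \eqref{EE-P}, the natural starting point is the inf-sup inequality of Lemma \ref{theoremLBB} together with $\|p_{hi}^k\|_0\le|||p_h^k|||_Q$, giving
\begin{equation*}
\|p_{hi}^k\|_0\lesssim\sup_{\bm{v}_h\in\bm{V}_h^0}\frac{b_h(\bm{v}_h,p_h^k)}{|||\bm{v}_h|||_V}.
\end{equation*}
Substituting $b_h(\bm{v}_h,p_h^k)$ from \eqref{fullw1} and bounding each remaining term via Lemma \ref{Lemma 3.1} (using the Hölder-form bound $|c_h(\bm{u}_h^k;\bm{u}_h^k,\bm{v}_h)|\lesssim\alpha\|\bm{u}_{hi}^k\|_{0,r}^{r-1}|||\bm{v}_h|||_V$ for the Darcy--Forchheimer nonlinearity) yields
\begin{equation*}
\|p_{hi}^k\|_0\lesssim\|\bm{f}^k\|_{*,h}+\|D_t\bm{u}_{hi}^k\|_0+\nu|||\bm{u}_h^k|||_V+\alpha\|\bm{u}_{hi}^k\|_{0,r}^{r-1}+|||\bm{u}_h^k|||_V^2.
\end{equation*}
Multiplying by $\Delta t$ and summing, the forcing, viscous, Forchheimer and convective pieces can each be bounded uniformly in $\Delta t$ by discrete Cauchy--Schwarz/Hölder in time together with \eqref{EEU}; for example, $\alpha\Delta t\sum_k\|\bm{u}_{hi}^k\|_{0,r}^{r-1}\le\alpha T^{1/r}\bigl(\Delta t\sum_k\|\bm{u}_{hi}^k\|_{0,r}^r\bigr)^{(r-1)/r}$ is controlled by \eqref{EEU}, and $\Delta t\sum_k|||\bm{u}_h^k|||_V^2$ is immediate from \eqref{EEU}.

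The hardest step is the time-derivative contribution $\Delta t\sum_k\|D_t\bm{u}_{hi}^k\|_0=\sum_k\|\bm{u}_{hi}^k-\bm{u}_{hi}^{k-1}\|_0$, since \eqref{EEU} only delivers the \emph{squared} jump sum $\sum_k\|\bm{u}_{hi}^k-\bm{u}_{hi}^{k-1}\|_0^2$, and a plain Cauchy--Schwarz in $k$ costs a factor $\widetilde l^{1/2}$. To close the bound while keeping the constant $C(\bm{f},\nu,\alpha,r,\bm{u}_{hi}^0)$ independent of $\Delta t$, one has to exploit the telescoping identity $\sum_k(\bm{u}_{hi}^k-\bm{u}_{hi}^{k-1})=\bm{u}_{hi}^{\widetilde l}-\bm{u}_{hi}^0$ after a careful rearrangement that keeps the inf-sup optimizer outside the time sum, so that the right-hand side is absorbed by $\|\bm{u}_{hi}^{\widetilde l}\|_0+\|\bm{u}_{hi}^0\|_0$, itself controlled by \eqref{EEU}. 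This is the main technical point; once it is handled, collecting all the pieces assembles into \eqref{EE-P}.
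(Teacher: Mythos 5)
Your treatment of \eqref{EEU} is exactly the paper's argument: test \eqref{fullwg} with $(\bm{u}_h^k,p_h^k)$, use \eqref{a2}, \eqref{c2}, \eqref{d0}, \eqref{fh}, the identity $2(b-a,b)=b^2-a^2+(b-a)^2$ and Young's inequality, then multiply by $\Delta t$ and telescope. Your strategy for \eqref{EE-P} is also the paper's: invoke the inf-sup condition of Lemma \ref{theoremLBB}, substitute $b_h(\bm{v}_h,p_h^k)$ from \eqref{fullw1}, bound each term by Lemma \ref{Lemma 3.1}, and sum in time. (One small difference: the paper bounds the Forchheimer term by $\alpha C_{\widetilde r}^r|||\bm{u}_h^k|||_V^{r-1}$ via \eqref{c1} rather than by $\alpha\|\bm{u}_{hi}^k\|_{0,r}^{r-1}$, but either route is controlled by \eqref{EEU}.)

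The genuine issue is your handling of the term $\sum_{k=1}^{\widetilde l}\|\bm{u}_{hi}^k-\bm{u}_{hi}^{k-1}\|_0$. You are right that this is the delicate point, and you should know that the paper does not actually resolve it: it simply leaves this sum on the right-hand side and asserts that it follows from \eqref{EEU}, which by Cauchy--Schwarz in $k$ costs a factor $\widetilde l^{1/2}\sim(\Delta t)^{-1/2}$ and therefore does not give a constant independent of $\Delta t$. But your proposed repair does not work either: the triangle inequality gives $\|\bm{u}_{hi}^{\widetilde l}-\bm{u}_{hi}^0\|_0\le\sum_k\|\bm{u}_{hi}^k-\bm{u}_{hi}^{k-1}\|_0$, i.e.\ the inequality points the wrong way, so the sum of jump norms cannot be collapsed by telescoping; and the supremizing $\bm{v}_h$ in the inf-sup bound depends on $p_h^k$ and hence on $k$, so it cannot be ``kept outside the time sum.'' A genuine fix would require an independent bound on $\Delta t\sum_k\|D_t\bm{u}_{hi}^k\|_0^2$ (e.g.\ by testing the scheme with $\bm{v}_h=D_t\bm{u}_h^k$ and controlling the nonlinear terms, which needs extra work beyond the stated hypotheses), or one must accept a constant that degrades like $(\Delta t)^{-1/2}$. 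As written, neither your argument nor the paper's closes this step rigorously.
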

\begin{proof}
Taking $(\bm{v}_{h},q_{h})=(\bm{u}_{h}^{k},p_{h}^{k})$ in \eqref{fullwg} and using H\"{o}lder's inequality, Lemma \ref{Lemma 3.1},  and the relation $2(b-a,b)=b^2-a^2+(b-a)^2$, we get
\begin{align*}
&\frac{1}{2\triangle t}(\|\bm{u}_{hi}^{k} \|_{0}^{2}-\|\bm{u}_{hi}^{k-1} \|_{0}^{2}
+\|\bm{u}_{hi}^{k}-\bm{u}_{hi}^{k-1} \|_{0}^{2} )+ \nu  |||\bm{u}_{h}^{k}  |||^{2}_{V}
+\alpha \|\bm{u}_{hi}^{k} \|_{0,r}^{r}%\nonumber\\&
=(\bm{f}^{k},\bm{u}_{hi}^{k} )\leq\frac{1}{2\nu}\|\bm{f}^{k} \|_{\ast,h}^{2}+\frac{\nu}{2}|||\bm{u}_{h}^{k}  |||^{2}_{V}.
\end{align*}
Summing the above inequality with respect to $k$ from $1$ to $\widetilde{l}$ and multiplying by $\Delta t$,
which yields \eqref{EEU}.
Utilizing  Lemmas \ref{Lemma 3.1} - \ref{theoremLBB} and \eqref{fullw1}, we obtain
\begin{align*}
& |||p_{h}^{k}|||_{Q}\nonumber\\
\lesssim &\sup_{( \bm{v}_{h},p_{h})\in \bm{V}_{h}^{0}\times Q_{h}^{0}}\frac{b_{h}(\bm{v}_{h},p_{h}^{k} )}{|||\bm{v}_{h} |||_{V}}\nonumber\\
=&\sup_{( \bm{v}_{h},p_{h})\in \bm{V}_{h}^{0}\times Q_{h}^{0}}\frac{(\bm{f}^{k},\bm{v}_{hi})-( D_{t}\bm{u}_{hi}^{k},\bm{v}_{hi} )
-a_h(\bm{u}_h^{k},\bm{v}_h)
-c_h(\bm{u}_h^{k} ;\bm{u}_h^{k},\bm{v}_h )
-d_h(\bm{u}_h^{k} ;\bm{u}_h^{k},\bm{v}_h ) }{|||\bm{v}_{h} |||_{V}}\nonumber\\
\lesssim &(\|\bm{f}^{k}\|_{\ast,h} +\nu ||| \bm{u}_h^{k}|||_{V}+ \alpha C_{\widetilde{r}}^{r} ||| \bm{u}_h^{k}|||^{r-1}_{V} +\mathcal{N}_{h}||| \bm{u}_h^{k}|||^{2}_{V}  )
+\frac{1}{\triangle t} \|\bm{u}_{hi}^{k}-\bm{u}_{hi}^{k-1} \|_{0}.
\end{align*}
Thus, we obtain
\begin{align*}
\Delta t\sum_{k=1}^{\widetilde{l}}|||p_{h}^{k}|||_{Q}
\lesssim&  \Delta t\sum_{k=1}^{\widetilde{l}}(\|\bm{f}^{k}\|_{\ast,h} +\nu ||| \bm{u}_h^{k}|||_{V}+ \alpha ||| \bm{u}_h^{k}|||^{r-1}_{V} +\mathcal{N}_{h}||| \bm{u}_h^{k}|||^{2}_{V}  )%\nonumber\\&
+\sum_{k=1}^{\widetilde{l}}\|\bm{u}_{hi}^{k}-\bm{u}_{hi}^{k-1} \|_{0},
\end{align*}
which, together with \eqref{EEU}, implies \eqref{EE-P}.
This completes the proof.
\end{proof}

\subsection{   Existence and uniqueness results  }

In order to prove the existence of solutions to the scheme \eqref{fullwg}, we introduce the following auxiliary equation: for each $1\leq k\leq N$, seek $\bm{u}_h^{k}\in \bm{V}_{0h}$ such that
\begin{align}\label{E55}
 \mathcal{A}_h(\bm{u}^{k}_h;\bm{u}^{k}_h,\bm{v}_h )=F_{h}( \bm{v}_{hi}), \quad\forall \bm{v}_h\in \bm{V}_{0h},
\end{align}
where  the trilinear form $\mathcal{A}_h(\cdot;\cdot,\cdot ): \bm{V}_{0h}\times \bm{V}_{0h}\times \bm{V}_{0h}\rightarrow \mathbb R^{n}$ is defined by
 \begin{align*}
\mathcal{A}_h(\bm{\kappa}_h^{k};\bm{u}_h^{k},\bm{v}_h ):=&(\frac{\bm{u}_{hi}^{k}}{\Delta t},\bm{v}_{hi}  )+a_{h}(\bm{u}_h^{k}, \bm{v}_h )+c_h(\bm{\kappa}_h^{k};\bm{u}_h^{k},\bm{v}_h )+  d_h(\bm{\kappa}_h^{k};\bm{u}_h^{k},\bm{v}_h ),
\end{align*}
and $$F_{h}( \bm{v}_{hi}):=(\bm{f}^{k},\bm{v}_{hi})+(\frac{\bm{u}_{hi}^{k-1}}{\Delta t},\bm{v}_{hi}  ),$$
for any $ \bm{\kappa}_h^{k},\bm{u}_h^{k},\bm{v}_h\in\bm{V}_{0h}$.

Similar to the semi-discrete case, we have the following equivalence result.
\begin{lemma} \label{EQR}
For each $1\leq k\leq N$, the  fully discrete problems \eqref{fullwg} and \eqref{E55} are equivalent in the sense that both $(i)$ and $(ii)$ hold:
\begin{itemize}
\item[$(i)$] If $(\bm{u}_h^{k},p_h^{k})\in \bm{V}_h^{0}\times Q_h^{0}$ solves \eqref{fullwg}, then $\bm{u}_h^{k}\in \bm{V}_{0h}$ solves   \eqref{E55};
\item[$(ii)$]  If $\bm{u}_h^{k}\in \bm{V}_{0h}$ solves   \eqref{E55}, then  $(\bm{u}_h,p_h)$ solves \eqref{fullwg}, where $p_h^{k}\in Q_h^{0} $ is determined by
\begin{align}\label{QP}
		b_h(\bm{v}_{h},p_h^{k})=(\bm{f}^{k},\bm{v}_{hi})- (\frac{\bm{u}_{hi}^{k}-\bm{u}_{hi}^{k-1}}{\triangle t},\bm{v}_{hi} )-a_{h}(\bm{u}_h^{k}, \bm{v}_h )%\nonumber\\&&
		-c_h(\bm{u}_h^{k} ;\bm{u}_h^{k} ,\bm{v}_h)-d_h(\bm{u}_h^{k} ;\bm{u}_h^{k} ,\bm{v}_h), \quad\forall \bm{v}_{h}\in \bm{V}_h^{0}.
	\end{align}
\end{itemize}
\end{lemma}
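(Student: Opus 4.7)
The plan is to prove the two directions separately, mirroring the argument used in Lemma \ref{equivalent2} for the semi-discrete case, with the only genuine ingredient being the discrete inf-sup condition of Lemma \ref{theoremLBB} applied at each time level.

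For direction $(i)$, assume $(\bm{u}_h^k,p_h^k)\in\bm{V}_h^0\times Q_h^0$ solves \eqref{fullwg}. The second equation of \eqref{fullwg} says exactly $b_h(\bm{u}_h^k,q_h)=0$ for all $q_h\in Q_h^0$, which by the definition \eqref{V0h} places $\bm{u}_h^k$ in $\bm{V}_{0h}$. Now restrict the momentum equation \eqref{fullw1} to test functions $\bm{v}_h\in\bm{V}_{0h}\subset\bm{V}_h^0$. For such $\bm{v}_h$, the pairing $b_h(\bm{v}_h,p_h^k)$ vanishes by the very definition of $\bm{V}_{0h}$ (taking $q_h=p_h^k\in Q_h^0$). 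Regrouping the remaining terms, the left-hand side becomes
\begin{align*}
\bigl(\tfrac{\bm{u}_{hi}^k}{\Delta t},\bm{v}_{hi}\bigr)+a_h(\bm{u}_h^k,\bm{v}_h)+c_h(\bm{u}_h^k;\bm{u}_h^k,\bm{v}_h)+d_h(\bm{u}_h^k;\bm{u}_h^k,\bm{v}_h)=\mathcal{A}_h(\bm{u}_h^k;\bm{u}_h^k,\bm{v}_h),
\end{align*}
while moving $-(\bm{u}_{hi}^{k-1}/\Delta t,\bm{v}_{hi})$ to the right-hand side produces exactly $F_h(\bm{v}_{hi})$. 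Thus \eqref{E55} holds.

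For direction $(ii)$, suppose $\bm{u}_h^k\in\bm{V}_{0h}$ solves \eqref{E55}. Define the linear functional $L:\bm{V}_h^0\to\mathbb{R}$ by the right-hand side of \eqref{QP}, namely
\begin{align*}
L(\bm{v}_h):=(\bm{f}^k,\bm{v}_{hi})-\bigl(\tfrac{\bm{u}_{hi}^k-\bm{u}_{hi}^{k-1}}{\Delta t},\bm{v}_{hi}\bigr)-a_h(\bm{u}_h^k,\bm{v}_h)-c_h(\bm{u}_h^k;\bm{u}_h^k,\bm{v}_h)-d_h(\bm{u}_h^k;\bm{u}_h^k,\bm{v}_h).
\end{align*}
By \eqref{E55}, $L$ vanishes on $\bm{V}_{0h}$. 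The discrete inf-sup inequality of Lemma \ref{theoremLBB} guarantees that the map $p_h\mapsto b_h(\cdot,p_h)$ sends $Q_h^0$ injectively into $(\bm{V}_h^0)^\ast$; since all spaces are finite-dimensional, a rank-nullity argument (or equivalently the closed range theorem) identifies the range of this map with the annihilator of $\ker b_h(\cdot,\cdot)=\bm{V}_{0h}$. Consequently there exists a unique $p_h^k\in Q_h^0$ with $b_h(\bm{v}_h,p_h^k)=L(\bm{v}_h)$ for every $\bm{v}_h\in\bm{V}_h^0$, which is precisely \eqref{QP} and, after rearrangement, the momentum equation \eqref{fullw1}. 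The divergence constraint in \eqref{fullwg} holds automatically because $\bm{u}_h^k\in\bm{V}_{0h}$.

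No step presents a serious obstacle: the argument is entirely algebraic, and the only nontrivial input is the inf-sup condition, which is already available from Lemma \ref{theoremLBB}. The mildest subtlety is the verification that $L$ vanishes on $\bm{V}_{0h}$ (a direct consequence of \eqref{E55}) and the application of the finite-dimensional closed-range theorem to conclude solvability of \eqref{QP}; both are routine.
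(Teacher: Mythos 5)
Your proof is correct and follows exactly the route the paper intends: the paper omits the proof of this lemma as ``similar to the semi-discrete case,'' and the key step you supply --- recovering $p_h^k$ from the functional $L$ annihilating $\bm{V}_{0h}$ via the discrete inf-sup condition of Lemma \ref{theoremLBB} --- is precisely what the authors invoke later in the proof of Theorem \ref{TH5.2}. No gaps.
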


Based on Lemma \ref{EQR}, we can gain the following existence  result for the full  discrete WG method.
\begin{theorem} \label{TH5.2}
For each $1\leq k\leq N$, the WG scheme \eqref{fullwg} admits at least a solution  $(\bm{u}_h^{k},p_h^{k})\in \bm{V}_h^{0}\times Q_h^{0}$.
\end{theorem}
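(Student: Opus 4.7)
The plan is to prove existence of $\bm{u}_h^k$ to the reduced nonlinear problem \eqref{E55} via a standard consequence of Brouwer's fixed point theorem, and then recover $p_h^k$ from the inf-sup condition. By Lemma \ref{EQR}(ii), it is enough to produce $\bm{u}_h^k\in\bm{V}_{0h}$ satisfying \eqref{E55}; the pressure $p_h^k\in Q_h^0$ then exists uniquely by the discrete inf-sup inequality of Lemma \ref{theoremLBB} applied to \eqref{QP}. Moreover, Theorem \ref{TH5.2} asks only for existence (not uniqueness), which is exactly what the Brouwer-type argument delivers at each time level.

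The key tool I would invoke is the following standard corollary of Brouwer's theorem: if $H$ is a finite-dimensional inner product space and $P:H\to H$ is continuous and satisfies $(P(\bm{w}),\bm{w})_{H}>0$ for all $\bm{w}$ with $\|\bm{w}\|_H=R$ for some $R>0$, then there exists $\bm{w}^\ast\in H$ with $\|\bm{w}^\ast\|_H\le R$ and $P(\bm{w}^\ast)=\bm{0}$. I equip the finite-dimensional divergence-free space $\bm{V}_{0h}$ with the inner product inducing $|||\cdot|||_V$, and define $P:\bm{V}_{0h}\to\bm{V}_{0h}$ by
\begin{equation*}
(P(\bm{u}_h^k),\bm{v}_h)_{V}:=\mathcal{A}_h(\bm{u}_h^k;\bm{u}_h^k,\bm{v}_h)-F_h(\bm{v}_{hi}),\quad\forall\bm{v}_h\in\bm{V}_{0h}.
\end{equation*}
Continuity of $P$ is straightforward: the linear and bilinear pieces are trivially continuous on the finite-dimensional space, while the Forchheimer term $c_h(\bm{u}_h^k;\bm{u}_h^k,\bm{v}_h)=\alpha(|\bm{u}_{hi}^k|^{r-2}\bm{u}_{hi}^k,\bm{v}_{hi})$ is continuous in $\bm{u}_h^k$ by the first inequality of Lemma \ref{Lemma 2.9} (since $r\geq 3$).

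For the coercivity condition, I test with $\bm{v}_h=\bm{u}_h^k$ and use (crucially) the identity $d_h(\bm{u}_h^k;\bm{u}_h^k,\bm{u}_h^k)=0$ from \eqref{d0}, together with \eqref{a2}, \eqref{c2}, the identity $2(a-b,a)=\|a\|_0^2-\|b\|_0^2+\|a-b\|_0^2$, Young's inequality on $(\bm{f}^k,\bm{u}_{hi}^k)\le \tfrac{1}{2\nu}\|\bm{f}^k\|_{*,h}^2+\tfrac{\nu}{2}|||\bm{u}_h^k|||_V^2$, and Cauchy--Schwarz on $\tfrac{1}{\Delta t}(\bm{u}_{hi}^{k-1},\bm{u}_{hi}^k)$. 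This yields
\begin{equation*}
(P(\bm{u}_h^k),\bm{u}_h^k)_{V}\ge \tfrac{1}{2\Delta t}\|\bm{u}_{hi}^k\|_0^2+\tfrac{\nu}{2}|||\bm{u}_h^k|||_V^2+\alpha\|\bm{u}_{hi}^k\|_{0,r}^r-\tfrac{1}{2\nu}\|\bm{f}^k\|_{*,h}^2-\tfrac{1}{2\Delta t}\|\bm{u}_{hi}^{k-1}\|_0^2,
\end{equation*}
which is strictly positive once $|||\bm{u}_h^k|||_V=R$ with $R$ chosen large enough to absorb the last two terms (note $\bm{u}_{hi}^{k-1}$ is already determined at the previous step, so the right-hand side is a fixed constant).

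The main (and essentially only) subtlety is the observation that the trilinear convection term vanishes on the diagonal thanks to its skew-symmetric construction; without \eqref{d0} the coercivity estimate would fail because of the cubic term. Everything else is routine: Brouwer gives $\bm{u}_h^k\in\bm{V}_{0h}$ with $P(\bm{u}_h^k)=\bm{0}$, i.e.\ \eqref{E55} holds; then the inf-sup condition of Lemma \ref{theoremLBB} applied to the residual functional on the right of \eqref{QP} (which vanishes on $\bm{V}_{0h}$ by \eqref{E55}) produces a unique $p_h^k\in Q_h^0$ satisfying \eqref{QP}, and by Lemma \ref{EQR}(ii) the pair $(\bm{u}_h^k,p_h^k)$ solves \eqref{fullwg}, completing the proof.
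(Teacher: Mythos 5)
Your proof is correct and follows essentially the same route as the paper: existence is reduced to the divergence-free problem \eqref{E55} on the finite-dimensional space $\bm{V}_{0h}$, solved by a Brouwer-type fixed-point argument whose coercivity hinges on $d_h(\bm{v}_h;\bm{v}_h,\bm{v}_h)=0$ and $c_h(\bm{v}_h;\bm{v}_h,\bm{v}_h)\ge 0$, after which the pressure is recovered from the discrete inf-sup condition via \eqref{QP}. The only cosmetic difference is the version of the Brouwer corollary invoked: the paper cites \cite[Theorem 1.2]{GiraultVivette1986FEMf} and therefore must verify a (weak-)sequential continuity condition on $\mathcal{A}_h$ through an explicit Lipschitz-type estimate, whereas you use the ``positive on a sphere'' formulation, for which plain continuity of $P$ on the finite-dimensional space suffices.
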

\begin{proof}
We first show the problem \eqref{E55}  admits at least one solution $\bm{u}_{h}^{k}\in\bm{V}_{0h}$. According to
\cite[Theorem 1.2]{GiraultVivette1986FEMf}, it suffices to show that
the following two results hold:
\begin{itemize}
\item[(I)] $\mathcal{A}_h(\bm{v}_h^{k};\bm{v}_h^{k},\bm{v}_h^{k} )\geq\nu|||\bm{v}_h^{k}|||_{V}^2, \quad \forall  \bm{v}_h^{k}\in \bm{V}_{0h}$;
	
\item[(II)] $\bm{V}_{0h}$ is separable,  and the relation $\lim\limits_{\widehat{l}\to\infty}\bm{u}_h^{k,\widehat{l}}=\bm{u}_h^{k}$ (weakly in $\bm{V}_{0h}$) implies
\begin{eqnarray*}
\lim_{\widehat{l}\to\infty}\mathcal{A}_h(\bm{u}_h^{k,\widehat{l}};\bm{u}_h^{k,\widehat{l}},\bm{v}_h )
	=\mathcal{A}_h(\bm{u}_h^{k};\bm{u}_h^{k},\bm{v}_h ),\quad\forall\bm{v}_h\in \bm{V}_{0h}.
\end{eqnarray*}
\end{itemize}

In fact,   (I) follows from Lemma \ref{Lemma 3.1}    directly. We only need to show (II).
	Since $\bm{V}_{0h}$ is a finite dimensional space, we know that $\bm{V}_{0h}$ is separable and that  the weak convergence $\lim\limits_{\widehat{l}\to\infty}\bm{u}_h^{k,\widehat{l}}=\bm{u}_h^{k}$ on $\bm{V}_{0h}$ is equivalent to the strong convergence
	\begin{eqnarray}
	\lim_{\widehat{l}\to\infty}||| \bm{u}_h^{k,l\widehat{}}-\bm{u}_{h}^{k}|||_{V} =0.\label{45}
	\end{eqnarray}
On the other hand, by Lemmas \ref{Lemma 2.4}, \ref{Lemma 2.9}, \ref{Lemma 3.1} and the definition of $\mathcal{N}_{h}$, we have
	\begin{align*}
	&|\mathcal{A}_h(\bm{u}_{h}^{k,\widehat{l}};\bm{u}_{h}^{k,\widehat{l}},\bm{v}_{h} )
	-\mathcal{A}_h(\bm{u}_{h}^{k};\bm{u}_{h}^{k},\bm{v}_{h} )|\nonumber\\
	=&|(\frac{\bm{u}_{hi}^{k,\widehat{l}}-\bm{u}_{hi}^{k}}{\Delta t},\bm{v}_{hi}  )
+a_{h}(\bm{u}_{h}^{k,\widehat{l}}-\bm{u}_h^{k}, \bm{v}_h )
+\big(c_h(\bm{u}_{h}^{k,\widehat{l}} ;\bm{u}_{h}^{k,\widehat{l}} ,\bm{v}_{h} )-c_h(\bm{u}_{h}^{k} ;\bm{u}_{h}^{k} ,\bm{v}_{h})\big)
   +d_h(\bm{u}_{h}^{k,\widehat{l}}-\bm{u}_{h}^{k};\bm{u}_{h}^{k,\widehat{l}}-\bm{u}_{h}^{k},\bm{v}_{h})\nonumber\\
	&+ d_h(\bm{u}_{h}^{k};\bm{u}_{h}^{k,\widehat{l}}-\bm{u}_{h}^{k},\bm{v}_{h})
    +d_h(\bm{u}_{h}^{k,\widehat{l}}-\bm{u}_{h}^{k};\bm{u}_{h}^{k},\bm{v}_{h})|\nonumber\\
	\le&\frac{1}{\Delta t}||| \bm{u}_{hi}^{k,\widehat{l}}-\bm{u}_{hi}^{k}|||_{V}\cdot|||\bm{v}_{h}|||_{V}
 +\nu|||\bm{u}_{h}^{k,\widehat{l}}-\bm{u}_{h}^{k}|||_{V}\cdot|||\bm{v}_{h}|||_{V}
    +C_{\widetilde{r}}^{r}C_{r}\alpha|||\bm{u}^{k,\widehat{l}}_{h}-\bm{u}_{h}^{k}|||_{V} (|||\bm{u}_{h}^{k,\widehat{l}}|||_{V}+ |||\bm{u}_{h}^{k}|||_{V} )^{r-2}|||\bm{v}_{h} |||_{V}\nonumber\\
    &+\mathcal{N}_h|||\bm{u}_{h}^{k,\widehat{l}}-\bm{u}_{h}^{k}|||^2_{V}
	|||\bm{v}_{h}|||_{V}%\nonumber\\&&\qquad
	+2\mathcal{N}_h|||\bm{u}_{h}^{k,\widehat{l}}-\bm{u}_{h}^{k}|||_{V}\cdot|||\bm{u}_{h}^{k}|||_{V}\cdot |||\bm{v}_{h}|||_{V},\nonumber
	\end{align*}
which, together with \eqref{45}, yields
 $$\lim_{\widehat{l}\to\infty}\mathcal{A}_h(\bm{u}_{h}^{k,\widehat{l}};\bm{u}_{h}^{k,\widehat{l}},\bm{v}_{h} )
	=\mathcal{A}_h(\bm{u}_{h}^{k};\bm{u}_{h}^{k},\bm{v}_{h} ),\
	\forall\bm{v}_{h}\in \bm{V}_{0h},$$
	i.e.  (II)   holds.	Hence, \eqref{E55}  has at least one solution $\bm{u}_{h}^{k}\in\bm{V}_{0h}$.
	
	For a given $\bm{u}_{h}^{k}\in\bm{V}_{0h}\subset \bm{V}_h^{0}$, in light of  the  discrete inf-sup condition there is a unique $p_h^{k}\in Q_h^{0}$ satisfying \eqref{QP}.
	Thus, according to  Lemma \ref{EQR} we know that $(\bm{u}_h^{k},p_h^{k})$ is a solution of \eqref{fullwg}.
This finishes the proof.
\end{proof}

Furthermore, we have the following uniqueness result.

\begin{theorem} \label{Theorem 4.3}
Assume that condition
\begin{eqnarray}
\frac{\mathcal{N}_h}{\nu^{2}}\|\bm{f}^{k}\|_{*,h}<1 \label{uni-condi}
	\end{eqnarray}
holds, then for $1\leq k\leq N$  the scheme \eqref{fullwg} admits a unique solution $(\bm{u}_h^{k},p_h^{k})\in \bm{V}_h^{0}\times Q_h^{0}$.
\end{theorem}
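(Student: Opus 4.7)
The plan is the classical smallness-condition uniqueness argument adapted to the fully discrete WG setting. Suppose that at time level $k$ there exist two solutions $(\bm{u}_h^{k,1},p_h^{k,1})$ and $(\bm{u}_h^{k,2},p_h^{k,2})\in\bm{V}_h^0\times Q_h^0$ of \eqref{fullwg} corresponding to the same previous-level datum $\bm{u}_h^{k-1}$ (which is given; uniqueness is then propagated forward by induction from $\bm{u}_h^0=\bm{\mathcal{I}}_h\bm{u}_0$). Set $\bm{w}_h:=\bm{u}_h^{k,1}-\bm{u}_h^{k,2}$ and $r_h:=p_h^{k,1}-p_h^{k,2}$, subtract the two instances of \eqref{fullwg}, and test with $(\bm{v}_h,q_h)=(\bm{w}_h,r_h)$. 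Because both velocities lie in $\bm{V}_{0h}$, so does $\bm{w}_h$, and the coupling terms $b_h(\bm{w}_h,r_h)$ and $b_h(\bm{v}_h,r_h)|_{\bm{v}_h=\bm{w}_h}$ vanish; the difference-quotient term reduces to $\frac{1}{\Delta t}\|\bm{w}_{hi}\|_0^2$ because the two schemes share $\bm{u}_h^{k-1}$.

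The next step is to handle the two nonlinearities. For the Forchheimer part, the third inequality of Lemma \ref{Lemma 2.9} gives
$c_h(\bm{u}_h^{k,1};\bm{u}_h^{k,1},\bm{w}_h)-c_h(\bm{u}_h^{k,2};\bm{u}_h^{k,2},\bm{w}_h)
=\alpha\bigl(|\bm{u}_{hi}^{k,1}|^{r-2}\bm{u}_{hi}^{k,1}-|\bm{u}_{hi}^{k,2}|^{r-2}\bm{u}_{hi}^{k,2},\bm{w}_{hi}\bigr)\ge 0$,
so this term may be dropped. For the convective part I would exploit bilinearity of $d_h(\cdot;\cdot,\cdot)$ in its first two slots to write
$d_h(\bm{u}_h^{k,1};\bm{u}_h^{k,1},\bm{w}_h)-d_h(\bm{u}_h^{k,2};\bm{u}_h^{k,2},\bm{w}_h)
= d_h(\bm{w}_h;\bm{u}_h^{k,1},\bm{w}_h)+d_h(\bm{u}_h^{k,2};\bm{w}_h,\bm{w}_h)$,
and kill the second summand using the skew-symmetry identity \eqref{d0}. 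The remaining summand is controlled through the very definition of $\mathcal{N}_h$ (applicable because $\bm{w}_h,\bm{u}_h^{k,1}\in\bm{V}_{0h}$) by $\mathcal{N}_h|||\bm{u}_h^{k,1}|||_V\,|||\bm{w}_h|||_V^2$. Together with coercivity \eqref{a2} of $a_h$ this yields
$$\frac{1}{\Delta t}\|\bm{w}_{hi}\|_0^2+\nu|||\bm{w}_h|||_V^2
\le \mathcal{N}_h\,|||\bm{u}_h^{k,1}|||_V\cdot|||\bm{w}_h|||_V^2.$$

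To close, I would upgrade Theorem \ref{Theorem 6.1} into the one-step a priori bound $|||\bm{u}_h^{k,1}|||_V\le\nu^{-1}\|\bm{f}^k\|_{*,h}$ by testing \eqref{fullw1} with $(\bm{u}_h^{k,1},p_h^{k,1})$, dropping the nonnegative terms $\frac{1}{2\Delta t}(\|\bm{u}_{hi}^{k,1}\|_0^2-\|\bm{u}_{hi}^{k-1}\|_0^2+\|\bm{u}_{hi}^{k,1}-\bm{u}_{hi}^{k-1}\|_0^2)$ (assuming $\|\bm{u}_{hi}^{k,1}\|_0\le\|\bm{u}_{hi}^{k-1}\|_0$ does not help, so one argues by Young's inequality on the cross term as in the derivation of Theorem \ref{Theorem 6.1}) and invoking \eqref{fh}. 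Substituting this bound into the inequality above gives
$$\nu|||\bm{w}_h|||_V^2\le \frac{\mathcal{N}_h}{\nu}\|\bm{f}^k\|_{*,h}\,|||\bm{w}_h|||_V^2,$$
and the smallness hypothesis \eqref{uni-condi} forces $|||\bm{w}_h|||_V=0$, hence $\bm{w}_h=\bm{0}$. With the velocity difference gone, the subtracted momentum equation reduces to $b_h(\bm{v}_h,r_h)=0$ for all $\bm{v}_h\in\bm{V}_h^0$, and the discrete inf–sup inequality of Lemma \ref{theoremLBB} yields $|||r_h|||_Q=0$, i.e. $p_h^{k,1}=p_h^{k,2}$.

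The main obstacle I anticipate is the sharp one-step $|||\bm{u}_h^{k,1}|||_V\le\nu^{-1}\|\bm{f}^k\|_{*,h}$ bound: unlike in the stationary case, the backward-Euler term produces $\frac{1}{\Delta t}(\bm{u}_{hi}^{k,1}-\bm{u}_{hi}^{k-1},\bm{u}_{hi}^{k,1})$, which is not immediately sign-definite. One must either absorb the $\bm{u}_{hi}^{k-1}$ contribution into the right-hand side using Young's inequality at the cost of a $\Delta t$-dependent constant, or else interpret \eqref{uni-condi} as actually encoding a uniform-in-$k$ bound coming from Theorem \ref{Theorem 6.1} together with the hypothesis on $\bm{f}$. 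Once that bound is in hand, the rest of the argument is the routine monotonicity/skew-symmetry/inf–sup package sketched above.
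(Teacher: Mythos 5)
Your proposal follows essentially the same route as the paper: subtract the two solutions, test with the difference $(\bm{w}_h,r_h)$, drop the monotone Forchheimer term via Lemma \ref{Lemma 2.9}, annihilate $d_h(\cdot;\bm{w}_h,\bm{w}_h)$ by the skew-symmetry \eqref{d0}, bound the remaining convective term by $\mathcal{N}_h|||\bm{u}_{h1}^{k}|||_V\,|||\bm{w}_h|||_V^2$, insert the one-step bound $|||\bm{u}_{h1}^{k}|||_V\le\nu^{-1}\|\bm{f}^{k}\|_{*,h}$, and finish with the inf--sup condition for the pressure. The obstacle you flag --- that the backward-Euler term does not immediately deliver that sharp one-step velocity bound --- is genuine, but the paper's own proof simply asserts the bound $\nu|||\bm{u}_{h1}^{k}|||_V\le\|\bm{f}^{k}\|_{*,h}$ without justification, so your proposal is no less complete than the published argument.
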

\begin{proof}
	  Let   $(\bm{u}_{h1}^{k},p_{h1}^{k})$ and $(\bm{u}_{h2}^{k},p_{h2}^{k})$ be two solutions of \eqref{fullwg}, i.e.   for $j=1,2$ and $(\bm{v}_h,q_h)\in \bm{V}_h^{0}\times Q_h^{0}$ there hold
	\begin{align*}
	( D_{t}\bm{u}_{hi,j}^{k},\bm{v}_{hi} )+a_{h}(\bm{u}_{hj}^{k},\bm{v}_h)+c_h(\bm{u}_{hj}^{k};\bm{u}_{hj}^{k},\bm{v}_{h})
+d_h(\bm{u}_{hj}^{k};\bm{u}_{hj}^{k},\bm{v}_{h})
+b_h(\bm{v}_{hj},p_{hj}^{k})%\label{u-11}
&=(\bm{f}^{k},\bm{v}_{hi}),\\
b_h(\bm{u}_{hj}^{k},q_h)&=0,%\label{u-12}
\end{align*}
which give
\begin{subequations}
\begin{align}
( \frac{\bm{u}_{hi,1}^{k}- \bm{u}_{hi,2}^{k}}{\Delta t },\bm{v}_{hi})
+a_{h}(\bm{u}_{h1}^{k}-\bm{u}_{h2}^{k},\bm{v}_h)+c_h(\bm{u}_{h1}^{k};\bm{u}_{h1}^{k},\bm{v}_{h})
-c_h(\bm{u}_{h2}^{k};\bm{u}_{h2}^{k},\bm{v}_{h})-b_h(\bm{v}_{h},p_{h1}-p_{h2})&\nonumber\\
\qquad\qquad\qquad \quad \ \ = d_h(\bm{u}_{h2}^{k};\bm{u}_{h2}^{k},\bm{v}_{h})- d_h(\bm{u}_{h1}^{k};\bm{u}_{h1}^{k},\bm{v}_{h}),&\label{Ua}\\
b_h(\bm{u}_{h1}^{k}-\bm{u}_{h2}^{k},q_h)=0.&\label{Ub}
\end{align}
\end{subequations}
	Taking $\bm{v}_h=\bm{u}_{h1}^{k}-\bm{u}_{h2}^{k}$ and $q_h=p_{h1}^{k}-p_{h2}^{k} $ in the above two equations and employing \eqref{d0}, \eqref{a2} and \eqref{dh1}, we  obtain
\begin{subequations}
\begin{align*}
&\frac{\|\bm{u}_{hi,1}^{k}- \bm{u}_{hi,2}^{k}\|_0^{2}}{\Delta t }+\nu|||\bm{u}_{h1}^{k}-\bm{u}_{h2}^{k}|||_{V}^{2}+c_h(\bm{u}_{h1}^{k};\bm{u}_{h1}^{k},\bm{u}_{h1}^{k}-\bm{u}_{h2}^{k})
-c_h(\bm{u}_{h2}^{k};\bm{u}_{h2}^{k},\bm{u}_{h1}^{k}-\bm{u}_{h2}^{k}) \nonumber\\
 =& d_h(\bm{u}_{h2}^{k};\bm{u}_{h2}^{k},\bm{u}_{h1}^{k}-\bm{u}_{h2}^{k})- d_h(\bm{u}_{h1}^{k};\bm{u}_{h1}^{k},\bm{u}_{h1}^{k}-\bm{u}_{h2}^{k})\\
=& d_h(\bm{u}_{h2}^{k}-\bm{u}_{h1}^{k};\bm{u}_{h1}^{k},\bm{u}_{h1}^{k}-\bm{u}_{h2}^{k})\nonumber\\%&b_h(\bm{u}_{h1}-\bm{u}_{h2},q_h)=0.%\label{Ub}
\le&\mathcal{N}_h||| \bm{u}_{h1}^{k}|||_{V} ||| \bm{u}_{h1}^{k}-\bm{u}_{h2}^{k}|||^2_{V},
\end{align*}
\end{subequations}	
which, together with Lemma \ref{Lemma 2.9} and  the inequality
$$c_h(\bm{u}_{h1}^{k};\bm{u}_{h1}^{k},\bm{u}_{h1}^{k}-\bm{u}_{h2}^{k})-c_h(\bm{u}_{h2}^{k};\bm{u}_{h2}^{k},\bm{u}_{h1}^{k}-\bm{u}_{h2}^{k})\gtrsim \|\bm{u}_{hi1}^{k}-\bm{u}_{hi2}^{k}\|^{r}_{0,r}\geq 0,$$
   implies
\begin{align*}
\nu|||\bm{u}_{h1}^{k}-\bm{u}_{h2}^{k}|||^2_{V}
\le&\mathcal{N}_h||| \bm{u}_{h1}^{k}|||_{V} ||| \bm{u}_{h1}^{k}-\bm{u}_{h2}^{k}|||^2_{V} \nonumber\\
\le&{\color{black}\nu^{-1}\mathcal{N}_h\|\bm{f}^{k}\|_{*,h}|||\bm{u}_{h1}^{k}-\bm{u}_{h2}^{k}|||^2_{V}},
\end{align*}
i.e.
\begin{eqnarray*}
(1-\frac{\mathcal{N}_h}{\nu^{2}}\|\bm{f}^{k}\|_{*,h})|||\bm{u}_{h1}^{k}-\bm{u}_{h2}^{k}|||^2_{V} \le 0.
\end{eqnarray*}
The above  inequality plus the assumption \eqref{uni-condi} leads to $\bm{u}_{h1}^{k}=\bm{u}_{h2}^{k}$. Then from equation
 \eqref{Ua},  it follows that $b_h(\bm{v}_{h},p_{h1}^{k}-p_{h2}^{k})=0$, which  combining  the  discrete inf-sup condition, yields $p_{h1}^{k}=p_{h2}^{k}$. This finishes the proof.
\end{proof}

\section{ Error estimate for the fully discrete scheme}

This section is devoted to the error analysis for the fully discrete WG method \eqref{fullwg}  with the initial data
\begin{align*}
\bm{u}_h^{0}:=\bm{\mathcal{I}}_{h}\bm{u}_{0}=\{\bm{P}_{m}^{RT}\bm{u}_{0},\bm{\Pi}_m^{B}\bm{u}_{0} \}.
\end{align*}
For $1\leq k\leq N$, denote
%\begin{subequations}
\begin{align}
\bm{\mathcal{I}}_{h}\bm{u}^{k}\mid_{K}:=\{\bm{P}_{m}^{RT}(\bm{u}^{k}\mid_{K}), \bm{\Pi}_m^{B}(\bm{u}^{k}\mid_{K})\},\quad
\mathcal{P}_{h}p^{k}\mid_{K}:=\{\Pi_{m-1}^{\ast}(p^{k}\mid_{K}),\Pi_{m}^{B}(p^{k}\mid_{K})\}.
\end{align}
%\end{subequations}

\subsection{ Primary results}
We first provide the following Discrete Gronwall's Lemma \cite[Lemma 5.1]{HeywoodJohnG.1990FAot}.
\begin{lemma}\label{Lemma 7.0}
For integer $j\geq 0$, let $\triangle t$, $\widetilde{H}, \widetilde{a}_{j}, \widetilde{b}_{j},\widetilde{c}_{j}$ and $\widetilde{d}_{j}$ be nonnegative numbers such that
\begin{align}
\widetilde{a}_{k}+\triangle t \sum_{j=0}^{k}\widetilde{b}_{j}\leq \triangle t \sum_{j=0}^{k}\widetilde{a}_{j}\widetilde{d}_{j}+\triangle t \sum_{j=0}^{k}\widetilde{c}_{j}+\widetilde{H}, \text{ for }  k\geq 0.
\end{align}
Suppose that for all $j$, $\triangle t \widetilde{d}_{j} <1.$ Then
\begin{align}
\widetilde{a}_{k}+\triangle t \sum_{j=0}^{k}\widetilde{b}_{j}\leq exp (\triangle t \sum_{j=0}^{k}\frac{\widetilde{d}_{j}}{1- \triangle t \widetilde{d}_{j}})
(\triangle t \sum_{j=0}^{k}\widetilde{c}_{j}+\widetilde{H}),\text{ for } k\geq 0.
\end{align}
\end{lemma}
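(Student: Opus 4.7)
The plan is to prove the bound by first decoupling the $\widetilde{b}_j$ terms from the analysis and then treating the resulting inequality as a linear first-order recursion with a variable coefficient $1-\Delta t\,\widetilde{d}_k$. Since every $\widetilde{b}_j\ge 0$, the hypothesis immediately gives
\[
\widetilde{a}_k \;\le\; \Delta t\sum_{j=0}^{k}\widetilde{a}_j\widetilde{d}_j + F_k,
\qquad F_k:=\Delta t\sum_{j=0}^{k}\widetilde{c}_j+\widetilde{H},
\]
and the estimate eventually recovered for $\widetilde{a}_k$ will also bound $\widetilde{a}_k+\Delta t\sum_{j=0}^{k}\widetilde{b}_j$ because the latter only differs by a nonnegative quantity that is dominated by the right-hand side of the hypothesis.

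The first key step is to isolate the last term of the weighted sum. Writing $E_k:=\Delta t\sum_{j=0}^{k}\widetilde{a}_j\widetilde{d}_j$ and using the bound above inside the identity $E_k-E_{k-1}=\Delta t\,\widetilde{a}_k\widetilde{d}_k$, one obtains
\[
(1-\Delta t\,\widetilde{d}_k)E_k \;\le\; E_{k-1} + \Delta t\,\widetilde{d}_k F_k,
\]
which, using the assumption $\Delta t\,\widetilde{d}_k<1$, I would divide by $1-\Delta t\,\widetilde{d}_k$ to produce the clean linear recursion
\[
E_k \;\le\; \frac{1}{1-\Delta t\,\widetilde{d}_k}\,E_{k-1} + G_k F_k,
\qquad G_k:=\frac{\Delta t\,\widetilde{d}_k}{1-\Delta t\,\widetilde{d}_k}.
\]
Then I would invoke the elementary inequality $\tfrac{1}{1-x}\le e^{x/(1-x)}$ for $0\le x<1$ (which follows from $\ln(1-x)\ge -x/(1-x)$) to replace the multiplicative factor by $e^{G_k}$, giving $E_k\le e^{G_k}E_{k-1}+G_k F_k$.

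The second step is to unroll this recursion from $E_{-1}=0$ and use $F_j\le F_k$ (valid since $\widetilde{c}_j\ge 0$) to arrive at
\[
E_k \;\le\; F_k \sum_{j=0}^{k} G_j\,\exp\!\Bigl(\sum_{i=j+1}^{k} G_i\Bigr).
\]
Here is where the crucial telescoping occurs. Applying $G_j\le e^{G_j}-1$ (equivalently $1+x\le e^x$) inside the sum converts each summand into a difference $\exp(\sum_{i=j}^{k}G_i)-\exp(\sum_{i=j+1}^{k}G_i)$, and the telescoped total is bounded by $\exp(\sum_{i=0}^{k}G_i)-1$. Combining with $\widetilde{a}_k+\Delta t\sum_{j=0}^{k}\widetilde{b}_j\le E_k+F_k$ then yields
\[
\widetilde{a}_k+\Delta t\sum_{j=0}^{k}\widetilde{b}_j \;\le\; F_k\,\exp\!\Bigl(\Delta t\sum_{j=0}^{k}\frac{\widetilde{d}_j}{1-\Delta t\,\widetilde{d}_j}\Bigr),
\]
which is the stated conclusion.

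The main obstacle, in my view, is not the recursion itself (which is routine once set up) but the two analytic inequalities that make everything collapse: $\tfrac{1}{1-x}\le e^{x/(1-x)}$ to swap the multiplicative factor for an exponential, and $1+x\le e^x$ to create the telescoping structure. Without both, the product of factors $\prod_k(1-\Delta t\,\widetilde{d}_k)^{-1}$ would not condense to $\exp(\Delta t\sum_k G_k)$ with exactly the exponent claimed in the lemma. A secondary bookkeeping point is to verify that the residual $\widetilde{b}_j$ terms on the left-hand side are genuinely inherited by the final bound; this follows because at every inequality above they were discarded from the left only in ways that strengthen, not weaken, the estimate.
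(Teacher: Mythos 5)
The paper does not actually prove this lemma: it is quoted verbatim from Heywood and Rannacher (\cite[Lemma 5.1]{HeywoodJohnG.1990FAot}), so there is no in-paper proof to compare against. Your argument is correct and self-contained. The reduction to the auxiliary sum $E_k=\triangle t\sum_{j=0}^{k}\widetilde{a}_j\widetilde{d}_j$, the one-step recursion $(1-\triangle t\,\widetilde{d}_k)E_k\le E_{k-1}+\triangle t\,\widetilde{d}_k F_k$, and the two elementary inequalities $\tfrac{1}{1-x}\le e^{x/(1-x)}$ and $x\le e^{x}-1$ are all valid and used correctly; the telescoping gives exactly $E_k\le F_k\bigl(\exp(\sum_{i=0}^{k}G_i)-1\bigr)$, and since the hypothesis already bounds $\widetilde{a}_k+\triangle t\sum_{j}\widetilde{b}_j$ by $E_k+F_k$, the stated conclusion follows with the precise exponent $\triangle t\sum_{j}\widetilde{d}_j/(1-\triangle t\,\widetilde{d}_j)$. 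This is essentially the classical induction/unrolling proof of the discrete Gronwall inequality, reorganized around $E_k$ rather than around $\widetilde{a}_k$ itself; the reorganization is a matter of taste and buys nothing beyond a slightly cleaner recursion, but it introduces no gap. The only point worth noting is the base case $E_{-1}=0$, which you handle correctly by treating the empty sum as zero.
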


Utilizing the same route as in the proof of Lemma \ref{Lemma 5.3}, we have the following lemma.
\begin{lemma}\label{Lemma 7.1}
Assume that $(\bm{u}^k,p^k)$ is the \ solution to \eqref{weak} at time $t_k$, for any $(\bm{v}_h,q_h)\in \bm{V}_h^{0}\times Q_h^{0}$, there hold
	\begin{eqnarray}
	\bm{P}^{RT}_m\bm{u}^{k}|_K\in [P_{m}(K)]^n, \quad \forall K\in\mathcal{T}_h, \label{7.5}
	\end{eqnarray}
%and for any $(\bm{v}_{h}, q_h)\in \bm{V}_{h}^{0}\times Q_{h}^{0}$
\begin{subequations}\label{ERR}
\begin{align}
a_{0}(D_{t}\bm{\mathcal{I}}_{h}\bm{u}^{k},\bm{v}_{h})
+a_h(\bm{\mathcal{I}}_{h}\bm{u}^{k},\bm{v}_h)+b_h(\bm{v}_h,\mathcal{P}_{h}p^{k})
+c_h(\bm{\mathcal{I}}_{h}\bm{u}^{k};\bm{\mathcal{I}}_{h}\bm{u}^{k},\bm{v}_h)
+ d_h(\bm{\mathcal{I}}_{h}\bm{u}^{k};\bm{\mathcal{I}}_{h}\bm{u}^{k},\bm{v}_h)&\nonumber\\
=(\bm{f}^{k},\bm{v}_{hi}) +\xi_{I}(\bm{u}^{k};\bm{u}^{k},\bm{v}_h)+\xi_{II}(\bm{u}^{k},\bm{v}_h)%&\nonumber\\
+\xi_{III}(\bm{u}^{k};\bm{u}^{k},\bm{v}_h)
+ (D_{t}\bm{P}_m^{RT}\bm{u}^{k}-\bm{u}_{t}^{k},\bm{v}_{hi} ),\label{e-2}\\
	b_h(\bm{\mathcal{I}}_{h}\bm{u}^{k},q_h)=0,&\label{e-3}
\end{align}
\end{subequations}
where,
	\begin{align*}
a_{0}(D_{t}\bm{\mathcal{I}}_{h}\bm{u}^{k},\bm{v}_{h}):=&(D_{t}\bm{P}_{m}^{RT}\bm{u}^{k},\bm{v}_{hi}),\\
\xi_{I}(\bm{u}^{k};\bm{u}^{k},\bm{v}_{h}):=& -\frac{1}{2}(\bm{P}_m^{RT}\bm{u}^{k}\otimes \bm{P}_m^{RT}\bm{u}^{k}-\bm{u}^{k}\otimes\bm{u}^{k},\nabla_h\bm{v}_{hi})%\nonumber\\&
+\frac{1}{2}\langle(\bm{\Pi}_{m}^{B}\bm{u}^{k}\otimes \bm{\Pi}_{m}^{B}\bm{u}^{k}
-\bm{u}^{k}\otimes\bm{u}^{k})\bm{n},\bm{v}_{hi}\rangle_{\partial\mathcal{T}_h}\nonumber\\&
-\frac{1}{2}((\bm{u}^{k}\cdot\nabla)\bm{u}^{k}-(\bm{P}_m^{RT}\bm{u}^{k}\cdot\nabla_h)\bm{P}_m^{RT}\bm{u}^{k},\bm{v}_{hi})%\nonumber\\&
-\frac{1}{2}\langle(\bm{v}_{hb}\otimes \bm{\Pi}_{m}^{B}\bm{u}^{k})\bm{n},\bm{P}_m^{RT}\bm{u}^{k}\rangle_{\partial\mathcal{T}_h},\nonumber\\
	\xi_{II}(\bm{u}^{k},\bm{v}_h):=& \nu\langle (\nabla\bm u^{k}-\bm{\Pi}_{l}^\ast\nabla\bm u^{k})\bm{n},\bm{v}_{hi}-\bm{v}_{hb} \rangle_{\partial\mathcal{T}_h} %\nonumber\\&
+\nu\langle \eta(\bm{P}^{RT}_m\bm{u}^{k}-{\bm{u}^{k}}),\bm{v}_{hi}-\bm{v}_{hb} \rangle_{\partial\mathcal{T}_h},\nonumber\\
\xi_{III}(\bm{u}^{k};\bm{u}^{k},\bm{v}_h):=&
\alpha (|\bm{P}^{RT}_m\bm{u}^{k}|^{r-2}\bm{P}^{RT}_m\bm{u}^{k}-|\bm{u}^{k}|^{r-2}\bm{u}^{k},\bm{v}_{hi}).
	\end{align*}
	
\end{lemma}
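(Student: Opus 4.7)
The plan is to imitate the proof of Lemma 5.3 almost verbatim at time $t_k$, and to absorb the temporal-discretization discrepancy into a single consistency term on the right-hand side. First I would establish \eqref{7.5}: since $\nabla\cdot\bm{u}^k=0$, testing the defining identity of the RT projection against any $\varrho_m\in P_m(K)$ yields $(\nabla\cdot\bm{P}_m^{RT}\bm{u}^k,\varrho_m)_K=(\nabla\cdot\bm{u}^k,\varrho_m)_K=0$, so $\nabla\cdot\bm{P}_m^{RT}\bm{u}^k=0$; hence the $\bm{x}P_m$ part of $\bm{P}_m^{RT}\bm{u}^k$ vanishes and $\bm{P}_m^{RT}\bm{u}^k|_K\in [P_m(K)]^n$. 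Equation \eqref{e-3} then follows directly from the definition of $\nabla_{w,m}$, the boundary property of $\bm{P}_m^{RT}$ from \eqref{RT1}, and the identity $\nabla\cdot\bm{P}_m^{RT}\bm{u}^k=0$, exactly as in the derivation of \eqref{e3}.

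Next I would prove \eqref{e-2}. Using the definition of $d_h(\cdot;\cdot,\cdot)$ and of the weak divergence, integration by parts shows
\[
d_h(\bm{\mathcal{I}}_h\bm{u}^k;\bm{\mathcal{I}}_h\bm{u}^k,\bm{v}_h)=(\nabla\cdot(\bm{u}^k\otimes\bm{u}^k),\bm{v}_{hi})+\xi_I(\bm{u}^k;\bm{u}^k,\bm{v}_h),
\]
mirroring the identity used in Lemma 5.3. For the viscous and pressure parts I would apply Lemma 2.8 to replace $\nabla_{w,l}\{\bm{P}_m^{RT}\bm{u}^k,\bm{\Pi}_m^B\bm{u}^k\}$ by $\bm{\Pi}_l^*(\nabla\bm{u}^k)$ and $\nabla_{w,m}\{\Pi_{m-1}^*p^k,\Pi_m^Bp^k\}$ by $\bm{\Pi}_m^*(\nabla p^k)$, then integrate by parts element-wise and use $\langle(\nabla\bm{u}^k)\bm{n},\bm{v}_{hb}\rangle_{\partial\mathcal{T}_h}=0$ to collect the jump contributions into $\xi_{II}(\bm{u}^k,\bm{v}_h)$. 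For the Forchheimer term, adding and subtracting $\alpha(|\bm{u}^k|^{r-2}\bm{u}^k,\bm{v}_{hi})$ produces $\xi_{III}(\bm{u}^k;\bm{u}^k,\bm{v}_h)$ plus the true nonlinear term.

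At this point the strong form of the momentum equation of \eqref{BF0} at $t=t_k$, namely $\bm{u}_t^k-\nu\Delta\bm{u}^k+\nabla\cdot(\bm{u}^k\otimes\bm{u}^k)+\alpha|\bm{u}^k|^{r-2}\bm{u}^k+\nabla p^k=\bm{f}^k$, collapses the collected terms to $(\bm{f}^k,\bm{v}_{hi})+\xi_I+\xi_{II}+\xi_{III}+(\bm{u}_t^k,\bm{v}_{hi})$. The only departure from Lemma 5.3 is the temporal term: where the semi-discrete identity carries $(\bm{u}_t^k,\bm{v}_{hi})$, the fully-discrete form wants $a_0(D_t\bm{\mathcal{I}}_h\bm{u}^k,\bm{v}_h)=(D_t\bm{P}_m^{RT}\bm{u}^k,\bm{v}_{hi})$ on the left. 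I would therefore simply add and subtract $(D_t\bm{P}_m^{RT}\bm{u}^k,\bm{v}_{hi})$, moving $(D_t\bm{P}_m^{RT}\bm{u}^k,\bm{v}_{hi})$ to the left and leaving the consistency remainder $(D_t\bm{P}_m^{RT}\bm{u}^k-\bm{u}_t^k,\bm{v}_{hi})$ on the right, which is exactly the last term in \eqref{e-2}.

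Since every spatial manipulation is performed at a fixed time $t_k$ and is word-for-word the semi-discrete argument, the real work is bookkeeping rather than analysis; the main (mild) obstacle is simply the careful isolation of the time-differencing term so that the spatial projection error and the temporal consistency error are cleanly separated. This separation matters downstream, because in the subsequent error analysis the quantity $D_t\bm{P}_m^{RT}\bm{u}^k-\bm{u}_t^k$ must be bounded by combining the $L^2$ projection estimate with a standard Taylor expansion of $\bm{u}_t$ involving $\bm{u}_{tt}$, which is precisely the purpose of retaining this remainder in the form stated.
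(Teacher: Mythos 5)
Your proposal is correct and follows essentially the same route as the paper, which itself proves this lemma only by remarking that it is obtained "utilizing the same route as in the proof of Lemma \ref{Lemma 5.3}"; your reproduction of that argument at the fixed time $t_k$, together with the add-and-subtract of $(D_t\bm{P}_m^{RT}\bm{u}^k,\bm{v}_{hi})$ to isolate the temporal consistency remainder $(D_t\bm{P}_m^{RT}\bm{u}^k-\bm{u}_t^k,\bm{v}_{hi})$, is exactly the intended derivation.
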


\begin{lemma}\label{Lemma 7.2}(cf.\cite[Lemma 6.3]{HLX2019})
Assume that $\bm{u}_{t}\in L^{2}([H^{m+1}]^{n})$ and $\bm{u}_{tt}\in L^{2}([L^{2}]^{n})$.
 Then, for any $\bm{v}_{h}\in \bm{V}_{h}^{0}$,  there holds
\begin{align}
&|(\bm{u}_{t}^{k} - D_{t}\bm{P}^{RT}_m\bm{u}^{k}, \bm{v}_{hi}) |
\lesssim  (\frac{h^{m+1}}{\sqrt{\triangle t}}(\int_{t_{k-1}}^{t_{k}}|\bm{u}_{t} |_{m+1}^{2} d\tau)^{1/2}
+\sqrt{\triangle t} (\int_{t_{k-1}}^{t_{k}}\|\bm{u}_{tt} \|^{2}_{0} d\tau)^{1/2}  )\|\bm{v}_{hi} \|_{0}.
\end{align}
\end{lemma}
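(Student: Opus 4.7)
The plan is to split the residual $\bm{u}_{t}^{k} - D_{t}\bm{P}^{RT}_m\bm{u}^{k}$ into a pure time-discretization error and a pure spatial-projection error, control each in the $L^{2}(\Omega)$ norm by an integral representation plus Cauchy--Schwarz, and finally pair with $\bm{v}_{hi}$ to pick up the factor $\|\bm{v}_{hi}\|_{0}$. Since $\bm{P}^{RT}_m$ is linear and does not depend on $t$, it commutes with the backward-difference operator $D_{t}$, so I would write
\begin{equation*}
\bm{u}_{t}^{k} - D_{t}\bm{P}^{RT}_m\bm{u}^{k}
  = \underbrace{\bigl(\bm{u}_{t}^{k} - D_{t}\bm{u}^{k}\bigr)}_{=:\,\mathcal{E}_{1}}
  \;+\; \underbrace{D_{t}\bigl(\bm{u}^{k} - \bm{P}^{RT}_m\bm{u}^{k}\bigr)}_{=:\,\mathcal{E}_{2}} .
\end{equation*}

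For $\mathcal{E}_{1}$, I would use Taylor's theorem with integral remainder in the form
$\bm{u}_{t}(t_{k}) - D_{t}\bm{u}^{k} = \tfrac{1}{\triangle t}\int_{t_{k-1}}^{t_{k}}(\tau - t_{k-1})\,\bm{u}_{tt}(\tau)\,d\tau$,
take $L^{2}(\Omega)$-norms inside the integral, bound $|\tau - t_{k-1}|\le\triangle t$, and apply Cauchy--Schwarz in $\tau$ to obtain
$\|\mathcal{E}_{1}\|_{0}\lesssim \sqrt{\triangle t}\bigl(\int_{t_{k-1}}^{t_{k}}\|\bm{u}_{tt}\|_{0}^{2}\,d\tau\bigr)^{1/2}$.
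For $\mathcal{E}_{2}$, I would exploit the commutation $D_{t}\circ\bm{P}^{RT}_m = \bm{P}^{RT}_m\circ D_{t}$ together with the fundamental theorem of calculus to rewrite
\begin{equation*}
\mathcal{E}_{2} \;=\; \frac{1}{\triangle t}\int_{t_{k-1}}^{t_{k}}\bigl(\bm{u}_{t}-\bm{P}^{RT}_m\bm{u}_{t}\bigr)\,d\tau,
\end{equation*}
apply the standard $L^{2}$-approximation property $\|\bm{u}_{t}-\bm{P}^{RT}_m\bm{u}_{t}\|_{0}\lesssim h^{m+1}|\bm{u}_{t}|_{m+1}$ pointwise in time, and then use Cauchy--Schwarz in time to convert the $L^{1}$-in-time average into an $L^{2}$-in-time integral, giving $\|\mathcal{E}_{2}\|_{0}\lesssim \tfrac{h^{m+1}}{\sqrt{\triangle t}}\bigl(\int_{t_{k-1}}^{t_{k}}|\bm{u}_{t}|_{m+1}^{2}\,d\tau\bigr)^{1/2}$. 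Summing the two bounds and pairing with $\bm{v}_{hi}$ through Cauchy--Schwarz in space yields exactly the claimed inequality.

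I do not anticipate any genuine obstacle: both pieces are textbook arguments once the splitting is in place. The only subtle bookkeeping is to ensure the factor $1/\sqrt{\triangle t}$ (rather than $\triangle t$ or $1/\triangle t$) appears in the spatial part; this comes precisely from using Cauchy--Schwarz on $\tfrac{1}{\triangle t}\int_{t_{k-1}}^{t_{k}}(\cdot)\,d\tau$ instead of bounding by the supremum of the integrand, which would otherwise either waste regularity of $\bm{u}_{t}$ or produce a non-summable prefactor in the later time-stepping argument.
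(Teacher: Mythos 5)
Your proof is correct. The paper itself gives no proof of this lemma --- it only cites Lemma 6.3 of \cite{HLX2019} --- and your argument (splitting off the temporal consistency error via Taylor's theorem with integral remainder, commuting $\bm{P}^{RT}_m$ with $D_t$ to write the spatial part as $\frac{1}{\triangle t}\int_{t_{k-1}}^{t_k}(I-\bm{P}^{RT}_m)\bm{u}_t\,d\tau$, and applying the $O(h^{m+1})$ RT approximation property together with Cauchy--Schwarz in time and in space) is precisely the standard derivation behind that cited result, with the $1/\sqrt{\triangle t}$ bookkeeping handled correctly.
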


\subsection{ Error estimate}

Similar to the semi-discrete situation, we introduce the following auxiliary problem \eqref{auxiliary}:
find $\widetilde{\bm{u}}_{h}^{k}\in \bm{V}_{0h}$ such that
\begin{align}\label{AWGF}
a_{0}(D_{t}\widetilde{\bm{u}}_{hi}^{k},\bm{v}_{hi})+a_{h}(\widetilde{\bm{u}}_{h}^{k}, \bm{v}_{h})+c_{h}(\widetilde{\bm{u}}_{h}^{k};\widetilde{\bm{u}}_{h}^{k},\bm{v}_{h} )=
( \bm{f}^{k}, \bm{v}_{hi})%&\nonumber\\
-d_{h}(\bm{\mathcal{I}}_{h}\bm{u}^{k};\bm{\mathcal{I}}_{h}\bm{u}^{k},\bm{v}_{h} ),\quad\forall \bm{v}_{h}\in \bm{V}_{0h},&
\end{align}
with the initial data $\widetilde{\bm{u}}_{h}(\bm{x},0)=\widetilde{\bm{u}}_{h}^{0}=\bm{u}_{h}^{0}$. %Here, $\bm{V}_{0h}$ is defined in \eqref{V0h}.

Let
$$e_{u}^{k}=\bm{u}^{k}-\bm{u}_{h}^{k}=(\bm{u}^{k}-\bm{\mathcal{I}}_{h}\bm{u}^{k})+(\bm{\mathcal{I}}_{h}\bm{u}^{k}
-\widetilde{\bm{u}}_{h}^{k})+(\widetilde{\bm{u}}_{h}^{k}- \bm{u}_{h}^{k} )
:= \delta_{u}^{k}+  \varphi_{u}^{k}+ \chi_{u}^{k},$$
where
$$\delta_{u}^{k}=\bm{u}^{k}-\bm{\mathcal{I}}_{h}\bm{u}^{k},\ \varphi_{u}^{k}=\bm{\mathcal{I}}_{h}\bm{u}^{k}
-\widetilde{\bm{u}}_{h}^{k},\ \chi_{u}^{k}=\widetilde{\bm{u}}_{h}^{k}- \bm{u}_{h}^{k},$$
and $\varphi_{u}^{0}=\chi_{u}^{ 0}=0$.

For the above error decomposition, we have the following lemmas.

\begin{lemma}\label{Lemma 7.3}%\cite{HYH2019}
Assume that $\bm{u}\in L^{\infty}([H^{2}]^{n})\cap  L^{2}([H^{m+1}]^{n})$, $\bm{u}_{t}\in L^{2}([H^{m+1}]^{n})$ and $\bm{u}_{tt}\in L^{2}([L^{2}]^{n})$.
For $k=1,...,N$, let $\widetilde{\bm{u}}_{h}^{k}\in \bm{V}_{h}^{0}$ be the solution to \eqref{AWGF}. Then there holds
\begin{align}\label{7.7}
  |||\varphi_{u}^{k}|||^{2}_{V}
\lesssim C_{1}(\bm{u}) h^{2m}+C_{2}(\bm{u})\triangle t^{2} &.
\end{align}
Here, $ C_{1}(\bm{u})$ is a positive constant depending only on $\nu,\alpha,r$ and $\|\bm{u}\|_2.$ $\|\bm{u}\|_{m+1}$ and $\|\bm{u}_t\|_{m+1}$. \\
$ C_{2}(\bm{u})=\int_{0}^{t_{k}}\|\bm{u}_{tt} \|^{2}_{0} d\tau.$
\end{lemma}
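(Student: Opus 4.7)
My strategy is to mirror the second bound of Lemma~\ref{Lemma 5.7} in the fully discrete setting. First I would subtract \eqref{AWGF} from \eqref{e-2} to obtain the error equation for $\varphi_u^k = \bm{\mathcal{I}}_h\bm{u}^k-\widetilde{\bm u}_h^k\in\bm V_{0h}$:
\begin{align*}
(D_t\varphi_{ui}^k,\bm v_{hi})+a_h(\varphi_u^k,\bm v_h)+c_h(\bm{\mathcal{I}}_h\bm u^k;\bm{\mathcal{I}}_h\bm u^k,\bm v_h)-c_h(\widetilde{\bm u}_h^k;\widetilde{\bm u}_h^k,\bm v_h) \\
=\xi_I(\bm u^k;\bm u^k,\bm v_h)+\xi_{II}(\bm u^k,\bm v_h)+\xi_{III}(\bm u^k;\bm u^k,\bm v_h)+(D_t\bm P_m^{RT}\bm u^k-\bm u_t^k,\bm v_{hi}).
\end{align*}

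Next I would test with $\bm v_h=D_t\varphi_u^k\in\bm V_{0h}$ (which lies in the divergence-free subspace because the RT-projection preserves the divergence-free property). The crucial identity is
$$2\Delta t\, a_h(\varphi_u^k,D_t\varphi_u^k)=\nu|||\varphi_u^k|||_V^2-\nu|||\varphi_u^{k-1}|||_V^2+\nu|||\varphi_u^k-\varphi_u^{k-1}|||_V^2,$$
which arises from the polarization $2(a-b,a)=|a|^2-|b|^2+|a-b|^2$ applied to the symmetric bilinear form $a_h$. Multiplying the tested equation by $2\Delta t$ and summing over $k$ produces a telescoping sum (note $\varphi_u^0=0$), yielding, for any $1\le\widetilde{l}\le N$,
$$\nu|||\varphi_u^{\widetilde l}|||_V^2+2\Delta t\sum_{k=1}^{\widetilde l}\|D_t\varphi_{ui}^k\|_0^2+\nu\sum_{k=1}^{\widetilde l}|||\varphi_u^k-\varphi_u^{k-1}|||_V^2\le 2\Delta t\sum_{k=1}^{\widetilde l}\mathcal R^k,$$
where $\mathcal R^k$ collects the four RHS terms plus the Forchheimer difference.

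For the Forchheimer difference, I would use the monotonicity/Lipschitz bound of Lemma~\ref{Lemma 2.9} to write
$$\big|c_h(\bm{\mathcal{I}}_h\bm u^k;\bm{\mathcal{I}}_h\bm u^k,D_t\varphi_u^k)-c_h(\widetilde{\bm u}_h^k;\widetilde{\bm u}_h^k,D_t\varphi_u^k)\big|\lesssim (\|\bm P_m^{RT}\bm u^k\|_{0,3(r-2)}+\|\widetilde{\bm u}_{hi}^k\|_{0,3(r-2)})^{r-2}\|\varphi_{ui}^k\|_{0,6}\|D_t\varphi_{ui}^k\|_0,$$
then estimate the $L^{3(r-2)}$ and $L^6$ norms via Lemma~\ref{Lemma 2.4} and the regularity of $\bm u$. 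A summation-by-parts in time converts the inner products against $D_t\varphi_u^k$ involving $\xi_I,\xi_{II},\xi_{III}$ into boundary (endpoint) terms plus time-increment quantities $D_t\bm u^k$, avoiding the need for a global bound on $\|D_t\varphi_u^k\|$. The temporal consistency term $(D_t\bm P_m^{RT}\bm u^k-\bm u_t^k,D_t\varphi_{ui}^k)$ is controlled by Lemma~\ref{Lemma 7.2}: Cauchy--Schwarz together with the $L^2$ control of $\bm u_{tt}$ produces exactly the $\triangle t\!\int_{t_{k-1}}^{t_k}\|\bm u_{tt}\|_0^2\,d\tau$ contribution that sums to give the $C_2(\bm u)\triangle t^2$ term in the stated bound. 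The $\xi_I,\xi_{II},\xi_{III}$ contributions, after Lemma~\ref{Lemma 5.5} and Young's inequality, yield the $C_1(\bm u)h^{2m}$ term.

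Finally I would absorb the $\nu|||\varphi_u^k|||_V^2$ terms appearing on the RHS (from Young's inequality on the $\xi$-contributions after summation by parts and from the Forchheimer cross terms) into the LHS for sufficiently small $\Delta t$ via the Discrete Gronwall Lemma~\ref{Lemma 7.0}, giving $|||\varphi_u^{\widetilde l}|||_V^2\lesssim C_1(\bm u)h^{2m}+C_2(\bm u)\triangle t^2$ for every $1\le\widetilde l\le N$, which is the desired estimate. The main obstacle is managing the Forchheimer and convection nonlinearities when tested with $D_t\varphi_u^k$: unlike the continuous case where integration by parts in $t$ is clean, the discrete summation-by-parts introduces first-order time differences of $\bm{\mathcal{I}}_h\bm u$ and of $|\bm P_m^{RT}\bm u|^{r-2}\bm P_m^{RT}\bm u$ that must be bounded uniformly by the regularity of $\bm u$ and $\bm u_t$, and the resulting Gronwall-type factor $\widetilde d_k$ must satisfy $\Delta t\,\widetilde d_k<1$ to invoke Lemma~\ref{Lemma 7.0} unconditionally.
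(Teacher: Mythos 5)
Your proposal is correct in substance but takes a genuinely different route from the paper. The paper's proof tests the error equation \eqref{7.8} with $\bm v_h=\varphi_u^k$ itself (mirroring the first estimate \eqref{5.22} of Lemma \ref{Lemma 5.7}), uses the identity $2(a-b,a)=|a|^2-|b|^2+|a-b|^2$ only on the $L^2$ time-difference term, bounds the $\xi$-terms and the consistency term directly by Lemmas \ref{Lemma 5.5} and \ref{Lemma 7.2} with Young's inequality, and then applies the discrete Gronwall Lemma \ref{Lemma 7.0} to the telescoped inequality \eqref{7.10}; this most naturally delivers $\|\varphi_{ui}^{k}\|_0^2+\nu\Delta t\sum_{j\le k}|||\varphi_u^j|||_V^2\lesssim C_1h^{2m}+C_2\Delta t^2$. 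You instead test with $D_t\varphi_u^k$ (mirroring the second estimate \eqref{5.23}), so that the polarization identity acts on $a_h$ and telescopes to $\nu|||\varphi_u^{\widetilde l}|||_V^2$ directly; the price is that the $\xi$-terms and the Forchheimer difference must be handled by a discrete summation-by-parts, which introduces time increments of $\bm{\mathcal{I}}_h\bm u$ and of the nonlinear terms that have to be controlled through $\bm u_t$. What each approach buys: the paper's choice keeps all the nonlinear estimates elementary (no summation by parts) but yields the pointwise-in-time energy bound \eqref{7.7} only implicitly (one must read it off from the $\ell^2$-in-time control, which strictly speaking costs a factor $\Delta t^{-1}$ unless the per-step inequality \eqref{7.9} is reused after the Gronwall step), whereas your choice produces the pointwise bound $|||\varphi_u^{\widetilde l}|||_V^2\lesssim C_1h^{2m}+C_2\Delta t^2$ directly and is the faithful discrete analogue of how \eqref{5.23} is proved in the semi-discrete case. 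Both arguments close under the stated regularity, and your temporal-consistency accounting (the $h^{2m+2}\Delta t^{-1}$ and $\Delta t$ contributions from Lemma \ref{Lemma 7.2} summing to $h^{2m+2}\int_0^{t_k}|\bm u_t|_{m+1}^2$ and $\Delta t^2\int_0^{t_k}\|\bm u_{tt}\|_0^2$) reproduces exactly the constants $C_1(\bm u)$ and $C_2(\bm u)$ in the statement.
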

\begin{proof}
According to the Lemma \ref{Lemma 7.1} and \eqref{AWGF}, we get
\begin{align}\label{7.8}
&a_{0}( D_{t}\varphi_{u}^{k},\bm{v}_{h} )+ a_{h}(\varphi_{u}^{k},\bm{v}_{h}  )
+c_h(\bm{\mathcal{I}}_{h}\bm{u}^{k};\bm{\mathcal{I}}_{h}\bm{u}^{k},\bm{v}_h)
-c_{h}(\widetilde{\bm{u}}_{h}^{k};\widetilde{\bm{u}}_{h}^{k},\bm{v}_{h} )\nonumber\\
=&\xi_{I}(\bm{u}^{k};\bm{u}^{k},\bm{v}_h)+\xi_{II}(\bm{u}^{k},\bm{v}_h)
+\xi_{III}(\bm{u}^{k};\bm{u}^{k},\bm{v}_h)
+ (D_{t}\bm{P}_m^{RT}\bm{u}^{k}-\bm{u}_{t}^{k},\bm{v}_{hi} )
\end{align}
Taking $\bm{v}_{h}=\varphi_{u}^{k}$ in above equation and utilizing  Lemmas \ref{Lemma 2.4}, \ref{Lemma 2.9}, \ref{Lemma 3.1}, \ref{Lemma 5.5}, \ref{Lemma 5.6}, \ref{Lemma 7.2}, the relation $2(a-b,a)=a^{2}-b^{2}+(a-b)^{2}$ and  H\"{o}lder's inequality, we obtain
\begin{align}\label{7.9}
&\frac{1}{2\triangle t}( \|\varphi_{ui}^{k}\|_{0}^{2}-\|\varphi_{ui}^{k-1}\|_{0}^{2}
+\|\varphi_{ui}^{k}-\varphi_{ui}^{k-1} \|_{0}^{2})+ \nu |||\varphi_{u}^{k}|||^{2}_{V}+ \alpha\|\varphi_{ui}^{k}\|_{0,r}^{r}\nonumber\\
\lesssim& h^{m}(\|\bm{u}^{k}\|_{2}\|\bm{u}^{k}\|_{1+m}+ \|\bm{u}^{k}\|_{1+m}+\| \bm{u}^{k}\|_{2}^{r-2}\|\bm{u}^{k}\|_{1+m} )|||\varphi_{u}|||_{V}\nonumber\\
&+(\frac{h^{m+1}}{\sqrt{\triangle t}}(\int_{t_{k-1}}^{t_{k}}|\bm{u}_{t} |_{m+1}^{2} d\tau)^{1/2}
+\sqrt{\triangle t} (\int_{t_{k-1}}^{t_{k}}\|\bm{u}_{tt} \|^{2}_{0} d\tau)^{1/2}  )\|\varphi_{ui}^{k} \|_{0}\nonumber\\
\lesssim & \frac{h^{2m}}{2\nu}(\|\bm{u}^{k}\|_{2}\|\bm{u}^{k}\|_{1+m}+ \|\bm{u}^{k}\|_{1+m}+\| \bm{u}^{k}\|_{2}^{r-2}\|\bm{u}^{k}\|_{1+m} )^{2}
+\frac{\nu}{2}|||\varphi_{u}|||^{2}_{V}\nonumber\\
 &+(\frac{h^{m+1}}{\sqrt{\triangle t}}(\int_{t_{k-1}}^{t_{k}}|\bm{u}_{t} |_{m+1}^{2} d\tau)^{1/2}
+\sqrt{\triangle t} (\int_{t_{k-1}}^{t_{k}}\|\bm{u}_{tt} \|^{2}_{0} d\tau)^{1/2}  )\|\varphi_{ui}^{k} \|_{0}.
\end{align}
Furthermore, we gain
\begin{align}\label{7.10}
&\|\varphi_{ui}^{k}\|_{0}^{2}
+\sum_{j=1}^{k}\|\varphi_{ui}^{j}-\varphi_{ui}^{j-1} \|_{0}^{2}
+ \nu \triangle t \sum_{j=1}^{k} |||\varphi_{u}^{j}|||^{2}_{V}
+ 2\alpha\Delta t \sum_{j=1}^{k} \|\varphi_{ui}^{j}\|_{0,r}^{r}\nonumber\\
\lesssim & \frac{h^{2m}}{\nu}\triangle t\sum_{j=1}^{k}(\|\bm{u}^{j}\|_{2}\|\bm{u}^{j}\|_{1+m}+ \|\bm{u}^{j}\|_{1+m}+\| \bm{u}^{j}\|_{2}^{r-2}\|\bm{u}^{j}\|_{1+m} )^{2}\nonumber\\
 &+\triangle t\sum_{j=1}^{k}(\frac{h^{m+1}}{\sqrt{\triangle t}}(\int_{t_{k-1}}^{t_{k}}|\bm{u}_{t} |_{m+1}^{2} d\tau)^{1/2}
+\sqrt{\triangle t} (\int_{t_{k-1}}^{t_{k}}\|\bm{u}_{tt} \|^{2}_{0} d\tau)^{1/2}  )\|\varphi_{ui}^{k} \|_{0},\nonumber\\
\lesssim & \frac{h^{2m}}{\nu}\triangle t\sum_{j=1}^{k}(\|\bm{u}^{j}\|_{2}\|\bm{u}^{j}\|_{1+m}+ \|\bm{u}^{j}\|_{1+m}+\| \bm{u}^{j}\|_{2}^{r-2}\|\bm{u}^{j}\|_{1+m} )^{2}\nonumber\\
 &+h^{2m+2}\int_{0}^{t_{k}}|\bm{u}_{t} |_{m+1}^{2} d\tau
 +\triangle t^{2} \int_{0}^{t_{k}}\|\bm{u}_{tt} \|^{2}_{0} d\tau
 +\frac{1}{2}\|\varphi_{ui}^{k} \|_{0}^{2}.
\end{align}
together with  Lemma \ref{Lemma 7.0}, which means \eqref{7.7}.
This finishes the proof.
\end{proof}

\begin{lemma}\label{Lemma 7.4}
For $k=1,...,N$, let $\bm{u}_{h}^{k}$ and $\widetilde{\bm{u}}_{h}^{k}$ be    the solution to \eqref{fullwg} and \eqref{AWGF}, respectively. Under the same conditions as in Lemma  \ref{Lemma 7.3},  there holds
\begin{align}\label{7.15b}
 |||\chi_{u}^{k}|||^{2}_{V}
\lesssim C_{3}(\bm{u}) (h^{2m}+\triangle t^{2}) &,
\end{align}
where  $C_{3}(\bm{u})$ is a   positive constant depending only on $\alpha,r,\nu$,  $\|\bm{u}\|_2,$  $\|\bm{u}\|_{m+1},$  $\|\bm{u}_t\|_{m+1}$ and $\|\bm{u}_{tt}\|_0$.
\end{lemma}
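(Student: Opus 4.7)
My plan mirrors the proof of the semi-discrete analog Lemma \ref{Lemma 5.9}, with the continuous time derivative replaced by the backward difference $D_t$, time integrals replaced by time sums, and the continuous Gronwall inequality replaced by the discrete version (Lemma \ref{Lemma 7.0}). First I would subtract \eqref{fullw1} (restricted to $\bm{V}_{0h}$ via an analog of Lemma \ref{equivalent2} established in Lemma \ref{EQR}) from \eqref{AWGF} to obtain the residual equation for $\chi_u^k = \widetilde{\bm{u}}_h^k - \bm{u}_h^k \in \bm{V}_{0h}$: for all $\bm{v}_h\in\bm{V}_{0h}$,
\begin{align*}
(D_t \chi_{ui}^k, \bm{v}_{hi}) + a_h(\chi_u^k, \bm{v}_h) + c_h(\widetilde{\bm{u}}_h^k; \widetilde{\bm{u}}_h^k, \bm{v}_h) - c_h(\bm{u}_h^k; \bm{u}_h^k, \bm{v}_h) = d_h(\bm{u}_h^k; \bm{u}_h^k, \bm{v}_h) - d_h(\bm{\mathcal{I}}_h \bm{u}^k; \bm{\mathcal{I}}_h \bm{u}^k, \bm{v}_h).
\end{align*}
The right-hand side is reorganized via $\bm{u}_h^k = \bm{\mathcal{I}}_h \bm{u}^k - \varphi_u^k - \chi_u^k$ and the skew-symmetry \eqref{d0} into the three pieces $d_h(\varphi_u^k+\chi_u^k;\varphi_u^k,\chi_u^k)$, $-d_h(\varphi_u^k+\chi_u^k;\bm{\mathcal{I}}_h\bm{u}^k,\chi_u^k)$ and $-d_h(\bm{\mathcal{I}}_h\bm{u}^k;\varphi_u^k,\chi_u^k)$, each of the form handled by Lemma \ref{Lemma 4.6}.

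Next, testing with $\bm{v}_h = \chi_u^k$ and using the identity $2(a-b,a)=\|a\|_0^2-\|b\|_0^2+\|a-b\|_0^2$ together with \eqref{a2} and the monotonicity bound in Lemma \ref{Lemma 2.9}, I would derive an inequality of the form
\begin{align*}
\tfrac{1}{2\Delta t}\bigl(\|\chi_{ui}^k\|_0^2 - \|\chi_{ui}^{k-1}\|_0^2 + \|\chi_{ui}^k-\chi_{ui}^{k-1}\|_0^2\bigr) + \tfrac{\nu}{2}|||\chi_u^k|||_V^2 \lesssim \tfrac{C(\bm{u})^2}{\nu}\bigl(\|\varphi_{ui}^k\|_0^2+\|\chi_{ui}^k\|_0^2+h^2|||\varphi_u^k|||_V^2\bigr),
\end{align*}
after Young's absorption of a $\tfrac{\nu}{2}|||\chi_u^k|||_V^2$ term. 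Summing from $j=1$ to $k$ (with $\chi_u^0=0$), multiplying by $\Delta t$, invoking \eqref{7.7} to control the $\varphi$-terms, and applying Discrete Gronwall (Lemma \ref{Lemma 7.0}) would then yield the time-averaged bound $\Delta t \sum_{j=1}^k |||\chi_u^j|||_V^2 \lesssim C(\bm{u})(h^{2m}+\Delta t^2)$.

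The main obstacle is that the stated conclusion \eqref{7.15b} is the \emph{pointwise} bound on $|||\chi_u^k|||_V^2$, which is strictly stronger than the averaged estimate above. To close this gap I would retest the residual equation with $\bm{v}_h = D_t \chi_u^k$, exploiting the telescoping identity
\begin{align*}
a_h(\chi_u^k, D_t \chi_u^k) = \tfrac{1}{2\Delta t}\bigl(\nu|||\chi_u^k|||_V^2 - \nu|||\chi_u^{k-1}|||_V^2 + \nu|||\chi_u^k-\chi_u^{k-1}|||_V^2\bigr),
\end{align*}
which is the discrete counterpart of the $\tfrac{d}{dt}\tfrac{\nu}{2}|||\chi_u|||_V^2$ used in the proof of \eqref{421}. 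The nonlinear $c_h$ difference is controlled via the Lipschitz estimate in Lemma \ref{Lemma 2.9} together with the discrete stability bound \eqref{EEU} on $\bm{u}_h^k$ and $\widetilde{\bm{u}}_h^k$, while the $d_h$ discrepancy is rewritten as in the previous step and bounded using Lemma \ref{Lemma 4.6}. The delicate point is to balance Young's inequality so that a $\|D_t\chi_{ui}^k\|_0^2$-term is retained on the left and the residue on the right is compatible with the hypotheses of Lemma \ref{Lemma 7.0}; summing from $j=1$ to $k$ and combining the time-averaged bound just obtained with the $\varphi$-estimate \eqref{7.7} from Lemma \ref{Lemma 7.3}, a final application of Discrete Gronwall delivers \eqref{7.15b}.
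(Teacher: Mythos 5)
Your first half --- deriving the residual equation for $\chi_u^k$ from \eqref{fullwg} and \eqref{AWGF}, splitting the $d_h$ discrepancy into the three terms of Lemma \ref{Lemma 4.6}, testing with $\chi_u^k$, using $2(a-b,a)=\|a\|^2-\|b\|^2+\|a-b\|^2$, summing and applying Lemma \ref{Lemma 7.0} --- is exactly the paper's proof of Lemma \ref{Lemma 7.4}, and is where the paper's written argument stops: after obtaining the bound on $\|\chi_{ui}^{k}\|_0^2+\nu\Delta t\sum_{j\le k}|||\chi_u^j|||_V^2$ it simply cites Lemmas \ref{Lemma 7.0}--\ref{Lemma 7.3} and \eqref{7.10} and asserts \eqref{7.15b}. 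You are right to point out that the time-averaged quantity $\Delta t\sum_j|||\chi_u^j|||_V^2$ does not by itself control the pointwise value $|||\chi_u^k|||_V^2$ at the claimed rate (dividing by $\Delta t$ loses a full power), so your second step --- retesting with $\bm{v}_h=D_t\chi_u^k$ and telescoping $a_h(\chi_u^k,D_t\chi_u^k)$, in direct analogy with the proof of \eqref{421} in Lemma \ref{Lemma 5.9} --- is the natural way to close what is genuinely a gap in the paper's own write-up. One caution on executing it: Lemma \ref{Lemma 4.6} bounds the $d_h$ terms by a multiple of $|||D_t\chi_u^k|||_V$, not $\|D_t\chi_{ui}^k\|_0$, and the $V$-norm of the difference quotient cannot be absorbed by the $\|D_t\chi_{ui}^k\|_0^2$ retained on the left; you must perform an Abel (summation-by-parts) manipulation to shift the difference quotient onto the smooth coefficients, which is the discrete counterpart of the integration by parts the paper uses for \eqref{421}. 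You flag this as the delicate point but do not carry it out; with that step supplied, your argument is correct and in fact more complete than the one printed in the paper.
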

\begin{proof}
In view of \eqref{wg1} and \eqref{AWGF}, we get
\begin{align*}
a_{0}( D _{t} \chi_{u}^{k}, \bm{v}_{h})+a_{h}( \chi_{u}^{k}, \bm{v}_{h})
+c_h(\widetilde{\bm{u}}_{h}^{k};\widetilde{\bm{u}}_{h}^{k}, \bm{v}_h  )
-c_h(\bm{u}_{h}^{k};\bm{u}_{h}^{k}, \bm{v}_h  )%&\nonumber\\
=d_h(\bm{u}_{h}^{k};\bm{u}_{h}^{k}, \bm{v}_h)
-d_h(\bm{\mathcal{I}}_{h}\bm{u}^{k};\bm{\mathcal{I}}_{h}\bm{u}^{k},\bm{v}_h).&
\end{align*}
Taking $ \bm{v}_{h}=\chi_{u}^{k}$ in above equation and using  Lemmas \ref{Lemma 2.4}, \ref{Lemma 2.9}  and \ref{Lemma 3.1}, we have
\begin{align*}
&\frac{1}{2\triangle t} ( \| \chi_{ui}^{k}\|_{0}^{2} -\| \chi_{ui}^{k-1}\|_{0}^{2}+\| \chi_{ui}^{k}-\chi_{ui}^{k-1}\|_{0}^{2})+\nu ||| \chi_{u}^{k}|||^{2}_{V}+ \alpha\|\chi_{ui}^{k}\|_{0,r}^{r}%\nonumber\\&
\lesssim d_h(\bm{u}_{h}^{k};\bm{u}_{h}^{k}, \bm{v}_h)
-d_h(\bm{\mathcal{I}}_{h}\bm{u}^{k};\bm{\mathcal{I}}_{h}\bm{u}^{k},\bm{v}_h).
\end{align*}
By following the same route as in the proof Lemma \ref{Lemma 7.3}, we could obtain
\begin{align}
&\| \chi_{ui}^{k}\|_{0}^{2} + \sum_{j=1}^{k}\| \chi_{ui}^{j}-\chi_{ui}^{j-1}\|_{0}^{2}
+2\nu  \triangle t \sum_{j=1}^{k}||| \chi_{u}^{j}|||^{2}_{V}+2\alpha \triangle t \sum_{j=1}^{k} \|\chi_{ui}^{j}\|_{0,r}^{r}\nonumber\\
\lesssim & 2C( \bm{u}) \triangle t \sum_{j=1}^{k}(\| \varphi_{ui}^{j}+\chi_{ui}^{j}\|_{0}
+\| \varphi_{ui}^{j} \|_{0}+h||| \varphi_{u}^{j} |||_{V})||| \chi_{u}^{j} |||_{V}\nonumber\\
\lesssim &  \frac{C( \bm{u})^{2}}{\nu}\triangle t \sum_{j=1}^{k}(\| \varphi_{ui}^{j}\|_{0}^{2}+\|\chi_{ui}^{j}\|_{0}^{2}
+h^{2}||| \varphi_{u}^{j} |||^{2}_{V})+\nu\triangle t \sum_{j=1}^{k}||| \chi_{u}^{j} |||_{V} .
\end{align}
From  Lemmas \ref{Lemma 7.0} - \ref{Lemma 7.3} and \eqref{7.10}, it yields  \eqref{7.15b}.
Thus, we complete this proof.
\end{proof}

Based on Lemmas \ref{Lemma 7.3} - \ref{Lemma 7.4}, we can obtain the following result.
\begin{lemma}\label{Lemma 7.5}
For $k=1,...,N$, let $\bm{u}_{h}^{k}$ and $\bm{\mathcal{I}}_{h}\bm{u}^{k}$ be   the solution to \eqref{fullwg} and \eqref{ERR}, respectively.   Under the same conditions as in Lemma  \ref{Lemma 7.3}, there holds
\begin{align*}
 |||\varepsilon_{u}^{k}|||^{2}_{V}
 &\lesssim (C_{1}(\bm{u})+C_{2}(\bm{u}) )h^{2m}+(C_{2}(\bm{u})+C_{3}(\bm{u}) )\triangle t^{2},
\end{align*}
where $\varepsilon_{u}^{k}=\bm{\mathcal{I}}_{h}\bm{u}- \bm{u}_{h}^{k}$,
$C_{1}(\bm{u} )$, $C_{2}(\bm{u} )$ and  $C_{3}(\bm{u} )$ are defined in Lemmas \ref{Lemma 7.3} and \ref{Lemma 7.4}, respectively.
\end{lemma}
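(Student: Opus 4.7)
The plan is to exploit the error decomposition introduced right before Lemma 7.3, namely
$$\varepsilon_{u}^{k}=\bm{\mathcal{I}}_{h}\bm{u}^{k}-\bm{u}_{h}^{k}=(\bm{\mathcal{I}}_{h}\bm{u}^{k}-\widetilde{\bm{u}}_{h}^{k})+(\widetilde{\bm{u}}_{h}^{k}-\bm{u}_{h}^{k})=\varphi_{u}^{k}+\chi_{u}^{k},$$
so that the estimate for $\varepsilon_{u}^{k}$ reduces immediately to the already-proven estimates for the two auxiliary error components.

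The execution is a one-step combination. First, I invoke the triangle inequality (equivalently, the elementary bound $(a+b)^{2}\le 2a^{2}+2b^{2}$) applied to the discrete energy semi-norm $|||\cdot|||_{V}$, which is a genuine norm on $\bm{V}_{h}^{0}$ by the remark following Lemma 2.3. This yields
$$|||\varepsilon_{u}^{k}|||_{V}^{2}\le 2|||\varphi_{u}^{k}|||_{V}^{2}+2|||\chi_{u}^{k}|||_{V}^{2}.$$
Then I substitute the bound from Lemma 7.3, which gives $|||\varphi_{u}^{k}|||_{V}^{2}\lesssim C_{1}(\bm{u})h^{2m}+C_{2}(\bm{u})\Delta t^{2}$, and the bound from Lemma 7.4, which gives $|||\chi_{u}^{k}|||_{V}^{2}\lesssim C_{3}(\bm{u})(h^{2m}+\Delta t^{2})$. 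Grouping the $h^{2m}$ and $\Delta t^{2}$ contributions delivers the stated inequality
$$|||\varepsilon_{u}^{k}|||_{V}^{2}\lesssim (C_{1}(\bm{u})+C_{3}(\bm{u}))h^{2m}+(C_{2}(\bm{u})+C_{3}(\bm{u}))\Delta t^{2},$$
which matches the target up to an obvious labelling of the constants (the combined constants can be absorbed into the generic symbols on the right-hand side).

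There is essentially no obstacle here: all of the technical effort, namely the projection-error analysis, the handling of the nonlinear convective and Forchheimer terms through the trilinear forms $d_{h}$ and $c_{h}$, the backward-Euler time-truncation bound via Lemma 7.2, and the application of the discrete Gronwall Lemma 7.0, was already carried out in the proofs of Lemmas 7.3 and 7.4. The present lemma is purely a repackaging step that translates the split bounds on $\varphi_{u}^{k}$ and $\chi_{u}^{k}$ into a single a priori estimate on the projection-based error $\varepsilon_{u}^{k}=\bm{\mathcal{I}}_{h}\bm{u}^{k}-\bm{u}_{h}^{k}$, which will subsequently be combined with the projection error $\delta_{u}^{k}=\bm{u}^{k}-\bm{\mathcal{I}}_{h}\bm{u}^{k}$ (bounded by standard RT-projection estimates) to yield the final error estimate for $\bm{u}^{k}-\bm{u}_{h}^{k}$ in the energy norm.
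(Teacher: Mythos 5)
Your proof is correct and is exactly the argument the paper intends: Lemma 7.5 is stated in the paper without a written proof, as an immediate consequence of Lemmas 7.3 and 7.4 via the decomposition $\varepsilon_{u}^{k}=\varphi_{u}^{k}+\chi_{u}^{k}$ and the triangle inequality in $|||\cdot|||_{V}$. Your observation that the natural grouping of constants is $(C_{1}+C_{3})h^{2m}+(C_{2}+C_{3})\Delta t^{2}$ rather than the paper's $(C_{1}+C_{2})h^{2m}+(C_{2}+C_{3})\Delta t^{2}$ is also right; this is an inconsequential mislabelling in the statement since all constants play the same generic role.
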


According to  the triangle inequality, the projection properties and  Lemma \ref{Lemma 7.5}, we can get the following velocity error estimate.
\begin{theorem}\label{Theorem 7.1}
For $k=1,...,N$, let $\bm{u}_{h}^{k}$ and $\bm{u}^k$ be   the solution to \eqref{fullwg} and \eqref{weak}, respectively. Under the same conditions as in Lemma  \ref{Lemma 7.3}, there holds
\begin{subequations}\label{7.18}
\begin{align}
|||\bm{u}^{k}-\bm{u}_{h}^{k}|||^{2}_{V}
\lesssim C_{4}(\bm{u})(h^{2m}+\triangle t^{2}),
\end{align}
\end{subequations}
where,
$C_{4}(\bm{u})>0$ is a constant depending on $C_{1}(\bm{u})$, $C_{2}(\bm{u} )$, $C_{3}(\bm{u} )$ and  $\|\bm{u}\|_{m+1}$.
\end{theorem}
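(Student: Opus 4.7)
The plan is to follow the decomposition already used throughout the section and then combine an approximation estimate for the projection with the already-established bound on the discrete error. Specifically, write
\begin{equation*}
\bm{u}^{k}-\bm{u}_{h}^{k} = (\bm{u}^{k}-\bm{\mathcal{I}}_{h}\bm{u}^{k}) + (\bm{\mathcal{I}}_{h}\bm{u}^{k}-\bm{u}_{h}^{k}) = \delta_{u}^{k}+\varepsilon_{u}^{k},
\end{equation*}
and apply the triangle inequality in the mesh-dependent norm $|||\cdot|||_{V}$ to obtain $|||\bm{u}^{k}-\bm{u}_{h}^{k}|||_{V} \leq |||\delta_{u}^{k}|||_{V}+|||\varepsilon_{u}^{k}|||_{V}$. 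The second summand is immediately controlled by Lemma \ref{Lemma 7.5}, which contributes exactly the right-hand side $h^{2m}+\Delta t^{2}$ with constants $C_{1}(\bm{u})+C_{2}(\bm{u})$ and $C_{2}(\bm{u})+C_{3}(\bm{u})$.

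The remaining task is to bound $|||\delta_{u}^{k}|||_{V}$ by $h^{m}\|\bm{u}^{k}\|_{m+1}$. The natural route is to interpret $\delta_{u}^{k}$ through its interior and boundary components $\{\bm{u}^{k}-\bm{P}_{m}^{RT}\bm{u}^{k},\bm{u}^{k}-\bm{\Pi}_{m}^{B}\bm{u}^{k}\}$ and then apply Lemma \ref{Lemma 2.3}, which gives
\begin{equation*}
\|\nabla_{w,l}\delta_{u}^{k}\|_{0,K} \lesssim \|\nabla(\bm{u}^{k}-\bm{P}_{m}^{RT}\bm{u}^{k})\|_{0,K}+h_{K}^{-1/2}\|\bm{P}_{m}^{RT}\bm{u}^{k}-\bm{\Pi}_{m}^{B}\bm{u}^{k}\|_{0,\partial K}.
\end{equation*}
Standard approximation estimates for $\bm{P}_{m}^{RT}$ and $\bm{\Pi}_{m}^{B}$, combined with the trace inequality, yield $O(h_{K}^{m}\|\bm{u}^{k}\|_{m+1,K})$ for both contributions, and an analogous treatment of the jump term $\|\eta^{1/2}(\bm{v}_{hi}-\bm{v}_{hb})\|_{0,\partial\mathcal{T}_{h}}$ appearing in $|||\cdot|||_{V}$ gives the same order. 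Summing over $K\in\mathcal{T}_{h}$ produces $|||\delta_{u}^{k}|||_{V}\lesssim h^{m}\|\bm{u}^{k}\|_{m+1}$.

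Combining the two estimates and absorbing $\|\bm{u}^{k}\|_{m+1}^{2}$ into a constant $C_{4}(\bm{u})$ that also depends on $C_{1}(\bm{u}),C_{2}(\bm{u}),C_{3}(\bm{u})$ delivers \eqref{7.18}. I do not expect any genuine obstacle in this argument, since it is a textbook application of triangle inequality plus projection approximation; the only mildly delicate point is the clean handling of the ``mixed'' object $\delta_{u}^{k}$ against the discrete norm $|||\cdot|||_{V}$, which is resolved by viewing $\delta_{u}^{k}$ through its interior/boundary components and invoking Lemma \ref{Lemma 2.3}, exactly as was done in the preceding lemmas of this section.
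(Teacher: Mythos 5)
Your proposal is correct and follows exactly the route the paper takes: the paper justifies Theorem \ref{Theorem 7.1} in one sentence by invoking the triangle inequality applied to the splitting $\bm{u}^{k}-\bm{u}_{h}^{k}=\delta_{u}^{k}+\varepsilon_{u}^{k}$, the projection approximation properties for $|||\delta_{u}^{k}|||_{V}\lesssim h^{m}\|\bm{u}^{k}\|_{m+1}$, and Lemma \ref{Lemma 7.5} for $|||\varepsilon_{u}^{k}|||_{V}$. Your write-up merely supplies the standard details (via Lemma \ref{Lemma 2.3}, the trace inequality, and the $L^{2}$/RT projection estimates) that the paper leaves implicit, so there is no substantive difference.
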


To estimate the pressure, we introduce the following time discrete scheme of the weak problem \eqref{weak}: For $k = 1,...,N$, seek $(\bm{u}^{k},p^{k})\in \bm{V}\times Q$ such that
\begin{subequations}\label{weak3}
\begin{align}
(\frac{\bm{u}^{k}-\bm{u}^{k-1} }{\Delta t},\bm{v})+a(\bm{u}^{k},\bm{v} )
+b(\bm{v}, p^{k})+c(\bm{u}^{k};\bm{u}^{k},\bm{v} )+d(\bm{u}^{k};\bm{u}^{k},\bm{v} )&=(\bm{f}, \bm{v} ),  &\forall\bm{v}\in \bm{V},\\
b(\bm{u}^{k}, q)&=0, &\forall q\in Q.
\end{align}
\end{subequations}

\begin{lemma}\label{Lemma 7.6}
Assume that $\bm{u}\in L^{\infty}([H^{2}]^{n})\cap L^{2}([H^{m+1}]^{n})$, $\bm{u}_{t}\in L^{2}([H^{m+1}]^{n})$, $\bm{u}_{tt}\in L^{2}([L^{2}]^{n})$ and $p\in L^{2}(H^{m})$.
For $k=1,...,N$, let $(\bm{u}_{h}^{k}, p_{h}^{k} )$ and $(\bm{\mathcal{I}}_{h}\bm{u}^{k},\mathcal{P}_{h}p^{k} )$ be  respectively the solution to \eqref{fullwg} and  \eqref{ERR},
then there holds
\begin{align}\label{Jp}
&|||\mathcal{P}_{h}p^{k}-p^{k}_{h} |||_{Q}
\lesssim C_{5}(\bm{u})(h^{m}+\triangle t)+ C_{6}(\bm{u})(h^{2m}+\triangle t^{2}).
\end{align}
Here, $C_{5}(\bm{u})>0$ and  $C_{6}(\bm{u})>0$ are  constants depending only on $\alpha,\nu,r,$ $C_{4}(\bm{u})$.
\end{lemma}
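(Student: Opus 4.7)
I would mimic the strategy of Lemma \ref{Lemma 5.11}, substituting Lemma \ref{Lemma 7.2} for the time--derivative consistency error and Lemma \ref{Lemma 7.5} for the discrete energy bound on $\varepsilon_u^k:=\bm{\mathcal{I}}_h\bm{u}^k-\bm{u}_h^k$. The first step is to invoke the discrete inf-sup inequality (Lemma \ref{theoremLBB}) to reduce matters to estimating
\[
|||\mathcal{P}_h p^k-p_h^k|||_Q\lesssim \sup_{\bm{0}\neq\bm{v}_h\in\bm{V}_h^0}\frac{b_h(\bm{v}_h,\mathcal{P}_h p^k-p_h^k)}{|||\bm{v}_h|||_V}.
\]
Subtracting the fully discrete momentum equation \eqref{fullw1} from the projected identity \eqref{e-2} then rewrites the numerator as the sum of the consistency residuals $\xi_I(\bm{u}^k;\bm{u}^k,\bm{v}_h)+\xi_{II}(\bm{u}^k,\bm{v}_h)+\xi_{III}(\bm{u}^k;\bm{u}^k,\bm{v}_h)$, the time-discretization residual $(D_t\bm{P}_m^{RT}\bm{u}^k-\bm{u}_t^k,\bm{v}_{hi})$, the diffusion discrepancy $-a_h(\varepsilon_u^k,\bm{v}_h)$, the Forchheimer discrepancy $-[c_h(\bm{\mathcal{I}}_h\bm{u}^k;\bm{\mathcal{I}}_h\bm{u}^k,\bm{v}_h)-c_h(\bm{u}_h^k;\bm{u}_h^k,\bm{v}_h)]$, and the convective discrepancy $-[d_h(\bm{\mathcal{I}}_h\bm{u}^k;\bm{\mathcal{I}}_h\bm{u}^k,\bm{v}_h)-d_h(\bm{u}_h^k;\bm{u}_h^k,\bm{v}_h)]$.

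Next, I would bound each contribution. Lemma \ref{Lemma 5.5} applied at $t_k$ controls the three $\xi$-terms by $h^m$ times polynomial quantities in $\|\bm{u}^k\|_2,\|\bm{u}^k\|_{m+1}$, yielding $O(h^m)|||\bm{v}_h|||_V$; Lemma \ref{Lemma 7.2} bounds the time-discretization residual by $(h^{m+1}\Delta t^{-1/2}+\Delta t^{1/2})\|\bm{v}_{hi}\|_0$, which under the regularity assumed in Lemma \ref{Lemma 7.3} collapses to $O(h^m+\Delta t)|||\bm{v}_h|||_V$. The diffusion and Forchheimer discrepancies are handled via \eqref{a1}, \eqref{c1} and the triangle/Lipschitz estimates of Lemma \ref{Lemma 2.9}, giving $[\nu+\alpha C_{\widetilde r}^r C_r(|||\varepsilon_u^k|||_V+|||\bm{\mathcal{I}}_h\bm{u}^k|||_V)^{r-2}]\,|||\varepsilon_u^k|||_V\,|||\bm{v}_h|||_V$, which by Lemma \ref{Lemma 7.5} is $O(h^m+\Delta t)|||\bm{v}_h|||_V$. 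The convective discrepancy I would split exactly as in Lemma \ref{Lemma 5.11} into $\mathcal{J}_1=d_h(\varepsilon_u^k;\varepsilon_u^k,\bm{v}_h)$, $\mathcal{J}_2=d_h(\varepsilon_u^k;\bm{\mathcal{I}}_h\bm{u}^k,\bm{v}_h)$ and $\mathcal{J}_3=d_h(\bm{\mathcal{I}}_h\bm{u}^k;\varepsilon_u^k,\bm{v}_h)$; reusing the $L^3$--$L^6$--$L^2$ splitting with the inverse and trace inequalities, the linear pieces satisfy $\mathcal{J}_2+\mathcal{J}_3\lesssim C(\bm{u})\,|||\varepsilon_u^k|||_V\,|||\bm{v}_h|||_V$ (contributing to $C_5$), while the quadratic piece $\mathcal{J}_1$ scales like $|||\varepsilon_u^k|||_V^2\,|||\bm{v}_h|||_V$ and, by Lemma \ref{Lemma 7.5}, is $O(h^{2m}+\Delta t^2)|||\bm{v}_h|||_V$, producing the $C_6$-term.

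Summing the bounds, dividing by $|||\bm{v}_h|||_V$ and taking the supremum then yields \eqref{Jp}. The principal obstacle is the truly quadratic piece $\mathcal{J}_1$: to avoid a spurious $h^{-1}$ when one rewrites the discrete weak divergence as a piecewise strong divergence plus boundary jumps, one must carefully balance the $L^3$--$L^6$--$L^2$ H\"older splitting against a single application of the inverse inequality on each element, exactly as in the bound of $\mathcal{J}_1$ in Lemma \ref{Lemma 5.11}; only with this book-keeping does the factor $|||\varepsilon_u^k|||_V^2$ emerge cleanly. A secondary care point is the $h^{m+1}\Delta t^{-1/2}$ factor from Lemma \ref{Lemma 7.2}, which must be paired with the square-root integral $(\int_{t_{k-1}}^{t_k}|\bm{u}_t|_{m+1}^2\,d\tau)^{1/2}$ and absorbed through the $L^\infty$-in-time regularity of $\bm{u}_t$, so that the final bound is of the clean form $h^m+\Delta t$ rather than a mixed quantity.
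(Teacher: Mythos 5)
There is a genuine gap, and it sits exactly at the crux of this lemma. When you subtract the fully discrete momentum equation \eqref{fullw1} from the projected identity \eqref{e-2}, the left-hand sides contribute the difference of the two discrete time terms, i.e.\ $-(D_t\varepsilon_{ui}^k,\bm{v}_{hi})$ with $\varepsilon_u^k=\bm{\mathcal{I}}_h\bm{u}^k-\bm{u}_h^k$, \emph{in addition to} the consistency residual $(D_t\bm{P}_m^{RT}\bm{u}^k-\bm{u}_t^k,\bm{v}_{hi})$. Your decomposition of the numerator lists the latter but omits the former entirely. This is not a bookkeeping slip: $(D_t\varepsilon_{ui}^k,\bm{v}_{hi})$ is the hardest term in the pressure estimate. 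Lemma \ref{Lemma 7.5} controls only $|||\varepsilon_u^k|||_V$, not the difference quotient $D_t\varepsilon_{ui}^k$, and unlike the semi-discrete case there are no fully discrete analogues of \eqref{5.23} and \eqref{421} (the $\|\partial_t\varphi_{ui}\|_0$ and $\|\partial_t\chi_{ui}\|_0$ bounds that Lemma \ref{Lemma 5.11} leans on); Lemmas \ref{Lemma 7.3}--\ref{Lemma 7.4} give energy-norm bounds only. So the argument as written cannot close. The secondary items you flag ($\mathcal{J}_1$ and the $h^{m+1}\Delta t^{-1/2}$ factor) are real but minor by comparison; indeed $\mathcal{J}_1$ can be dispatched directly by \eqref{dh1} without the $L^3$--$L^6$--$L^2$ splitting.

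The paper avoids this trap by taking a different route: it subtracts \eqref{fullwg} from the \emph{time-discretized weak problem} \eqref{weak3} (so no $\xi$-residuals appear in this step), and then recovers the missing time-derivative control as a \emph{dual-norm} bound. Concretely, restricting the error equation \eqref{E6.26a} to test functions in $\bm{V}_{0h}$ kills the pressure term, and every remaining term is bounded by multiples of $|||e_u^k|||_V\,|||\bm{v}_h|||_V$; reading off the supremum yields $\|D_t e_{ui}^k\|_{*,h}\lesssim \nu|||e_u^k|||_V+\cdots$, which is then fed into the inf-sup step together with Theorem \ref{Theorem 7.1}. If you want to keep your structure (comparison against \eqref{e-2}), you can import the same device: test your error equation with $\bm{v}_h\in\bm{V}_{0h}$ to bound $\|D_t\varepsilon_{ui}^k\|_{*,h}$ by the very same right-hand-side quantities you already estimate, and only then pass to the inf-sup argument over all of $\bm{V}_h^0$. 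Without some such step, \eqref{Jp} does not follow.
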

\begin{proof}
Define $$e_p^k=p^k-p_h^k=(p^k- \mathcal{P}_{h}p^{k})+(\mathcal{P}_{h}p^{k} -p_h^k):=\delta_{p}^{k}+ \varepsilon_{p}^{k},$$
where $$\delta_{p}^{k}=p^k- \mathcal{P}_{h}p^{k}, \varepsilon_{p}^{k}=\mathcal{P}_{h}p^{k} -p_h^k.$$

Subtracting the equation  \eqref{fullwg} from \eqref{weak3}, we get
\begin{subequations}\label{E6.26}
\begin{align}%
(\frac{e_{ui}^{k}-e_{ui}^{k-1} }{\Delta t},\bm{v}_{hi})+a_{h}(e_{u}^{k},\bm{v}_{h} )
+b_{h}(\bm{v}_{h}, e_p^{k})+c_{h}(\bm{u}^{k};\bm{u}^{k},\bm{v}_{h} )-c_{h}(\bm{u}^{k}_h;\bm{u}^{k}_h,\bm{v}_{h} )&\nonumber\\
+d_{h}(\bm{u}^{k};\bm{u}^{k},\bm{v}_{h} )-d_{h}(\bm{u}^{k}_h;\bm{u}^{k}_h,\bm{v}_{h} )&=0,  \label{E6.26a}\\
b_{h}(e_{u}^{k}, q)&=0.\label{E6.26b}
\end{align}
\end{subequations}
Taking $\bm{v}_{h}=\varepsilon_{u}^{k}$ and $q_{h}=\varepsilon_{p}^{k}$ in the above equations and utilizing the projection properties, we derive
\begin{align}\label{ERRE}
(\frac{e_{ui}^{k}-e_{ui}^{k-1} }{\Delta t},\varepsilon_{ui}^{k})+\nu|||\varepsilon_{u}^{k}|||_{V}^{2}
+c_{h}(\bm{u}^{k};\bm{u}^{k},\varepsilon_{u}^{k} )-c_{h}(\bm{u}^{k}_h;\bm{u}^{k}_h,\varepsilon_{u}^{k} )+d_{h}(\bm{u}^{k}-\bm{u}^{k}_h;\bm{u}^{k}-\bm{u}^{k}_h,\varepsilon_{u}^{k} )&\nonumber\\
+d_{h}(\bm{u}^{k}-\bm{u}^{k}_h;\bm{u}^{k}_h,\varepsilon_{u}^{k} )
+d_{h}(\bm{u}^{k}_h;\bm{u}^{k}-\bm{u}^{k}_h,\varepsilon_{u}^{k} )&=0.
\end{align}
Utilizing \eqref{ERRE},  H\"{o}lder's inequality, Lemmas  \ref{Lemma 2.4},  \ref{Lemma 2.9} and \ref{Lemma 3.1}, we gain
\begin{align*}
&\|\frac{e_{ui}^{k}-e_{ui}^{k-1} }{\Delta t}\|_{\ast,h}
=\sup_{\bm{0}\neq \bm{\varepsilon_{u}^{k} }}\frac{(\frac{e_{ui}^{k}-e_{ui}^{k-1} }{\Delta t},\varepsilon_{ui}^{k})}{ ||| \varepsilon_{u}^{k} |||_V}\nonumber\\
\lesssim&\frac{1}{ ||| \varepsilon_{u}^{k} |||_V}\big(\nu|||\varepsilon_{u}^{k}|||_{V}^{2}
+\alpha((|\bm{u}^{k} |+|\bm{u}^{k}_{hi}|)^{r-2}|\bm{u}^{k}-\bm{u}^{k}_{hi} |,\varepsilon_{ui}^{k}   )
+ (|||\bm{u}^{k}_h|||_{V}+|||\bm{u}^{k}-\bm{u}^{k}_h|||_{V}) |||\bm{u}^{k}-\bm{u}^{k}_h|||_{V}|||\varepsilon_{u}^{k}|||_{V}    \big)\nonumber\\
\lesssim&\nu|||\varepsilon_{u}^{k}|||_{V}
+\big(\alpha(|||\bm{u}^{k}-\bm{u}^{k}_h |||_V+|||\bm{u}^{k}_h|||_V)^{r-2} +|||\bm{u}^{k}_h|||_{V} \big)|||\bm{u}^{k}-\bm{u}^{k}_h |||_V+|||\bm{u}^{k}-\bm{u}^{k}_h |||_V^{2},
\end{align*}
From the discrete inf-sup inequality and \eqref{E6.26a}, it follows that
\begin{align*}
|||\varepsilon_p^{k}|||_{Q}
\lesssim&\sup_{\bm{0}\neq \bm{v}_h\in\bm{V}_{h}^{0} }\frac{b_h(\bm{v}_h,\varepsilon_p^{k}  )}{ ||| \bm{v}_h|||_V}\nonumber\\
\lesssim&\|\frac{e_{ui}^{k}-e_{ui}^{k-1} }{\Delta t}\|_{\ast,h}
+\nu|||e_u^{k} |||_{V}+\big(\alpha(|||e_u^{k}|||_V+|||\bm{u}^{k}_h|||_V)^{r-2} +|||\bm{u}^{k}_h|||_{V} \big)|||e_u^{k}|||_V+|||e_u^{k} |||_V^{2}+\|\delta _{p}^{k}  \|_0,
\end{align*}
which, together with  Lemma \ref{Lemma 7.5}, Theorem \ref{Theorem 7.1}, the  projection properties and the boundedness of $|||\bm{u}^{k}_h|||_{V}$, yields the desired result \eqref{Jp}.

\end{proof}

\begin{theorem}\label{Theorem 7.3}
Under the same conditions as in Theorem \ref{Theorem 7.1}, there holds
\begin{align}
 \|\bm{u}^{k}-\bm{u}_{hi}^{k}\|_{0}
\lesssim C_{7}(\bm{u})(h^{m+1}+\triangle t),\label{6.22}
\end{align}
where,
$C_{7}(\bm{u})>0$ is a constant depending on $\alpha,r,\nu,\bm{f}$, $\|\bm{u}\|_2$, $\|\bm{u}\|_{m+1}$ and $\|\bm{u}_t\|_{m+1}$.
\end{theorem}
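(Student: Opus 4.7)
The plan is to mirror the semi-discrete $L^{2}$ estimate in Theorem \ref{Theoremul2}, but in the fully discrete setting by combining the backward Euler discrete energy identity with the discrete Gronwall inequality (Lemma \ref{Lemma 7.0}). First I would decompose $\bm{u}^{k}-\bm{u}_{hi}^{k}=\delta_{u}^{k}+\varepsilon_{u}^{k}$ with $\delta_{u}^{k}=\bm{u}^{k}-\bm{\mathcal{I}}_{h}\bm{u}^{k}$ and $\varepsilon_{u}^{k}=\bm{\mathcal{I}}_{h}\bm{u}^{k}-\bm{u}_{h}^{k}$. By the $\bm{P}^{RT}_{m}$ projection property $\|\delta_{u}^{k}\|_{0}\lesssim h^{m+1}\|\bm{u}^{k}\|_{m+1}$, so only $\|\varepsilon_{ui}^{k}\|_{0}$ needs to be controlled by $h^{m+1}+\Delta t$.

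Next, subtracting \eqref{fullwg} from the identity \eqref{ERR} in Lemma \ref{Lemma 7.1}, and testing the resulting error equation with $\bm{v}_{h}=\varepsilon_{u}^{k}$ and $q_{h}=\varepsilon_{p}^{k}$, the pressure term drops out by \eqref{e-3} together with \eqref{WG2}, and $b_{h}(\varepsilon_{u}^{k},\varepsilon_{p}^{k})=0$. The  backward-difference identity $2(a-b,a)=\|a\|^{2}-\|b\|^{2}+\|a-b\|^{2}$ applied to $(D_{t}\varepsilon_{ui}^{k},\varepsilon_{ui}^{k})$ and coercivity \eqref{a2}, \eqref{c2} yield
\begin{equation*}
\tfrac{1}{2\Delta t}\bigl(\|\varepsilon_{ui}^{k}\|_{0}^{2}-\|\varepsilon_{ui}^{k-1}\|_{0}^{2}+\|\varepsilon_{ui}^{k}-\varepsilon_{ui}^{k-1}\|_{0}^{2}\bigr)+\nu|||\varepsilon_{u}^{k}|||_{V}^{2}+\alpha\|\varepsilon_{ui}^{k}\|_{0,r}^{r}\le \mathcal{R}^{k},
\end{equation*}
where $\mathcal{R}^{k}$ collects the consistency error $\xi_{I}+\xi_{II}+\xi_{III}$ estimated by Lemma \ref{Lemma 5.5}, the temporal truncation $(D_{t}\bm{P}_{m}^{RT}\bm{u}^{k}-\bm{u}_{t}^{k},\varepsilon_{ui}^{k})$ estimated by Lemma \ref{Lemma 7.2}, the Forchheimer projection residual controlled by Lemma \ref{Lemma 2.9} as $\alpha((|\bm{u}^{k}|+|\bm{P}_{m}^{RT}\bm{u}^{k}|)^{r-2}|\delta_{u}^{k}|,\varepsilon_{ui}^{k})$, and the convective residual $d_{h}(\bm{\mathcal{I}}_{h}\bm{u}^{k};\bm{\mathcal{I}}_{h}\bm{u}^{k},\varepsilon_{u}^{k})-d_{h}(\bm{u}_{h}^{k};\bm{u}_{h}^{k},\varepsilon_{u}^{k})$.

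The bulk of the work is bounding the convective residual sharply in the $L^{2}$ norm. I would split it as
$d_{h}(\varepsilon_{u}^{k};\bm{\mathcal{I}}_{h}\bm{u}^{k},\varepsilon_{u}^{k})+d_{h}(\bm{\mathcal{I}}_{h}\bm{u}^{k};\varepsilon_{u}^{k},\varepsilon_{u}^{k})+d_{h}(\varepsilon_{u}^{k};\varepsilon_{u}^{k},\varepsilon_{u}^{k})$, noting that the skew-symmetric term $d_{h}(\bm{\mathcal{I}}_{h}\bm{u}^{k};\varepsilon_{u}^{k},\varepsilon_{u}^{k})=0$ by \eqref{d0}, the cubic remainder is absorbed via the energy bound Theorem \ref{Theorem 7.1} giving $|||\varepsilon_{u}^{k}|||_{V}\lesssim h^{m}+\Delta t$, and the dominant linear term is controlled, as in the proof of Theorem \ref{Theoremul2}, by $C(\bm{u})\|\varepsilon_{ui}^{k}\|_{0}|||\varepsilon_{u}^{k}|||_{V}$ using the $H^{2}$ regularity of $\bm{u}$. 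After applying Young's inequality so that all $|||\varepsilon_{u}^{k}|||_{V}^{2}$ pieces are absorbed into $\tfrac{\nu}{2}|||\varepsilon_{u}^{k}|||_{V}^{2}$ on the left, what remains on the right is of the form $C_{\Delta t,h}\|\varepsilon_{ui}^{k}\|_{0}^{2}+(h^{m+1}+\Delta t)^{2}$ times $\bm{u}$-dependent constants (the temporal term of size $\Delta t\bigl(\int_{t_{k-1}}^{t_{k}}\|\bm{u}_{tt}\|_{0}^{2}d\tau\bigr)^{1/2}$ accumulates to $\Delta t^{2}$).

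Summing over $k$ and multiplying by $\Delta t$, applying Lemma \ref{Lemma 7.0} (with $\Delta t$ small enough that $\Delta t \tilde d_{j}<1$) closes the recursion and delivers $\|\varepsilon_{ui}^{k}\|_{0}^{2}\lesssim C(\bm{u})^{2}(h^{m+1}+\Delta t)^{2}$. The triangle inequality with $\|\delta_{u}^{k}\|_{0}$ then gives \eqref{6.22}. The main obstacle is the sharp $L^{2}$-level control of the trilinear residual $d_{h}(\varepsilon_{u}^{k};\bm{\mathcal{I}}_{h}\bm{u}^{k},\varepsilon_{u}^{k})$: a crude energy estimate would only yield $|||\varepsilon_{u}^{k}|||_{V}$ on the right and break the $h^{m+1}$ order, so one must instead move one factor into $\|\varepsilon_{ui}^{k}\|_{0}$ using the $L^{\infty}([H^{2}]^{n})$ regularity of $\bm{u}$ (compare Lemma \ref{Lemma 4.6}), leaving only a Gronwall-compatible $\|\varepsilon_{ui}^{k}\|_{0}^{2}$ to be absorbed.
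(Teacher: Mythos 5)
Your overall architecture (decomposition $\bm{u}^{k}-\bm{u}_{hi}^{k}=\delta_{u}^{k}+\varepsilon_{u}^{k}$, testing with $(\varepsilon_{u}^{k},\varepsilon_{p}^{k})$, the identity $2(a-b,a)=\|a\|^{2}-\|b\|^{2}+\|a-b\|^{2}$, skew-symmetry of $d_h$, the bound $d_h(\varepsilon_{u}^{k};\bm{\mathcal{I}}_{h}\bm{u}^{k},\varepsilon_{u}^{k})\lesssim C(\bm{u})\|\varepsilon_{ui}^{k}\|_{0}|||\varepsilon_{u}^{k}|||_{V}$, and the discrete Gronwall Lemma \ref{Lemma 7.0}) matches the paper. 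But there is a genuine gap in where you start the error equation: you subtract \eqref{fullwg} from the projected consistency identity \eqref{ERR} of Lemma \ref{Lemma 7.1} and then invoke Lemma \ref{Lemma 5.5} to bound $\xi_{I}+\xi_{II}+\xi_{III}$. Those consistency functionals are only of size $h^{m}C(\bm{u})\,|||\bm{v}_h|||_{V}$ (the term $\nu\langle\eta(\bm{P}^{RT}_m\bm{u}-\bm{u}),\bm{v}_{hi}-\bm{v}_{hb}\rangle_{\partial\mathcal{T}_h}$ in $\xi_{II}$, for instance, cannot be improved beyond $h^{m}$ in the energy pairing, and it cannot be re-paired against $\|\bm{v}_{hi}\|_{0}$ at all since it lives on element boundaries). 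After Young's inequality they leave a residual $\tfrac{h^{2m}}{\nu}C(\bm{u})^{2}$ on the right-hand side, which dominates the target $h^{2m+2}$; Gronwall then only delivers $\|\varepsilon_{ui}^{k}\|_{0}\lesssim h^{m}$, one order short of \eqref{6.22}. This is precisely why Lemmas \ref{Lemma 7.3}--\ref{Lemma 7.5}, which do go through \eqref{ERR}, only state energy-norm rates.

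The paper avoids this by deriving the error equation differently: it subtracts \eqref{fullwg} from the time-discretized weak problem \eqref{weak3} to get \eqref{E6.26}, so that after testing with $\varepsilon_{u}^{k}$ the right-hand side consists only of terms pairing genuine $L^{2}$-projection errors of order $h^{m+1}$ against $\|\varepsilon_{ui}^{k}\|_{0}$ or $|||\varepsilon_{u}^{k}|||_{V}$: namely $-(\tfrac{\delta_{ui}^{k}-\delta_{ui}^{k-1}}{\Delta t},\varepsilon_{ui}^{k})$ (handled by Taylor expansion, giving $h^{m+1}\|\bm{u}_t^{k}\|_{m+1}+\Delta t$), the Forchheimer residual $\alpha(|\bm{P}_m^{RT}\bm{u}^{k}|^{r-2}\bm{P}_m^{RT}\bm{u}^{k}-|\bm{u}^{k}|^{r-2}\bm{u}^{k},\varepsilon_{ui}^{k})$, and the trilinear residuals $d_h(\delta_u^{k};\bm{u}^{k},\varepsilon_{u}^{k})$, $d_h(\bm{u}_h^{k};\delta_u^{k},\varepsilon_{u}^{k})$, each carrying a factor $\|\delta_{ui}^{k}\|_{0}\lesssim h^{m+1}\|\bm{u}^{k}\|_{m+1}$. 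No $\xi$-terms appear, so the $h^{m+1}$ order survives. Two smaller remarks: the ``cubic remainder'' $d_h(\varepsilon_{u}^{k};\varepsilon_{u}^{k},\varepsilon_{u}^{k})$ you propose to absorb via Theorem \ref{Theorem 7.1} is in fact identically zero by \eqref{d0}, so no absorption is needed; and your claim that the temporal truncation accumulates to $\Delta t^{2}$ is consistent with the paper's treatment.
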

\begin{proof}
Taking $\bm{v}_{h}= \varepsilon _{u}^{k}$ and $q_{h}= \varepsilon _{p}^{k}$ in \eqref{E6.26} and utilizing the projection properties, we gain
\begin{align*}%
&(\frac{\varepsilon_{ui}^{k}-\varepsilon_{ui}^{k-1} }{\Delta t},\varepsilon _{ui}^{k})+\nu||| \varepsilon _{u}^{k} |||_V^{2}
+c_{h}(\bm{\mathcal{I}}_{h}\bm{u}^{k};\bm{\mathcal{I}}_{h}\bm{u}^{k},\varepsilon _{u}^{k} )-c_{h}(\bm{u}^{k}_h;\bm{u}^{k}_h,\varepsilon _{u}^{k} )\nonumber\\
=&-(\frac{\delta_{ui}^{k}-\delta_{ui}^{k-1} }{\Delta t},\varepsilon _{ui}^{k})
+\alpha(|\bm{P}_m^{RT}\bm{u}^{k}|^{r-2}\bm{P}_m^{RT}\bm{u}^{k}-|\bm{u}^{k}|^{r-2} \bm{u}^{k}, \varepsilon _{ui}^{k}    )
-d_h(\varepsilon _{u}^{k}; \bm{u}^{k},\varepsilon _{u}^{k} )
-d_h(\delta_u^{k}; \bm{u}^{k},\varepsilon _{u}^{k} )
-d_h(\bm{u}^{k}_{h};\delta_u^{k},  \varepsilon _{u}^{k} ),
\end{align*}
which, together with  Lemmas \ref{Lemma 2.4}, \ref{Lemma 2.9}, \ref{Lemma 3.1} and \ref{Lemma 5.6}, the projection properties, Taylor expansion, the triangle inequality and H\"{o}lder's inequality, yields
\begin{align}%
&\frac{1}{2\Delta t} (   \|\varepsilon_{ui}^{k}\|_{0}^{2}-\|\varepsilon_{ui}^{k-1}\|_{0}^{2}+\|\varepsilon _{ui}^{k}-\varepsilon_{ui}^{k-1}\|_0^{2})+\nu||| \varepsilon _{u}^{k} |||_V^{2}
+\alpha \|\varepsilon_{ui}^{k}\|_{0,r}^{r}\nonumber\\
\lesssim& \| \frac{\delta_{ui}^{k}-\delta_{ui}^{k-1} }{\Delta t}\|_0 \| \varepsilon _{ui}^{k}\|_0
+\alpha(|\bm{P}_m^{RT}\bm{u}^{k}|+|\bm{u}^{k} |)^{r-2}|\bm{P}_m^{RT}\bm{u}^{k}-\bm{u}^{k}  |,\varepsilon _{ui}^{k})
+\|\bm{u}^{k}\|_2|||\varepsilon _{u}^{k} |||_V^{2}+(|||\bm{u}^{k}_{h} |||_V+\|\bm{u}^{k}\|_{2})\| \delta_{ui}^{k} \|_0 |||\varepsilon _{u}^{k} |||_V\nonumber\\
\lesssim& \frac{1}{ \Delta t}\|\bm{P}_m^{RT}(\bm{u}^{k} -\bm{u}^{k-1})-(\bm{u}^{k}-\bm{u}^{k-1} )   \|_0\| \varepsilon _{ui}^{k}\|_0
+\alpha\|\bm{u}^{k}\|_2^{r-2}\|\bm{P}_m^{RT}\bm{u}^{k}-\bm{u}^{k}  \|_0\|\varepsilon _{ui}^{k}\|_0\nonumber\\
&+\|\bm{u}^{k}\|_2|||\varepsilon _{u}^{k} |||_V^{2}+(\|\bm{f}^{k}\|_{\ast,h}+\|\bm{u}^{k}\|_{2})\| \delta_{ui}^{k} \|_0 |||\varepsilon _{u}^{k} |||_V\nonumber\\
\lesssim& \frac{1}{ \Delta t}\|(\bm{P}_m^{RT}\bm{u}_t^{k}-\bm{u}_t^{k})\Delta t+\mathcal{O}(\Delta t^{2})   \|_0\| \varepsilon _{ui}^{k}\|_0
+\alpha\|\bm{u}^{k}\|_2^{r-2}h^{m+1}\|\bm{u}^{k}  \|_{m+1}\|\varepsilon _{ui}^{k}\|_0\nonumber\\
&+\|\bm{u}^{k}\|_2|||\varepsilon _{u}^{k} |||_V^{2}+(\|\bm{f}^{k}\|_{\ast,h}+\|\bm{u}^{k}\|_{2})h^{m+1}\| \bm{u}^{k} \|_{m+1} |||\varepsilon _{u}^{k} |||_V\nonumber\\
\lesssim&h^{m+1}(\alpha\|\bm{u}^{k}\|_2^{r-2}\|\bm{u}^{k}  \|_{m+1}+ \|\bm{u}_t^{k}  \|_{m+1})\|\varepsilon _{ui}^{k}\|_0
+\Delta t\|\varepsilon _{ui}^{k}\|_0\nonumber\\&
+(\|\bm{f}^{k}\|_{\ast,h}+\|\bm{u}^{k}\|_{2})^{2}h^{2m+2}\| \bm{u}^{k} \|_{m+1} ^{2} +\frac{\nu}{2}|||\varepsilon _{u}^{k} |||_V^{2}.
\end{align}
Thus, for $k = 1,...,N$, sum the above inequality from 1 to $k$ and multiply by $\Delta t$, then we get
\begin{align*}%
&   \|\varepsilon_{ui}^{k}\|_{0}^{2}+\sum _{j=1}^{k}\|\varepsilon _{ui}^{j}-\varepsilon_{ui}^{j-1}\|_0^{2}+\nu\Delta t\sum _{j=1}^{k}||| \varepsilon _{u}^{k} |||_V^{2}
+2\alpha\Delta t\sum _{j=1}^{k} \|\varepsilon_{ui}^{k}\|_{0,r}^{r}\nonumber\\
\lesssim& \Delta t \sum _{j=1}^{k}h^{m+1}(\alpha\|\bm{u}^{j}\|_2^{r-2}\|\bm{u}^{j}  \|_{m+1}+ \|\bm{u}_t^{j}  \|_{m+1})\|\varepsilon _{ui}^{j}\|_0
+\Delta t^{2}+ \Delta t\sum _{j=1}^{k}\|\varepsilon _{ui}^{j}\|_0^{2}
+\Delta t \sum _{j=1}^{k}\|\bm{u}^{j}\|_2|||\varepsilon _{u}^{j} |||_V^{2}\nonumber\\&
+h^{2m+2} \Delta t \sum _{j=1}^{k} (\|\bm{f}^{j}\|_{\ast,h}+\|\bm{u}^{j}\|_{2})^{2}\| \bm{u}^{j} \|_{m+1} ^{2}\nonumber\\
\lesssim&h^{2m+2}\Delta t \sum _{j=1}^{k}(\alpha\|\bm{u}^{j}\|_2^{r-2}\|\bm{u}^{j}  \|_{m+1}+ \|\bm{u}_t^{j}  \|_{m+1})^{2}+\Delta t \sum _{j=1}^{k}\|\varepsilon _{ui}^{j}\|_0^{2}+\Delta t^{2}
\nonumber\\&
+h^{2m+2}\Delta t \sum _{j=1}^{k} (\|\bm{f}^{j}\|_{\ast,h}+\|\bm{u}^{j}\|_{2})^{2}\| \bm{u}^{j} \|_{m+1} ^{2},
\end{align*}
which, together with Lemma \ref{Lemma 7.0}, Theorem \ref{Theorem 6.1}, the triangle inequality and the projection properties, indicates \eqref{6.22}.
This completes this proof.
\end{proof}

Based on Lemma \ref{Lemma 7.6}, Theorems \ref{Theorem 7.1} - \ref{Theorem 7.3} ,  the triangle inequality and the projection properties, we can gain the following main conclusion  for the  fully discrete WG scheme \eqref{fullwg}.
\begin{theorem}\label{Theorem 7.2}
Under the same conditions as in Theorem \ref{Theorem 7.1}, there hold
\begin{subequations}\label{7.19}
\begin{align}
\|\bm{u}^{k}-\bm{u}_{hi}^{k}\|_{0}
\lesssim& C_{7}(\bm{u})(h^{m+1}+\triangle t),\\
 \|\nabla\bm{u}^{k}-\nabla_{h}\bm{u}_{hi}^{k}\|_{0}^{2}
+ \|\nabla\bm{u}^{k}-\nabla_{w,l}\bm{u}_{hi}^{k}\|_{0}^{2}
\lesssim & C_{4}(\bm{u} )(h^{2m}+\triangle t^{2}),\label{7.19b}\\
\|p^{k}-p_{h}^{k}\|_{0}
\lesssim& (C_{5}(\bm{u})+\|p\|_{m})h^{m}+C_{5}(\bm{u})\triangle t+ C_{6}(\bm{u})(h^{2m}+\triangle t^{2}).
\end{align}
\end{subequations}
\end{theorem}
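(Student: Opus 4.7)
The statement bundles three estimates which are all obtained by combining the already-proved Theorems~\ref{Theorem 7.1} and~\ref{Theorem 7.3} and Lemma~\ref{Lemma 7.6} with the triangle inequality and standard approximation properties of the operators $\bm{P}_m^{RT}$, $\Pi_{m-1}^{\ast}$ and $\bm{\Pi}_l^{\ast}$. My plan is therefore to dispose of each line in turn, introducing no new analytical machinery. The first bound is just Theorem~\ref{Theorem 7.3}, so nothing needs to be done for it.

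For the gradient estimate I would decompose
\begin{align*}
\nabla\bm{u}^k-\nabla_h\bm{u}_{hi}^k
=\bigl(\nabla\bm{u}^k-\nabla_h\bm{P}_m^{RT}\bm{u}^k\bigr)+\nabla_h\varepsilon_{ui}^k,
\end{align*}
where $\varepsilon_u^k=\bm{\mathcal{I}}_h\bm{u}^k-\bm{u}_h^k$. The first bracket is controlled by $h^m\|\bm{u}^k\|_{m+1}$ through the standard $\bm{RT}$-interpolation estimate, while $\|\nabla_h\varepsilon_{ui}^k\|_0\lesssim|||\varepsilon_u^k|||_V$ by Lemma~\ref{Lemma 2.4}; squaring and invoking Theorem~\ref{Theorem 7.1} (after writing $\bm{u}^k-\bm{u}_h^k=(\bm{u}^k-\bm{\mathcal{I}}_h\bm{u}^k)+\varepsilon_u^k$ and absorbing the interpolation error through the triangle inequality in the $|||\cdot|||_V$ norm) yields the claimed $h^{2m}+\Delta t^2$ rate. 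For the weak-gradient term $\|\nabla\bm{u}^k-\nabla_{w,l}\bm{u}_h^k\|_0^2$, I would use the commuting identity from Lemma~\ref{Lemma 2.8}, namely $\nabla_{w,l}\bm{\mathcal{I}}_h\bm{u}^k=\bm{\Pi}_l^{\ast}(\nabla\bm{u}^k)$, and split
\begin{align*}
\nabla\bm{u}^k-\nabla_{w,l}\bm{u}_h^k
=\bigl(\nabla\bm{u}^k-\bm{\Pi}_l^{\ast}\nabla\bm{u}^k\bigr)+\nabla_{w,l}\varepsilon_u^k.
\end{align*}
The first piece is of order $h^m\|\bm{u}^k\|_{m+1}$ by the $L^2$-projection estimate, and the second is at most $|||\varepsilon_u^k|||_V$ by Lemma~\ref{Lemma 2.3} (inequality~\eqref{2.10b}), so Theorem~\ref{Theorem 7.1} closes the argument exactly as before.

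For the pressure I would split $p^k-p_h^k=(p^k-\mathcal{P}_hp^k)+\varepsilon_p^k$, where $\varepsilon_p^k=\mathcal{P}_hp^k-p_h^k$. The interior component of the first summand satisfies $\|p^k-\Pi_{m-1}^{\ast}p^k\|_0\lesssim h^m\|p^k\|_m$ by the standard $L^2$-projection bound, while for the second summand the definition of the $|||\cdot|||_Q$-norm gives $\|\varepsilon_{pi}^k\|_0\le|||\varepsilon_p^k|||_Q$, and Lemma~\ref{Lemma 7.6} then supplies the rate $C_5(\bm{u})(h^m+\Delta t)+C_6(\bm{u})(h^{2m}+\Delta t^2)$. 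Adding the two contributions delivers the third line of the theorem.

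I do not anticipate a genuine obstacle here: the hard analytical work has already been done in Theorem~\ref{Theorem 7.1}, Theorem~\ref{Theorem 7.3} and Lemma~\ref{Lemma 7.6}. The only points that warrant care are (i) making sure the projection-approximation contributions are of the correct order after squaring in the gradient estimate — which is immediate since $h^{m}$ squares to $h^{2m}$ — and (ii) using $\|p\|_m$ rather than $\|p\|_{m+1}$ on the projection side of the pressure estimate, which is why the coefficient $\|p\|_m$ appears explicitly in the statement.
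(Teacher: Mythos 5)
Your proposal is correct and follows essentially the same route as the paper, which proves Theorem \ref{Theorem 7.2} exactly by combining Theorems \ref{Theorem 7.1} and \ref{Theorem 7.3} and Lemma \ref{Lemma 7.6} with the triangle inequality and the projection/interpolation properties (the paper merely states this combination without writing out the decompositions you supply). Your explicit splittings via $\bm{P}_m^{RT}$, Lemma \ref{Lemma 2.8} and Lemmas \ref{Lemma 2.3}--\ref{Lemma 2.4} are the natural way to fill in those details and introduce nothing beyond what the paper intends.
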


\begin{remark}\label{remark}
The result \eqref{7.19b}  shows that the velocity error estimate is independent of the pressure approximation, which means that the proposed fully discrete WG scheme is pressure-robust. Meanwhile, all the convergence rates in \eqref{7.19} are with the optimal order.
\end{remark}
\section{Iteration scheme}

To solve the nonlinear system of the fully  discrete WG scheme \eqref{fullwg}, we shall employ  the following linearized iteration algorithm: for $ 1\leq k\leq N$,
given $\bm{u}_{h}^{k,0}=\bm{u}_{h}^{k-1,\ast}$, then  $\forall(\bm{v}_{h}, q_h )\in  \bm{V}_{h}^{0}\times Q_{h}^{0}$, seek $(\bm{u}_{h}^{k,\widetilde{k}},p^{k,\widetilde{k}})$ with $\widetilde{k}=1,2,...$,   such that
\begin{subequations}\label{Oseen}
\begin{align}
(\frac{\bm{u}_{hi}^{k,\widetilde{k}}}{\Delta t},\bm{v}_{hi} )
+a_{h}(\bm{u}_{h}^{k,\widetilde{k}},\bm{v}_{h})+b_{h}(\bm{v}_{h},p_{h}^{k,\widetilde{k}})
+c_{h}(\bm{u}_{h}^{k,\widetilde{k}-1} ;\bm{u}_{h}^{k,\widetilde{k}},\bm{v}_{h} )%&\nonumber\\
+d_{h}(\bm{u}_{h}^{k,\widetilde{k}-1} ;\bm{u}_{h}^{k,\widetilde{k}},\bm{v}_{h} )
& =(\bm{f }^{k,\widetilde{k}},\bm{v}_{hi})+(\frac{\bm{u}_{hi}^{k-1}}{\Delta t},\bm{v}_{hi} ),\\
	b_{h}(\bm{u}_{h}^{k,\widetilde{k}},q_{h})&=0,
\end{align}
\end{subequations}
where  $\widetilde{k}$ is with regard to the the linearized iteration,
$\bm{u}_{h}^{k-1,\ast}$ denotes the obtained solution from the $(k-1){th}$ time step.

It is not difficult to know that the linear system \eqref{Oseen} is uni-solvent for given $(\bm{u}_{h}^{k,\widetilde{k}-1},p_{h}^{k,\widetilde{k}-1})$, and it holds
\begin{eqnarray}\label{bound-ul}
|||\bm{u}_h^{k,\widetilde{k}}||| _{V}\leq   \gamma,\quad \widetilde{k}=1,2,....
\end{eqnarray}
where $\gamma:=\frac 1 \nu\|\bm{f}^{k}\|_{*,h}+\frac{\|\bm{u}_{hi}^{k-1}\|_0}{\Delta t}.$

We have the following  convergence result.
\begin{theorem}\label{th73}
 For $k=1,..,N$, assume that  $(\bm{u}_{h}^{k},p_{h}^{k})\in \bm{V}_{h}^{0}\times Q_{h}^{0}$ is the solution to \eqref{fullwg}, under the condition
\begin{eqnarray}\label{CON}
2(r-2)C_{\widetilde{r}}^{r}\alpha\nu^{-1} \gamma^{r-2}+\mathcal{N}_{h}\nu^{-1}\gamma<1,
\end{eqnarray}
   the linearized iteration scheme \eqref{Oseen}  is convergent in the following sense:
\begin{align}\label{COV}
\lim_{\widetilde{k}\rightarrow\infty}|||\bm{u}_{h}^{k,\widetilde{k}}-\bm{u}_{h}^{k} |||_{V}=0,
\lim_{\widetilde{k}\rightarrow\infty}|||p_{h}^{k,\widetilde{k}}-p_{h}^{k} |||_{Q}=0.
\end{align}
\end{theorem}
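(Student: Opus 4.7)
The plan is to obtain the contraction $|||\bm{e}^{\widetilde{k}}|||_V\le\theta\,|||\bm{e}^{\widetilde{k}-1}|||_V$ for the velocity error $\bm{e}^{\widetilde{k}}:=\bm{u}_h^{k,\widetilde{k}}-\bm{u}_h^{k}$, with contraction constant $\theta$ equal to the left-hand side of \eqref{CON}; geometric decay of the velocity iterates will then yield convergence of the pressure iterates $\epsilon^{\widetilde{k}}:=p_h^{k,\widetilde{k}}-p_h^{k}$ through the discrete inf-sup condition of Lemma~\ref{theoremLBB}. First I would subtract \eqref{fullwg} from \eqref{Oseen} (taking $\bm{f}^{k,\widetilde{k}}=\bm{f}^{k}$) and split the frozen-coefficient nonlinearities as
\begin{align*}
c_h(\bm{u}_h^{k,\widetilde{k}-1};\bm{u}_h^{k,\widetilde{k}},\bm v_h)-c_h(\bm{u}_h^{k};\bm{u}_h^{k},\bm v_h)
 &= c_h(\bm{u}_h^{k,\widetilde{k}-1};\bm{e}^{\widetilde{k}},\bm v_h)
  +\alpha\bigl((|\bm{u}_{hi}^{k,\widetilde{k}-1}|^{r-2}-|\bm{u}_{hi}^{k}|^{r-2})\bm{u}_{hi}^{k},\bm v_{hi}\bigr),\\
d_h(\bm{u}_h^{k,\widetilde{k}-1};\bm{u}_h^{k,\widetilde{k}},\bm v_h)-d_h(\bm{u}_h^{k};\bm{u}_h^{k},\bm v_h)
 &= d_h(\bm{u}_h^{k,\widetilde{k}-1};\bm{e}^{\widetilde{k}},\bm v_h)
  +d_h(\bm{e}^{\widetilde{k}-1};\bm{u}_h^{k},\bm v_h).
\end{align*}

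Next I would test the resulting error equation against $\bm v_h=\bm{e}^{\widetilde{k}}\in\bm{V}_{0h}$. Since both iterates and the fully discrete solution are globally divergence-free, $b_h(\bm{e}^{\widetilde{k}},\epsilon^{\widetilde{k}})=0$; moreover $c_h(\bm{u}_h^{k,\widetilde{k}-1};\bm{e}^{\widetilde{k}},\bm{e}^{\widetilde{k}})\ge 0$ and $d_h(\bm{u}_h^{k,\widetilde{k}-1};\bm{e}^{\widetilde{k}},\bm{e}^{\widetilde{k}})=0$ by \eqref{d0}, while the non-negative time-difference contribution $\Delta t^{-1}\|\bm e_i^{\widetilde k}\|_0^{2}$ may be discarded. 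What remains is
\begin{equation*}
\nu |||\bm{e}^{\widetilde{k}}|||_V^2 \le
\bigl|\alpha((|\bm{u}_{hi}^{k,\widetilde{k}-1}|^{r-2}-|\bm{u}_{hi}^{k}|^{r-2})\bm{u}_{hi}^{k},\bm{e}_i^{\widetilde{k}})\bigr|
 +\bigl|d_h(\bm{e}^{\widetilde{k}-1};\bm{u}_h^{k},\bm{e}^{\widetilde{k}})\bigr|.
\end{equation*}
The second term I bound via the definition of $\mathcal{N}_h$ together with $|||\bm u_h^k|||_V\le\gamma$ to get $\mathcal{N}_h\gamma\,|||\bm{e}^{\widetilde{k}-1}|||_V|||\bm{e}^{\widetilde{k}}|||_V$. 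The first term I control using the first inequality of Lemma~\ref{Lemma 2.9} to extract $|\bm{e}_i^{\widetilde{k}-1}|$, followed by H\"{o}lder's inequality with exponents $(r/(r-3),r,r,r)$ and the embedding $\|\cdot\|_{0,r}\le C_{\widetilde r}|||\cdot|||_V$ from Lemma~\ref{Lemma 2.4}, together with the common bound $|||\bm{u}_h^{k,\widetilde{k}-1}|||_V,\,|||\bm{u}_h^{k}|||_V\le\gamma$; this delivers $2(r-2)C_{\widetilde r}^{r}\alpha\gamma^{r-2}|||\bm{e}^{\widetilde{k}-1}|||_V|||\bm{e}^{\widetilde{k}}|||_V$. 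Dividing by $\nu|||\bm{e}^{\widetilde{k}}|||_V$ yields exactly $\theta$ as in \eqref{CON}, and iterating gives $|||\bm{e}^{\widetilde{k}}|||_V\le\theta^{\widetilde k}|||\bm{e}^{0}|||_V\to 0$. The pressure convergence then follows by taking a supremum in the error momentum equation, applying Lemma~\ref{theoremLBB}, and plugging in the already controlled $|||\bm e^{\widetilde k}|||_V$ and $|||\bm e^{\widetilde k-1}|||_V$ (plus $\Delta t^{-1}\|\bm e_i^{\widetilde k}\|_0$, itself bounded by $|||\bm e^{\widetilde k}|||_V$).

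The main obstacle is the Forchheimer residual $\alpha((|\bm{u}_{hi}^{k,\widetilde{k}-1}|^{r-2}-|\bm{u}_{hi}^{k}|^{r-2})\bm{u}_{hi}^{k},\bm{e}_i^{\widetilde{k}})$: it must be handled so that the four $L^r$-factors produced by H\"{o}lder and the Sobolev-type embedding combine into precisely the power $\gamma^{r-2}$ appearing in \eqref{CON}; the alternative estimates in Lemma~\ref{Lemma 2.9} or a different H\"{o}lder splitting would give a weaker (larger) contraction constant and miss the threshold. A minor side issue is verifying $|||\bm u_h^k|||_V\le\gamma$, which is needed for $\bm u_h^k$ itself and not just for the iterates; this follows by testing \eqref{fullwg} with $\bm u_h^k\in\bm V_{0h}$ and using the non-negativity of $c_h(\bm u_h^k;\bm u_h^k,\bm u_h^k)$ and the antisymmetry \eqref{d0}, exactly as \eqref{bound-ul} is derived for the iterates.
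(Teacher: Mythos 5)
Your proposal is correct and follows essentially the same route as the paper: the same splitting of the frozen-coefficient nonlinearities into $c_h(\bm{u}_h^{k,\widetilde{k}-1};\bm{e}^{\widetilde{k}},\cdot)$ plus a Forchheimer residual and $d_h(\bm{u}_h^{k,\widetilde{k}-1};\bm{e}^{\widetilde{k}},\cdot)+d_h(\bm{e}^{\widetilde{k}-1};\bm{u}_h^{k},\cdot)$, the same testing with $\bm{e}^{\widetilde{k}}$ and discarding of the nonnegative terms, the same use of the first inequality of Lemma~\ref{Lemma 2.9}, the embedding of Lemma~\ref{Lemma 2.4} and the $\mathcal{N}_h$ bound to reach the contraction constant $\mathcal{M}$ of \eqref{CON}, and the same inf-sup argument for the pressure. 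Your explicit verification of $|||\bm{u}_h^{k}|||_V\le\gamma$ by testing \eqref{fullwg} with $\bm{u}_h^{k}$ is a welcome detail that the paper uses implicitly without comment.
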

\begin{proof}
Denote $e_{u}^{k,\widetilde{k}}=\{e_{u_{i}}^{k,\widetilde{k}},e_{u_{b}}^{\widetilde{k}} \}$,	 where $e_{u}^{k,\widetilde{k}}=\bm{u}_{h}^{k,\widetilde{k}}-\bm{u}_{h}^{k}$ and $e_{p}^{k,\widetilde{k}}=p_{h}^{k,\widetilde{k}}-p_{h}^{k}$.
Subtracting \eqref{fullwg} from \eqref{Oseen}, for any $(\bm{v}_{h}, q_h )\in  \bm{V}_{h}^{0}\times Q_{h}^{0}$, we obtain
\begin{subequations}\label{7.12}
\begin{align}
(\frac{e_{ui}^{k,\widetilde{k}}}{\Delta t},\bm{v}_{hi} )
+a_{h}(e_{u}^{k,\widetilde{k}},\bm{v}_{h})
+b_{h}(\bm{v}_{h},e_{p}^{k,\widetilde{k}})-
c_{h}(\bm{u}_{h}^{k} ;\bm{u}_{h}^{k},\bm{v}_{h})
+c_{h}(\bm{u}_{h}^{k,\widetilde{k}-1} ;\bm{u}_{h}^{k,\widetilde{k}},\bm{v}_{h})&\nonumber\\
-d_{h}(\bm{u}_{h}^{k} ;\bm{u}_{h}^{k},\bm{v}_{h})
+d_{h}(\bm{u}_{h}^{k,\widetilde{k}-1} ;\bm{u}_{h}^{k,\widetilde{k}},\bm{v}_{h} )&=0,\label{7.12a}\\
b_{h}(e_{u}^{k,\widetilde{k}},q_{h})&=0.\label{7.12b}
\end{align}
\end{subequations}
Taking $\bm{v}_{h}=e_{u}^{k,\widetilde{k}}$, $q_{h}=e_{p}^{k,\widetilde{k}}$ in \eqref{7.12} and  utilizing the definition of $c_{h}(\cdot;\cdot,\cdot)$ and the fact that $d_{h}(\bm{u}_{h} ;e_{u}^{k,\widetilde{k}},e_{u}^{k,\widetilde{k}})=0$, we gain
\begin{align}
 &\frac{1}{\Delta t}\|e_{ui}^{k,\widetilde{k}}\|_0^{2}
 +\nu|||e_{u}^{k,\widetilde{k}}|||^{2}_{V}\nonumber\\
 =&c_{h}(\bm{u}_{h}^{k} ;\bm{u}_{h}^{k},e_{u}^{k,\widetilde{k}})-c_{h}(\bm{u}_{h}^{k,\widetilde{k}-1} ;\bm{u}_{h}^{k,\widetilde{k}},e_{u}^{k,\widetilde{k}} )
+ d_{h}(\bm{u}_{h}^{k} ;\bm{u}_{h}^{k},e_{u}^{k,\widetilde{k}})-d_{h}(\bm{u}_{h}^{\widetilde{k}-1} ;\bm{u}_{h}^{k,\widetilde{k}},e_{u}^{k,\widetilde{k}} )\nonumber\\
 =&c_{h}(\bm{u}_{h}^{k} ;\bm{u}_{h}^{k},e_{u}^{k,\widetilde{k}})
 -c_{h}(\bm{u}_{h}^{k,\widetilde{k}-1} ;\bm{u}_{h}^{k},e_{u}^{k,\widetilde{k}} )-c_{h}(\bm{u}_{h}^{k,\widetilde{k}-1};e_{u}^{k,\widetilde{k}},e_{u}^{k,\widetilde{k}})
 -d_{h}(e_{u}^{k,\widetilde{k}-1} ;\bm{u}_{h}^{k},e_{u}^{k,\widetilde{k}})\nonumber\\
 =&\alpha ((|\bm{u}_{h}^{k} |^{r-2}- |\bm{u}_{h}^{k,\widetilde{k}-1} |^{r-2}) \bm{u}_{h}^{k}
, e_{u}^{k,\widetilde{k}})
 -\alpha (|\bm{u}_{h}^{k} |^{r-2}e_{u}^{k,\widetilde{k}},e_{u}^{k,\widetilde{k}} )-d_{h}(e_{u}^{k,\widetilde{k}-1} ;\bm{u}_{h}^{k},e_{u}^{k,\widetilde{k}}),
\end{align}
which, together with  Lemmas \ref{Lemma 2.4}, \ref{Lemma 2.9}, \ref{Lemma 3.1}, \eqref{bound-ul} and the fact $-\alpha (|\bm{u}_{h}^{k} |^{r-2}e_{u}^{k,\widetilde{k}},e_{u}^{k,\widetilde{k}} )\leq 0,$ derives
\begin{align}\label{7.13}
 &\nu|||e_{u}^{k,\widetilde{k}}|||^{2}_{V}\nonumber\\
  \leq&\alpha ((|\bm{u}_{h}^{k} |^{r-3}+|\bm{u}_{h}^{k,\widetilde{k}-1} |^{r-3})e_{u}^{k,\widetilde{k}-1}  \bm{u}_{h}^{k}, e_{u}^{k,\widetilde{k}}) )-d_{h}(e_{u}^{k,\widetilde{k}-1} ;\bm{u}_{h}^{k},e_{u}^{k,\widetilde{k}})\nonumber\\
  \leq&\big( (r-2)C_{\widetilde{r}}^{r}\alpha (|||\bm{u}_{h}^{k}|||^{r-3}_{V}+|||\bm{u}_{h}^{k,\widetilde{k}-1}|||^{r-3}_{V} )+ \mathcal{N}_{h}\big)||| \bm{u}_{h}^{k}|||_{V}\cdot|||e_{u}^{k,\widetilde{k}-1} |||_{V}\cdot
||| e_{u}^{k,\widetilde{k}}|||_{V}\nonumber\\
\leq& (2(r-2)C_{r}^{r}\alpha \gamma^{r-2}+\mathcal{N}_{h}\gamma)|||e_{u}^{k,\widetilde{k}-1} |||_{V}\cdot
||| e_{u}^{k,\widetilde{k}}|||_{V}.
\end{align}
Thus, we have
\begin{align}
|||e_{u}^{k,\widetilde{k}}|||_{V}
 \leq \mathcal{M}|||e_{u}^{k,\widetilde{k}-1}|||_{V}%\nonumber\\
  \leq...%\nonumber\\
  \leq \mathcal{M}^{\widetilde{k}}|||e_{u}^{k,0}|||_{V},
\end{align}
where $\mathcal{M}= 2(r-2)C_{\widetilde{r}}^{r}\alpha\nu^{-1} \gamma^{r-2}+\mathcal{N}_{h}\nu^{-1}\gamma$.
In view of \eqref{CON}, we know that $\mathcal{M}<1$.
Therefore, we obtain
\begin{align}\label{7.16}
\lim_{\widetilde{k}\rightarrow\infty} |||e_{u}^{k,\widetilde{k}}|||_{V}=\lim_{\widetilde{k}\rightarrow\infty} |||\bm{u}_{h}^{k,\widetilde{k}}-\bm{u}_{h}^{k} |||_{V}=0.
\end{align}
Next we shall derive the second convergence relation of \eqref{COV}.
From \eqref{7.12a}, it follows
\begin{align}\label{7.17}
&b_{h}(\bm{v}_{h},e^{k,\widetilde{k}}_{p})\nonumber\\
=&-(\frac{e_{ui}^{k,\widetilde{k}}}{\Delta t},\bm{v}_{hi} )-a_{h}(e_{u}^{k,\widetilde{k}},\bm{v}_{h})
+c_{h}(\bm{u}_{h}^{k} ;\bm{u}_{h}^{k},\bm{v}_{h})-c_{h}(\bm{u}_{h}^{k,\widetilde{k}-1} ;\bm{u}_{h}^{k,\widetilde{k}},\bm{v}_{h} )%\nonumber\\&
+d_{h}(\bm{u}_{h}^{k} ;\bm{u}_{h}^{k},\bm{v}_{h})
-d_{h}(\bm{u}_{h}^{k,\widetilde{k}-1} ;\bm{u}_{h}^{k,\widetilde{k}},\bm{v}_{h})\nonumber\\
=&-(\frac{e_{ui}^{k,\widetilde{k}}}{\Delta t},\bm{v}_{hi} )
-a_{h}(e_{u}^{k,\widetilde{k}},\bm{v}_{h})+\big(c_{h}(\bm{u}_{h}^{k} ;\bm{u}_{h}^{k},e_{u}^{k,\widetilde{k}})
 -c_{h}(\bm{u}_{h}^{k,\widetilde{k}-1} ;\bm{u}_{h}^{k},e_{u}^{k,\widetilde{k}} )\big)\nonumber\\
&-c_{h}(\bm{u}_{h}^{k,\widetilde{k}-1};e_{u}^{k,\widetilde{k}},e_{u}^{k,\widetilde{k}})-\big(d_h(\bm{u}_{h}^{k} ;e_{u}^{k,\widetilde{k}},\bm{v}_{h} )+d_h(e_{u}^{k,\widetilde{k}-1} ;e_{u}^{k,\widetilde{k}},\bm{v}_{h} )+d_h(e_{u}^{k,\widetilde{k}-1};\bm{u}_{h}^{k},\bm{v}_{h}) \big),
\end{align}
for all $\bm{v}_{h}\in \bm{V}_{h}^{0}$.
By  Lemma \ref{theoremLBB}, we gain
\begin{align}\label{7.18}
|||e^{k,\widetilde{k}}_{p}|||_{Q}
  \lesssim&\sup_{ \bm{v}_{h}\in \bm{V}_{h}^{0}}\frac{b_{h}(\bm{v}_{h},e^{k,\widetilde{k}}_{p})}{|||\bm{v}_{h}|||_{V} }\nonumber\\
 \leq& \frac{|||e_{ui}^{k,\widetilde{k}}|||_{V} ^{2}}{\triangle t}
 +\nu |||e_{u}^{k,\widetilde{k}}|||_{V} +C_{r}C_{\widetilde{r}}^{r}\alpha (|||\bm{u}_{h}^{k}|||^{r-3}_{V}%\nonumber\\&
+|||\bm{u}_{h}^{k,\widetilde{k}-1}|||^{r-3}_{V} )|||\bm{u}_{h}^{k}|||_{V}\cdot|||e_{u}^{k,\widetilde{k}-1}|||_{V} \nonumber\\& +C_{\widetilde{r}}^{r}\alpha|||\bm{u}_{h}^{k,\widetilde{k}-1}|||^{r-2}_{V} \cdot |||e_{u}^{k,\widetilde{k}}|||^{2}_{V}%\nonumber\\&
+\mathcal{N}_{h} (|||\bm{u}_{h}^{k}|||_{V}\cdot |||e_{u}^{k,\widetilde{k}}|||_{V}
+|||e_{u}^{k,\widetilde{k}-1}|||_{V}\cdot |||e_{u}^{k,\widetilde{k}}|||_{V}\nonumber\\&
+|||e_{u}^{k,\widetilde{k}-1}|||_{V} \cdot|||\bm{u}_{h}^{k}|||_{V}).
\end{align}
Combining results \eqref{7.17}, \eqref{7.18}, \eqref{7.16}, Lemmas  \ref{Lemma 2.4}, \ref{Lemma 2.9} and \ref{Lemma 3.1}, we have
\begin{align*}
\lim_{\widetilde{k}\rightarrow\infty} |||e^{k,\widetilde{k}}_{p}|||_{Q}
=\lim_{\widetilde{k}\rightarrow\infty} |||p_{h}^{k,\widetilde{k}}-p_{h}^{k}|||_{Q}=0.
\end{align*}
This completes this proof.

\end{proof}

\section{Numerical experiments}

\color{black}
In this section, we provide some numerical tests to verify the performance of the full discrete WG scheme  \eqref{fullwg} for the Brinkman-Forchheimer model \eqref{BF0} in two dimensions.
We adopt the linearized iterative algorithm \eqref{Oseen} with the initial guess $\bm{u}_{hi}^{0}=\bm{0}$ and the stopping criterion
\begin{equation*}
\|\bm{u}_{h}^{k,\widetilde{k}}-\bm{u}_{h}^{k,\widetilde{k}-1}\|_{0}<1e-8.
\end{equation*}
in all the  numerical examples.

\begin{exam} \label{EX7.1}
 Set $\Omega=  [0,1]\times[0,1]$, $\nu=1$,  $\alpha=1$, $r=5$ and $T=1$ in model \eqref{BF0}.
 The force term $\bm{f}$ is chosen such that the exact solution $(\bm{u},p)$ is as follows:
\begin{equation}
\left\{\begin{array}{ll}
u_{1}=5x^{2}(x-1)^{2}y(y-1)(2y-1)cos(t),   &\text{  in  } \Omega\times [0,T],     \\
u_{2}=-5x(x-1)(2x-1)y^{2}(y-1)^{2}cos(t) ,  &\text{  in  } \Omega\times [0,T],\\
p=10(2x-1)(2y-1)cos(t),        &\text{ in } \Omega\times [0,T].
\end{array}\right.
\end{equation}

We compute the scheme \eqref{fullwg} on uniform triangular meshes  (cf. Figure \ref{fig1:mesh}), with  $m=1,2$, $l=m-1,m$.
In order to confirm the convergence rates, according to  Theorem \ref{Theorem 7.2}, we choose $\triangle t=h^{2}$  for $m=1$, and $\triangle t=h^{3}$  for $m=2$.
Numerical results of  $\|\bm{u}-\bm{u}_{hi}\|_0$, $\|\nabla \bm{u}-\nabla_{h}\bm{u}_{hi}\|_0$, $\|p-p_{hi}\|_0$ and $\| \nabla_{h}\cdot\bm{u}_{hi} \|_{\infty}$ at the final time $T=1$ are listed in Tables \ref{tab1} and \ref{tab2}.

\begin{figure}[!t]
		  \centering
	 \includegraphics[width= 2.7in]{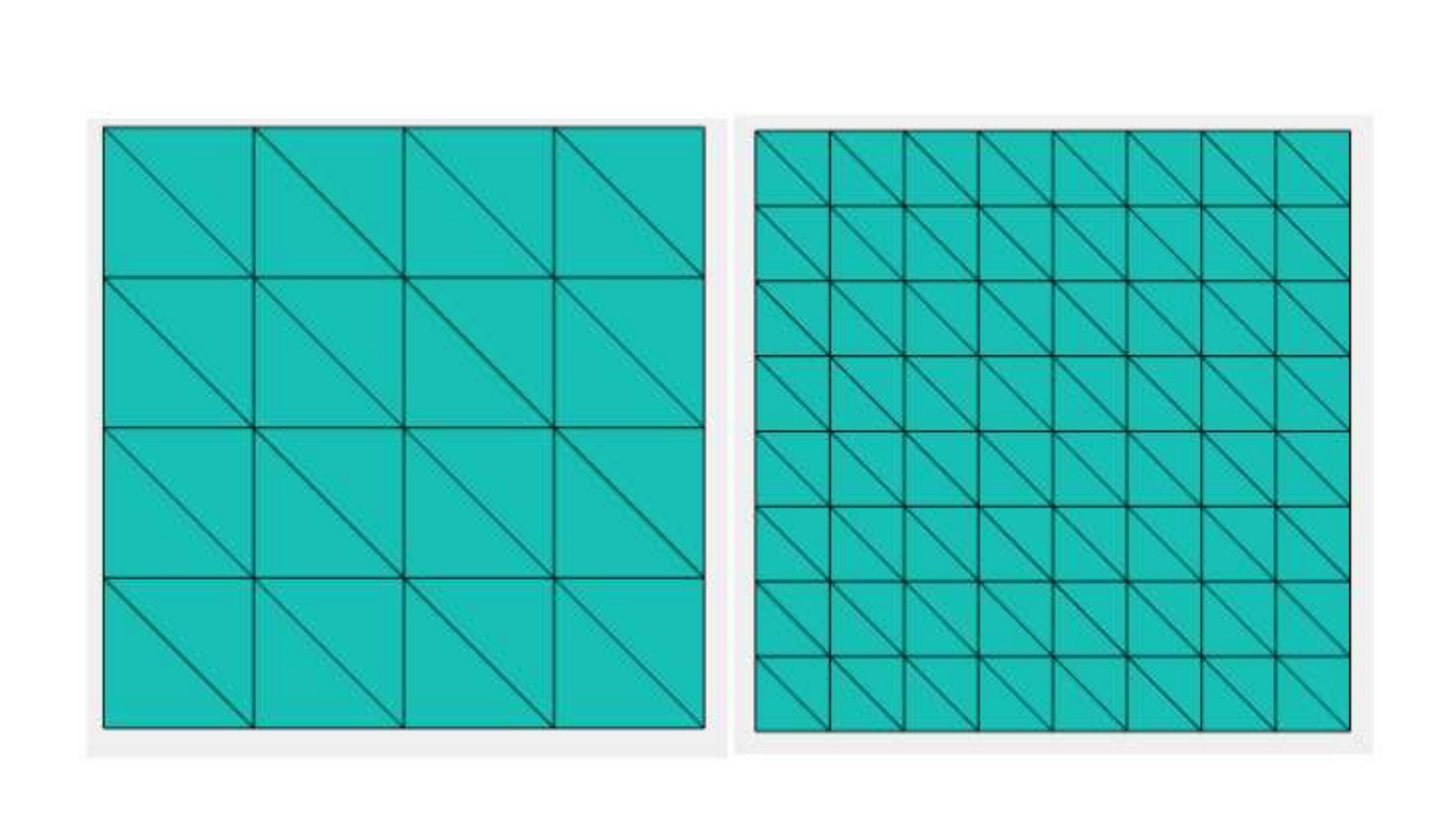}\\
		   \caption{Uniform triangular meshes:    $4\times4$ mesh (left)      and $8\times 8 $   mesh (right).}
		\label{fig1:mesh}
		\end{figure}
\end{exam}

\begin{table}[h]
	\small
	\caption{\label{tab1}
		History of convergence results at  time $T=1$  for Example \ref{EX7.1}:  $m=1$
		  }
	\centering
	
	\begin{tabular}{c|c|c|c|c|c|c|c|c}
		\Xhline{1pt}
		
		\multirow{2}{*}{$l$} &
		\multirow{2}{*}{$mesh$} &
		
		\multicolumn{2}{c|}{$\frac{\|\bm{u}-\bm{u}_{hi}\|_0}{\|\bm{u}\|_0}$} &
		\multicolumn{2}{c|}{$\frac{\|\nabla \bm{u}-\nabla_{h}\bm{u}_{hi}\|_0}{\|\nabla \bm{u}\|_0}$} &
		
		\multicolumn{2}{c| }{$\frac{\|p-p_{hi}\|_0}{\|p\|_0}$} &
		\multirow{2}{*}{$\|\nabla_{h}\cdot\bm{u}_{hi}\|_{0,\infty}$
		} \\
		\cline{3-8}
		
		& &Error &Rate  &Error &Rate  &Error &Rate  \\
		\hline
		
		\multirow{5}{*}{$0$}

&$4\times4$     &6.1174e-01   &   -   &5.1756e-01   &  -    &2.8647e-01   &    -  &2.6021e-17\\
&$8\times8$     &1.6218e-01   &1.92   &2.7342e-01   &0.92   &1.4410e-01   &0.99   &1.3531e-16\\
&$16\times16$   &4.2348e-02   &1.94   &1.3857e-01   &0.98   &7.2150e-02   &1.00   &1.2739e-17\\
&$32\times32$   &1.0847e-02   &1.97   &6.9428e-02   &1.00   &3.6085e-02   &1.00   &3.8015e-18\\
&$64\times64$   &2.7506e-03   &1.98   &3.4725e-02   &1.00   &1.8043e-02   &1.00   &5.8750e-18\\

		\Xhline{1pt}
		\multirow{5}{*}{$1$}
&$4\times4$     &5.8156e-01   &   -   &5.1353e-01   &  -    &2.8648e-01   &    -  &1.1189e-16\\
&$8\times8$     &1.5539e-01   &1.90   &2.7269e-01   &0.91   &1.4410e-01   &0.99   &7.2316e-17\\
&$16\times16$   &4.0683e-02   &1.93   &1.3845e-01   &0.98   &7.2153e-02   &1.00   &1.4637e-18\\
&$32\times32$   &1.0430e-02   &1.96   &6.9410e-02   &1.00   &3.6087e-02   &1.00   &9.0124e-19\\
&$64\times64$   &2.6448e-03   &1.98   &3.4722e-02   &1.00   &1.8045e-02   &1.00   &2.1244e-17\\

		\Xhline{1pt}
	\end{tabular}
	
\end{table}

\begin{table}[h]
	\small
	\caption{\label{tab2}
		History of convergence results at  time $T=1$  for Example \ref{EX7.1}:  $m=2$
		  }
	\centering
	
	\begin{tabular}{c|c|c|c|c|c|c|c|c}
		\Xhline{1pt}
		
		\multirow{2}{*}{$l$} &
		\multirow{2}{*}{$mesh$} &
		
		\multicolumn{2}{c|}{$\frac{\|\bm{u}-\bm{u}_{hi}\|_0}{\|\bm{u}\|_0}$} &
		\multicolumn{2}{c|}{$\frac{\|\nabla \bm{u}-\nabla_{h}\bm{u}_{hi}\|_0}{\|\nabla \bm{u}\|_0}$}&
		
		\multicolumn{2}{c| }{$\frac{\|p-p_{hi}\|_0}{\|p\|_0}$} &
		\multirow{2}{*}{$\|\nabla_{h}\cdot\bm{u}_{hi}\|_{0,\infty}$
		} \\
		\cline{3-8}
		
		& &Error &Rate  &Error &Rate  &Error &Rate  \\
		\hline
		
		\multirow{5}{*}{$1$}
&$2\times2$    &4.5784e-01   & -     &4.5504e-01   & -     &1.3276e-01   & -    &8.8753e-16\\
&$4\times4$    &6.0385e-02   &2.92   &1.3051e-01   &1.80   &3.3137e-02   &2.00  &3.9893e-16\\
&$8\times8$    &7.6249e-03   &2.99   &3.4969e-02   &1.90   &8.2790e-03   &2.00  &3.1464e-16\\
&$16\times16$  &9.5935e-04   &2.99   &8.9649e-03   &1.96   &2.0694e-03   &2.00  &3.2339e-16\\
&$32\times32$  &1.2103e-04   &2.99   &2.2716e-03   &1.98   &5.1763e-04   &2.00  &1.6534e-18\\

		\Xhline{1pt}
		\multirow{5}{*}{$2$}
&$2\times2$    &4.1899e-01   &   -   &4.5586e-01   &  -    &1.3275e-01   & -     &3.2917e-15\\
&$4\times4$    &5.7284e-02   &2.87   &1.3023e-01   &1.81   &3.3124e-02   &2.00   &6.7750e-15\\
&$8\times8$    &7.3938e-03   &2.95   &3.4901e-02   &1.90   &8.2753e-03   &2.00   &3.6799e-17\\
&$16\times16$  &9.4361e-04   &2.97   &8.9527e-03   &1.96   &2.0684e-03   &2.00   &1.0169e-15\\
&$32\times32$  &1.2018e-04   &2.97   &2.2692e-03   &1.98   &5.1739e-04   &2.00   &3.1390e-15\\

		\Xhline{1pt}
	\end{tabular}
	
\end{table}

\begin{exam} \label{EX7.2}
 Set $\Omega=  [0,1]\times[0,1]$, $\nu=1$,  $\alpha=0.1$, $r=3.5$ and $T=1$ in the model \eqref{BF0}.
 The exact solution $(\bm{u},p)$ is given as follows:
\begin{equation}\label{EX2}
\left\{\begin{array}{ll}
u_{1}=x^{2}(x-1)^{2}y(y-1)(2y-1)e^{-t},   &\text{  in  } \Omega\times [0,T],     \\
u_{2}=-x(x-1)(2x-1)y^{2}(y-1)^{2}e^{-t},  &\text{  in  } \Omega\times [0,T],\\
p=(x^2-y^2)e^{-t}, &\text{  in  } \Omega\times [0,T],\\
\end{array}\right.
\end{equation}
where $\bm{f}$ is received in the same manner as in Example \ref{EX7.1}.

We compute the scheme \eqref{fullwg} on uniform triangular meshes  (cf. Figure \ref{fig1:mesh}), with  $m=1,2$, $l=m-1,m$.
 Similar to Example \ref{EX7.1}, taking $\triangle t=h^{2}$ for $m=1$ and $\triangle t=h^{3}$  for $m=2$.
 Numerical results of  $\|\bm{u}-\bm{u}_{hi}\|_0$, $\|\nabla \bm{u}-\nabla_{h}\bm{u}_{hi}\|_0$, $\|p-p_{hi}\|_0$ and $\| \nabla_{h}\cdot\bm{u}_{hi} \|_{\infty}$ at the final time $T=1$
are listed in Tables \ref{tab3} and \ref{tab4}.

\begin{table}[h]
	\small
	\caption{\label{tab3}
		History of convergence results at  time $T=1$ for Example \ref{EX7.2}:  $m=1$
		  }
	\centering
	
	\begin{tabular}{c|c|c|c|c|c|c|c|c}
		\Xhline{1pt}
		
		\multirow{2}{*}{$l$} &
		\multirow{2}{*}{$mesh$} &
		
		\multicolumn{2}{c|}{$\frac{\|\bm{u}-\bm{u}_{hi}\|_0}{\|\bm{u}\|_0}$} &
		\multicolumn{2}{c|}{$\frac{\|\nabla \bm{u}-\nabla_{h}\bm{u}_{hi}\|_0}{\|\nabla \bm{u}\|_0}$} &
		
		\multicolumn{2}{c| }{$\frac{\|p-p_{hi}\|_0}{\|p\|_0}$} &
		\multirow{2}{*}{$\|\nabla_{h}\cdot\bm{u}_{hi}\|_{0,\infty}$
		} \\
		\cline{3-8}
		
		& &Error &Rate  &Error &Rate  &Error &Rate\\
		\hline
		
		\multirow{5}{*}{$0$}

&$4\times4$    &7.0944e-01   &-      &5.3583e-01   &-      &2.7375e-01   &-      &5.3126e-18\\
&$8\times8$    &1.7851e-01   &1.99   &2.7573e-01   &0.96   &1.3470e-01   &1.02   &5.4210e-19\\
&$16\times16$  &4.5921e-02   &1.96   &1.3885e-01   &0.99   &6.7044e-02   &1.01   &1.7686e-18\\
&$32\times32$  &1.1702e-02   &1.97   &6.9461e-02   &1.00   &3.3476e-02   &1.00   &4.1234e-18\\
&$64\times64$  &2.9561e-03   &1.99   &3.4727e-02   &1.00   &1.6731e-02   &1.00   &8.7350e-19\\

		\Xhline{1pt}
		\multirow{5}{*}{$1$}
&$4\times4$    &6.7643e-01   &   -   &5.2887e-01   &   -   &2.7369e-01   &  -    &9.2157e-19\\
&$8\times8$    &1.7150e-01   &1.98   &2.7462e-01   &0.95   &1.3469e-01   &1.02   &1.8974e-18\\
&$16\times16$  &4.4232e-02   &1.96   &1.3869e-01   &0.99   &6.7051e-02   &1.01   &3.0493e-19\\
&$32\times32$  &1.1284e-02   &1.97   &6.9438e-02   &1.00   &3.3483e-02   &1.00   &1.2705e-19\\
&$64\times64$  &2.8532e-03   &1.98   &3.4725e-02   &1.00   &1.6735e-02   &1.00   &1.4207e-18\\
		
		\Xhline{1pt}
	\end{tabular}
	
\end{table}

\begin{table}[h]
	\small
	\caption{\label{tab4}
		History of convergence results at  time $T=1$  for Example \ref{EX7.2}:  $m=2$
		  }
	\centering
	
	\begin{tabular}{c|c|c|c|c|c|c|c|c}
		\Xhline{1pt}
		
		\multirow{2}{*}{$l$} &
		\multirow{2}{*}{$mesh$} &
		
		\multicolumn{2}{c|}{$\frac{\|\bm{u}-\bm{u}_{hi}\|_0}{\|\bm{u}\|_0}$} &
		\multicolumn{2}{c|}{$\frac{\|\nabla \bm{u}-\nabla_{h}\bm{u}_{hi}\|_0}{\|\nabla \bm{u}\|_0}$}&
		
		\multicolumn{2}{c| }{$\frac{\|p-p_{hi}\|_0}{\|p\|_0}$} &
		\multirow{2}{*}{$\|\nabla_{h}\cdot\bm{u}_{hi}\|_{0,\infty}$
		} \\
		\cline{3-8}
		
		& &Error &Rate  &Error &Rate  &Error &Rate  \\
		\hline
		
		\multirow{5}{*}{$1$}
&$2\times2$    &4.5670e-01   &   -   &4.5492e-01   &   -   &4.3325e-02   &  -    &9.3838e-17\\
&$4\times4$    &6.0293e-02   &2.92   &1.3048e-01   &1.80   &1.0411e-02   &2.06   &3.9258e-17\\
&$8\times8$    &7.6173e-03   &2.98   &3.4963e-02   &1.90   &2.5611e-03   &2.02   &4.1688e-17\\
&$16\times16$  &9.5073e-04   &3.00   &8.9623e-03   &1.96   &6.3818e-04   &2.00   &2.4364e-17\\
&$32\times32$  &1.2011e-04   &2.98   &2.2663e-03   &1.98   &1.6534e-04   &1.95   &2.6648e-18\\

		\Xhline{1pt}
		\multirow{5}{*}{$2$}
&$2\times2$    &4.1828e-01   &   -   &4.5571e-01   &   -   &4.3230e-02   &   -   &2.3459e-17\\
&$4\times4$    &5.7219e-02   &2.87   &1.3020e-01   &1.81   &1.0311e-02   &2.07   &3.7610e-16\\
&$8\times8$    &7.3882e-03   &2.95   &3.4895e-02   &1.90   &2.5314e-03   &2.03   &1.4674e-16\\
&$16\times16$  &9.3551e-04   &2.98   &8.9503e-03   &1.96   &6.3057e-04   &2.01   &1.4735e-16\\
&$32\times32$  &1.1831e-04   &2.98   &2.2638e-03   &1.98   &1.6351e-04   &1.95   &4.7409e-16\\

		\Xhline{1pt}
	\end{tabular}
	
\end{table}

From the numerical results of the above two tests, we have the following observations:
\begin{itemize}
  \item The convergence rates of $\|\nabla \bm{u}-\nabla_{h}\bm{u}_{hi}\|_0$ and $\|p-p_{hi}\|_0$ for the full discrete WG scheme \eqref{fullwg}  are  $m^{th}$ orders in the cases of $m=1,2$ and $l=m, m-1$.
      Furthermore, the convergence rate of $\|\bm{u}-\bm{u}_{hi}\|_0$ is  $(m+1)^{th}$ order. These are conformable to the theoretical results in Theorem \ref{Theorem 7.2}.

  \item The results of $\|\nabla_{h}\cdot\bm{u}_{hi}\|_{\infty} $ are almost zero.
       This implies that the discrete velocity is globally divergence-free, which is consistent with   $\bm{Remark}$ \ref{remark}.
\end{itemize}
\color{black}
\end{exam}

\begin{exam}[The lid-driven cavity flow problem]\label{EX7.3}
This problem is used to test the influence of the two parameters $\alpha$  and $r$ on the solution of the fully discrete WG scheme. Take   $\Omega$ =$[0, 1]\times[0, 1]$, $\nu=0.1$  and $\bm{f}=\bm{0}$. The   boundary conditions are as follows:
$$\bm{u}|_{x=0}=\bm{u}|_{x=1}=\bm{u}|_{y=0}=\bm{0}, \quad \bm{u}|_{y=1}=(1,0)^T.$$ % u_{1}=1, u_{2}=0
We compute the fully discrete WG scheme \eqref{fullwg}  with  $m=l=2$ on the $20\times 20$ uniform triangular mesh  (cf. Figure \ref{fig1:mesh})
in the following cases:
\begin{itemize}
\item [ I]. $\alpha=0$, i.e. the case of the Navier-Stokes  equations;

\item [ II].   $r=3$ and $\alpha=1, 5, 50$;

\item [ III]. $\alpha=1$ and $r=3, 5, 10$.
\end{itemize}
The     velocity streamlines and the   pressure contours at time $T=0.5$  are displayed in Figures \ref{fig21:21} - \ref{fig233:233}.
  As a comparison,  the  referenced  numerical solutions obtained  with  the Taylor-Hood element
  are also shown   for  $\alpha=0$ at time $T=0.5$; see (a) and (c) in Figure \ref{fig21:21}.

From Figures \ref{fig21:21} - \ref{fig233:233}, we have the following observations:
\begin{itemize}
  \item From  Figure \ref{fig223:223}, we can see that the shape and   size  of the vortex  change evidently, which means that  the damping effect becomes greater for the velocity  as the damping parameter  $\alpha$ increases.   We can also  see that the  pressure approximation is not significantly affected by $\alpha$.
  \item In addition,  as shown in Figure  \ref{fig233:233},  the velocity  and pressure approximations are not  significantly effected by the number  $r$.
\end{itemize}

 \end{exam}

\begin{figure}[htbp!]%\label{Fig1}
\centering
\subfigure[ velocity (Taylor-Hood)     ]
{\includegraphics[width=3.7cm]{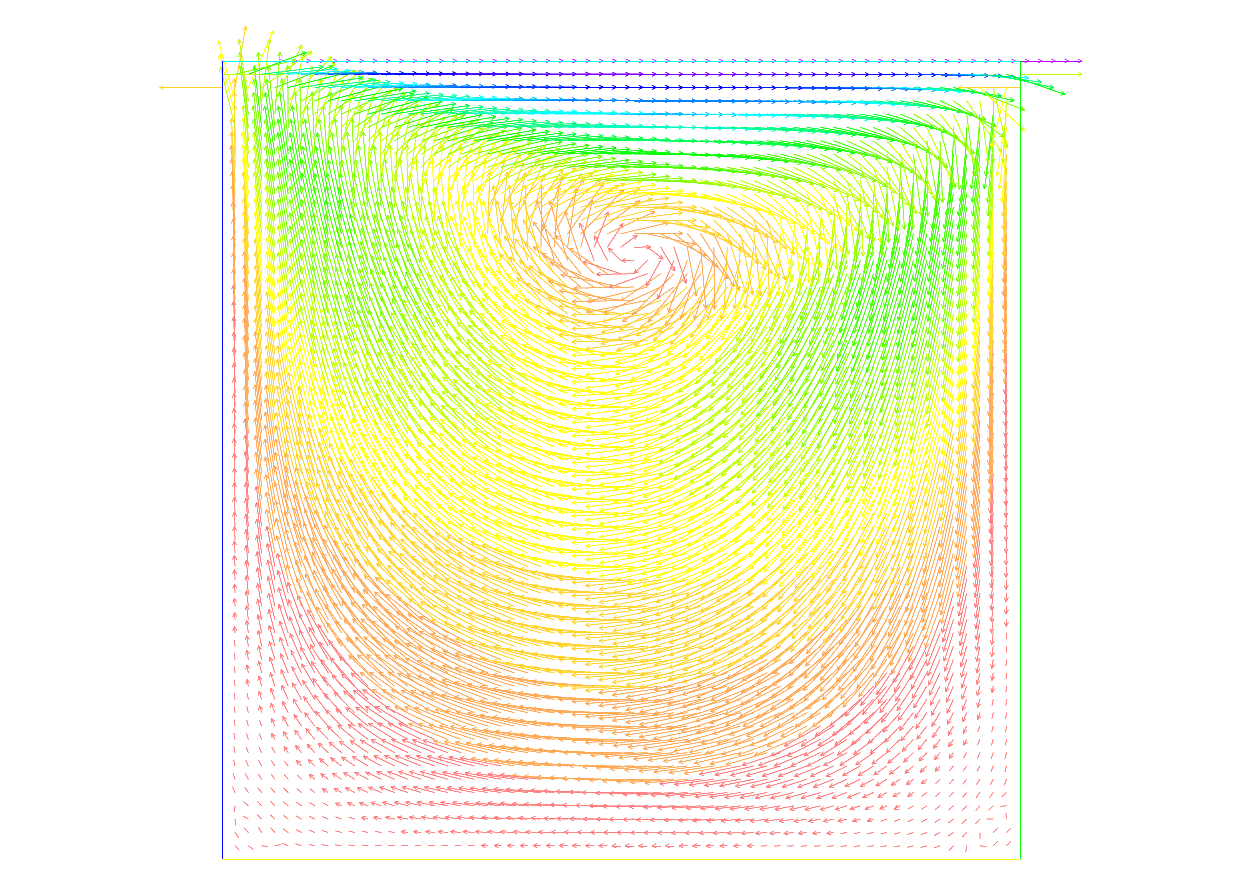}}
\quad
\subfigure[  velocity (WG) ]
{\includegraphics[width=3.7cm]{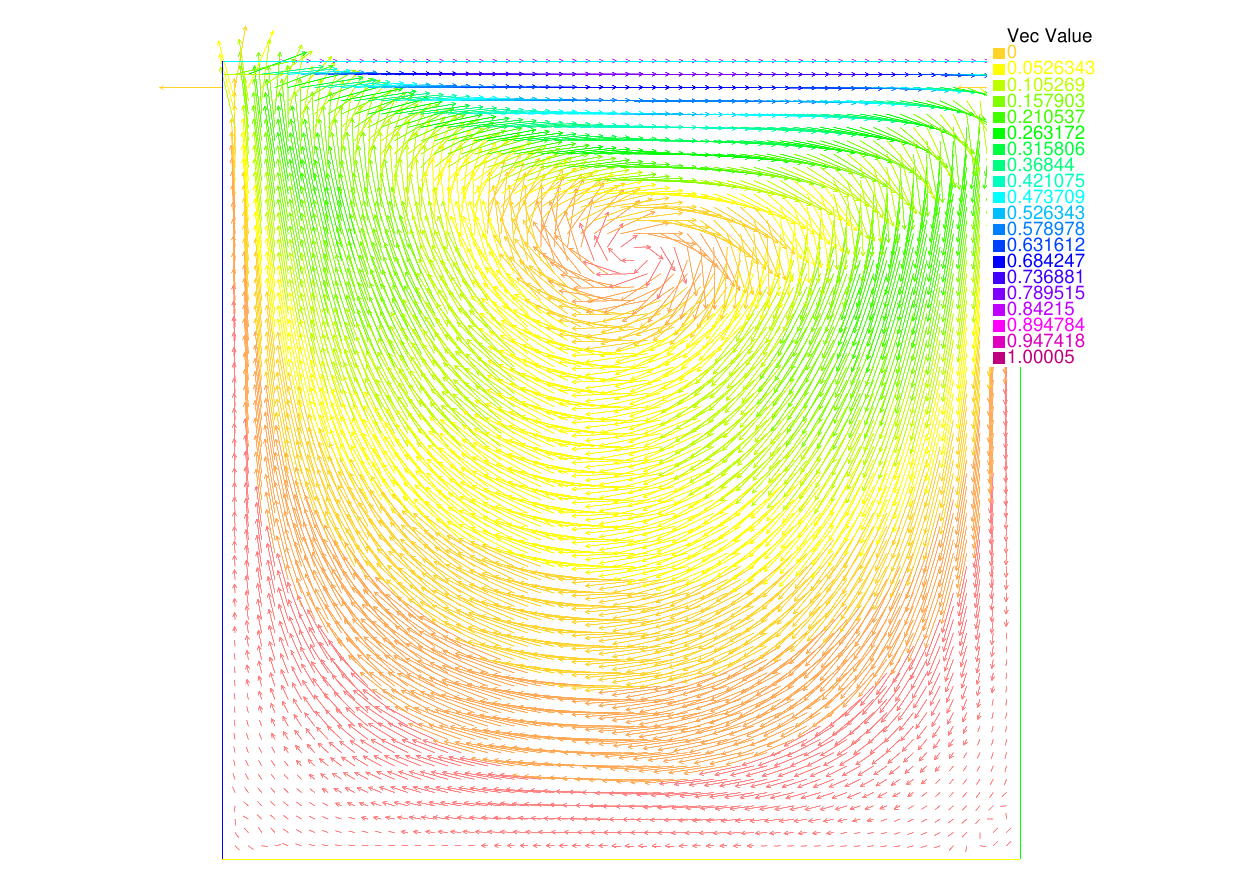}}%\\
\quad
\subfigure[ pressure  (Taylor-Hood)  ]
{\includegraphics[width=3.7cm]{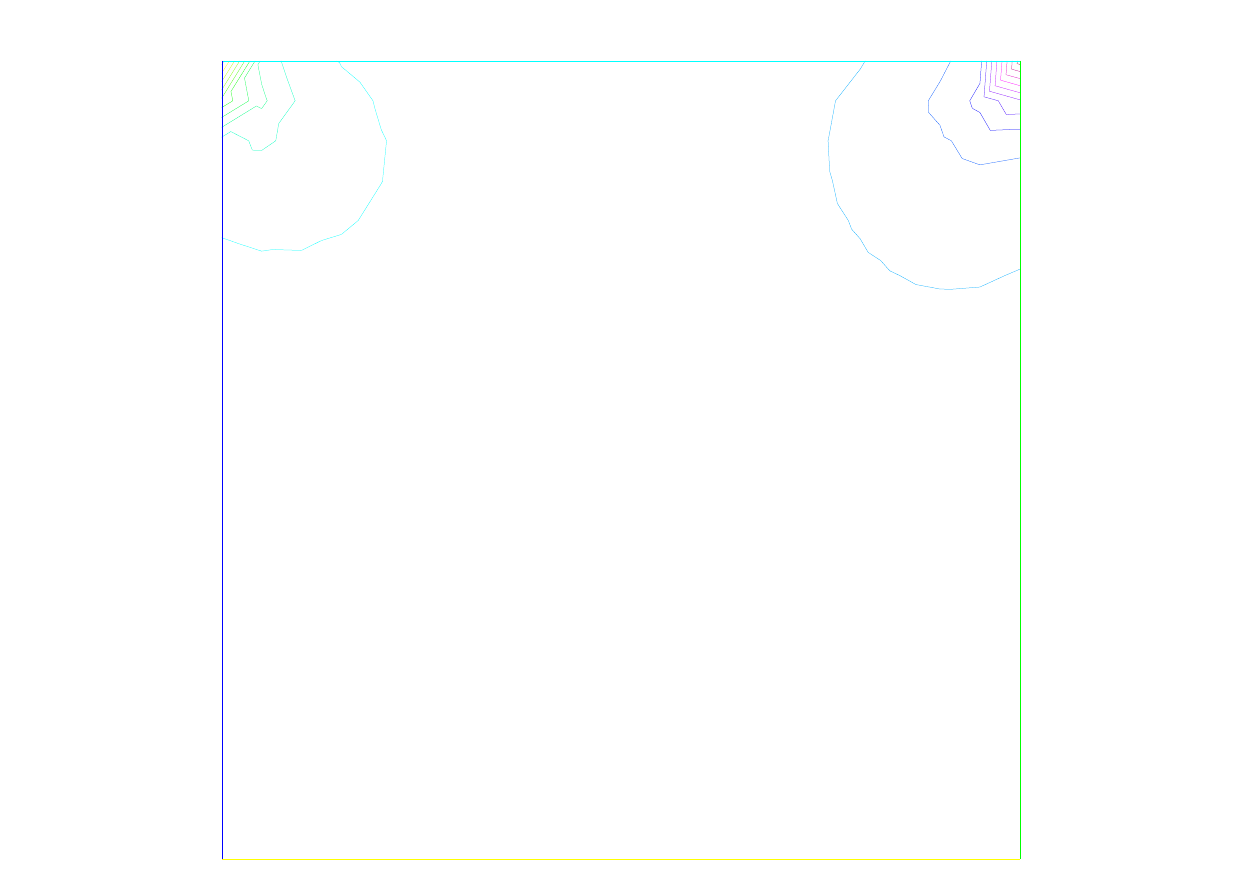}}
\quad
\subfigure[  pressure (WG) ]
{\includegraphics[width=3.7cm]{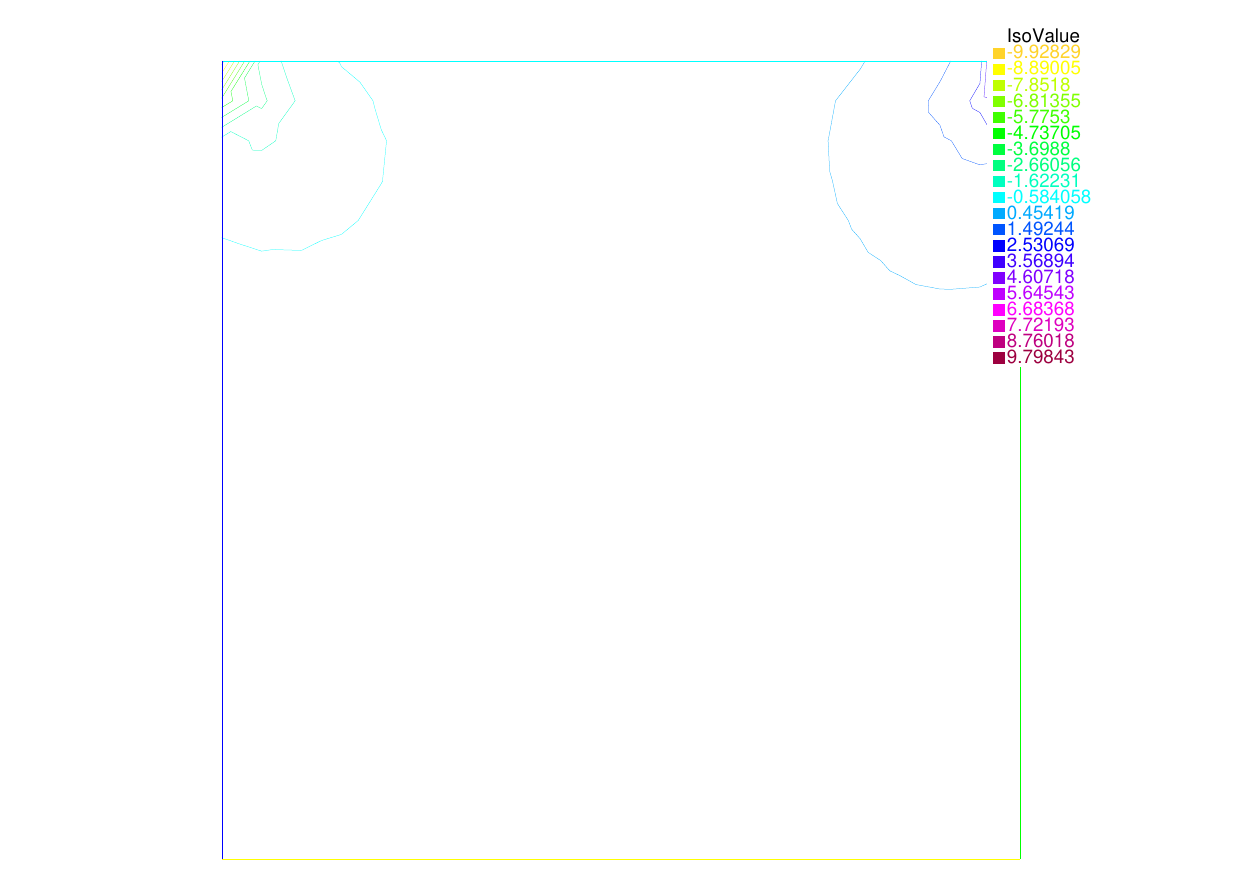}}%\\%\\\\%\\
\quad
\caption{ The velocity streamlines  and pressure contours for Example \ref{EX7.3}: $\alpha=0$ at $T=0.5$}
\label{fig21:21}
\end{figure}

\begin{figure}[htbp!]%\label{Fig1}
\centering
\subfigure[ velocity: $\alpha=1$]
{\includegraphics[width=5 cm]{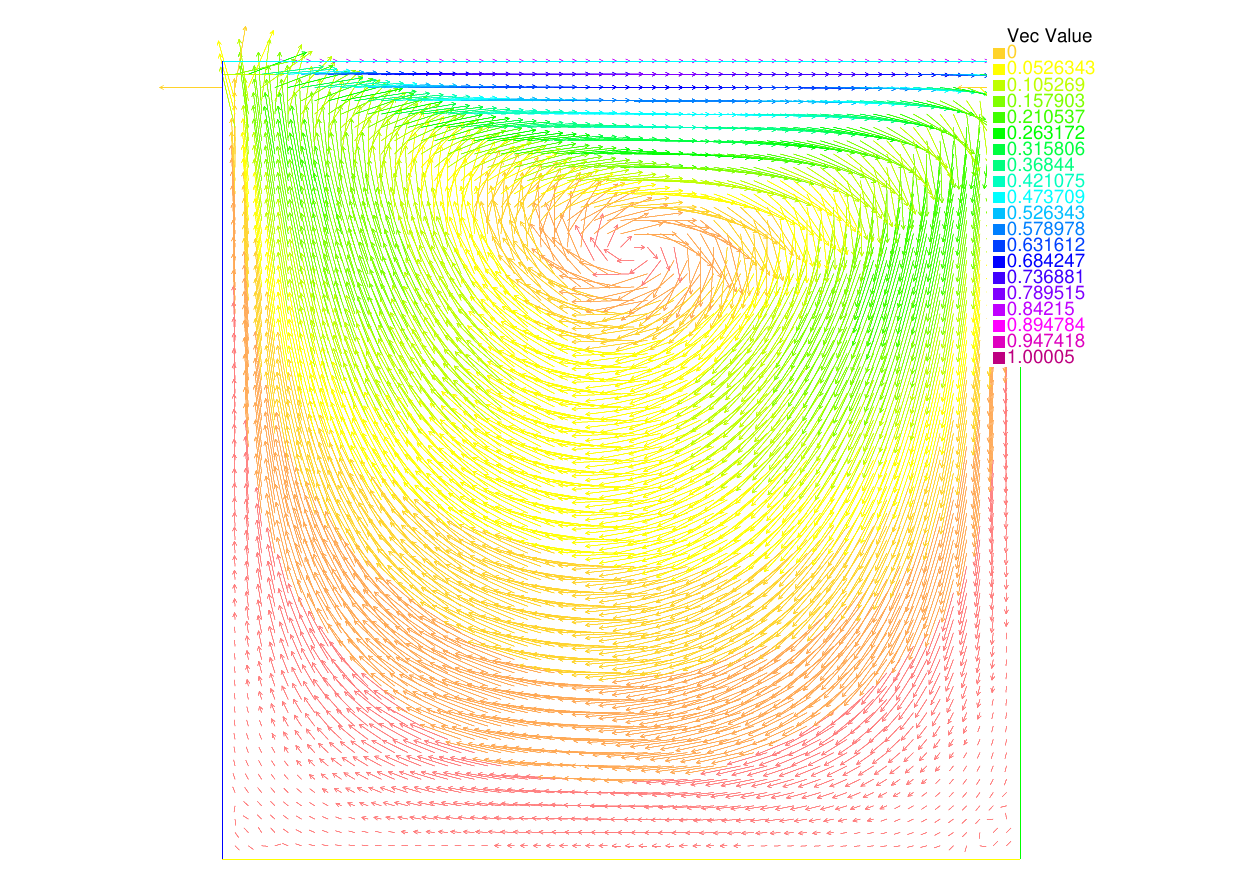}}
%\quad
\subfigure[ velocity: $\alpha=5 $]
{\includegraphics[width=5 cm]{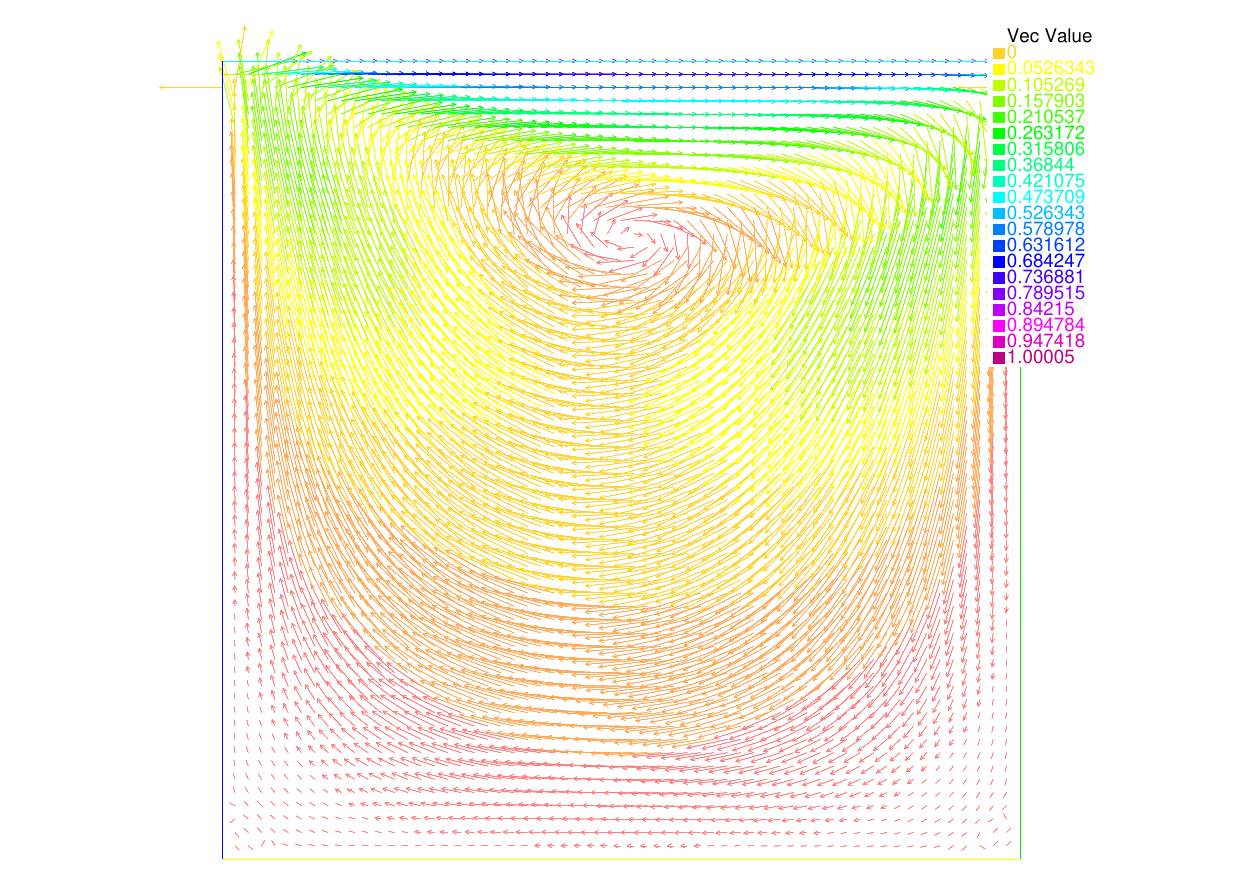}}%
%\quad
\subfigure[ velocity: $\alpha=50 $]
{\includegraphics[width=5 cm]{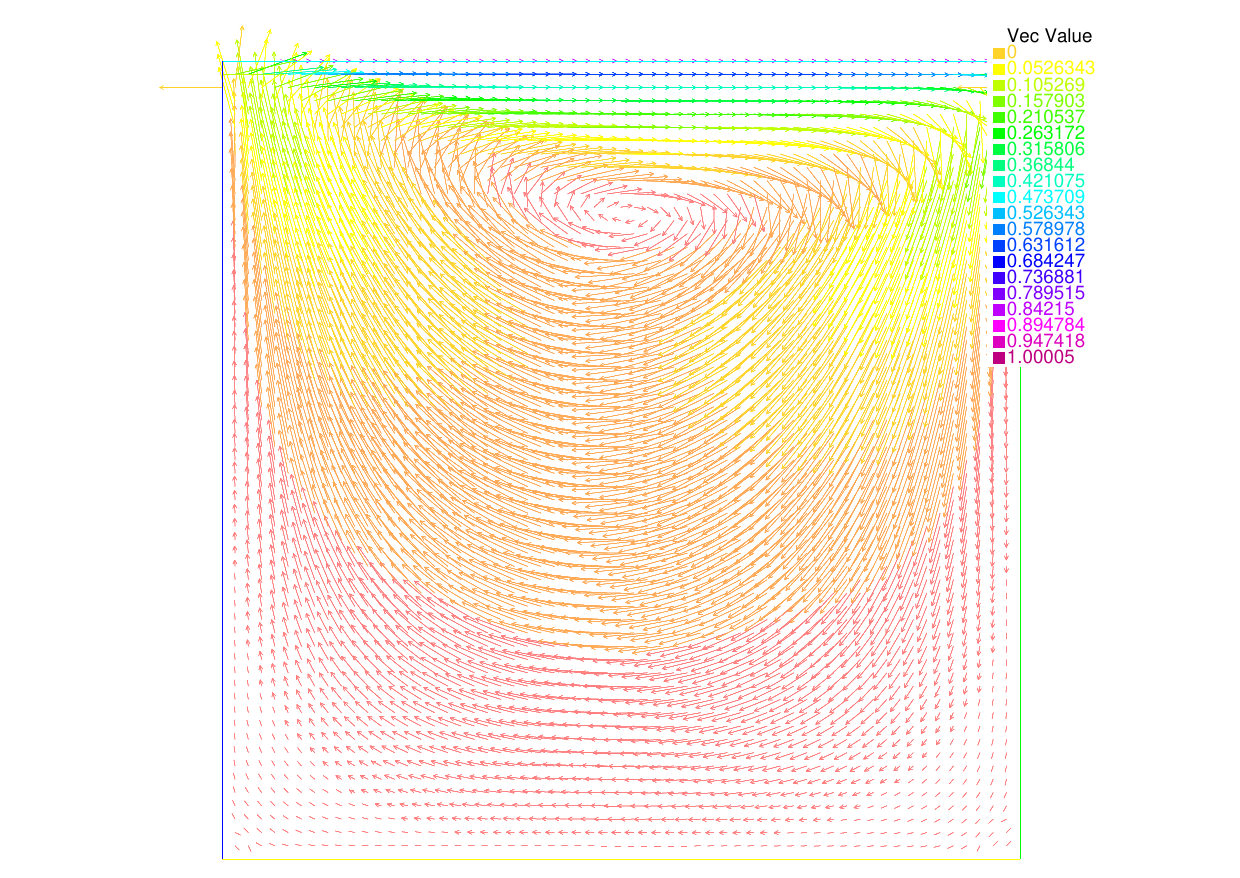}}\\
\subfigure[  pressure: $\alpha=1 $]
{\includegraphics[width=5 cm]{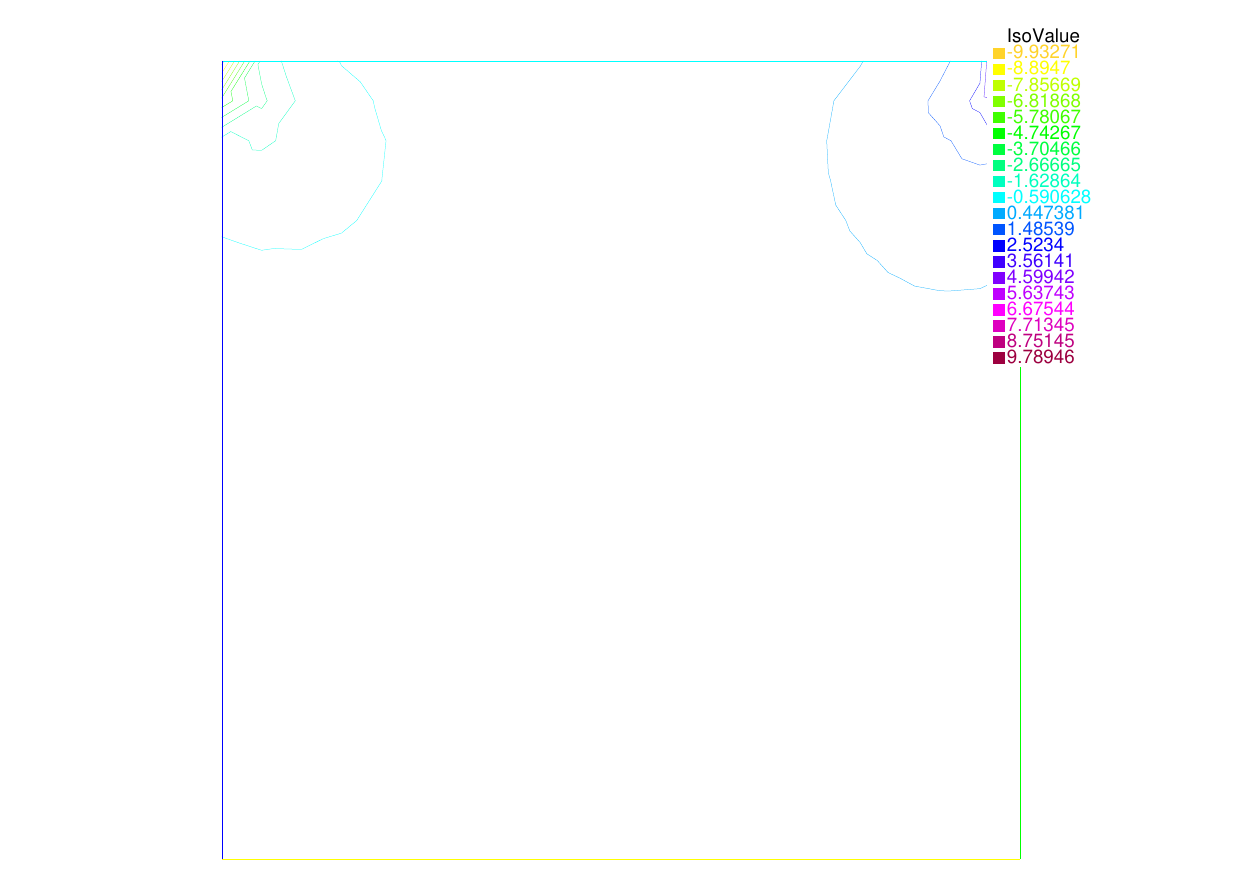}}
\subfigure[  pressure: $\alpha=5$]
{\includegraphics[width=5 cm]{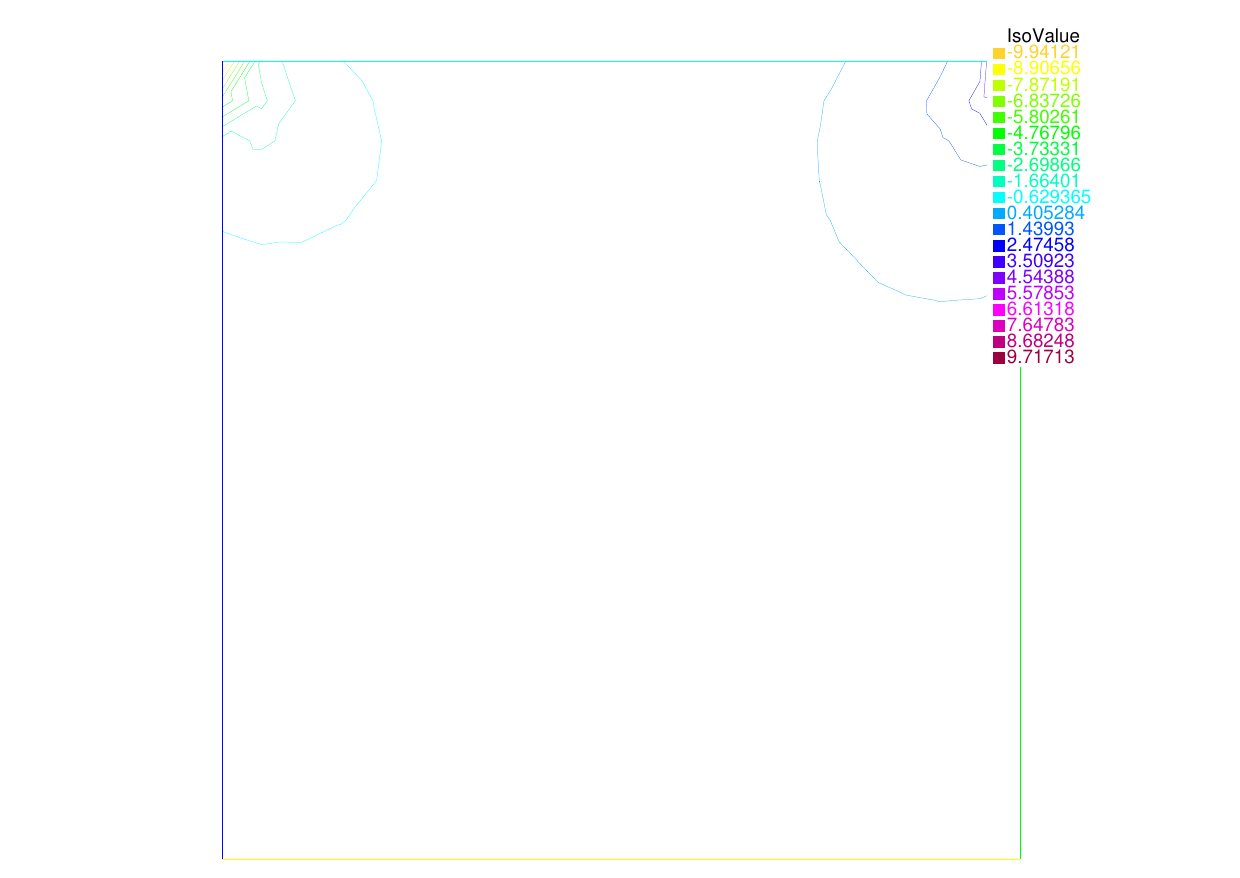}}
\subfigure[  pressure: $\alpha=50 $]
{\includegraphics[width=5 cm]{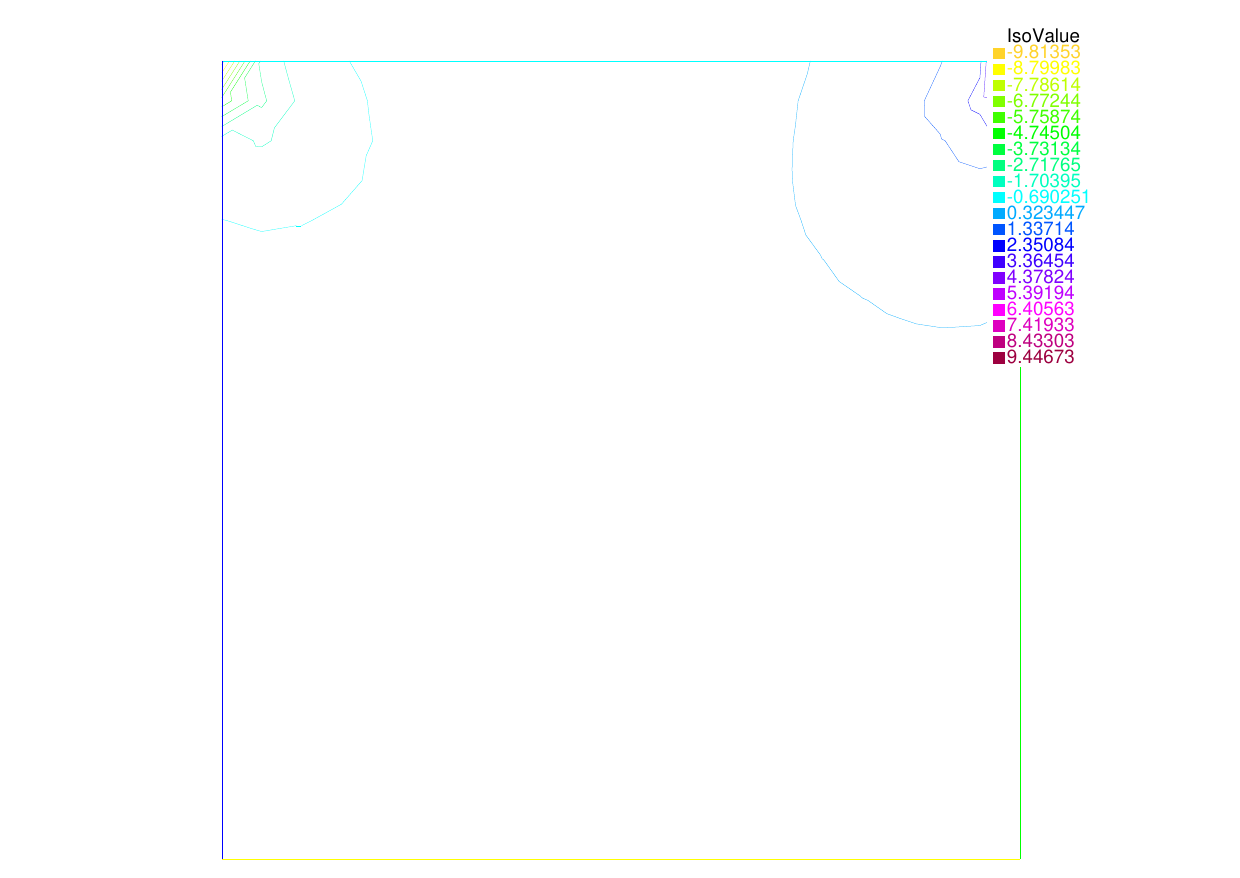}}
\caption{The   velocity streamlines and pressure contours  for Example \ref{EX7.3}:  $r=3$ and   $\alpha=1,5, 50$ at $T=0.5$}
\label{fig223:223}
\end{figure}

\begin{figure}[htbp!]%\label{Fig1}
\centering
\subfigure[velocity: $r=3$]
{\includegraphics[width=5cm]{u1305.eps}}
\subfigure[velocity: $r=5$]
{\includegraphics[width=5cm]{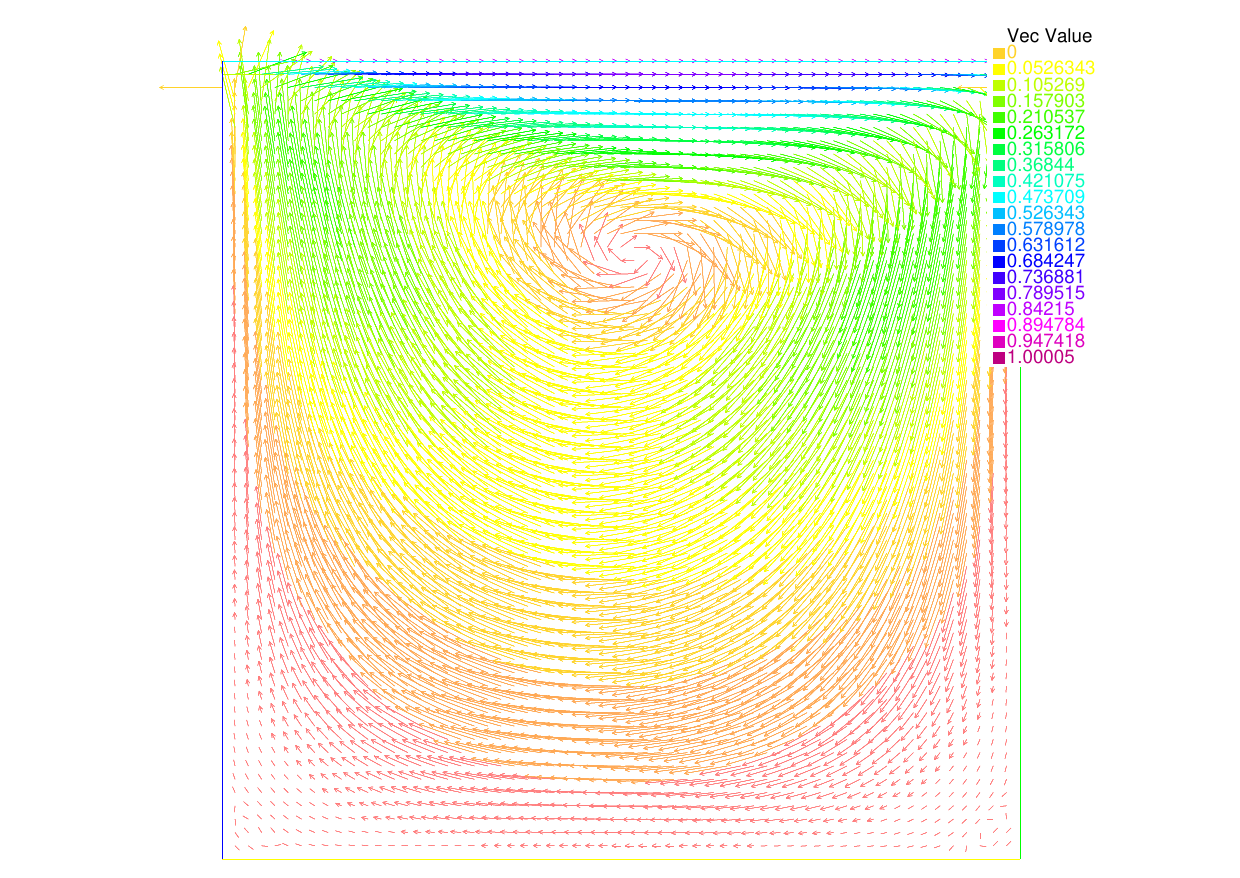}}%\\
\subfigure[velocity: $r=10$]
{\includegraphics[width=5cm]{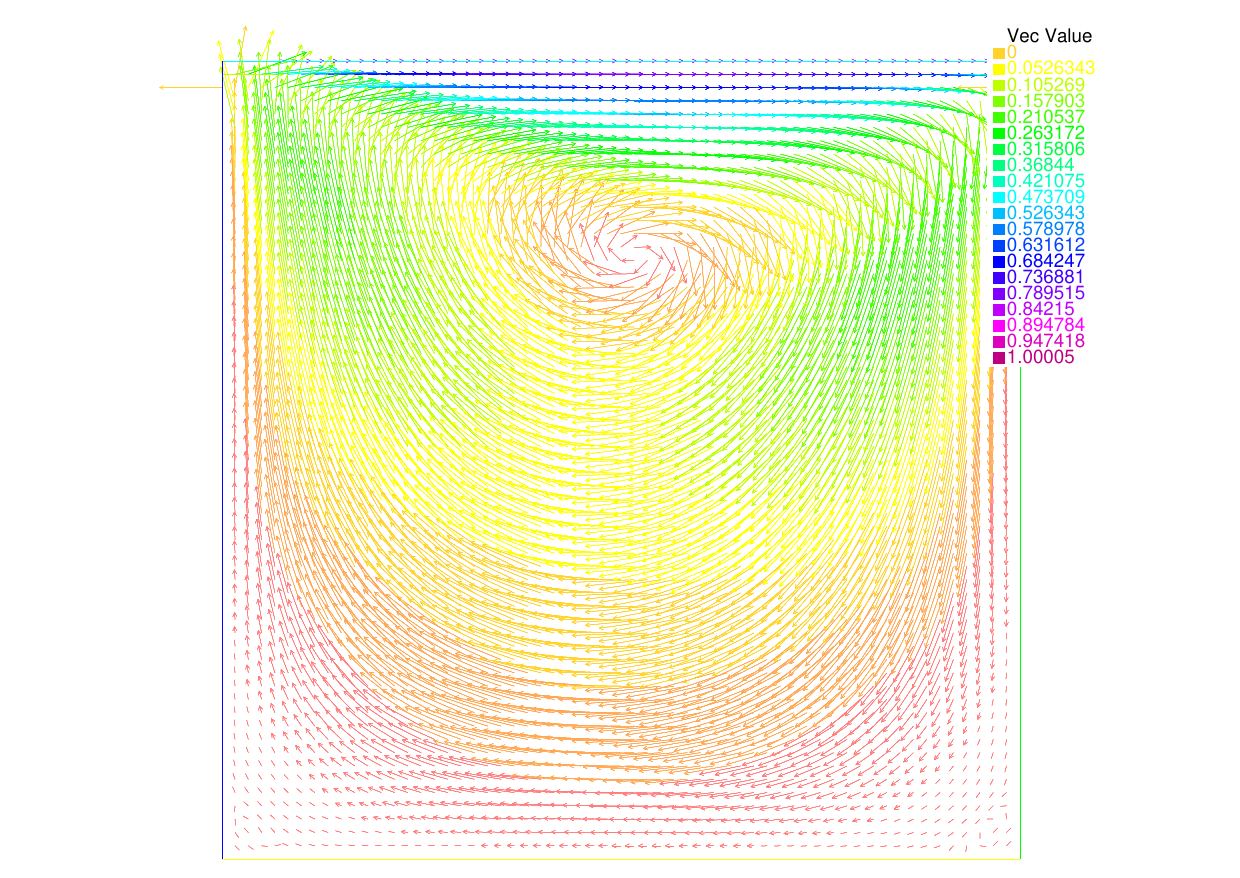}}
\quad
\subfigure[pressure: $r=3$]
{\includegraphics[width=5cm]{p1305.eps}}
\subfigure[ pressure: $r=5$]
{\includegraphics[width=5cm]{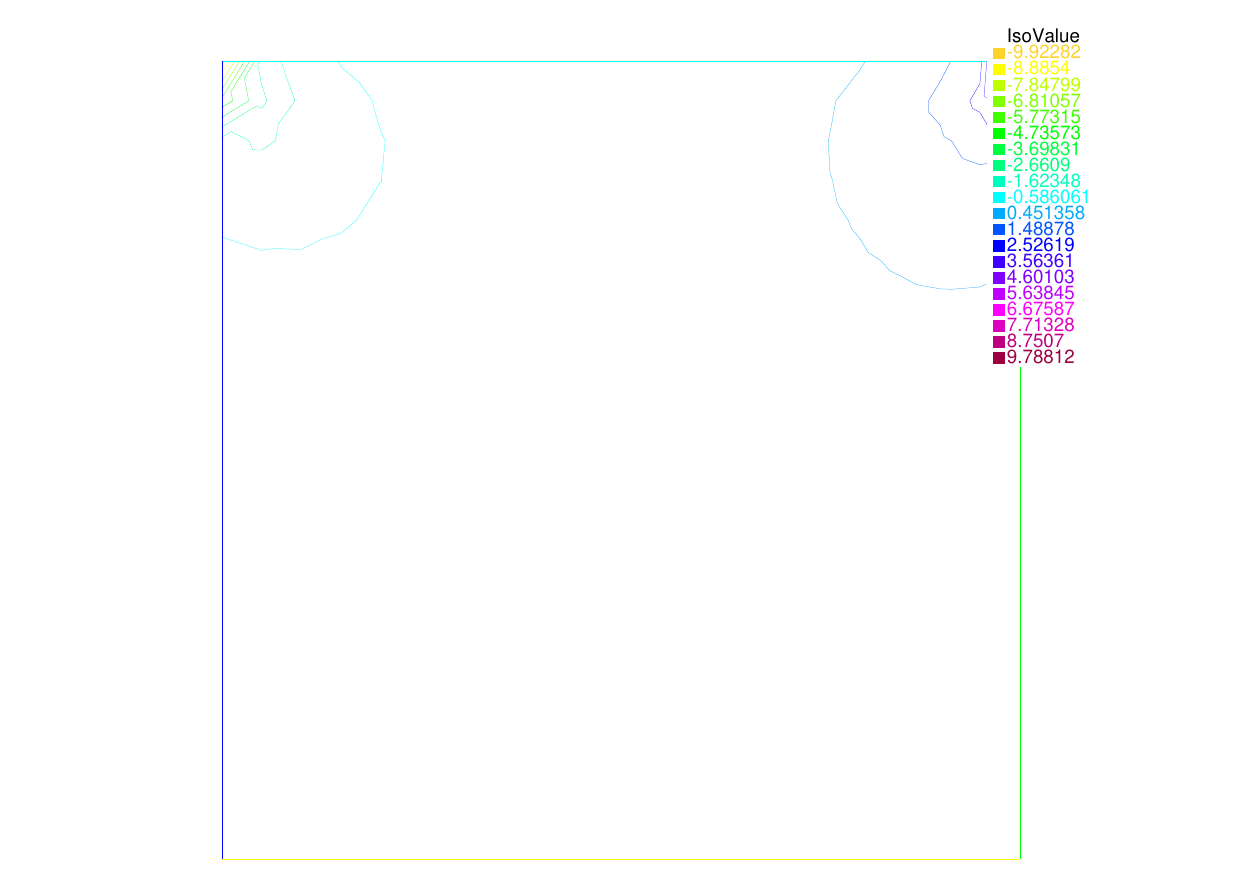}}%\\
\subfigure[ pressure: $r=10$]
{\includegraphics[width=5cm]{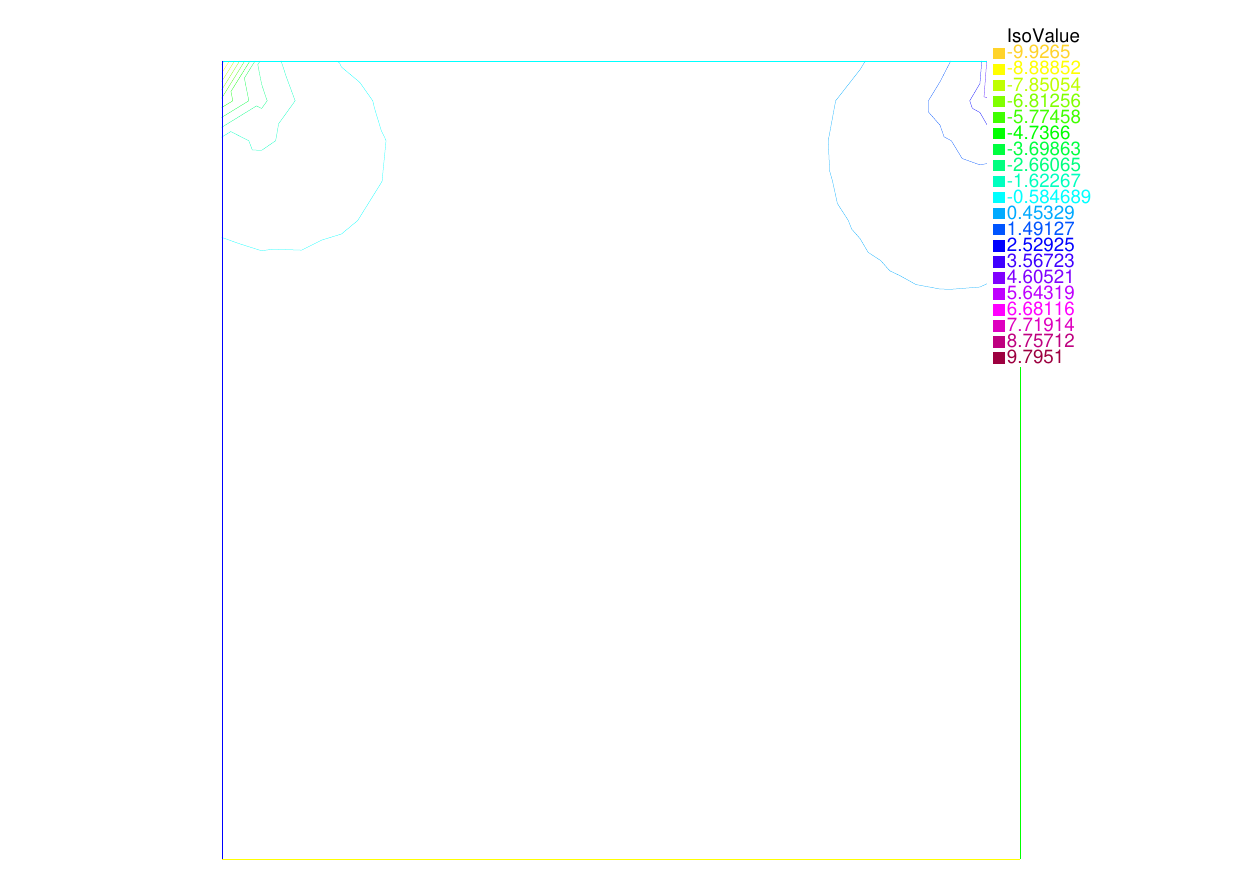}}
\caption{The   velocity streamlines and pressure contours  for Example \ref{EX7.2}:  $\alpha=1$ and    $r=3,5,10$ at $T=0.5$}
\label{fig233:233}
\end{figure}

\begin{exam}[The backward-facing step flow problem]\label{EX7.4}
We consider a backward-facing step flow problem in  $\Omega=\Omega_1\setminus \Omega_2$, with  $\Omega_1=[-4, 16]\times[-1, 2]$ and  $\Omega_2=[-4, 0]\times[-1, 0]$; see Figure \ref{fig4:mesh2} for the   domain and its finite element mesh. We take $\nu=0.01$   and $\bm{f}=\bm{0}$.  The boundary conditions are as follows:
$$\bm{u}|_{y=-1}=\bm{u}|_{y=2}=\bm{u}|_{-4\leq x\leq 0,y=0}=\bm{u}|_{x=0, -1\leq y\leq 0}=\bm{0},  $$
$$\bm{u}|_{x=-4}=(y(2-y), 0 )^T, \quad  \left(-p +\nu\frac{\partial u_1}{\partial x}\right) |_{x=16}=0, \quad u_{2}|_{x=16}=0.$$

We compute the fully discrete WG scheme \eqref{fullwg} with  $m=l=2$ in the following cases:
\begin{itemize}
\item [ I]. $\alpha=0$, i.e. the case of the Navier-Stokes  equations;

\item [ II].   $r=3$ and $\alpha=0.01, 0.1, 1$;

\item [ III]. $\alpha=1$ and $r=3, 10, 15$.
\end{itemize}
The obtained velocity and pressure approximations are shown   in Figures \ref{fig41:40} - \ref{fig41:45} at  time $T=0.5$.  As a comparison,  the    numerical solutions  obtained with the Taylor-Hood element %\cite{2012FREE}
 are also shown for $\alpha=0$; see (a), (b)  and (c) in Figure \ref{fig41:40}. Similar to Example \ref{EX7.3}, we can see that  our method is effective and the damping effect is gradually enhanced as the parameters $\alpha$ and $r$   increase.

\end{exam}

\vspace{0cm}
\begin{figure}[htbp!]
\centering
\setlength{\abovecaptionskip}{-2cm}
{\includegraphics[width=10cm]{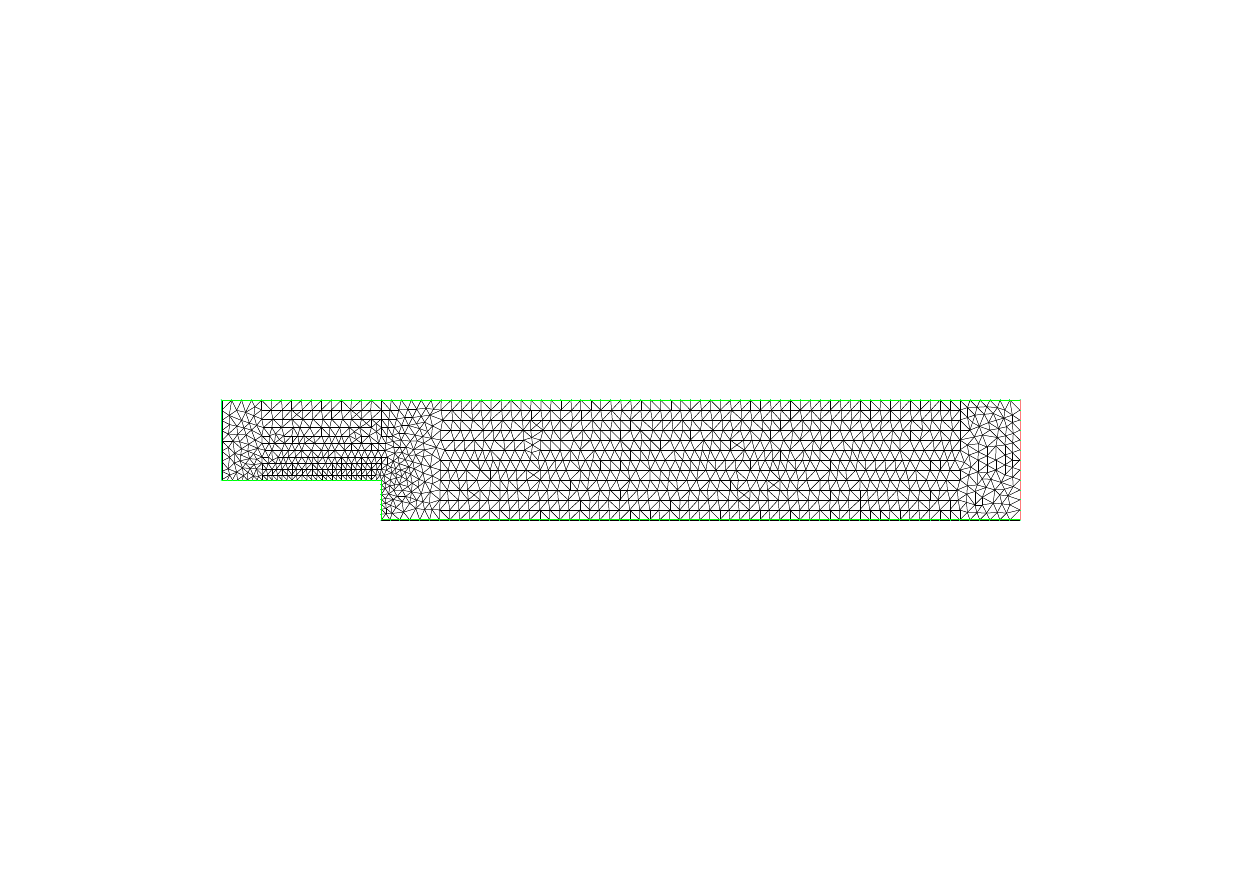}}%\\
\caption{The domain and  finite element mesh for Example \ref{EX7.4}}
\label{fig4:mesh2}
\end{figure}
%\vspace{0cm}

\begin{figure}[htbp!]
\centering
\setlength{\abovecaptionskip}{0.cm}
\subfigure[ $u_1$ (Taylor-Hood) ]
{\includegraphics[width=5.5cm]{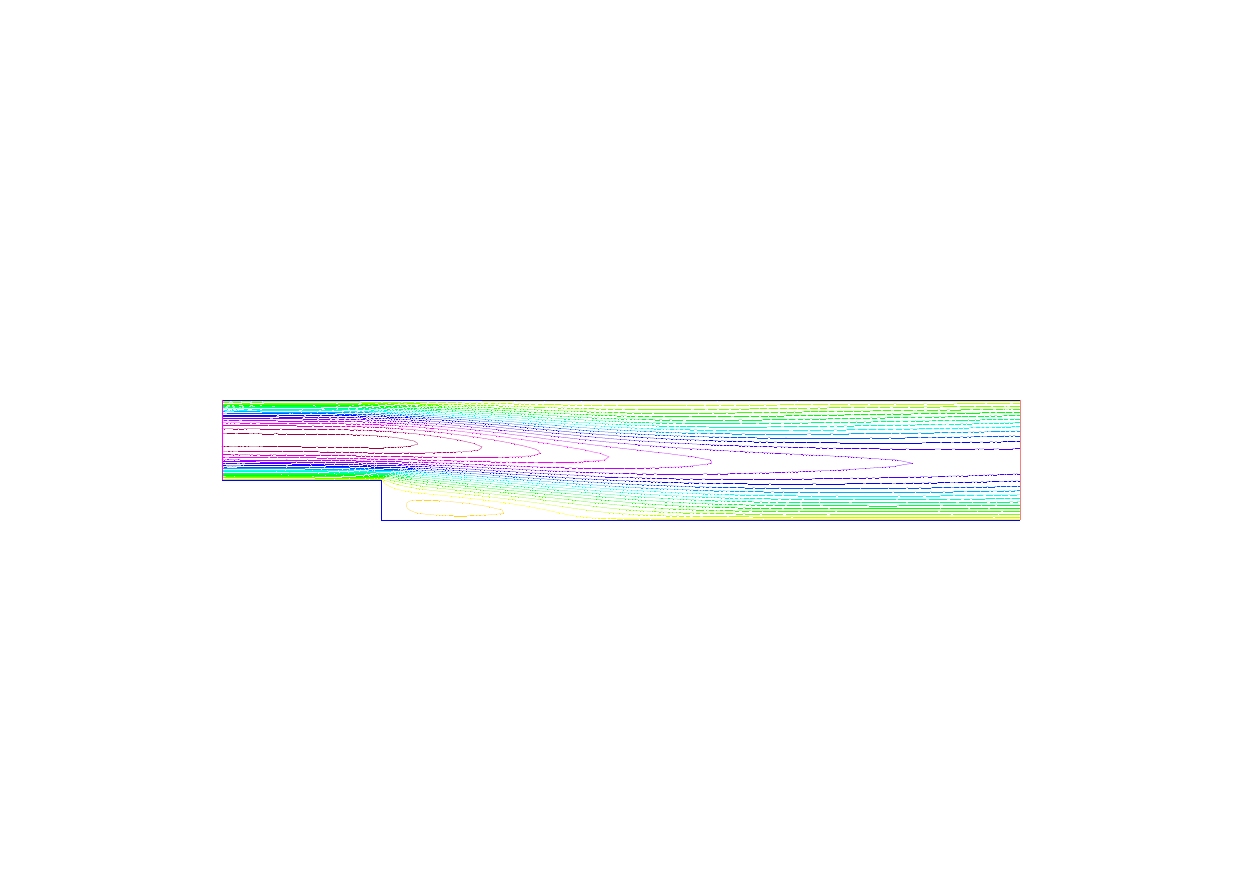}}
\subfigure[  $u_2$ (Taylor-Hood)]
{\includegraphics[width=5.5cm]{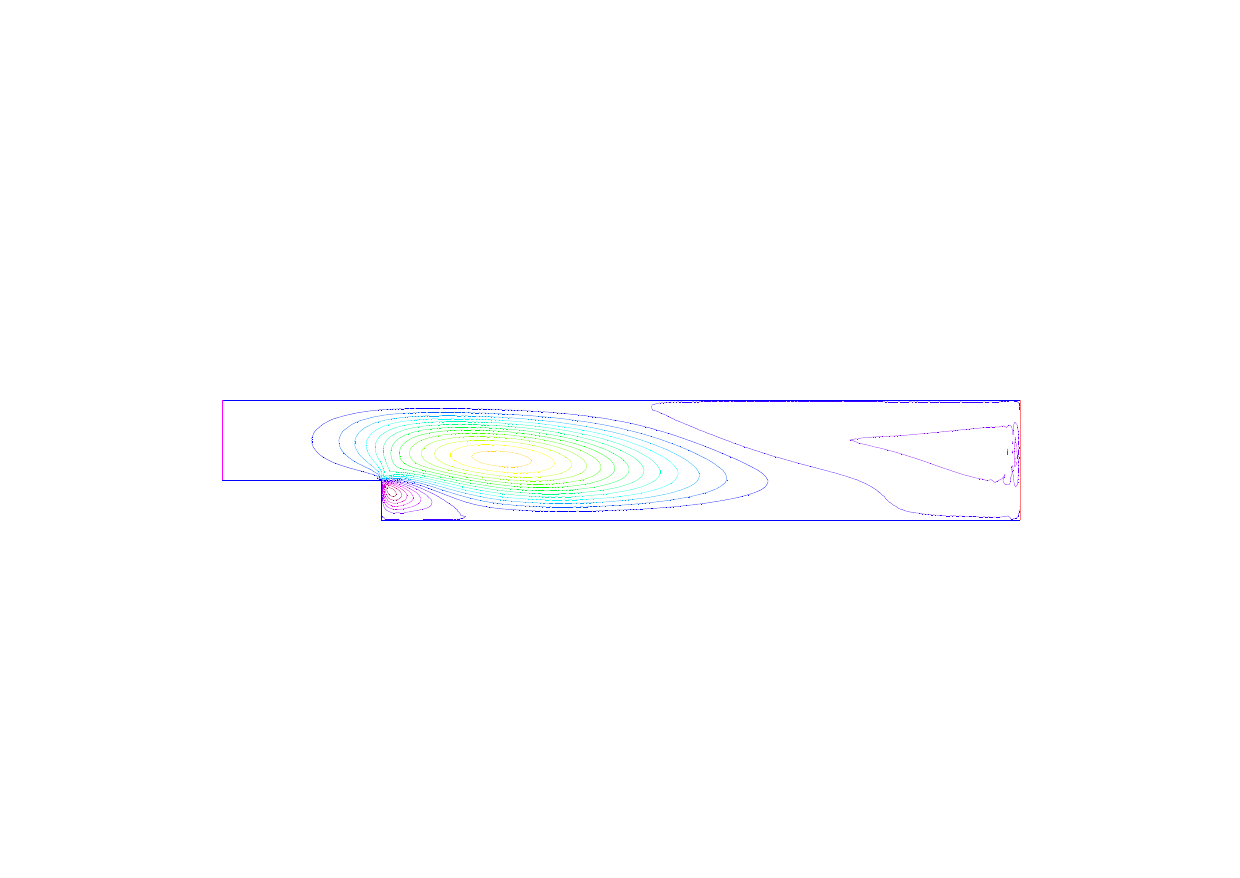}}%\\
\subfigure[pressure (Taylor-Hood)]
{\includegraphics[width=5.5cm]{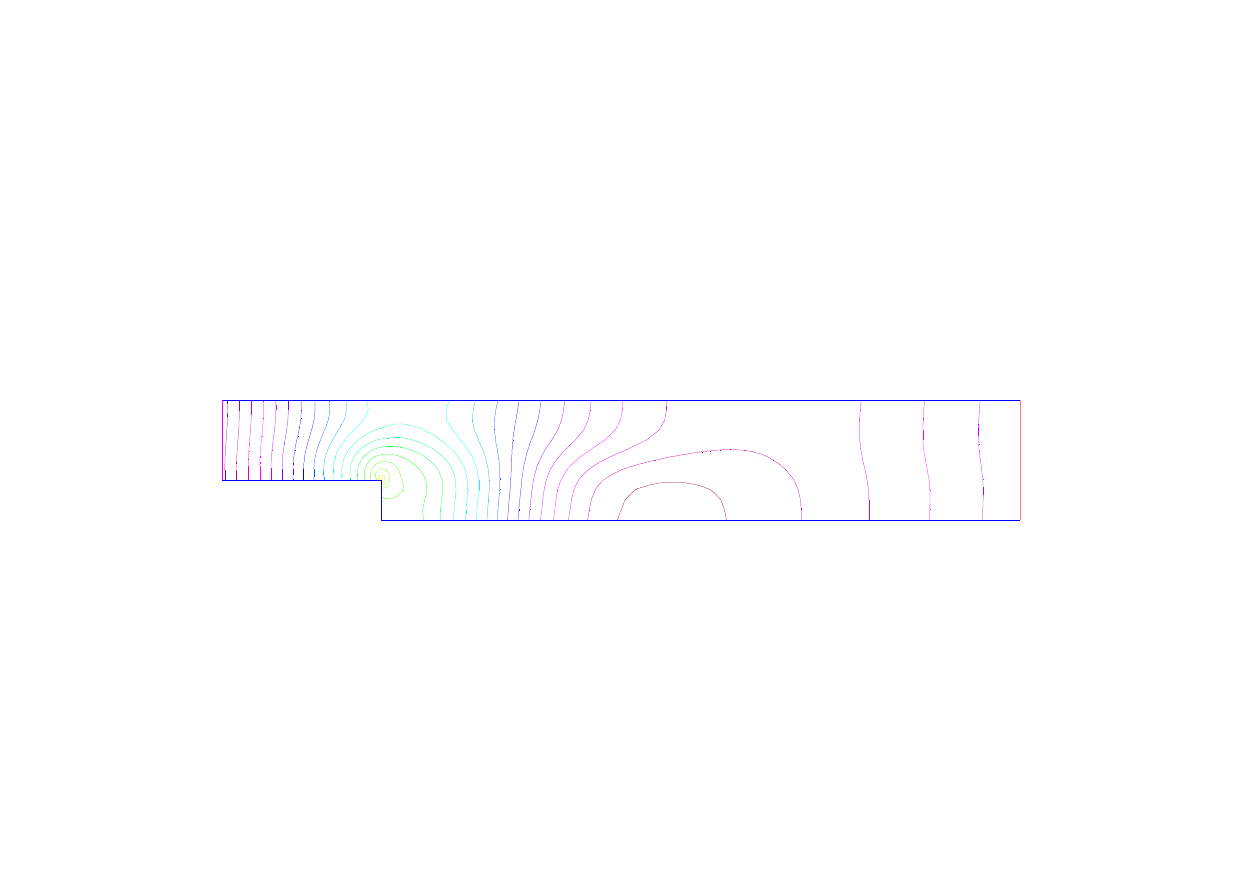}}\\
%\\
\subfigure[  $u_1$ (WG)]
{\includegraphics[width=5.5cm]{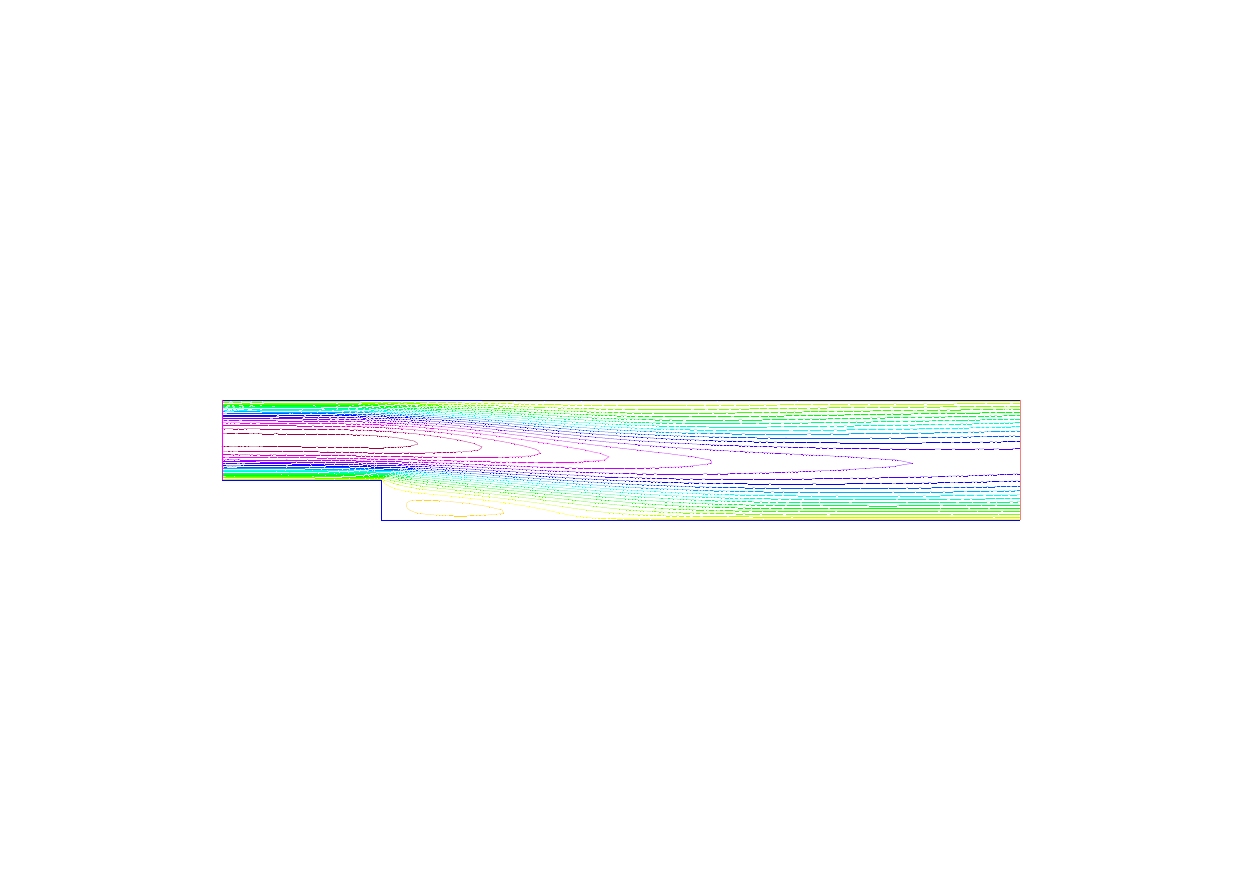}}
\subfigure[ $u_2$ (WG)]
{\includegraphics[width=5.5cm]{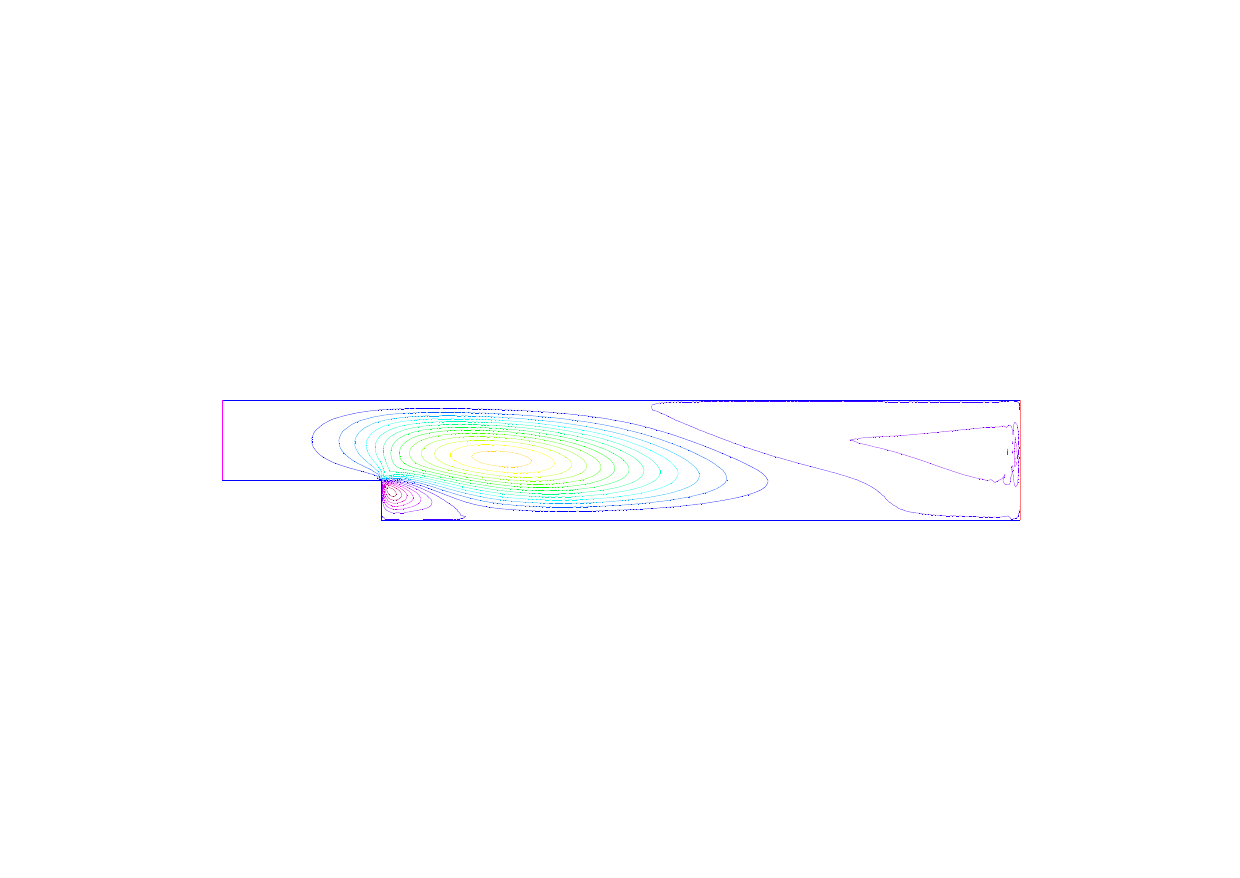}}
\subfigure[pressure (WG)]
{\includegraphics[width=5.5cm]{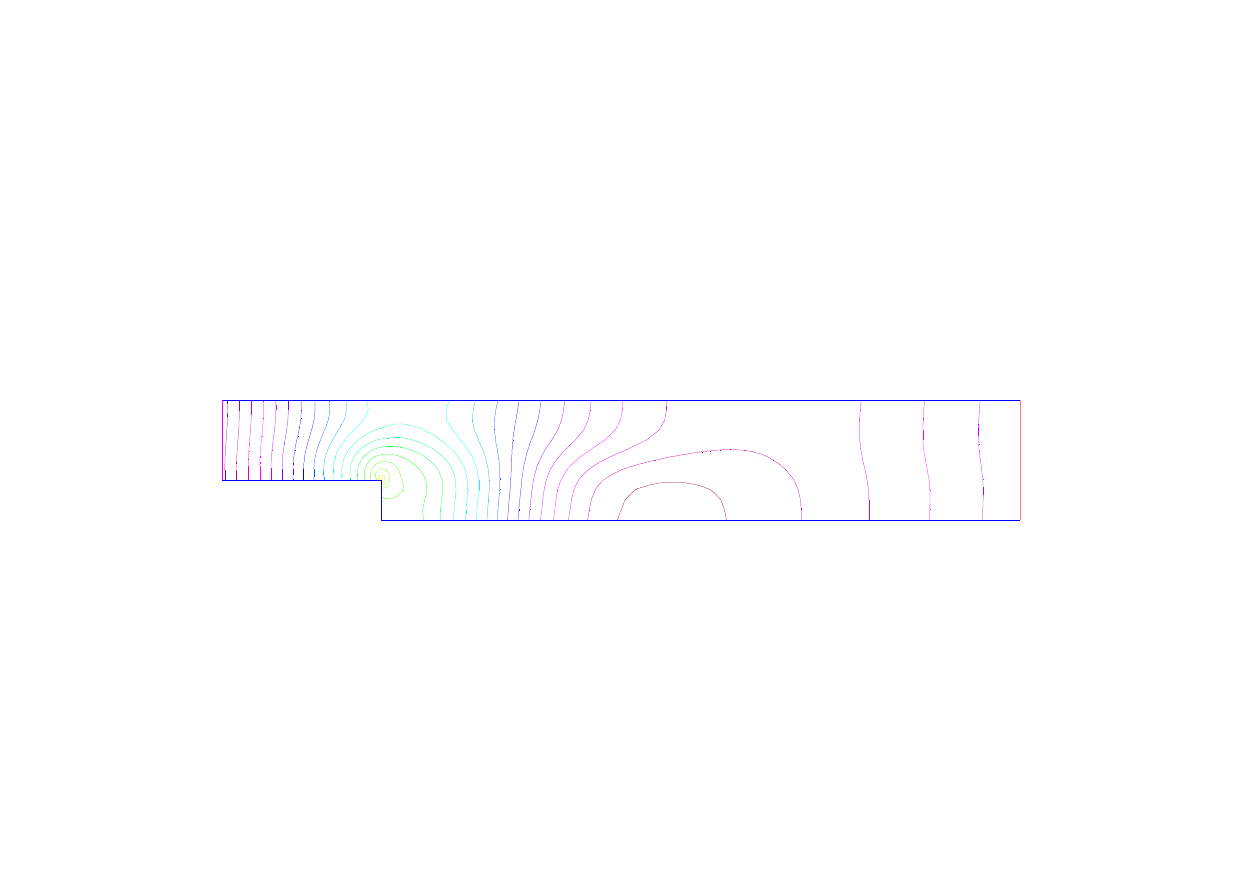}}%\\
 \caption{ The velocity $\bm u_h=(u_1,u_2)^T$   and pressure contours for Example \ref{EX7.4}: $\alpha=0$ at time $T=0.5$}
\label{fig41:40}
\end{figure}

\begin{figure}[htbp!]%\label{Fig1}
 \centering
\subfigure[  $u_1$: $\alpha=0.01$]
{\includegraphics[width=5.5cm]{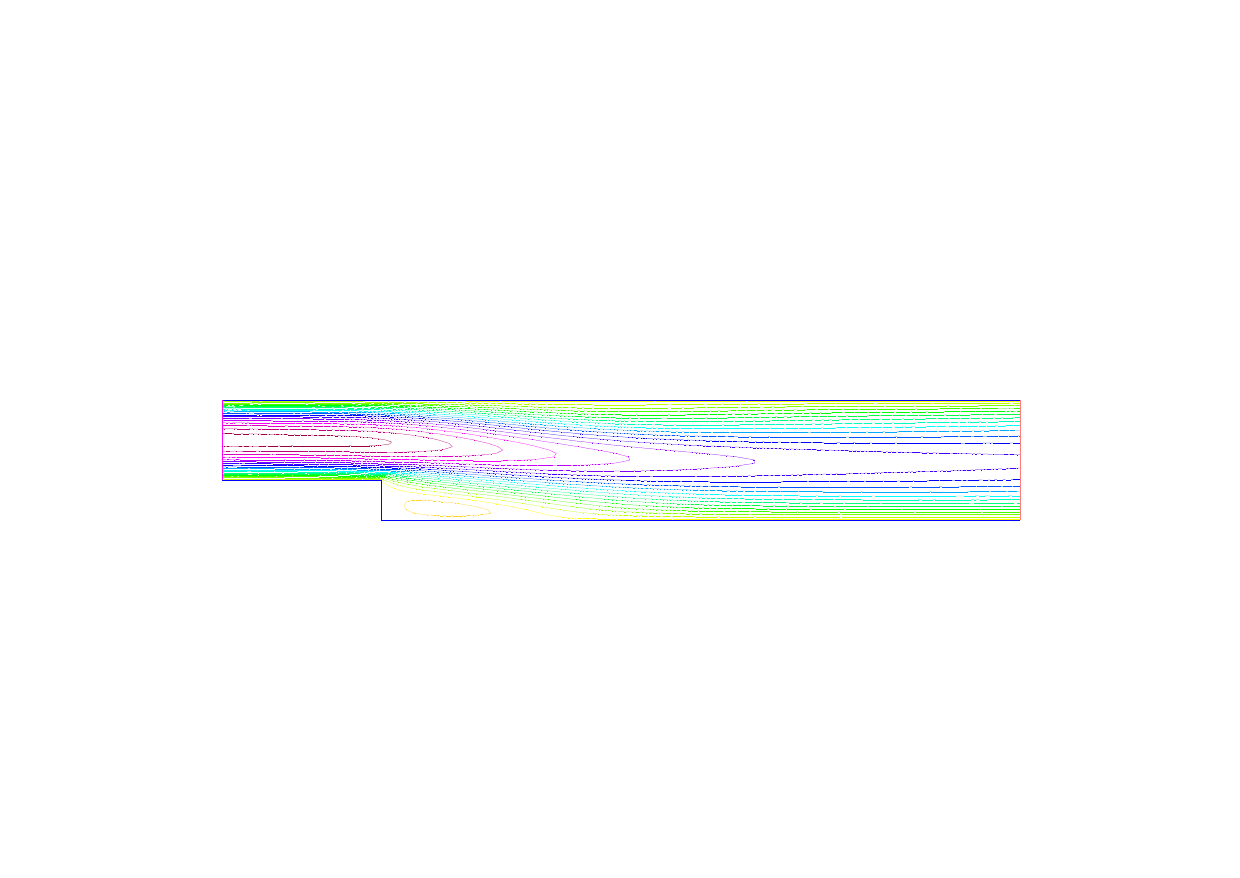}}
\subfigure[ $u_1$: $\alpha=0.1$]
{\includegraphics[width=5.5cm]{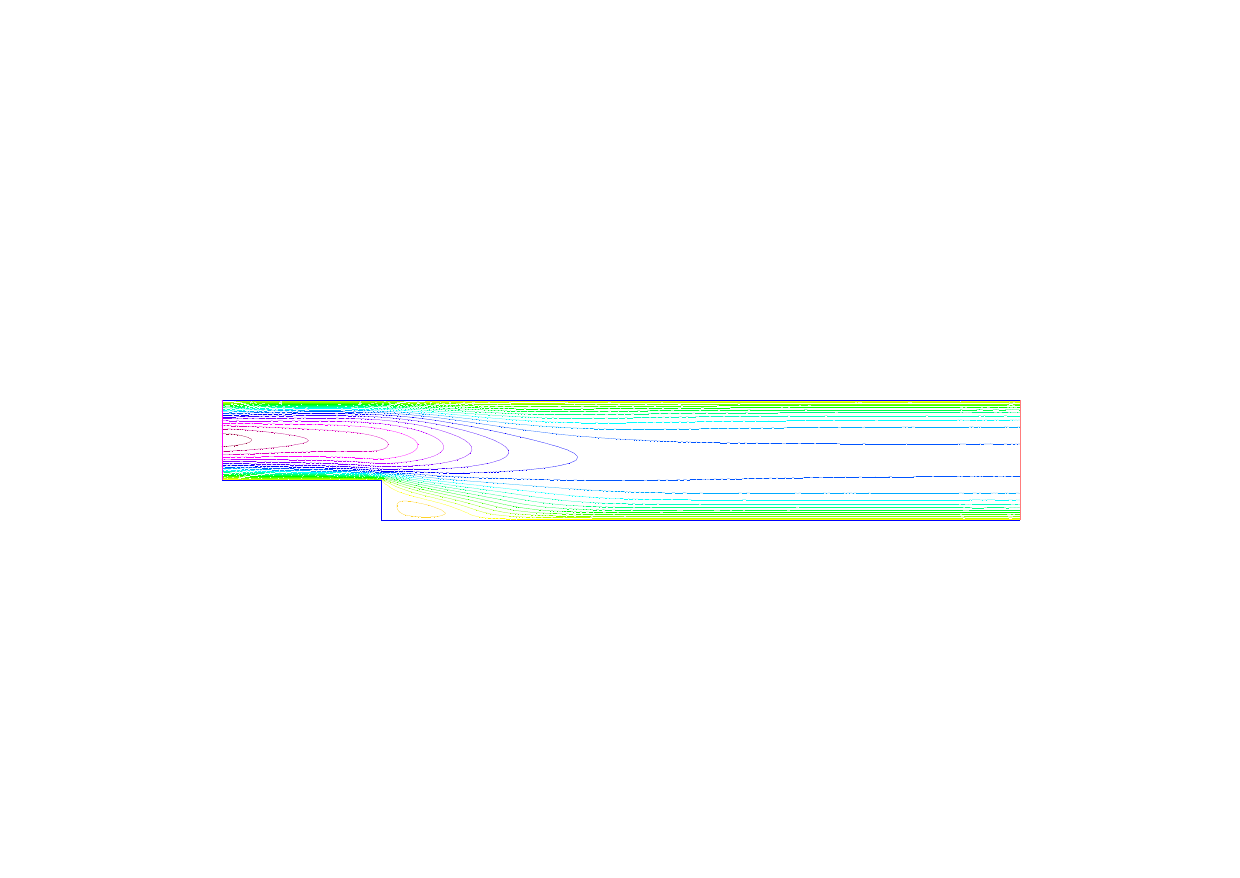}}%\\
\subfigure[  $u_1$: $\alpha=1$]
{\includegraphics[width=5.5cm]{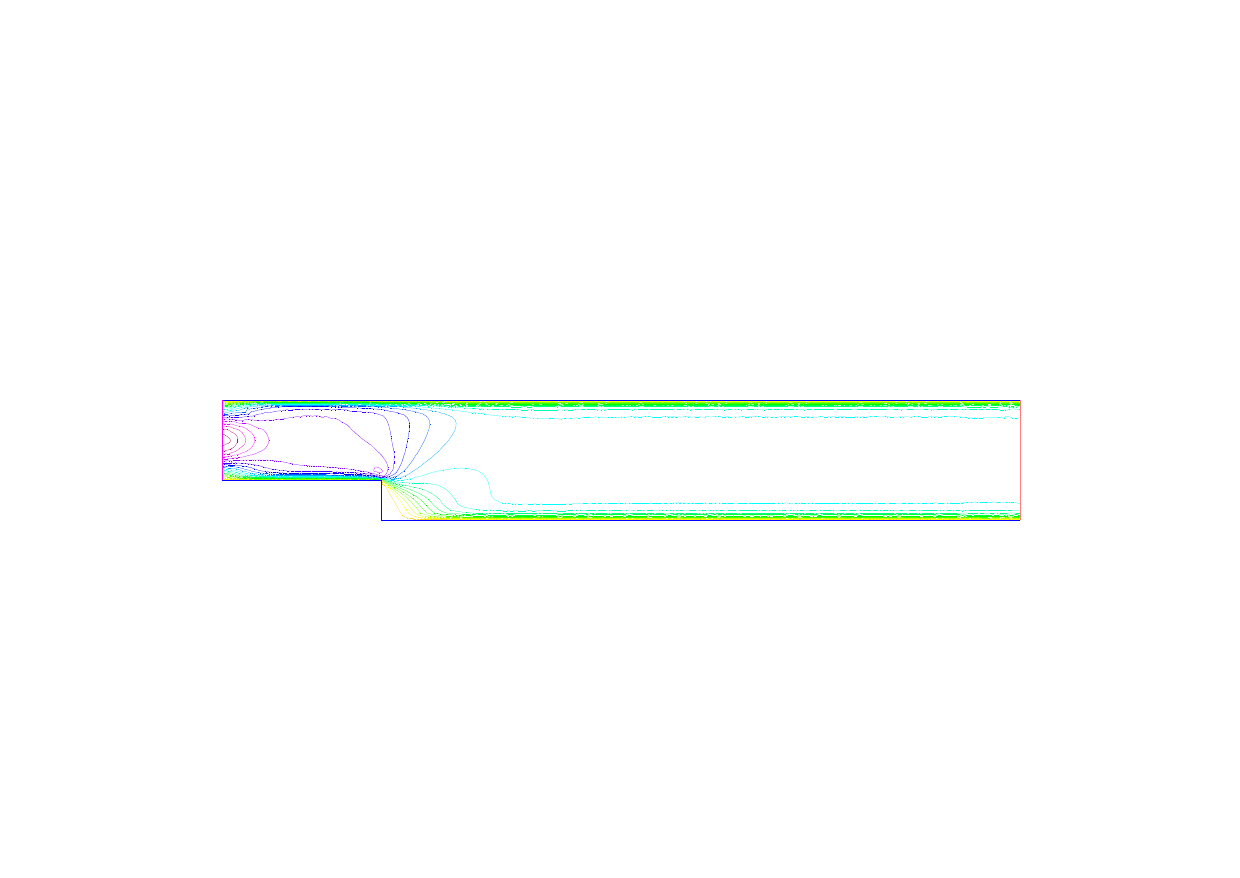}}\\

\subfigure[  $u_2$: $\alpha=0.01$]
{\includegraphics[width=5.5cm]{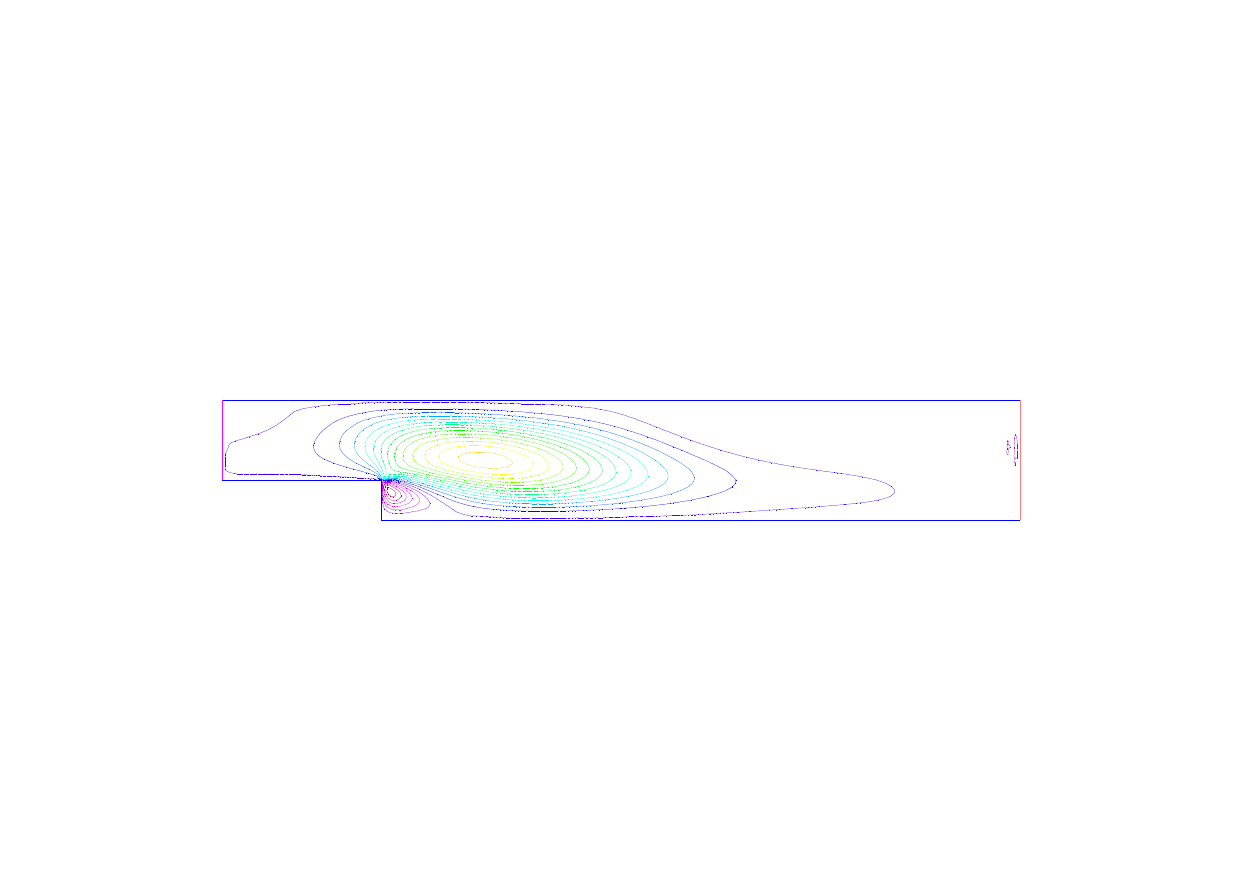}}
\subfigure[  $u_2$: $\alpha=0.1$]
{\includegraphics[width=5.5cm]{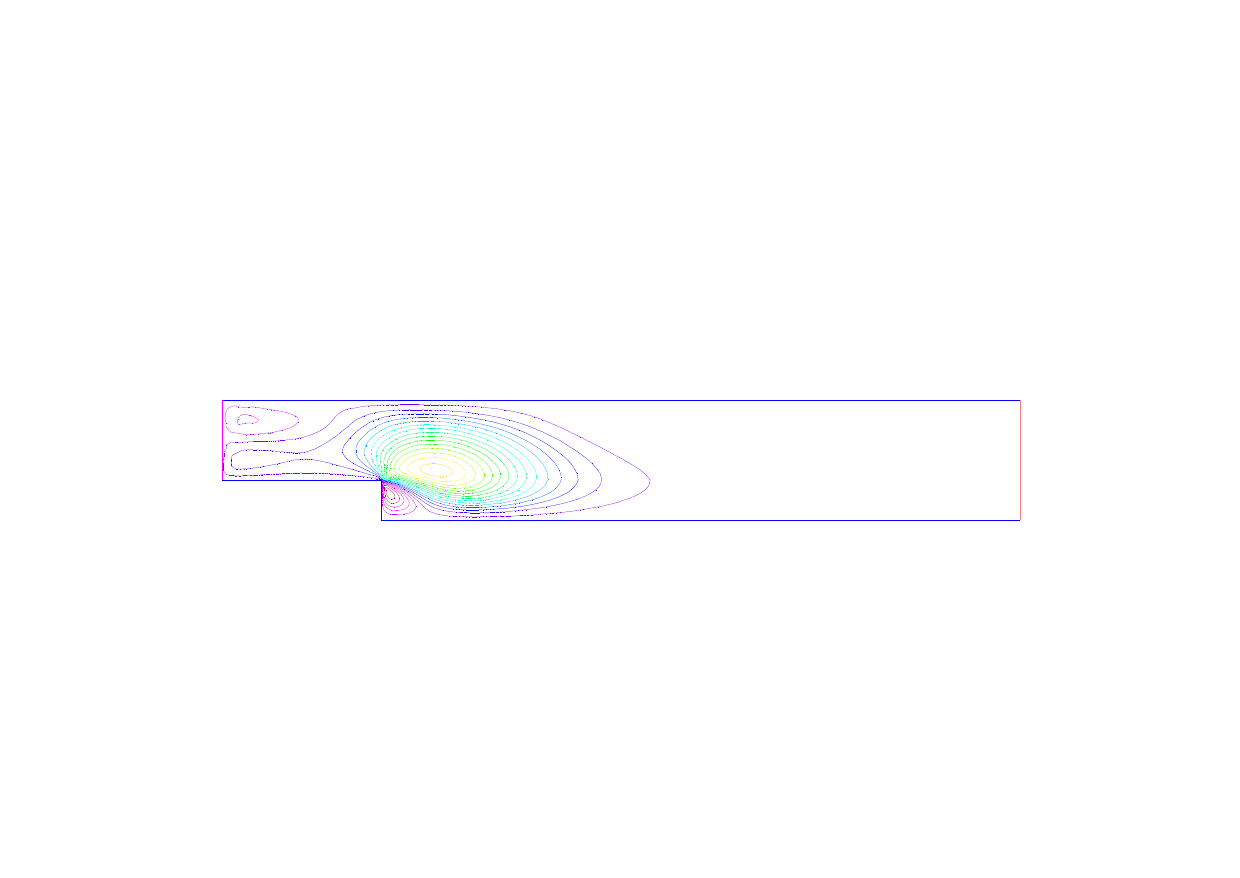}}%\\
\subfigure[  $u_2$: $\alpha=1$]
{\includegraphics[width=5.5cm]{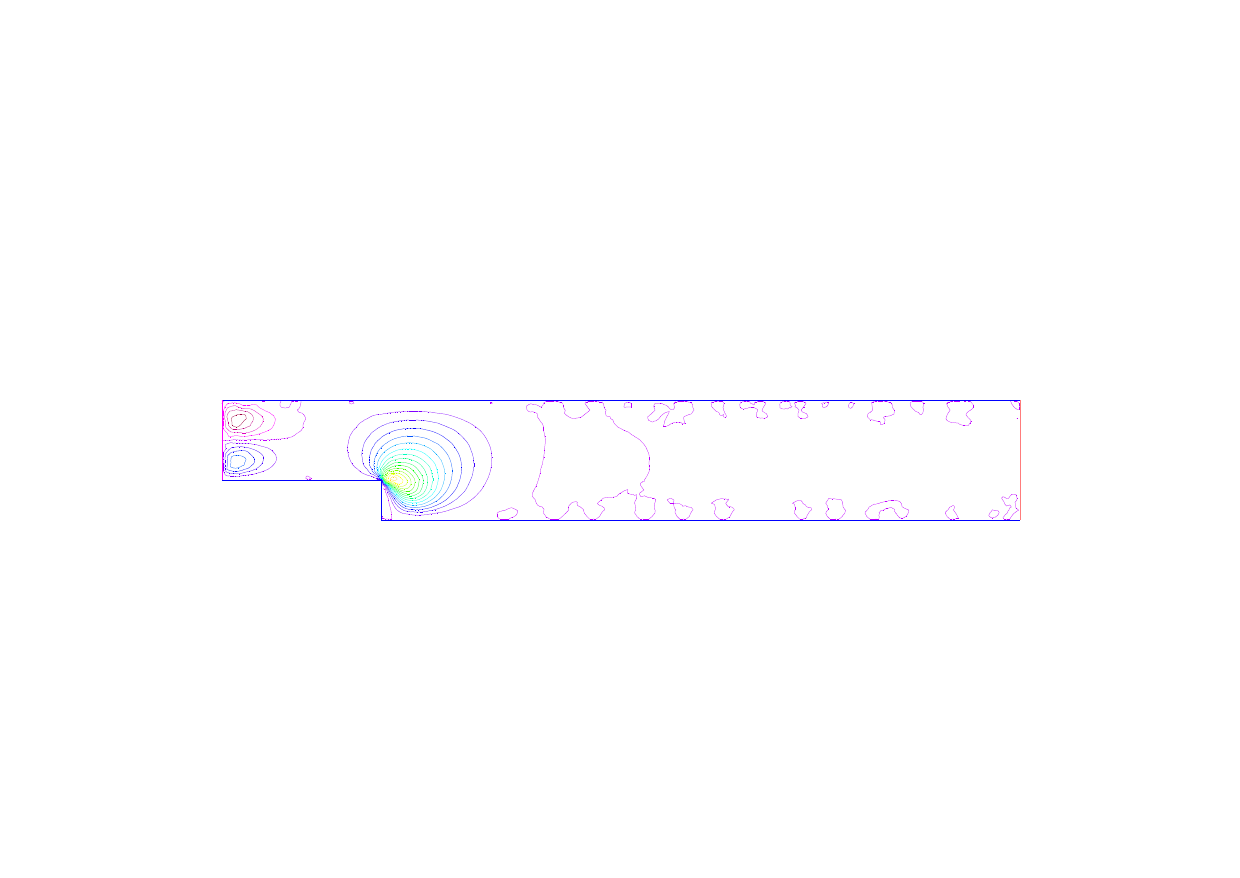}}

\subfigure[pressure: $\alpha=0.01$]
{\includegraphics[width=5.5cm]{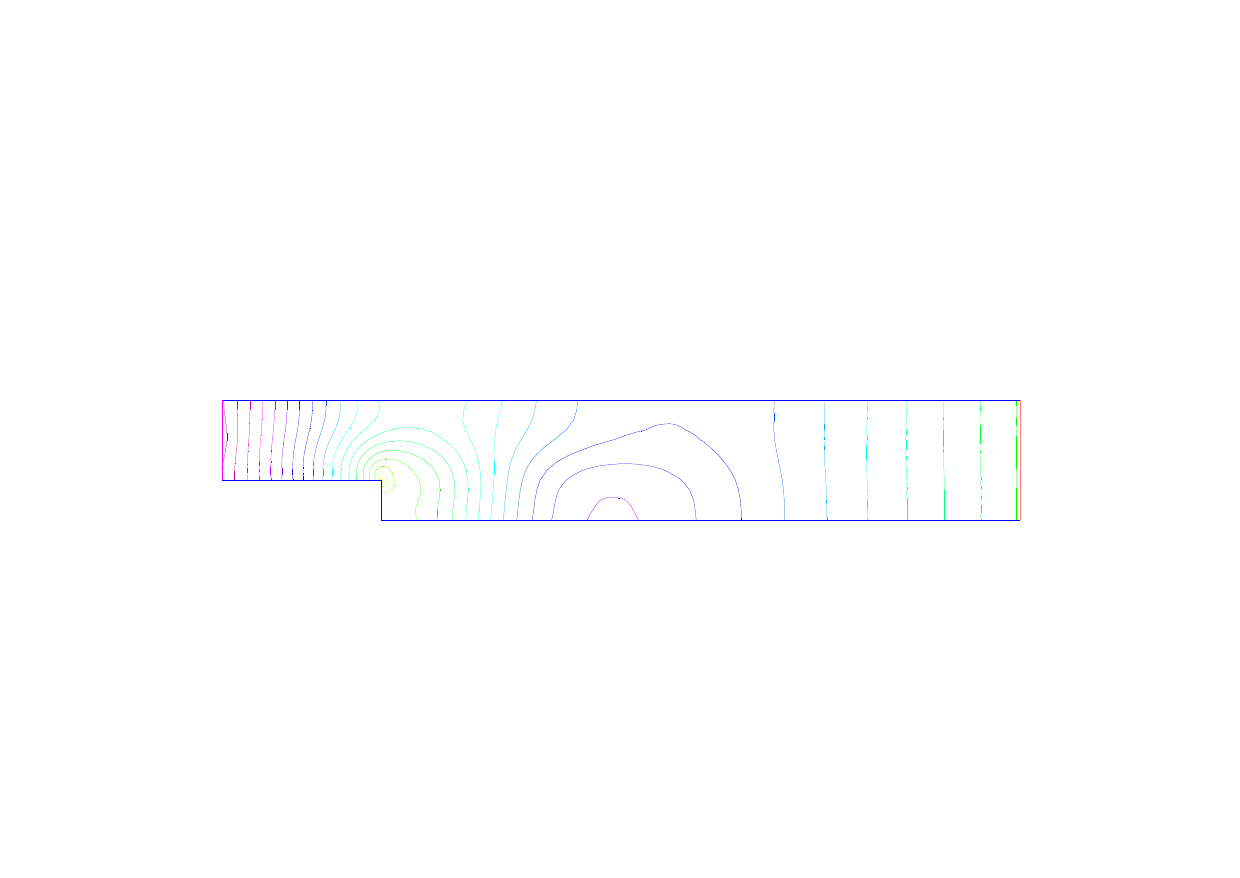}}
\subfigure[pressure: $\alpha=0.1$]
{\includegraphics[width=5.5cm]{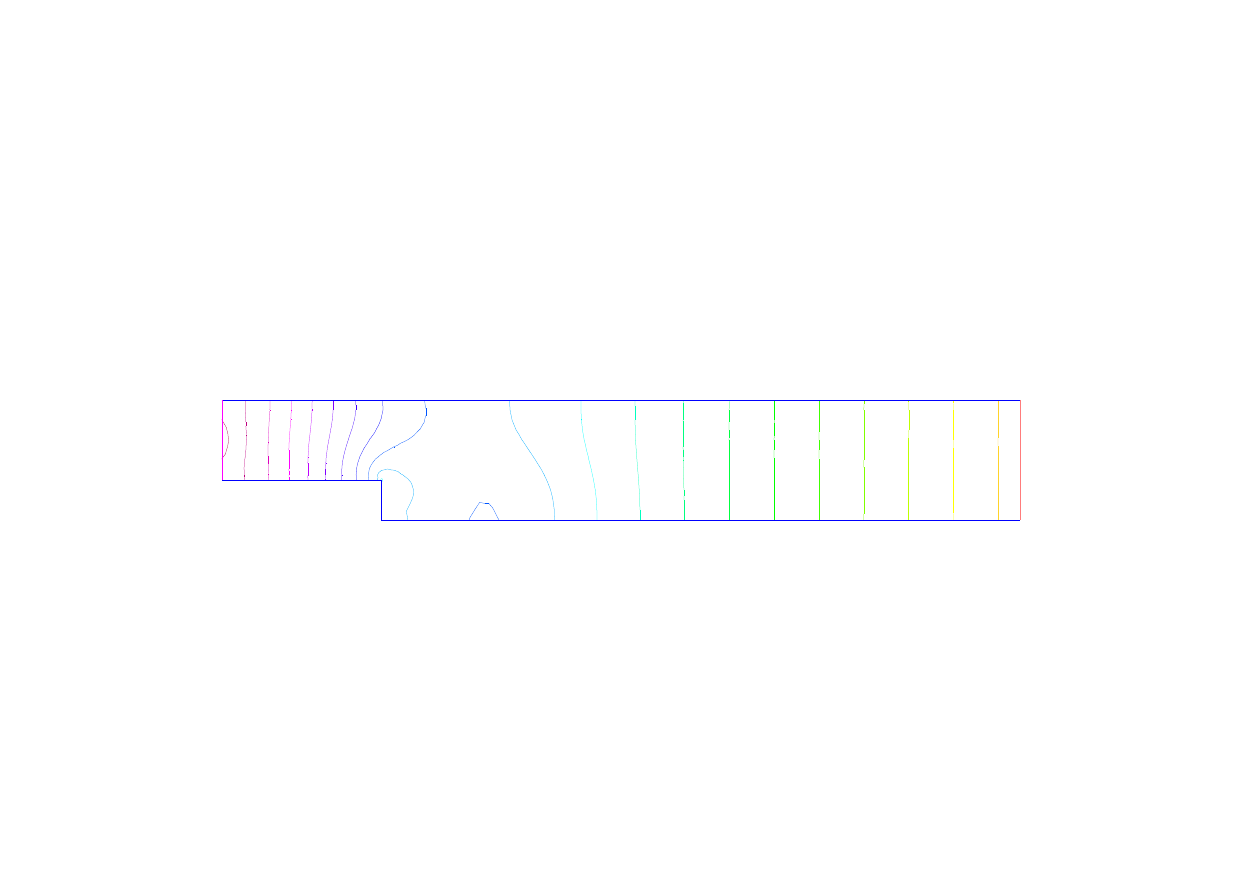}}%\\
\subfigure[pressure: $\alpha=1$]
{\includegraphics[width=5.5cm]{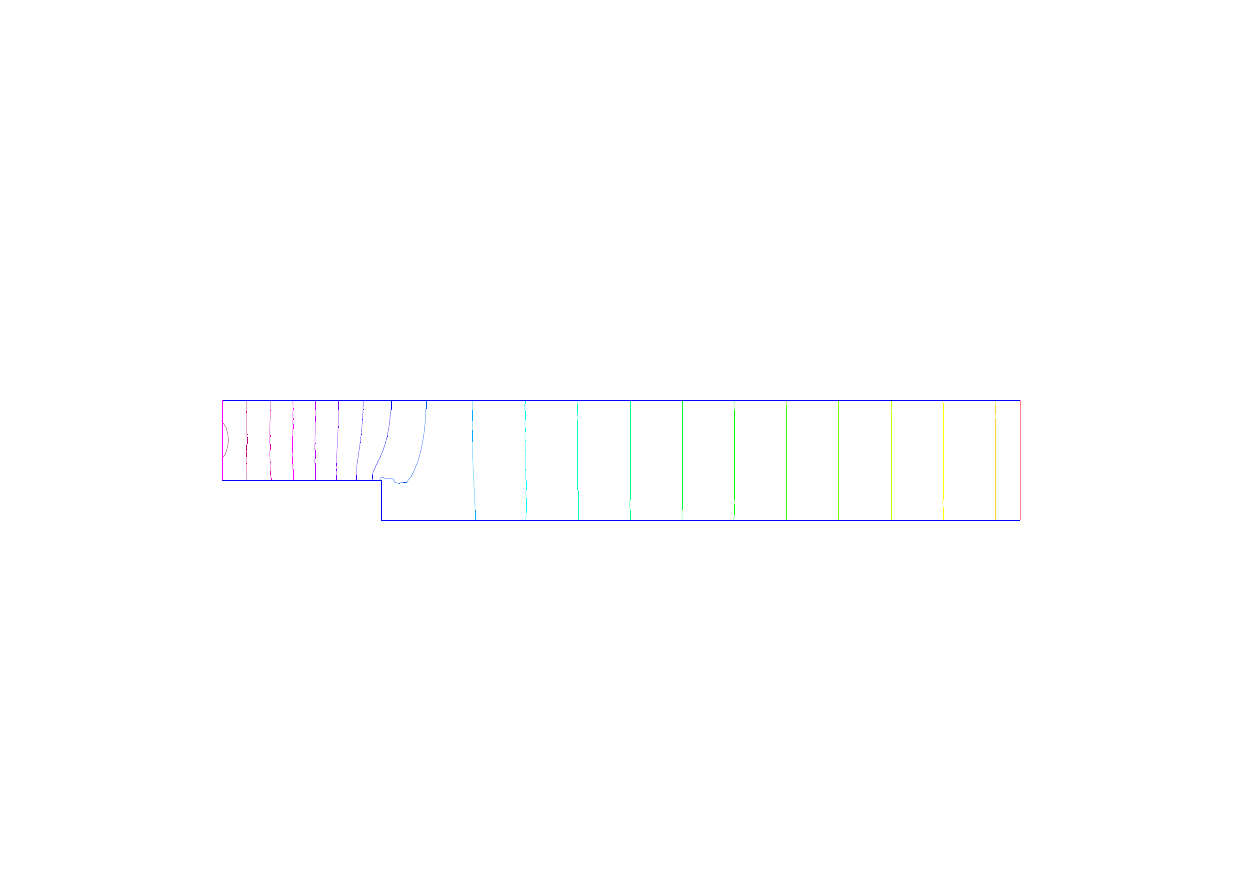}}
\caption{The velocity  $\bm u_h=(u_1,u_2)^T$   and pressure contours for Example \ref{EX7.4}: $r=3$ and $\alpha=0.01, 0.1, 1$ at time $T=0.5$ }
\label{fig41:42}
\end{figure}

\begin{figure}[htbp!]%\label{Fig1}
 \centering
\subfigure[  $u_1$: $r=3$]
{\includegraphics[width=5.5cm]{u1841305.eps}}
\subfigure[ $u_1$: $r=10$]
{\includegraphics[width=5.5cm]{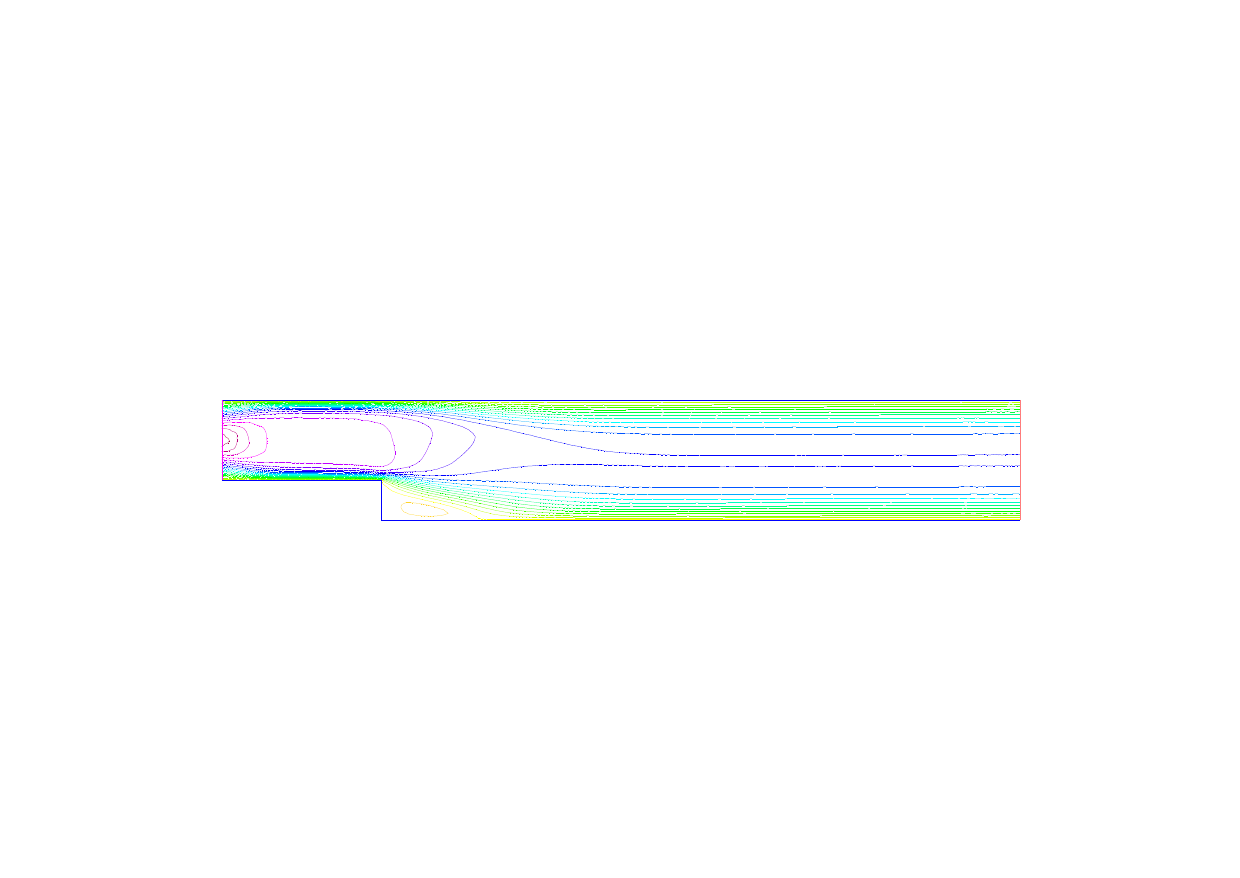}}%\\
\subfigure[  $u_1$: $r=15$]
{\includegraphics[width=5.5cm]{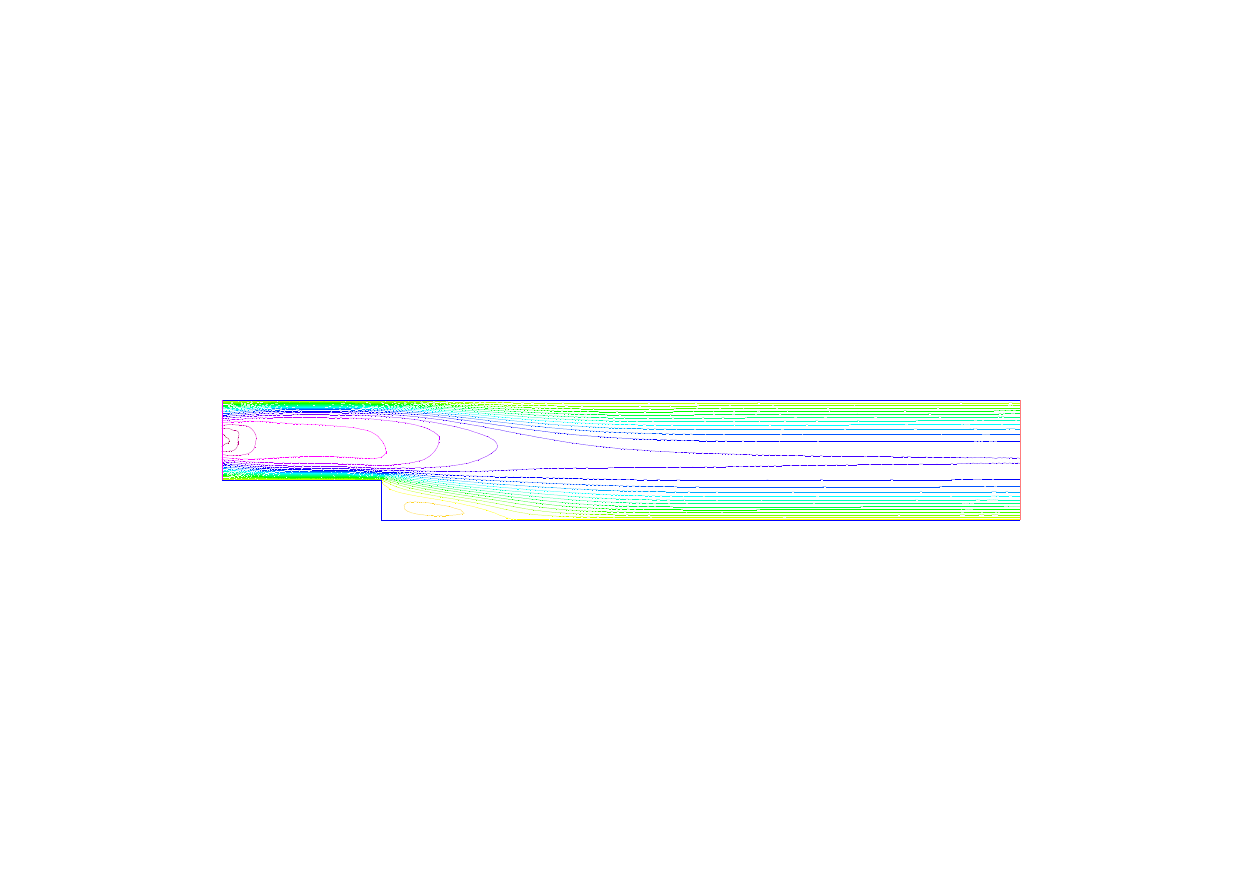}}\\

\subfigure[  $u_2$: $r=3$]
{\includegraphics[width=5.5cm]{u2841305.eps}}
\subfigure[  $u_2$: $r=10$]
{\includegraphics[width=5.5cm]{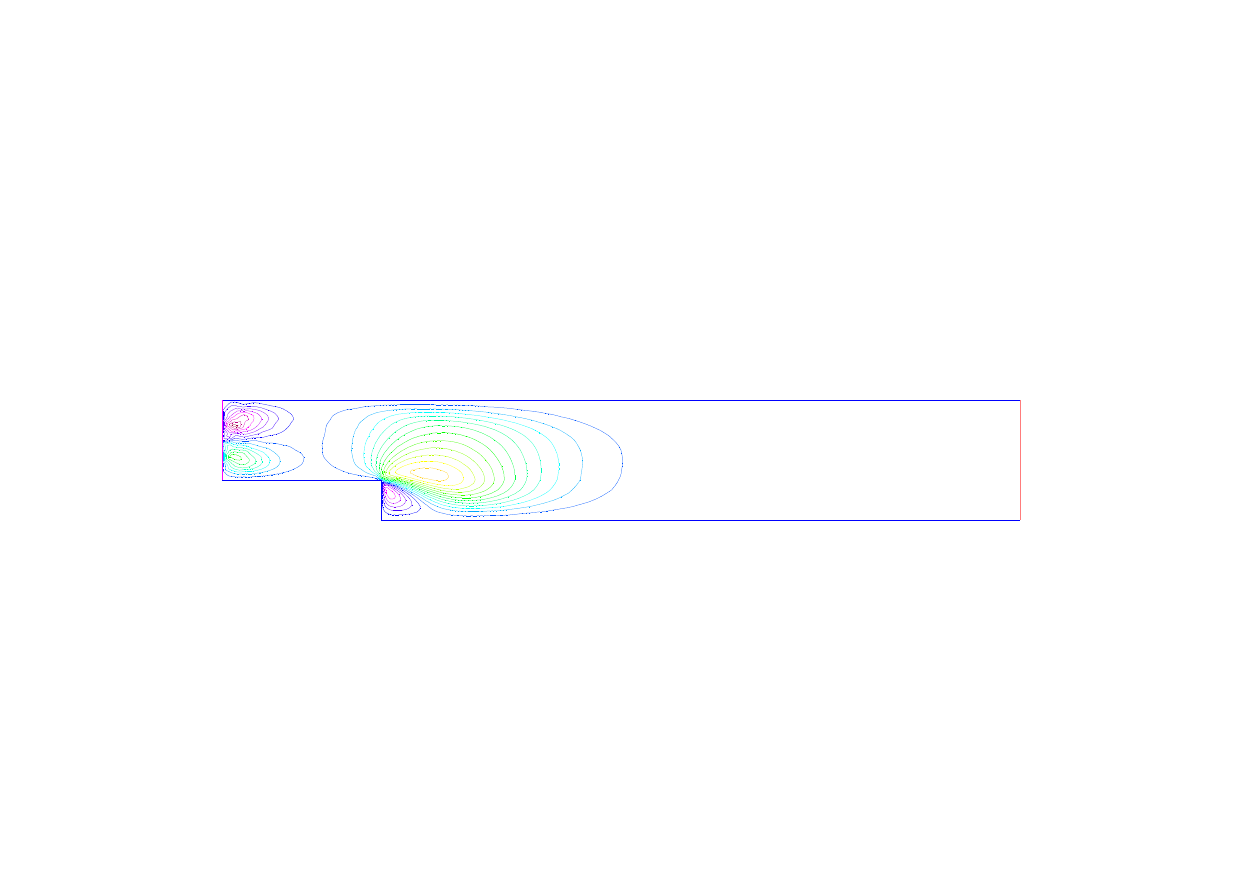}}%\\
\subfigure[  $u_2$: $r=15$]
{\includegraphics[width=5.5cm]{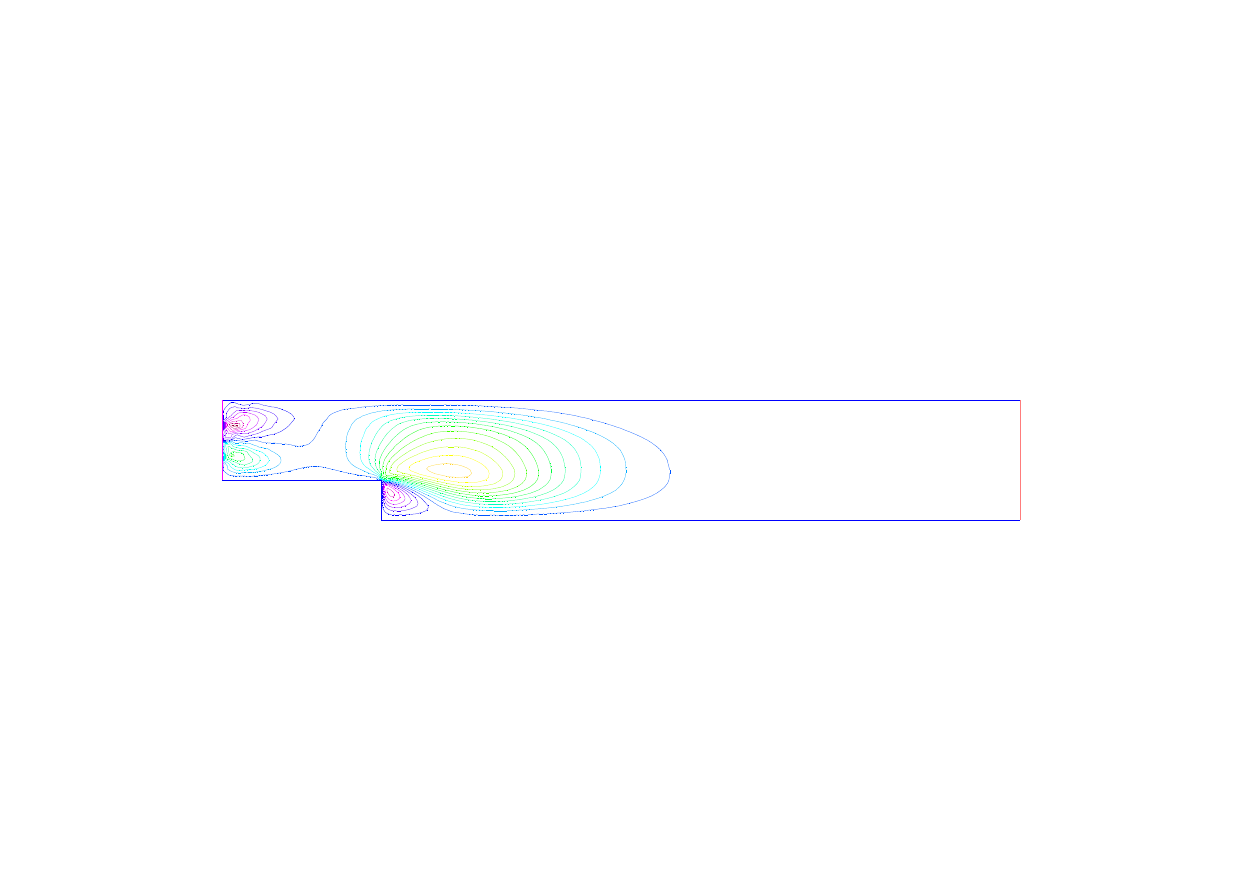}}

\subfigure[pressure: $r=3$]
{\includegraphics[width=5.5cm]{p841305.eps}}
\subfigure[pressure: $r=10$]
{\includegraphics[width=5.5cm]{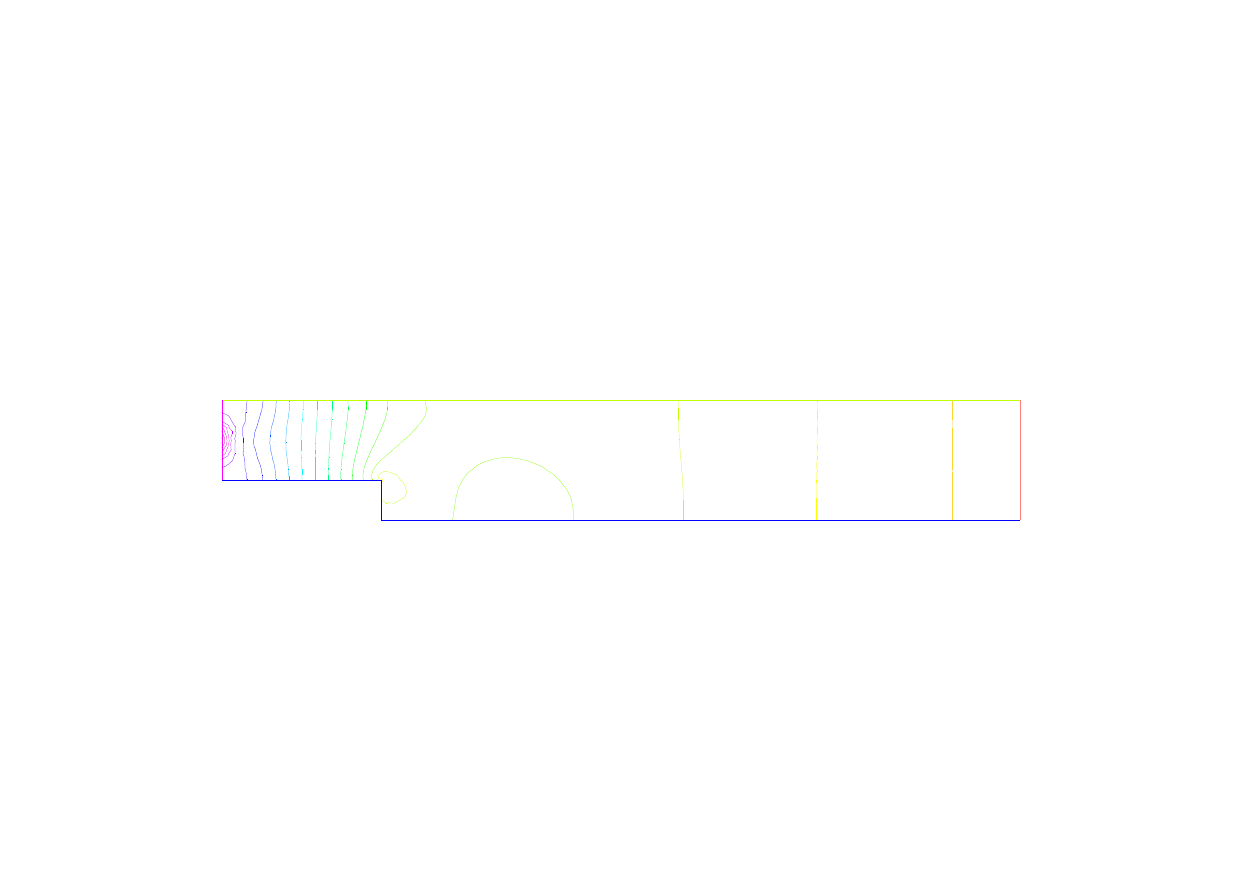}}%\\
\subfigure[pressure: $r=15$]
{\includegraphics[width=5.5cm]{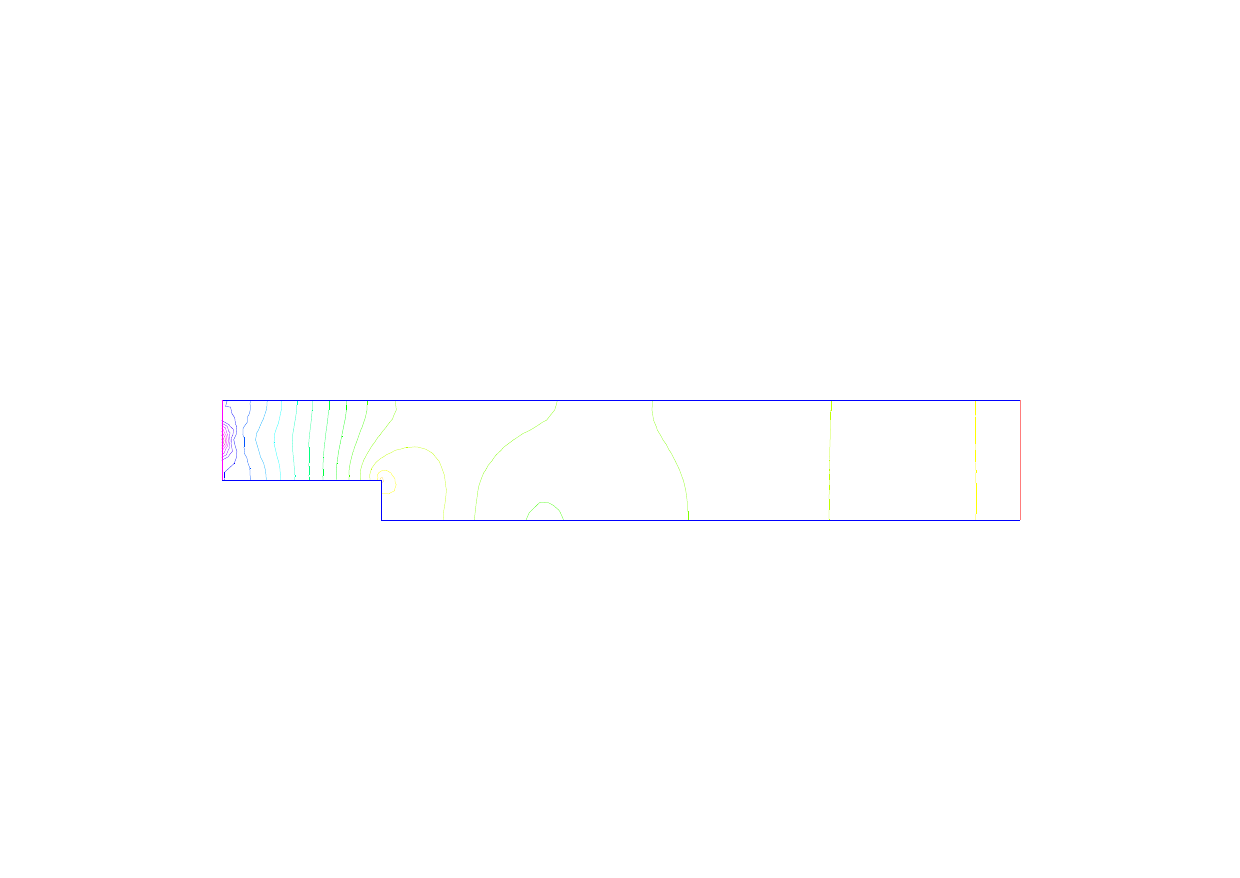}}
\caption{The velocity  $\bm u_h=(u_1,u_2)^T$   and pressure contours for Example \ref{EX7.4}: $\alpha=1$ and $r=3, 10, 15$ at time $T=0.5$ }
\label{fig41:45}
\end{figure}

\begin{exam}[The problem of flow around a   circular  cylinder]\label{EX7.5}
The flow around a circular cylinder   is examined with the Brinkman-Forchheimer model \eqref{BF0} and the the WG  method. We take
  $\Omega$ =$[0, 6]\times[0, 1] \setminus O_{d}(1,0.5)$, $\nu=0.005$   and $\bm{f}=\bm{0}$, where $O_{d}(1,0.5)$ is a disk with center $(1,0.5)$  and diameter $d=0.3$; see  Figure \ref{fig2:mesh2} for the domain and its finite element mesh. The boundary conditions are as follows:
$$\bm{u}|_{y=0}=\bm{u}|_{y=1}=\bm{u}|_{\partial O_{d}}=\bm{0}, \quad \bm{u}|_{x=0}=(6y(1-y), 0 )^T, $$
$$ \left(-p\bm{I}+\nu\nabla \bm{u}\right){  \bm{n}}\big|_{x=6}=0, $$
where $\bm{I}$ and $ \bm{n}$ are the unit matrix and the outward unit normal vector, respectively.
We compute the fully discrete WG scheme \eqref{fullwg}  with  $m=l=2$ in the following cases:
\begin{itemize}
\item [ I]. $\alpha=0$, i.e. the case of the Navier-Stokes  equations;

\item [ II].   $r=3.5$ and $\alpha=0.1, 1, 5$;

\item [ III]. $\alpha=2$ and $r=3, 5, 10$.
\end{itemize}
The obtained velocity, vorticity and pressure approximations are shown   in Figures \ref{fig21:2111}, \ref{fig32:32} - \ref{fig35:35} at time $T=0.5$, respectively.  As a comparison,  the  referenced  numerical solutions  obtained  with the Taylor-Hood  element
are also shown for $\alpha=0$; see (a), (b)  and (c) in Figure \ref{fig21:2111}.
Similar to Example \ref{EX7.3}, we also  see that our method is effective and the damping impact on velocity, vorticity and pressure is gradually enhanced with the parameters $\alpha$ and $r$ increasing.

\end{exam}
%\vspace{-1.5cm}
\begin{figure}[htbp!]
\centering
\setlength{\abovecaptionskip}{-2cm}
%\subfigure
{\includegraphics[width=10cm]{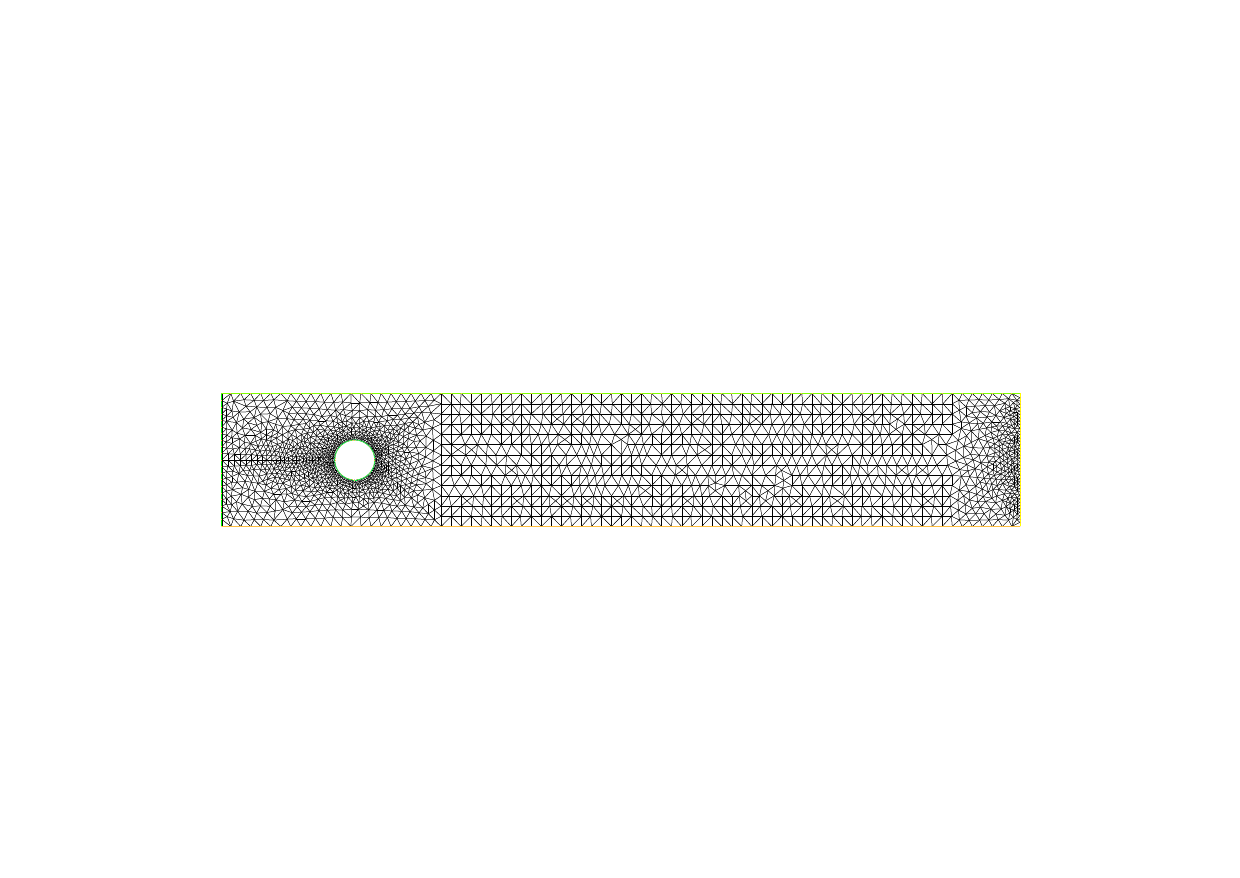}}%\\
\caption{The domain and  finite element mesh for Example \ref{EX7.5}}
\label{fig2:mesh2}
\end{figure}
%\vspace{0cm}

\begin{figure}[htbp!]
\centering
\setlength{\abovecaptionskip}{0.cm}
\subfigure[velocity (Taylor-Hood)]
{\includegraphics[width=5.5cm]{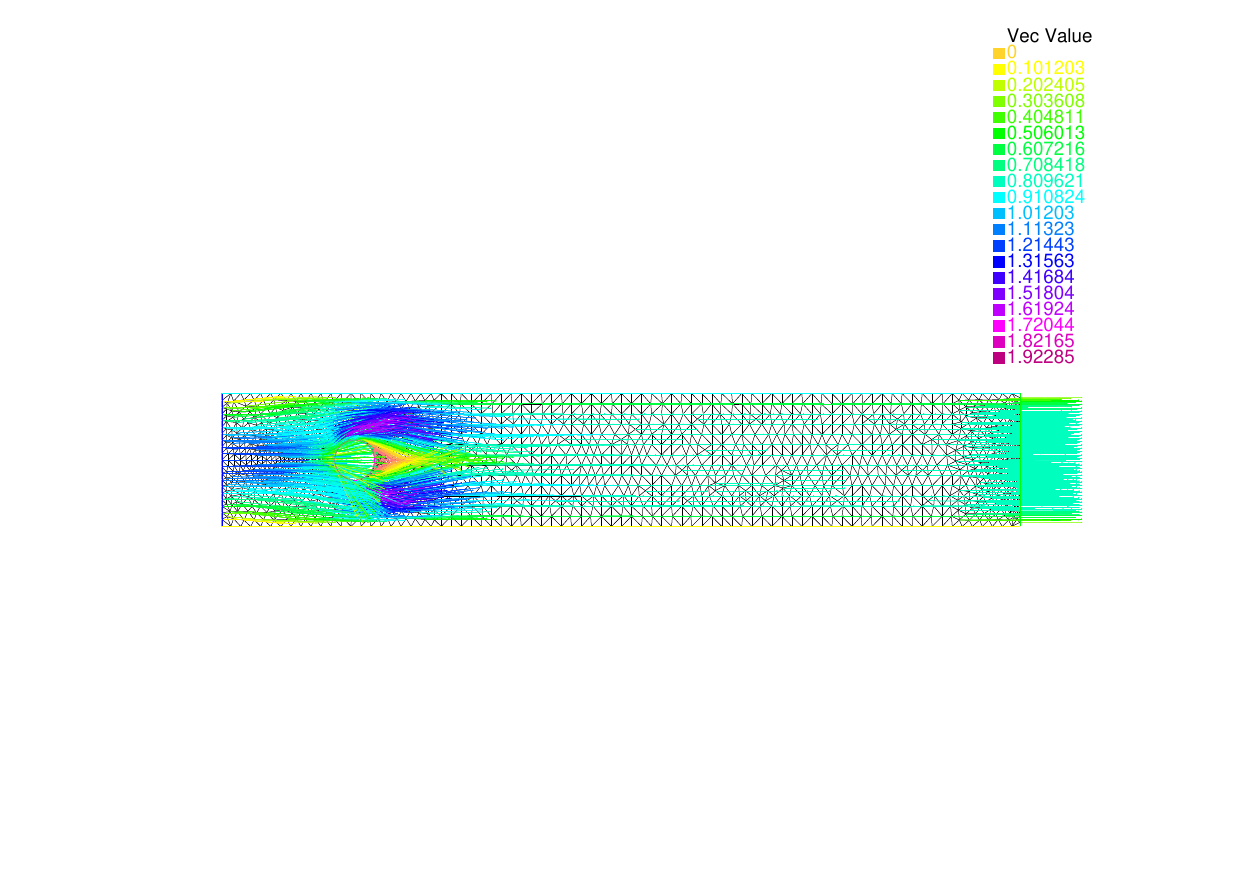}}
\subfigure[vorticity (Taylor-Hood) ]
{\includegraphics[width=5.5cm]{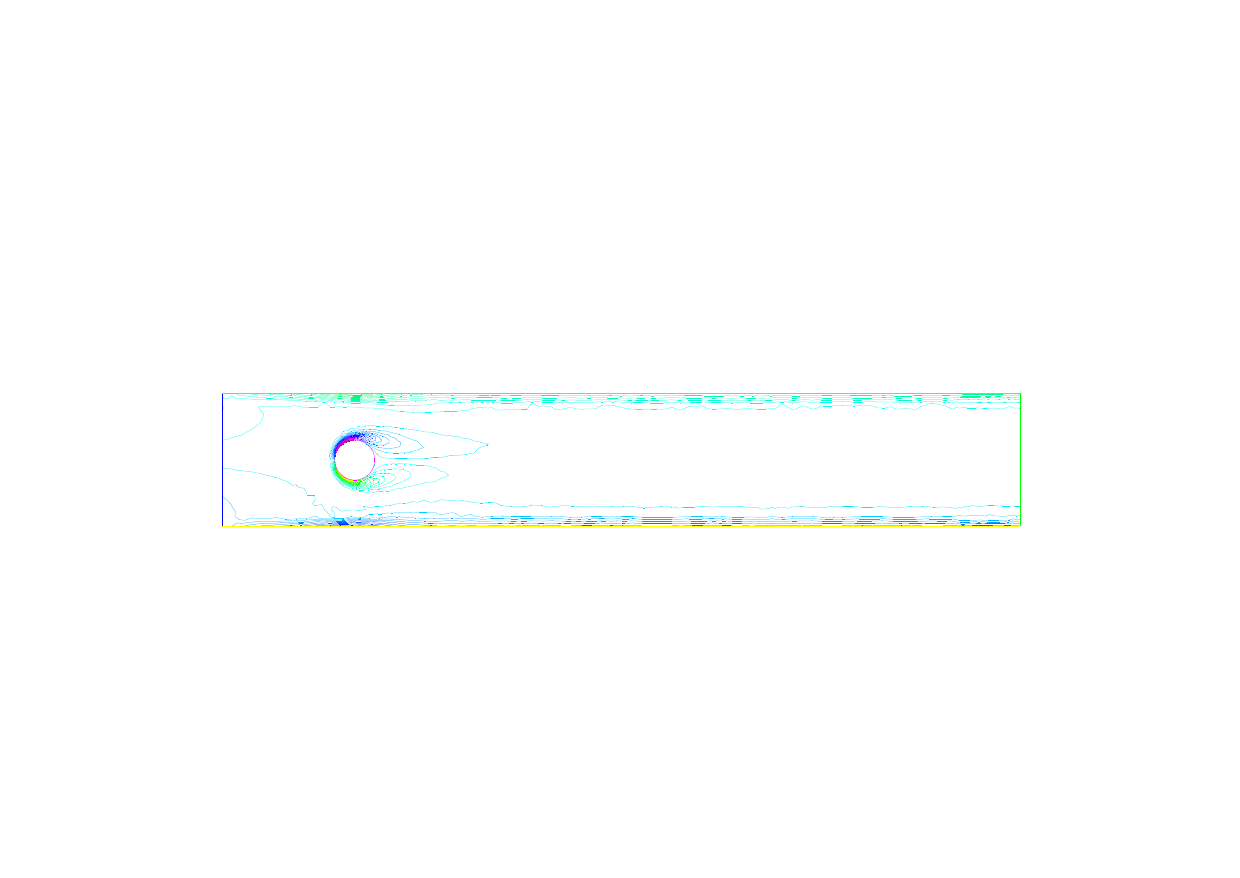}}%\\
\subfigure[pressure (Taylor-Hood) ]
{\includegraphics[width=5.5cm]{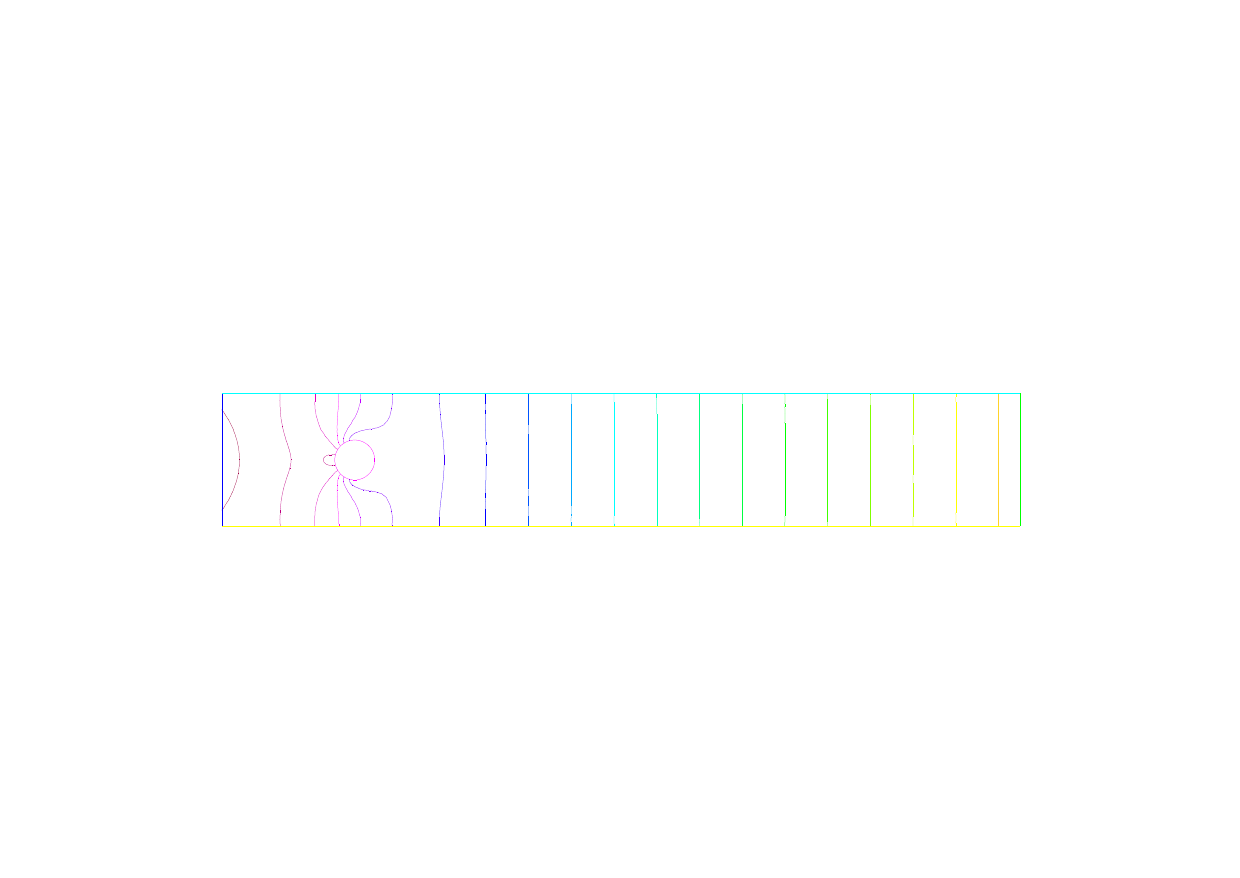}}\\%\\
%\\
\subfigure[velocity (WG)]
{\includegraphics[width=5.5cm]{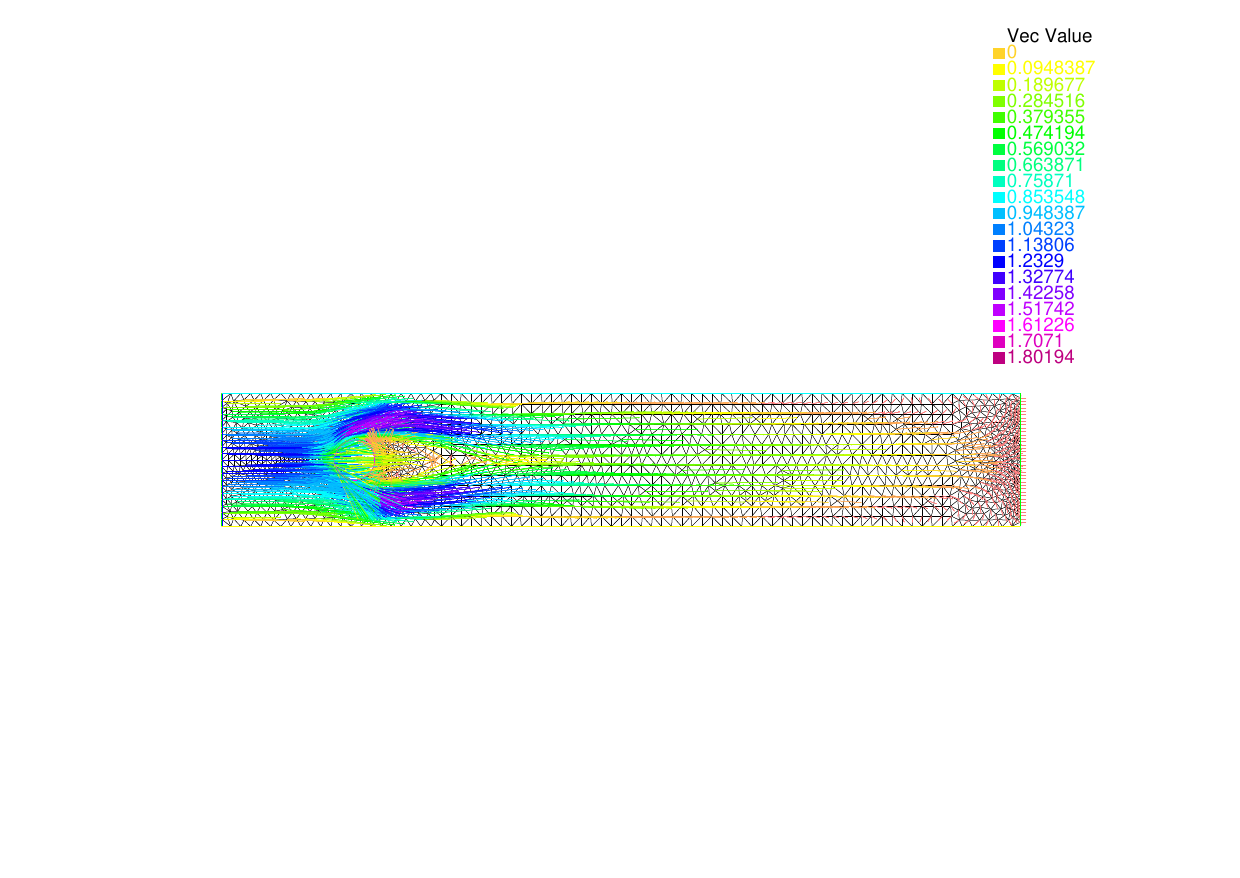}}%\\
\subfigure[vorticity (WG)]
{\includegraphics[width=5.5cm]{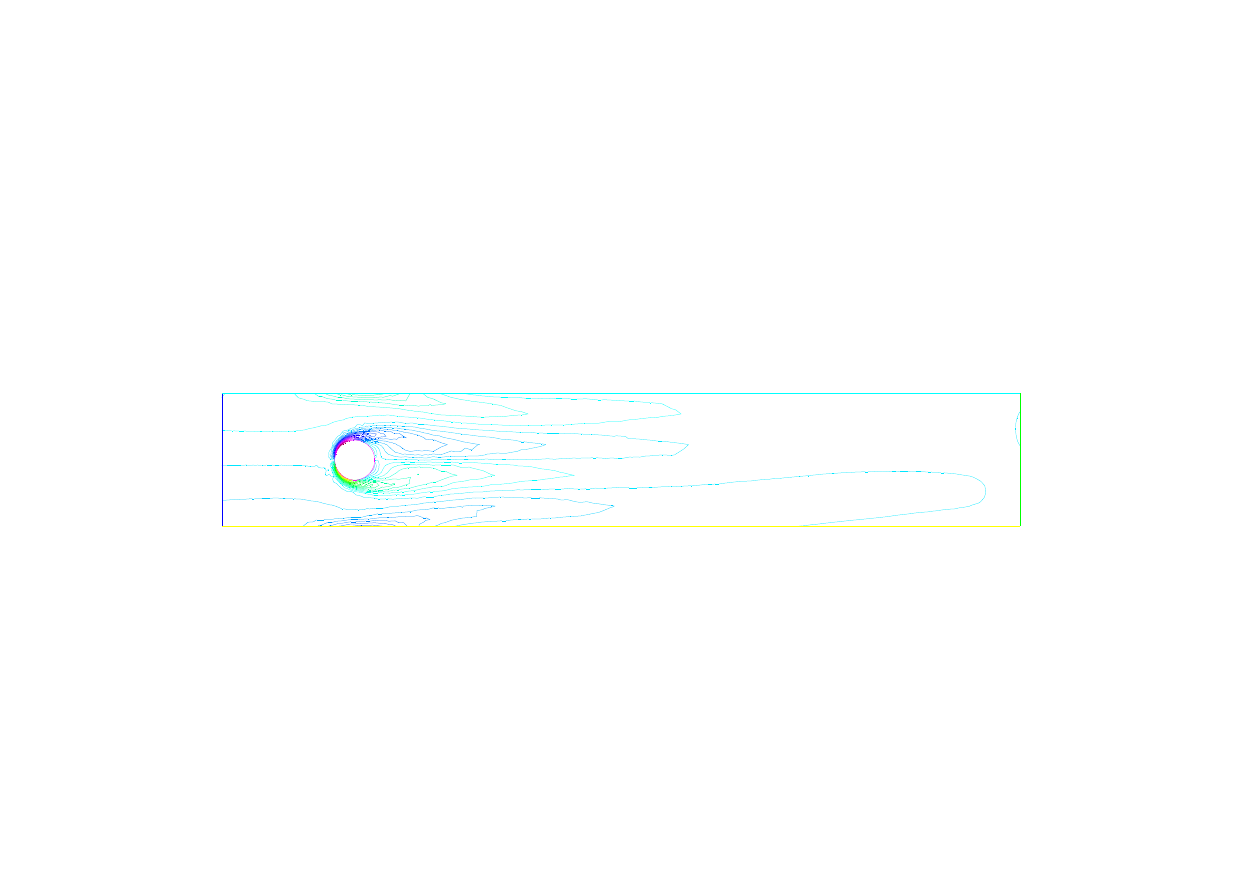}}
\subfigure[pressure (WG)]
{\includegraphics[width=5.5cm]{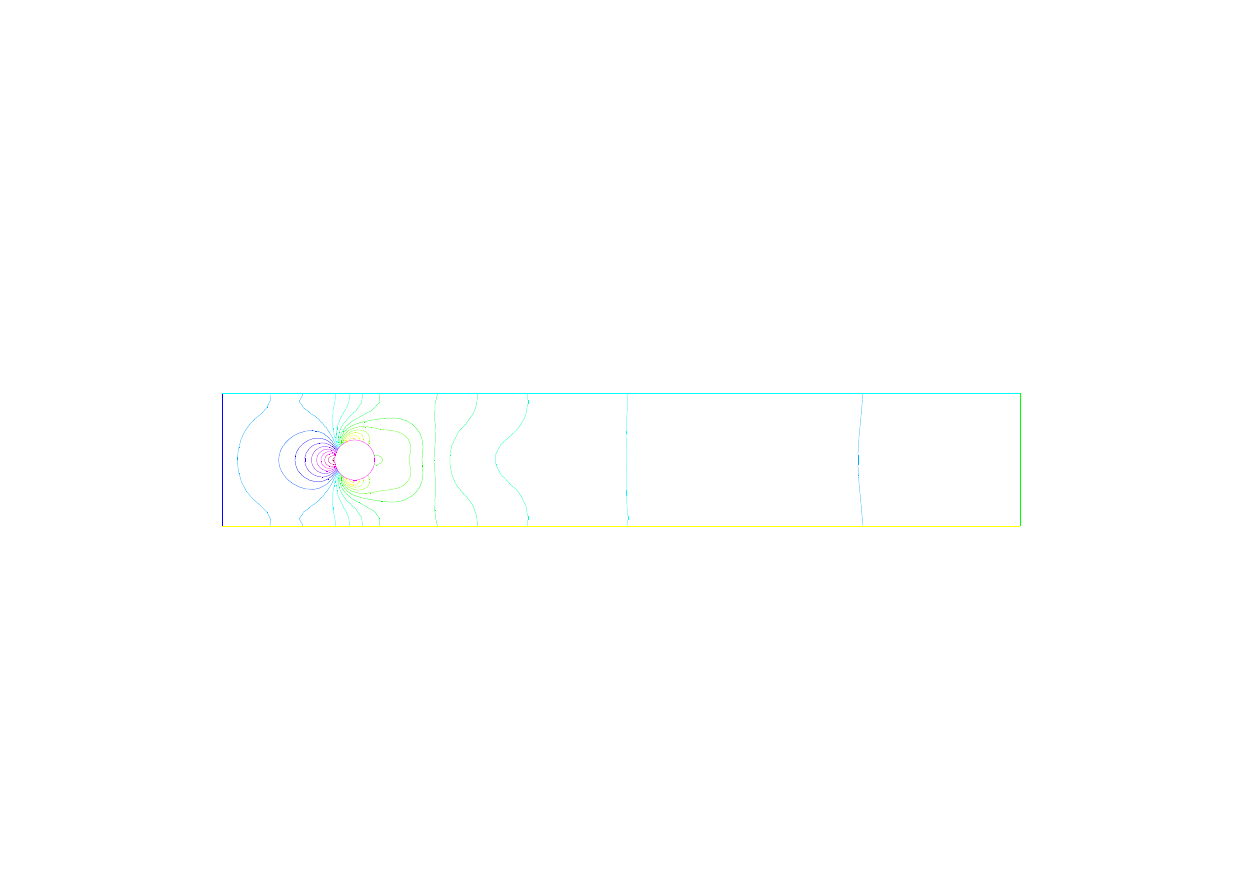}}%\hspace{-10mm}%\\
\caption{ The velocity streamlines, vortex lines  and pressure contours for Example \ref{EX7.5}: $\alpha=0$ at $T=0.5$}
\label{fig21:2111}
\end{figure}

\begin{figure}[htbp!]%\label{Fig1}
\centering
\subfigure[velocity: $\alpha=0.1$]
{\includegraphics[width=5.5cm]{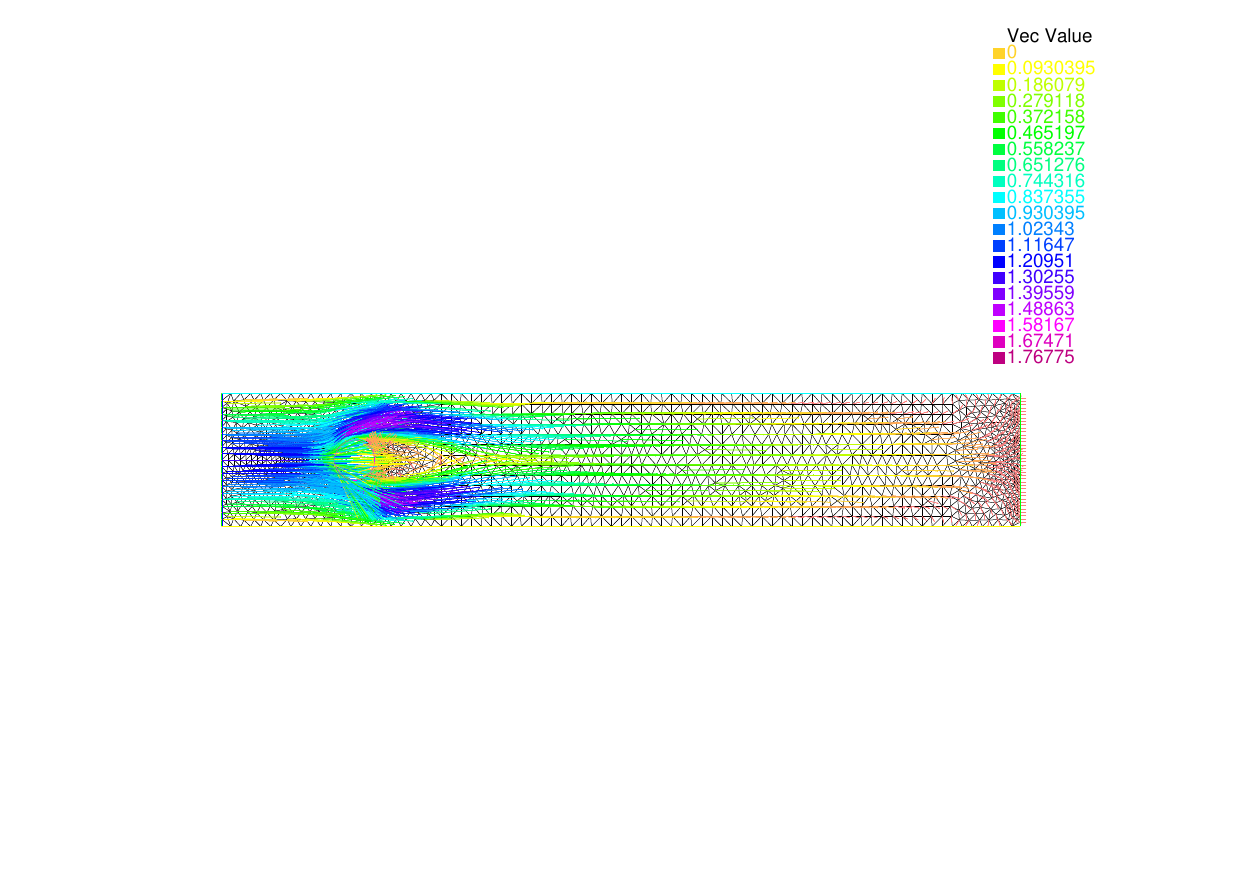}}
\subfigure[velocity: $\alpha=1$]
{\includegraphics[width=5.5cm]{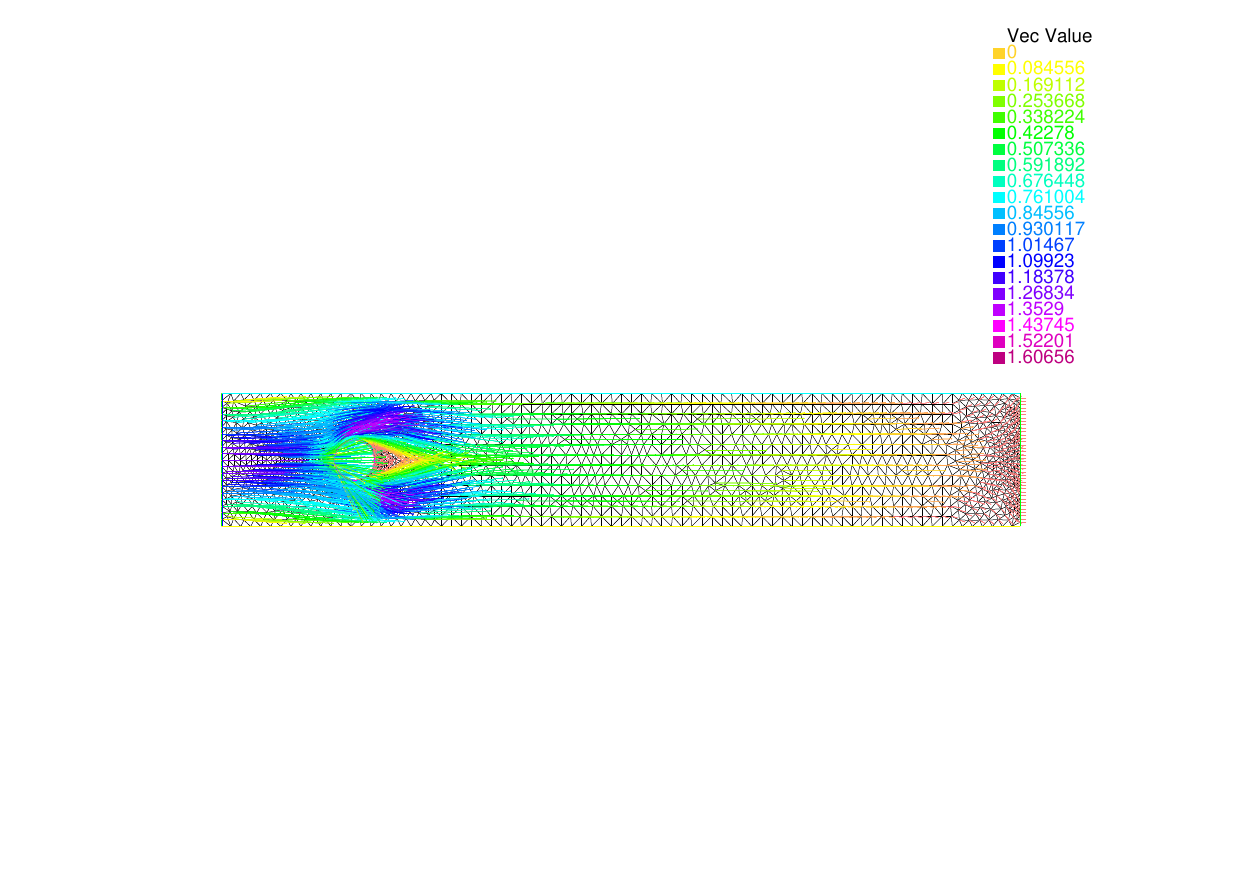}}%\\
\subfigure[velocity: $\alpha=5$]
{\includegraphics[width=5.5cm]{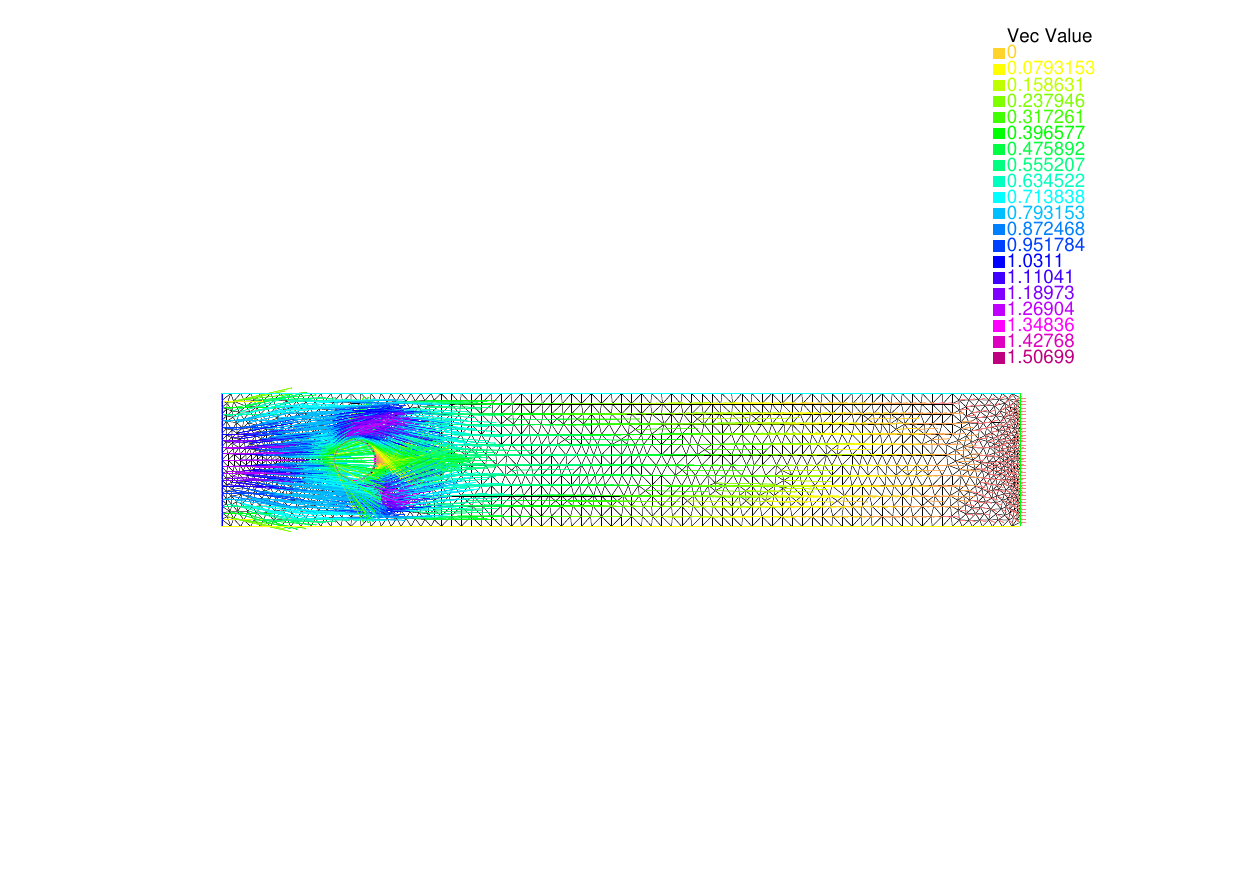}}
\quad
\quad
\subfigure[vorticity: $\alpha=0.1$]
{\includegraphics[width=5.5cm]{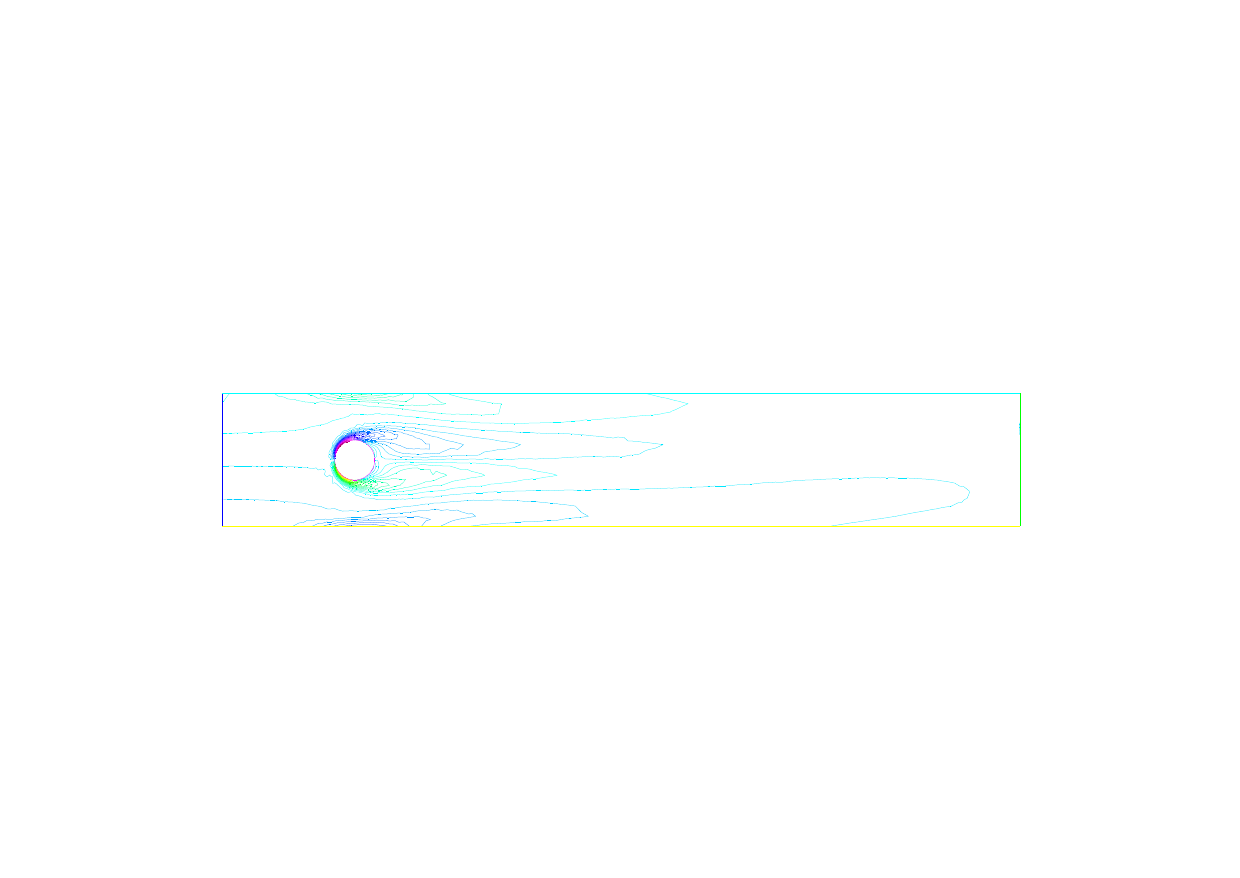}}
\subfigure[vorticity: $\alpha=1$]
{\includegraphics[width=5.5cm]{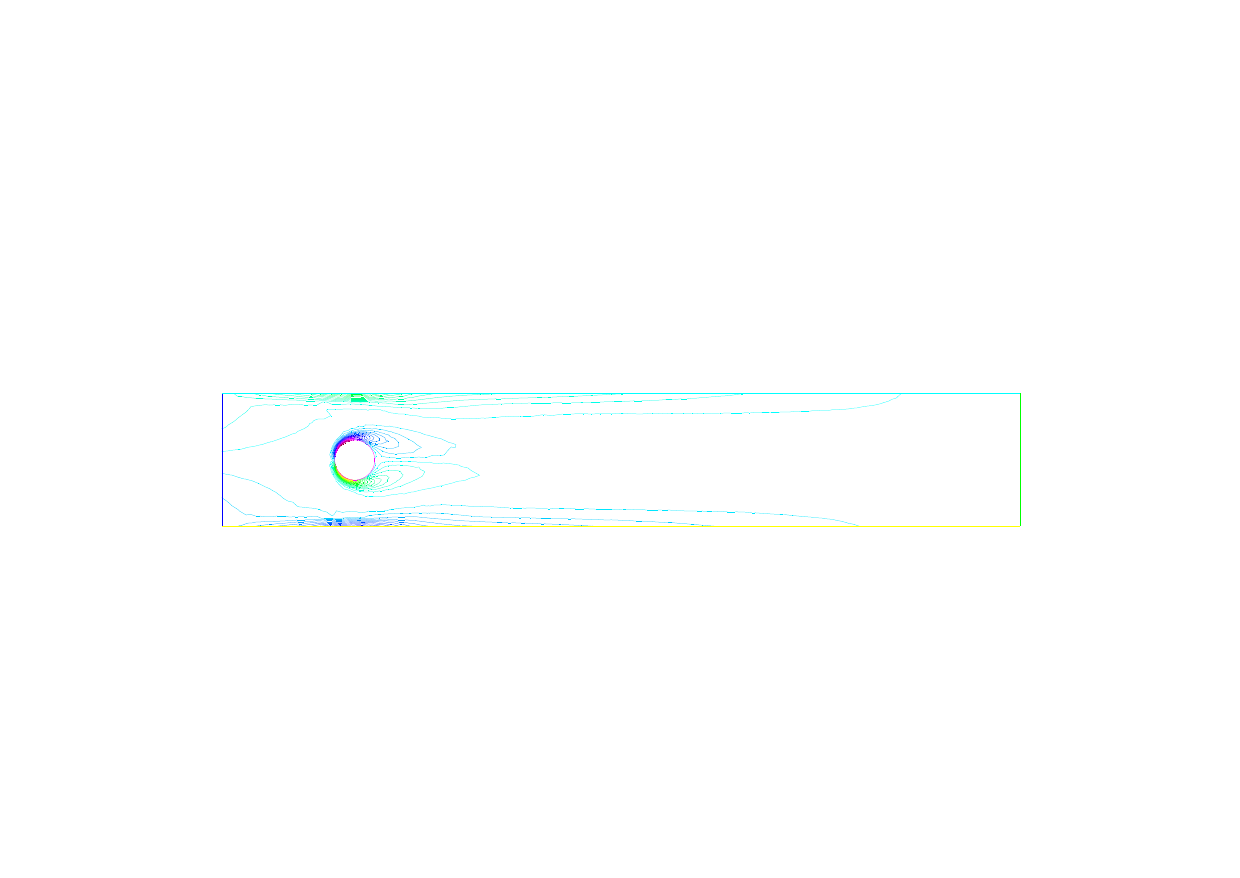}}%\\
\subfigure[vorticity: $\alpha=5$]
{\includegraphics[width=5.5cm]{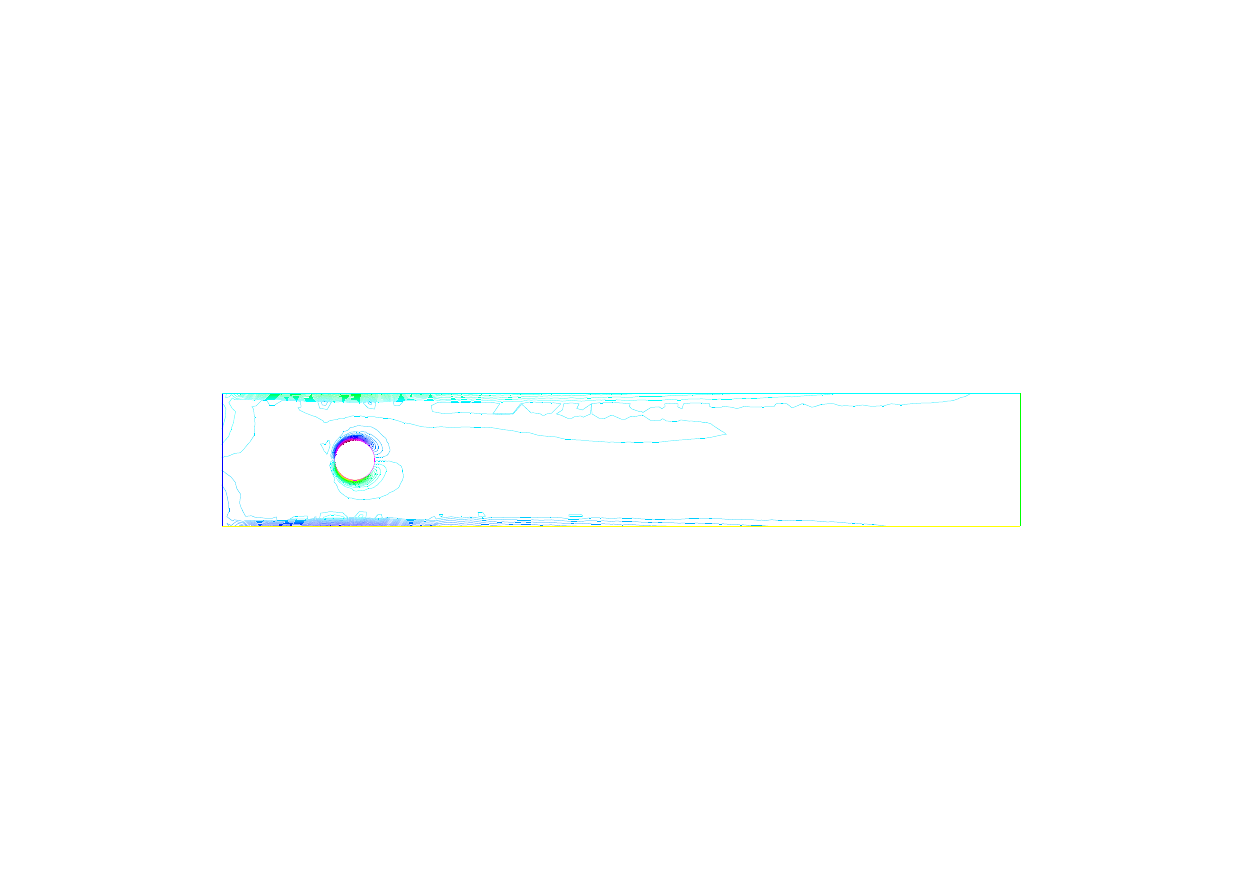}}\hspace{-10mm}
\quad
\subfigure[pressure: $\alpha=0.1$]
{\includegraphics[width=5.5cm]{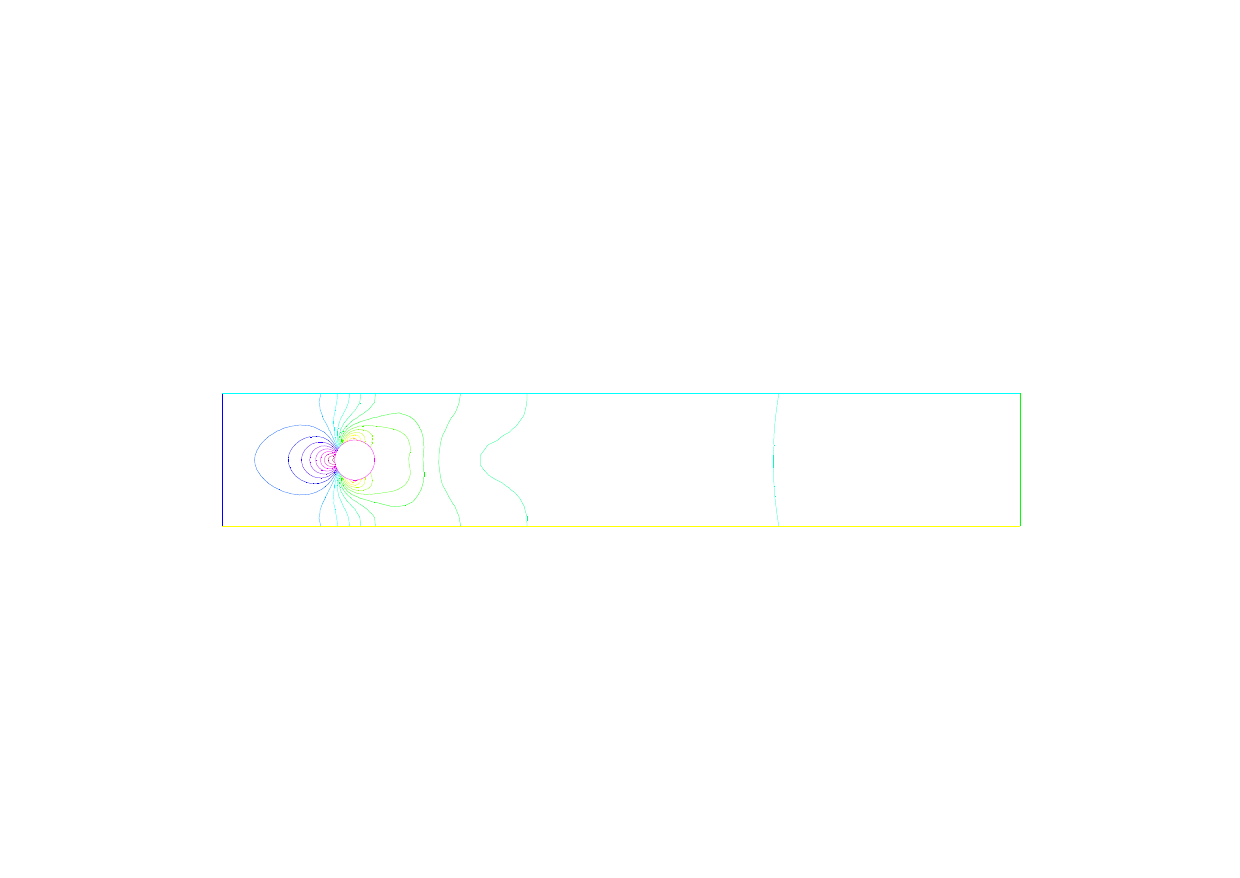}}
\subfigure[pressure: $\alpha=1$]
{\includegraphics[width=5.5cm]{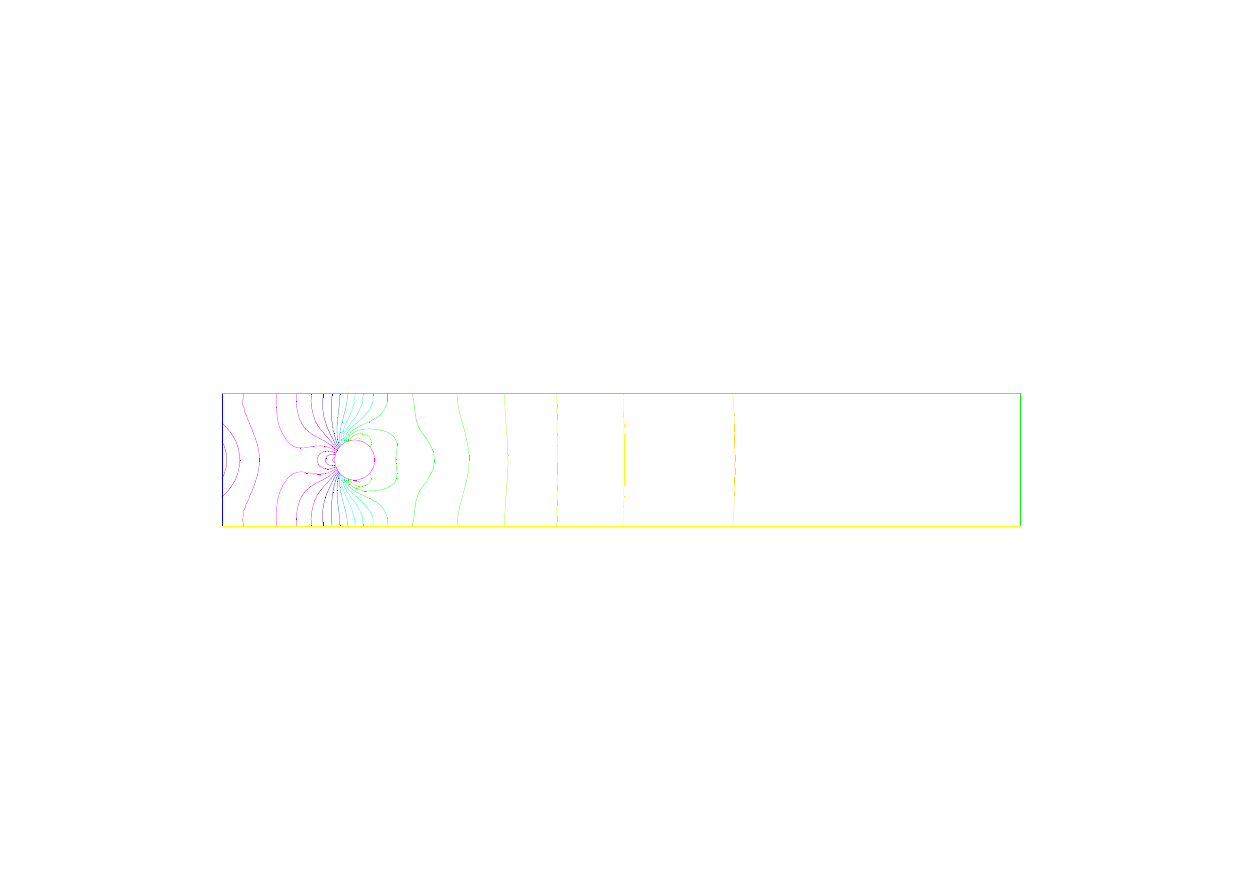}}%\\
\subfigure[pressure: $\alpha=5$]
{\includegraphics[width=5.5cm]{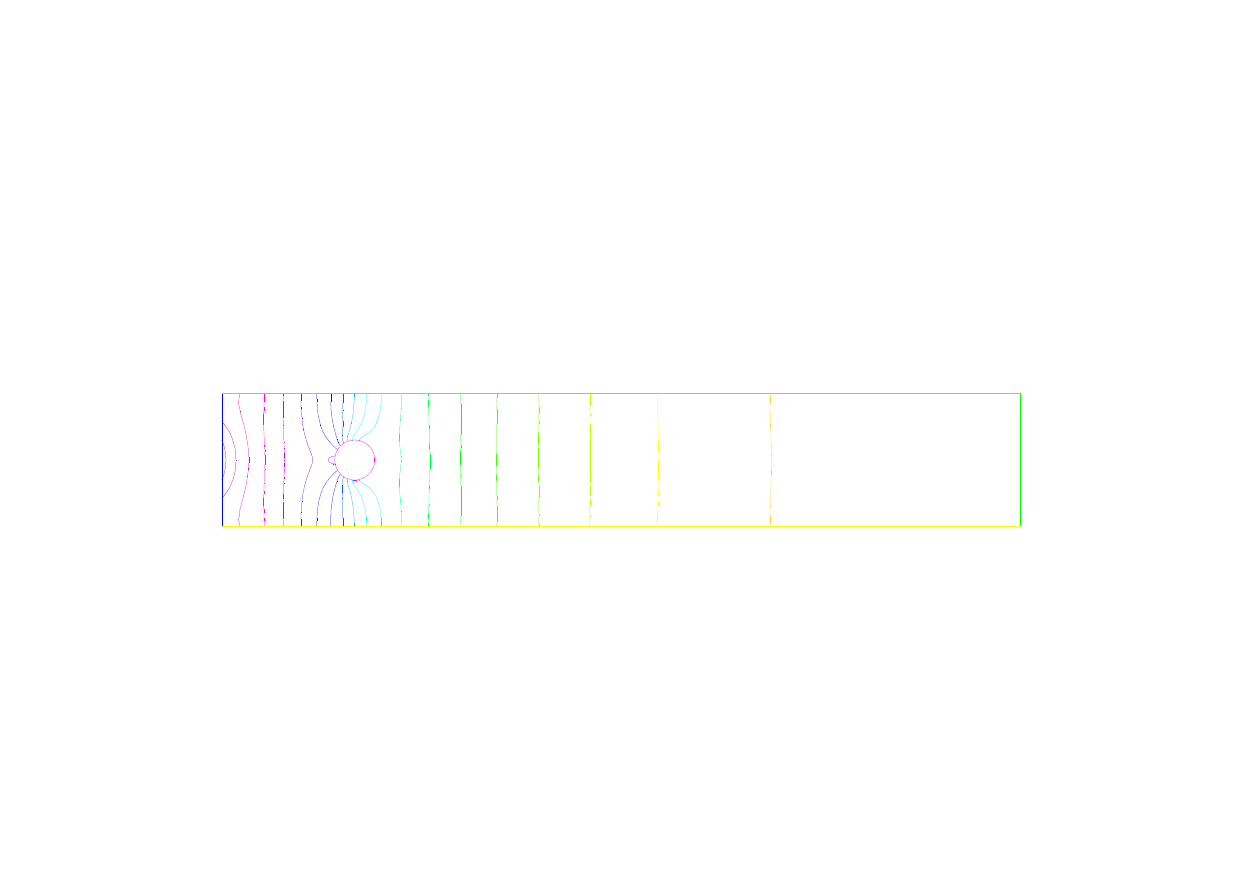}}%\hspace{-10mm}
\caption{The velocity streamlines, vortex lines  and pressure contours for Example \ref{EX7.5}: $r=3.5$ and $\alpha=0.1, 1, 5$ at $T=0.5$}
\label{fig32:32}
\end{figure}

%\vspace{-0.3cm}

%\vspace{0cm}

\begin{figure}[htbp!]
\centering
\setlength{\abovecaptionskip}{0.cm}
\subfigure[velocity: $r=3$]
{\includegraphics[width=5.5cm]{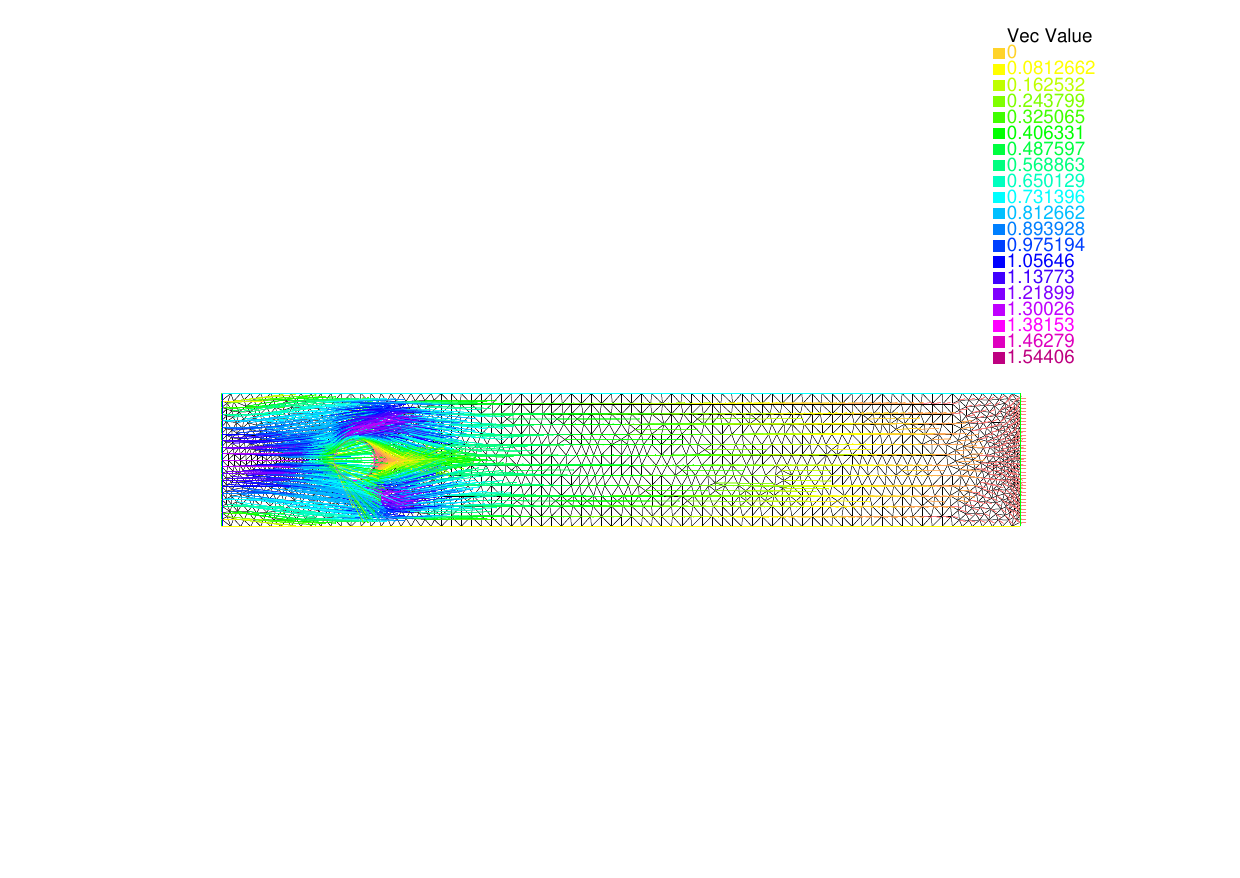}}
\subfigure[velocity: $r=5$]
{\includegraphics[width=5.5cm]{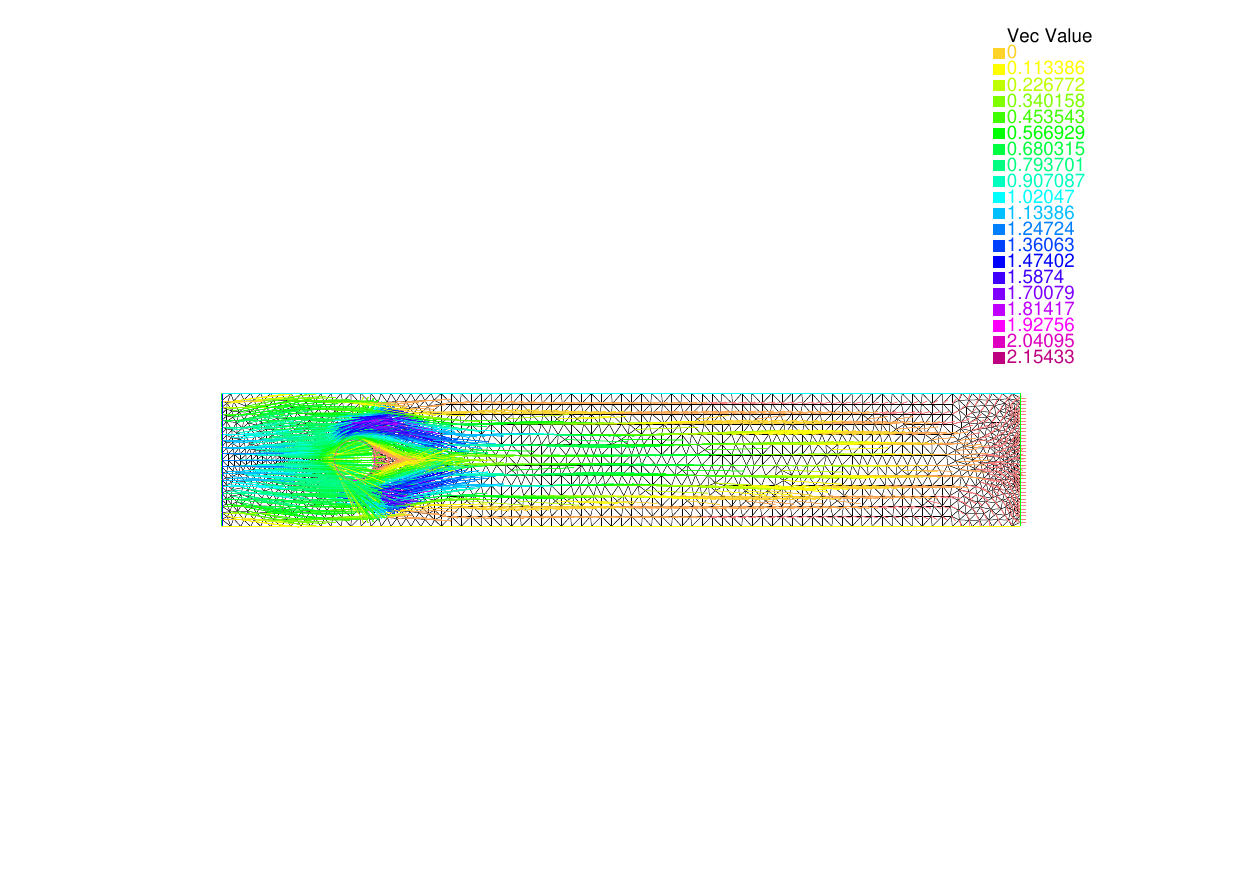}}%\\
\subfigure[velocity: $r=10$]
{\includegraphics[width=5.5cm]{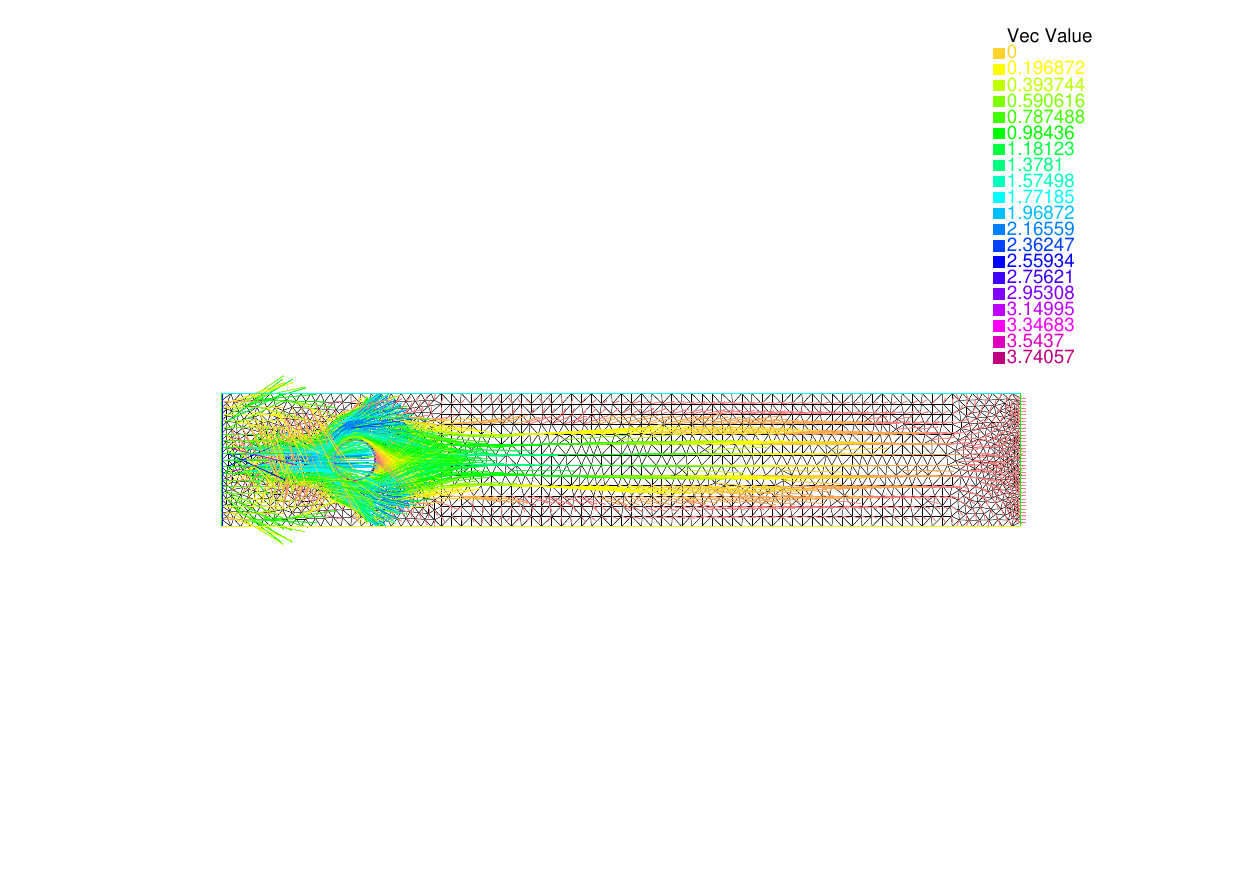}}
\quad
\subfigure[vorticity: $r=3$]
{\includegraphics[width=5.5cm]{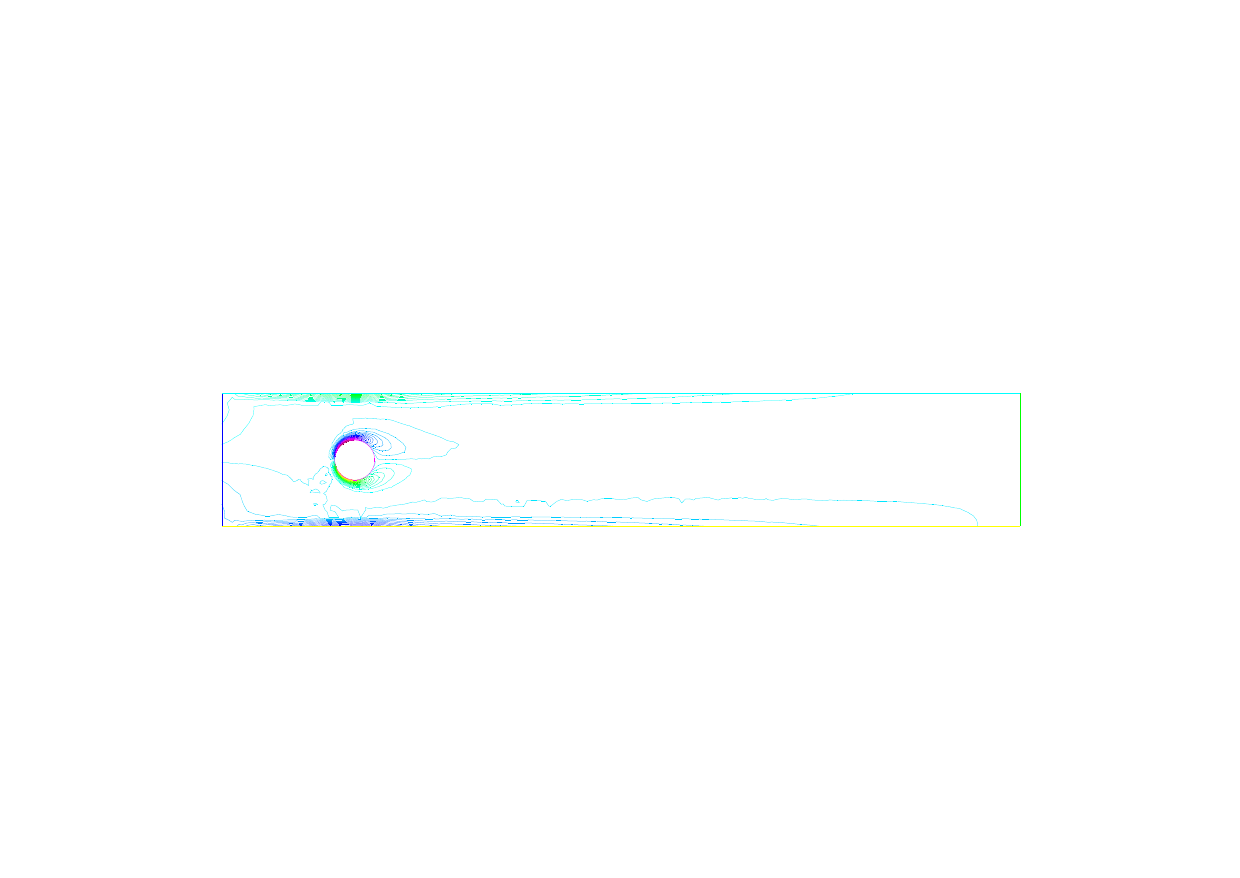}}
\subfigure[vorticity: $r=5$]
{\includegraphics[width=5.5cm]{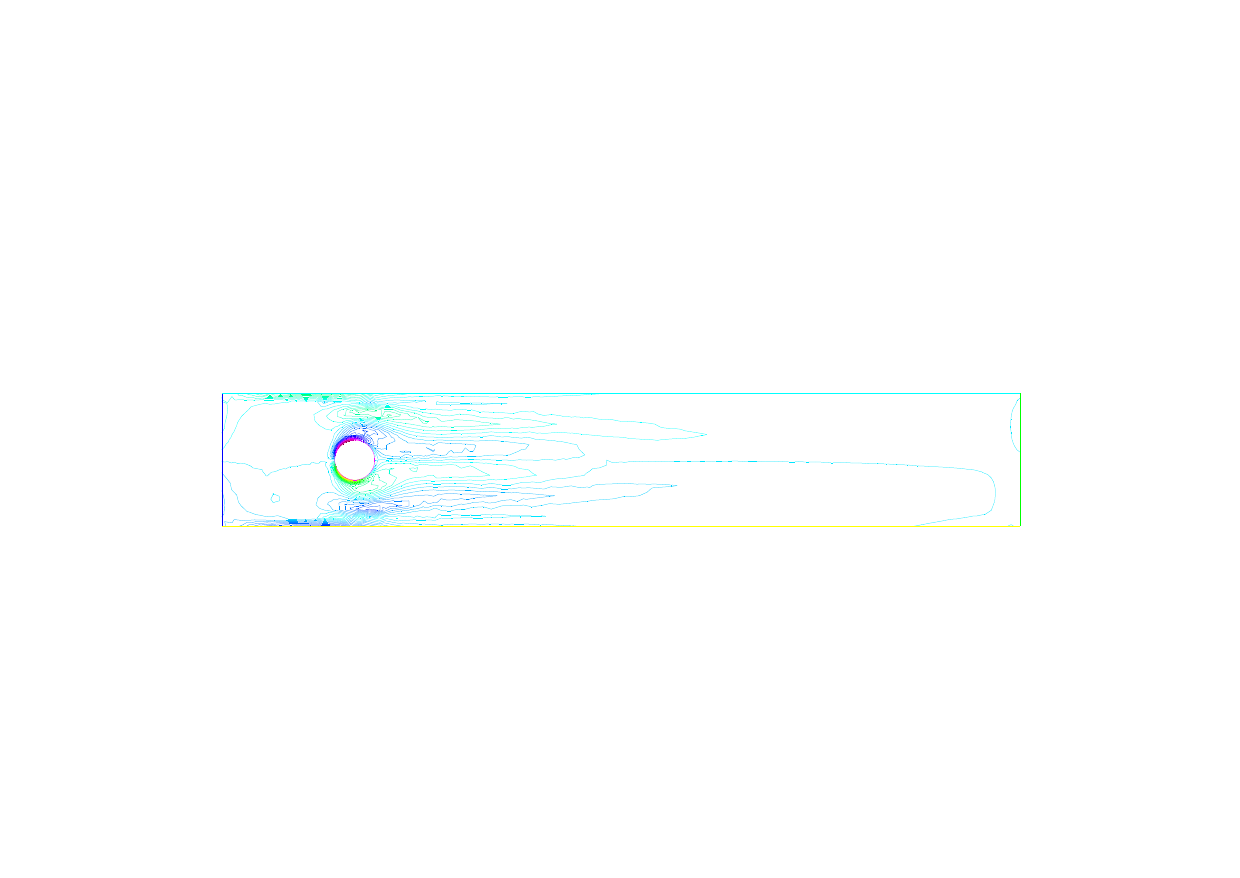}}%\\
\subfigure[vorticity: $r=10$]
{\includegraphics[width=5.5cm]{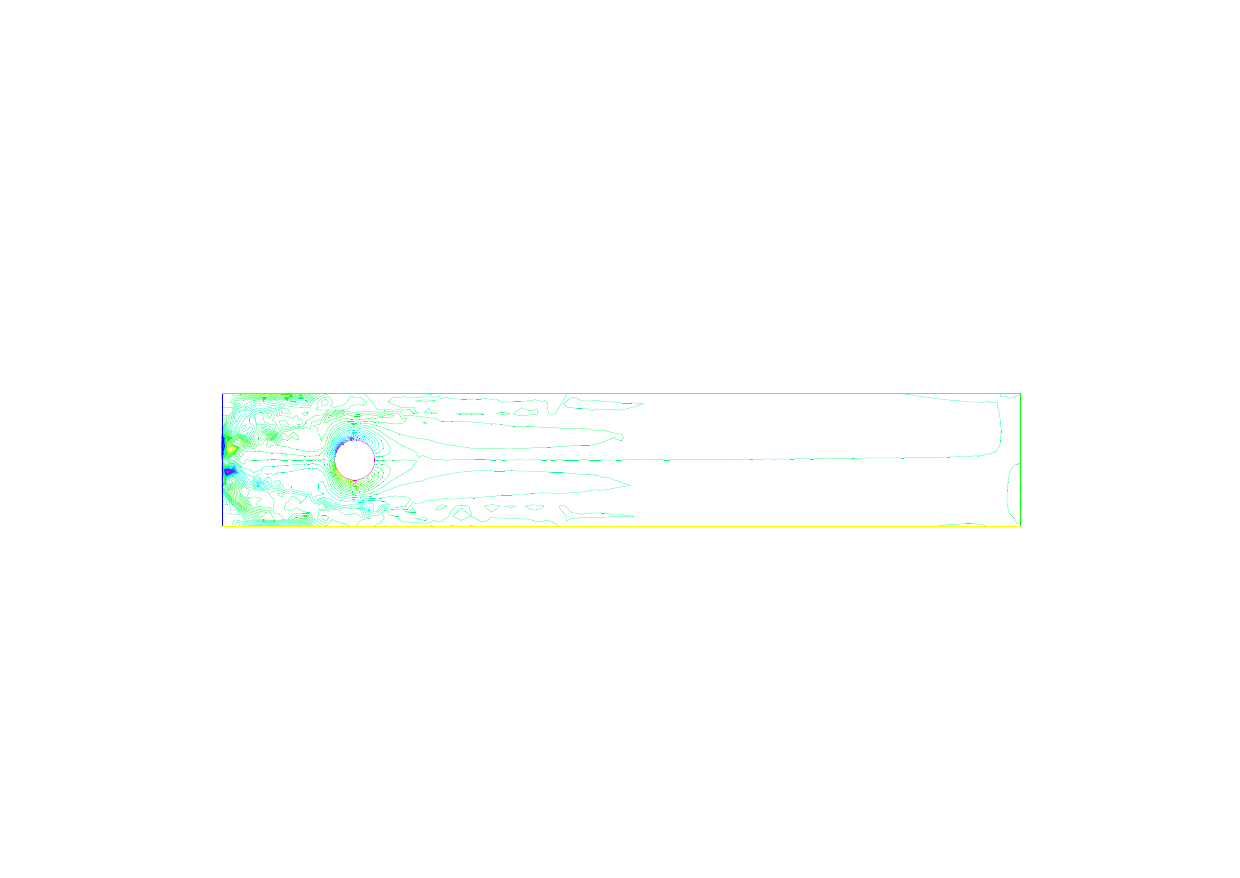}}
\quad
\subfigure[pressure: $r=3$]
{\includegraphics[width=5.5cm]{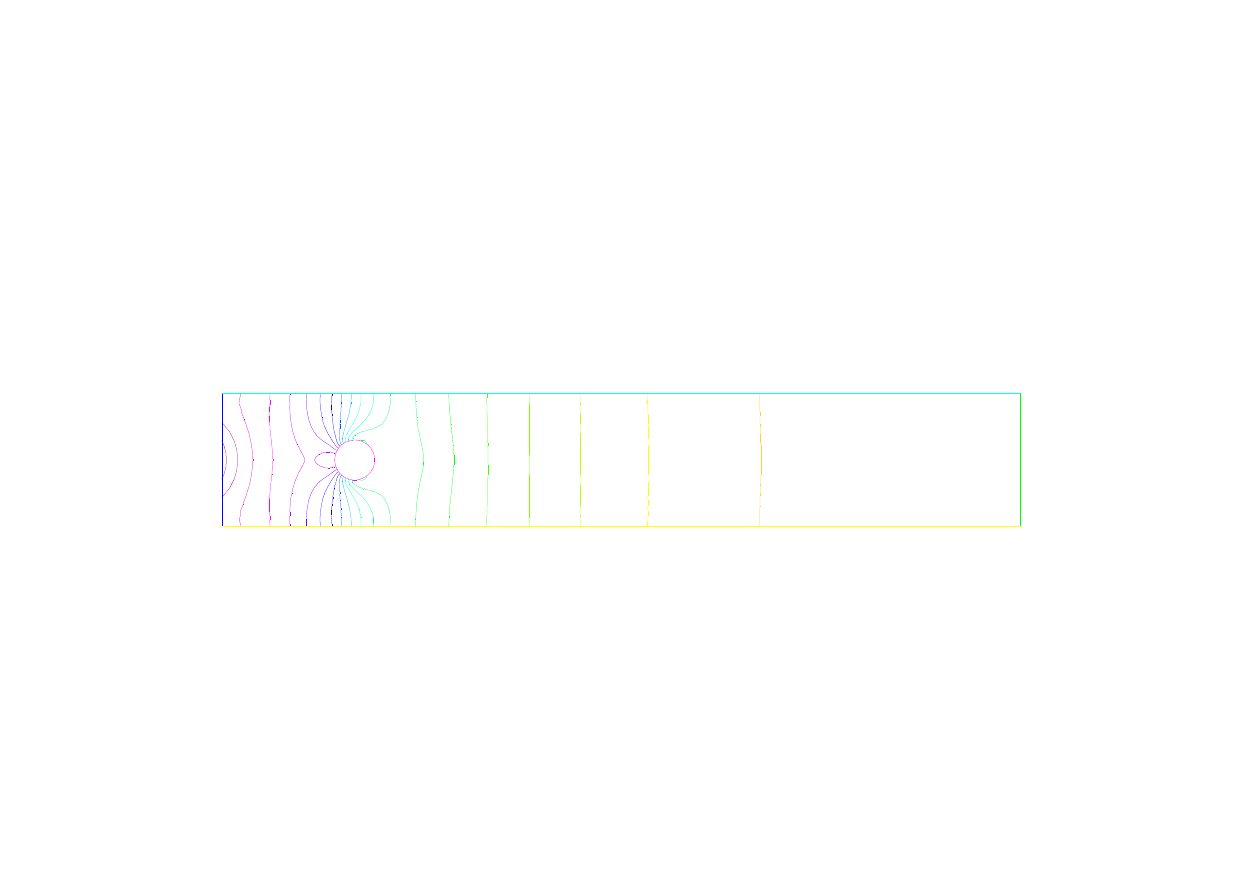}}
\subfigure[pressure: $r=5$]
{\includegraphics[width=5.5cm]{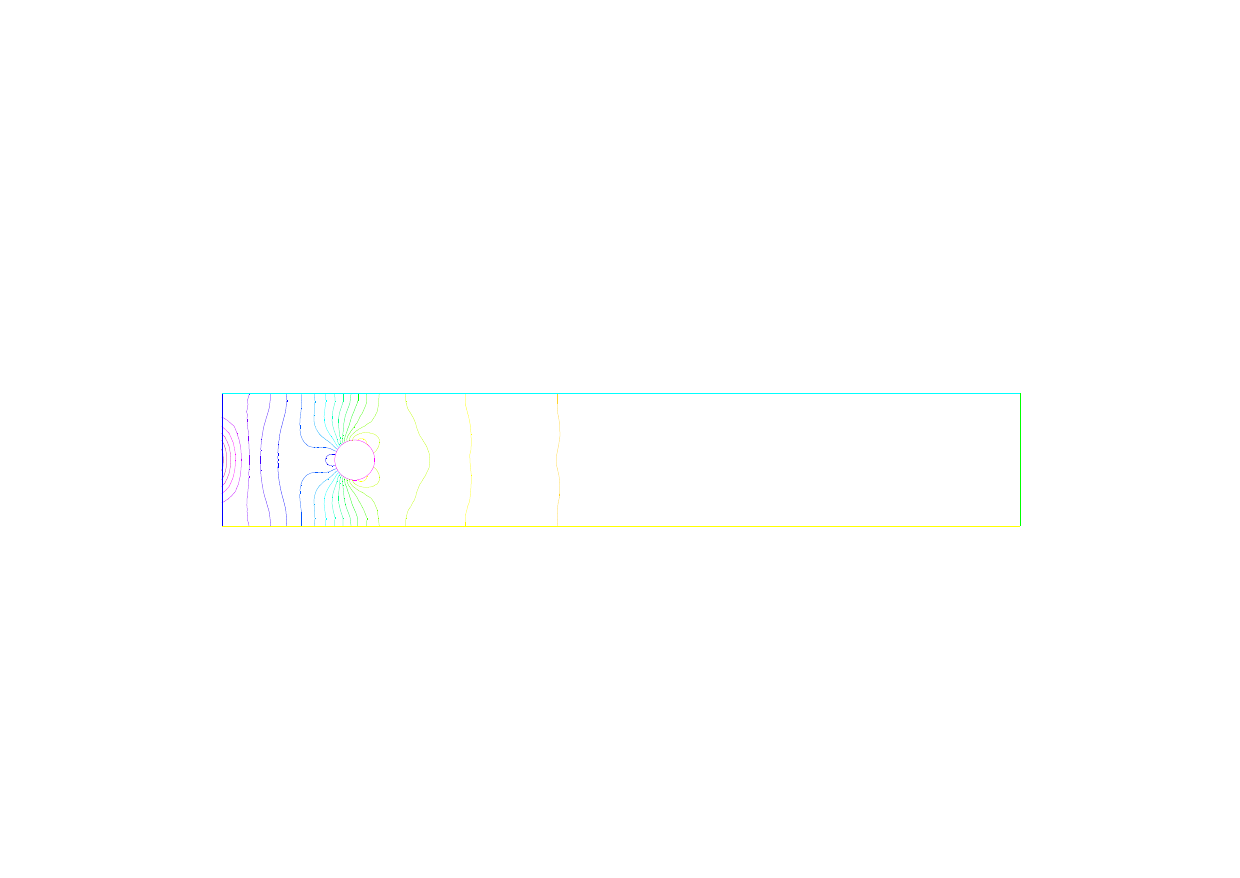}}%\\
\subfigure[pressure: $r=10$]
{\includegraphics[width=5.5cm]{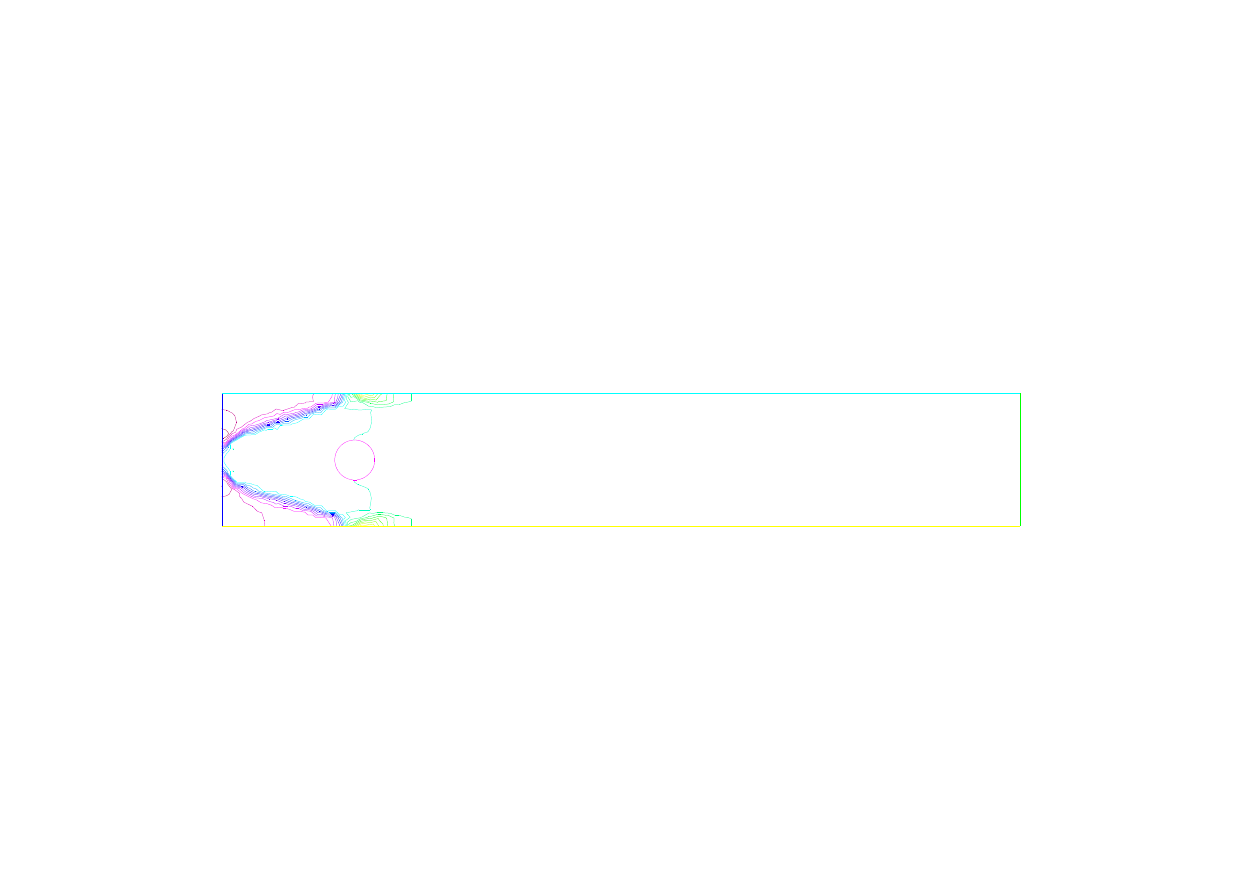}}
\caption{The velocity streamlines, vortex lines  and pressure contours for Example \ref{EX7.5}: $\alpha=2$ and  $r=3,5,10$ at $T=0.5$ }
\label{fig35:35}
\end{figure}

\section{Conclusion}
We proposed a class of globally divergence-free semi-discrete  and fully discrete WG scheme for the unsteady incompressible convective Brinkman-Forchheimer equations, which are pressure robust.
The stability, existence and uniqueness of  solution, and  optimal error estimations of semi-discrete and fully discrete WG schemes for the velocity and pressure approximations have been demonstrated. The developed linearized iteration algorithm is convergent.
Numerical tests verified the theoretical results and demontrated the robustness of the proposed methods.

%% \text{Re}ferences
%%
%% Following citation commands can be used in the body text:
%% Usage of \cite is as follows:
%%   \cite{key}         ==>>  [#]
%%   \cite[chap. 2]{key} ==>> [#, chap. 2]
%%

%% \text{Re}ferences with bibTeX database:

% \color{black}A uniform posteriori error estimates with respect to the \text{Re}ynolds number \color{black}will be given in the future work.

%\section*{Acknowledgments.}
%\addcontentsline{toc}{section}{Acknowledgments.}
%This work is supported in part by National Natural Science Foundation of
%China (11771312, 11271273) and Major \text{Re}search
%Plan of  National Natural Science Foundation of China (91430105).
%The authors thank the editor and referees for their criticism, valuable comments and suggestions which helped to improve this paper.

%%\section*{\text{Re}ferences.}
%%\bibliographystyle{elsarticle-num}
%%\bibliography{<mydatabase>}

%% Authors are advised to submit their bibtex database files. They are
%% requested to list a bibtex style file in the manuscript if they do
%% not want to use elsarticle-num.bst.

\color{black}
%% \text{Re}ferences without.} bibTeX database:
\bibliographystyle{siam}
\bibliography{mybib}{}

\begin{thebibliography}{10}

\bibitem{bookin1}
{\sc F.~Auricchio, L.~Beir{\~a}o~da Veiga, C.~Lovadina, and A.~Reali}, {\em The
  importance of the exact satisfaction of the incompressibility constraint in
  nonlinear elasticity: mixed {FEM}s versus {NURBS}-based approximations},
  Computer Methods in Applied Mechanics and Engineering, 199 (2010),
  pp.~314--323.

\bibitem{RT-1}
{\sc F.~Brezzi, D.~Boffi, L.~Demkowicz, R.~Durán, R.~Falk, and M.~Fortin},
  {\em Mixed Finite Elements, Compatibility Conditions, and Applications},
  Springer Berlin Heidelberg, 2008.

\bibitem{CaucaoSergio2022AtBs}
{\sc S.~Caucao, R.~Oyarzúa, S.~Villa-Fuentes, and I.~Yotov}, {\em {A}
  three-field {B}anach spaces-based mixed formulation for the unsteady
  {B}rinkman-{F}orchheimer equations}, Computer methods in applied mechanics
  and engineering, 394 (2022), p.~114895.

\bibitem{Caucao;2021}
{\sc S.~Caucao and I.~Yotov}, {\em {A} {B}anach space mixed formulation for the
  unsteady {B}rinkman-{F}orchheimer equations}, IMA Journal of Numerical
  Analysis, 41 (4) (2021), pp.~2708--2743.

\bibitem{CFX2016}
{\sc G.~Chen, M.~Feng, and X.~Xie}, {\em {R}obust globally divergence-free weak
  {G}alerkin methods for {S}tokes equations}, Journal of Computational
  Mathematics, 34 (5) (2016), pp.~549--572.

\bibitem{Chen-Xie2023}
{\sc G.~Chen and X.~Xie}, {\em {A}nalysis of a class of globally
  divergence-free {HDG} methods for stationary {N}avier-{S}tokes equations},
  Science China-Mathematics, 66 (2023).
\newblock \url{https://doi.org/10.1007/s11425-022-2077-7}.

\bibitem{CiarletP.G1978TFEM}
{\sc P.~Ciarlet}, {\em {T}he finite element method for {E}lliptic problems},
  vol.~v.Volume 4 of Studies in mathematics and its applications, Elsevier
  Science, San Diego, 1~ed., 1978.

\bibitem{CKS2007}
{\sc B.~Cockburn, G.~Kanschat, and D.~Sch\"{o}tzau}, {\em {A} note on
  discontinuous {G}alerkin divergence-free solutions of the {N}avier-{S}tokes
  equations}, Journal of Scientific Computing, 31 (2007), pp.~61--73.

\bibitem{GiraultVivette1986FEMf}
{\sc V.~Girault and P.~Raviart}, {\em {F}inite Element Methods for
  {N}avier-{S}tokes Equations: {T}heory and Algorithms}, vol.~5 of Springer
  Series in Computational Mathematics, Springer Berlin / Heidelberg, Berlin,
  Heidelberg, 1~ed., 1986.

\bibitem{HLX2019}
{\sc Y.~Han, H.~Li, and X.~Xie}, {\em {R}obust globally divergence-free weak
  {G}alerkin finite element methods for unsteady natural convection problems},
  Numerical Mathematics: Theory, Methods and Applications, 12 (2019),
  pp.~1266--1308.

\bibitem{HX2019}
{\sc Y.~Han and X.~Xie}, {\em {R}obust globally divergence-free weak {G}alerkin
  finite element methods for natural convection problems}, Communications in
  Computational Physics, 26 (2019), pp.~1039--1070.

\bibitem{HeywoodJohnG.1990FAot}
{\sc J.~Heywood and R.~Rannacher}, {\em {F}inite-element approximation of the
  nonstationary {N}avier-{S}tokes problem part {IV}: Error analysis for
  second-order time discretization}, SIAM journal on numerical analysis, 27 (2)
  (1990), pp.~353--384.

\bibitem{HuXiaozhe2019AwGf}
{\sc X.~Hu, L.~Mu, and X.~Ye}, {\em {A} weak {G}alerkin finite element method
  for the {N}avier-{S}tokes equations}, Journal of computational and applied
  mathematics, 362 (2019), pp.~614--625.

\bibitem{JLMNR2017}
{\sc V.~John, A.~Linke, C.~Merdon, M.~Neilan, and L.~Rebholz}, {\em {O}n the
  divergence constraint in mixed finite element methods for incompressible
  flows}, SIAM Review, 59 (3) (2017), pp.~492--544.

\bibitem{D1982Nonlinear}
{\sc D.~D. Joseph, D.~A. Nield, and G.~Papanicolaou}, {\em {N}onlinear equation
  governing flow in a saturated porous medium}, Water Resources Research, 18
  (4) (1982), pp.~1049--1052.

\bibitem{MR3511719}
{\sc C.~Lehrenfeld and J.~Sch\"{o}berl}, {\em High order exactly
  divergence-free hybrid discontinuous {G}alerkin methods for unsteady
  incompressible flows}, Computer Methods in Applied Mechanics and Engineering,
  307 (2016), pp.~339--361.

\bibitem{2022Unconditional}
{\sc M.~Li, Z.~Li, and D.~Shi}, {\em Unconditional optimal error estimates for
  the transient {N}avier-{S}tokes equations with damping}, Advances in Applied
  Mathematics and Mechanics,  (2022), p.~14.

\bibitem{LiMinghao2019Tmfe}
{\sc M.~Li, D.~Shi, Z.~Li, and H.~Chen}, {\em {T}wo-level mixed finite element
  methods for the {N}avier-{S}tokes equations with damping}, Journal of
  mathematical analysis and applications, 470 (1) (2019), pp.~292--307.

\bibitem{LiZhenzhen2019Smfe}
{\sc Z.~Li, D.~Shi, and M.~Li}, {\em {S}tabilized mixed finite element methods
  for the {N}avier-{S}tokes equations with damping}, Mathematical methods in
  the applied sciences, 42 (2) (2019), pp.~605--619.

\bibitem{add4}
{\sc A.~Linke}, {\em {D}ivergence-free mixed finite elements for the
  incompressible a {N}avier-{S}tokes equation}, Ph.D. Dissertation, University
  of Erlangen,  (2007).

\bibitem{bookin3}
{\sc A.~Linke}, {\em Collision in a cross-shaped domain---a steady 2d
  {N}avier-{S}tokes example demonstrating the importance of mass conservation
  in {CFD}}, Computer Methods in Applied Mechanics and Engineering, 198 (2009),
  pp.~3278--3286.

\bibitem{MR3564690}
{\sc A.~Linke and C.~Merdon}, {\em Pressure-robustness and discrete {H}elmholtz
  projectors in mixed finite element methods for the incompressible
  {N}avier-{S}tokes equations}, Computer Methods in Applied Mechanics and
  Engineering, 311 (2016), pp.~304--326.

\bibitem{2019Mixed}
{\sc D.~Liu and K.~Li}, {\em {M}ixed finite element for two-dimensional
  incompressible convective {B}rinkman-{F}orchheimer equations}, Applied
  Mathematics and Mechanics(English Edition), 40 (6) (2019), pp.~889--910.

\bibitem{QianLiu2021Saoa}
{\sc Q.~Liu and D.~Shi}, {\em {S}uperconvergent analysis of a nonconforming
  mixed finite element method for time-dependent damped {N}avier-{S}tokes
  equations}, Computational and applied mathematics, 40 (1) (2021).

\bibitem{Louaked;2017}
{\sc M.~Louaked, N.~Seloula, and S.~Trabelsi}, {\em {A}pproximation of the
  unsteady {B}rinkman-{F}orchheimer equations by the pressure stabilization
  method}, Numerical Methods for Partial Differential Equations, 33 (6) (2017),
  pp.~1949--1965.

\bibitem{MuLin2023ApwG}
{\sc L.~Mu}, {\em {A} pressure-robust weak {G}alerkin finite element method for
  {N}avier-{S}tokes equations}, Numerical methods for partial differential
  equations, 39 (3) (2023), pp.~2327--2354.

\bibitem{MuLin2018Addf}
{\sc L.~Mu, J.~Wang, X.~Ye, and S.~Zhang}, {\em {A} discrete divergence free
  weak {G}alerkin finite element method for the {S}tokes equations}, Applied
  numerical mathematics, 125 (2018), pp.~172--182.

\bibitem{add2}
{\sc M.~A. Olshanskii and A.~Reusken}, {\em Grad-div stabilization for {S}tokes
  equations}, Mathematics of Computation, 73 (2004), pp.~1699--1718.

\bibitem{PengHui2022WGmf}
{\sc H.~Peng and Q.~Zhai}, {\em {W}eak {G}alerkin method for the {S}tokes
  equations with damping}, Discrete and continuous dynamical systems. Series B,
  27 (4) (2022), p.~1853.

\bibitem{QiuHailong2019Msaf}
{\sc H.~Qiu and L.~Mei}, {\em {M}ulti-level stabilized algorithms for the
  stationary incompressible {N}avier-{S}tokes equations with damping}, Applied
  numerical mathematics, 143 (2019), pp.~188--202.

\bibitem{TaiYue2024WGmf}
{\sc Y.~Tai, X.~Wang, W.~Yin, and P.~Meng}, {\em {W}eak {G}alerkin method for
  the {N}avier-{S}tokes equation with nonlinear damping term}, Networks and
  heterogeneous media, 19 (2024), pp.~475--499.

\bibitem{TeschlGerald2012Odea}
{\sc G.~Teschl}, {\em Ordinary differential equations and dynamical systems},
  vol.~140 of Graduate Studies in Mathematics, American Mathematical Society,
  Providence, R.I, 1~ed., 2012.

\bibitem{K1981Boundary}
{\sc K.~Vafai and C.~L. Tien}, {\em {B}oundary and inertia effects on flow and
  heat transfer in porous media}, International Journal of Heat and Mass
  Transfer, 24 (2) (1981), pp.~195--203.

\bibitem{WangJunping2013AwGf}
{\sc J.~Wang and X.~Ye}, {\em {A} weak {G}alerkin finite element method for
  second-order elliptic problems}, Journal of Computational and Applied
  Mathematics, 241 (1) (2013), pp.~103--115.

\bibitem{WangJunping2013AWGM}
{\sc J.~Wang and X.~Ye}, {\em {A} weak {G}alerkin mixed finite element method
  for second-order elliptic problems}, Mathematics of Computation,  (2014),
  pp.~2101--2126.

\bibitem{WangJunping2016AwGf}
{\sc J.~Wang and X.~Ye}, {\em {A} weak {G}alerkin finite element method for the
  {S}tokes equations}, Advances in computational mathematics, 42 (1) (2016),
  pp.~155--174.

\bibitem{WangRuishu2016AwGf}
{\sc R.~Wang, X.~Wang, Q.~Zhai, and R.~Zhang}, {\em {A} weak {G}alerkin finite
  element scheme for solving the stationary {S}tokes equations}, Journal of
  computational and applied mathematics, 302 (2016), pp.~171--185.

\bibitem{WangX.J2024Rgdw}
{\sc X.~Wang and X.~Xie}, {\em {R}obust globally divergence-free weak
  {G}alerkin methods for stationary incompressible convective
  {B}rinkman-{F}orchheimer equations}, Numerical Mathematics: Theory, Methods
  and Applications,  (2024).

\bibitem{WassimEid2022Lapf}
{\sc E.~Wassim and Y.~Shang}, {\em {L}ocal and parallel finite element
  algorithms for the incompressible {N}avier-{S}tokes equations with damping},
  Discrete and continuous dynamical systems. Series B, 27 (11) (2022), p.~6823.

\bibitem{WassimEid2023Aptm}
{\sc E.~Wassim, B.~Zheng, and Y.~Shang}, {\em {A} parallel two-grid method
  based on finite element approximations for the 2{D}/3{D} {N}avier-{S}tokes
  equations with damping}, Engineering with computers,  (2023).
\newblock \url{https://doi.org/10.1007/s00366-023-01807-w}.

\bibitem{XZ2010}
{\sc X.~Xu and S.~Zhang}, {\em {A} new divergence-free interpolation operator
  with applications to the {D}arcy-{S}tokes-{B}rinkman equations}, SIAM Journal
  On Scientific Computing, 32 (2) (2010), pp.~855--874.

\bibitem{YangHuaijun2023Saot}
{\sc H.~Yang and X.~Jia}, {\em {S}uperconvergence analysis of the
  bilinear-constant scheme for two-dimensional incompressible convective
  {B}rinkman-{F}orchheimer equations}, Numerical methods for partial
  differential equations, 40 (1) (2024).

\bibitem{ZhangLi2019Agdw}
{\sc L.~Zhang, M.~Feng, and J.~Zhang}, {\em {A} globally divergence-free weak
  {G}alerkin method for {B}rinkman equations}, Applied numerical mathematics,
  137 (2019), pp.~213--229.

\bibitem{ZZX2023}
{\sc M.~Zhang, T.~Zhang, and X.~Xie}, {\em {R}obust globally divergence-free
  weak {G}alerkin finite element method for incompressible
  {M}agnetohydrodynamics flow}, Communications in Nonlinear Science and
  Numerical Simulation, 131 (2024), p.~107810.

\bibitem{ZhengBo2021Tdsa}
{\sc B.~Zheng and Y.~Shang}, {\em {T}wo-level defect-correction stabilized
  algorithms for the simulation of 2{D}/3{D} steady {N}avier-{S}tokes equations
  with damping}, Applied numerical mathematics, 163 (2021), pp.~182--203.

\bibitem{ZhengBo2023Tsaf}
{\sc B.~Zheng and Y.~Shang}, {\em {T}wo-grid stabilized algorithms for the
  steady {N}avier-{S}tokes equations with damping}, Mathematical methods in the
  applied sciences, 46 (1) (2023), pp.~107--125.

\bibitem{ZCX2017}
{\sc X.~Zheng, G.~Chen, and X.~Xie}, {\em {A} divergence-free weak {G}alerkin
  method for quasi-{N}ewtonian {S}tokes flows}, Science China Mathematics, 60
  (8) (2017), pp.~1515--1528.

\end{thebibliography}
\color{black}

%%\begin{thebibliography}{00}
%%\end{thebibliography}
% \begin{thebibliography}{00}
%% \bibitem must have the following form:
%%   \bibitem{key}...
%%
% \bibitem{}
% \end{thebibliography}

%%\begin{thebibliography}{00}
%%\end{thebibliography}
% \begin{thebibliography}{00}
%% \bibitem must have the following form:
%%   \bibitem{key}...
%%
% \bibitem{}
% \end{thebibliography}

\end{document}